\newtheorem{thm}{Theorem}[section]
\newtheorem{lemma}[thm]{Lemma}
\newtheorem{prop}[thm]{Proposition}
\newtheorem{defn}[thm]{Definition}
\newtheorem{cor}[thm]{Corollary}
\theoremstyle{remark}
\newtheorem{remark}[thm]{Remark}
\newtheorem{obs}[thm]{Observation}
\newtheorem{ex}[thm]{Example}
\newcommand{\wt}{\widetilde}
\newcommand{\wh}{\widehat}
\newcommand{\fq}{\mathfrak{q}}
\newcommand{\cA}{\mathcal{A}}
\newcommand{\cB}{\mathcal{B}}
\newcommand{\cC}{\mathcal{C}}
\newcommand{\cD}{\mathcal{D}}
\newcommand{\cE}{\mathcal{E}}
\newcommand{\cF}{\mathcal{F}}
\newcommand{\cM}{\mathcal{M}}
\newcommand{\cP}{\mathcal{P}}
\newcommand{\cW}{\mathcal{W}}
\newcommand{\ve}{\varepsilon}
\newcommand{\FC}{\operatorname{FC}}
\newcommand{\fc}{\operatorname{fc}}
\newcommand{\Subsk}{\operatorname{Subsk}}
\newcommand{\Std}{\operatorname{Std}}
\newcommand{\abs}[1]{\left\lvert#1\right\rvert}
\title{Double Rim Hook Cluster Algebras}
\author{Michael Chmutov, Pakawut Jiradilok, and James Stevens}
\address{\,}
\email[M.~Chmutov]{mchmutov@gmail.com}
\address{Department of Mathematics, Massachusetts Institute of Technology, Cambridge, Massachusetts}
\email[P.~Jiradilok]{pakawut@mit.edu}
\address{Department of Mathematics, University of Chicago, Chicago, Illinois}
\email[J.~Stevens]{jts@math.uchicago.edu}
\begin{document}

\begin{abstract}
We describe an infinite family of non-Pl\"ucker cluster variables inside the double Bruhat cell cluster algebra defined by Berenstein, Fomin, and Zelevinsky. These cluster variables occur in a family of subalgebras of the double Bruhat cell cluster algebra which we call Double Rim Hook (DRH) cluster algebras. We discover that all of the cluster variables are determinants of matrices of special form. We conjecture that all the cluster variables of the double Bruhat-cell cluster algebra have similar determinant form. We notice the resemblance between our staircase diagram and Auslander-Reiten quivers.
\end{abstract}

\maketitle

\tableofcontents

\section{Introduction}

Cluster algebras have been introduced by Fomin and Zelevinsky \cite{ca1}, and have since developed into a major area of mathematics. Cluster algebras are subalgebras of rational functions in many variables. They are defined via dynamical systems; starting with a seed, which consists of a quiver and a rational function for each vertex of the quiver (called a cluster variable), one performs mutations to the seed to get new collections of cluster variables. The cluster algebra is the algebra generated by all the cluster variables obtained by mutations from the original seed. It turns out that many well algebras have cluster algebra structure; in particular coordinate rings of many algebraic varieties have this property, thus allowing one to use combinatorial methods to study geometry. More information on the current state of the field can be found in \cite{CAP}.

This paper is concerned with a particular cluster algebra, namely the coordinate ring of the open double Bruhat cell $G^{w_0,w_0}$ of $GL_n(\mathbb{C})$. This cluster algebra is generally known to be of wild type, namely the number of cluster variables is infinite and even the number of possible quivers obtained by mutations from the initial seed is infinite. As such, a full combinatorial understanding of it is somewhat hopeless. The main result of the report is a description of a family of subalgebras of finite type, which we refer to as the double rim hook algebras, and an explicit description of all the cluster variables of these algebras. As far as we know this is the first explicit description of infinite families of non-Pl\"ucker cluster variables in the double Bruhat cell cluster algebra.

It is known that all the minors of a matrix with distinct variables for entries are cluster variables of the double Bruhat cell algebra. The cluster variables of the double rim-hook subalgebras are not minors of such a matrix, but they are all determinants of a particular form. We suspect that all cluster variables of the double Bruhat cell algebra are of this form, but we do not know how one would approach a proof of this statement.

\bigskip

\section{Open double Bruhat cell cluster algebra}

As shown in \cite{ca3}, the coordinate ring of the open double Bruhat cell $G^{w_0,w_0}$ of $GL_n(\mathbb{C})$ has the structure of a cluster algebra (in fact, that was shown for all double Bruhat cells). Following \cite{fomin-reading}, we define this cluster algebra via a combinatorial gadget called double wiring diagrams. An example is shown in Figure \ref{fig:wd}. Such a diagram consists of two collections of piecewise straight lines, denoted by two colors, with the property that each pair of lines of the same color intersects precisely once. Thus, in each color we end up with a diagram for a reduced expression of the longest element of the symmetric group. The lines of each color are numbered so that the left endpoints of the lines end up numbered bottom to top. The \emph{chambers} of a double wiring diagram are the areas of each level between, or to the side of, crosses. Each of these is labeled by the indices of lines below it (see Figure \ref{fig:wd with labels}). 

\begin{figure}
\centering
\begin{minipage}{0.47\textwidth}
\centering
\resizebox{\textwidth}{!}{\input{wd.pspdftex}}
\caption{A double wiring diagram on four strands.}
\label{fig:wd}
\end{minipage}\hfill
\begin{minipage}{0.47\textwidth}
\centering
\resizebox{\textwidth}{!}{\input{wd_with_labels.pspdftex}}
\caption{Chamber labels for the wiring diagram.}
\label{fig:wd with labels}
\end{minipage}
\end{figure}

\begin{remark}
In the literature, the labels on the red lines usually go top to bottom. Our convention was chosen to work better for the correspondence with the double rim hook algebras. It will, however, result in using axis-coordinates when talking about matrices; for example we will label the bottom left entry of an $n\times n$ matrix $X$ by $x_{11}$ and the top left by $x_{1n}$.
\end{remark}

Now we describe a seed for this cluster algebra, starting with the quiver. Fix a positive integer $n$. Choose a double wiring diagram $D$ on $n$ strands. The vertices of the quiver are the chambers of $D$, except for the bottom chamber labeled by $(\varnothing,\varnothing)$. The arrows of the quiver are placed according to the local rules in Figure \ref{fig:wd quiver rules}; these ensure that the mutation is as described in \cite[Lemma 4.23]{fomin-reading}. The first column just claims that every red crossing has an arrow from the chamber on its left to the chamber on its right, and the opposite is true of the blue crossings. The second and third columns deal with the cases when one chamber is completely above or below another. The last two columns deal with the cases when the two chambers are on adjacent levels but are not completely above or below each other. All the cases that are unaccounted for do not have arrows. See Figure \ref{fig:quiver_example} for the quiver corresponding to our example of the double wiring diagram.

\begin{figure}
$$\resizebox{.6\textwidth}{!}{\input{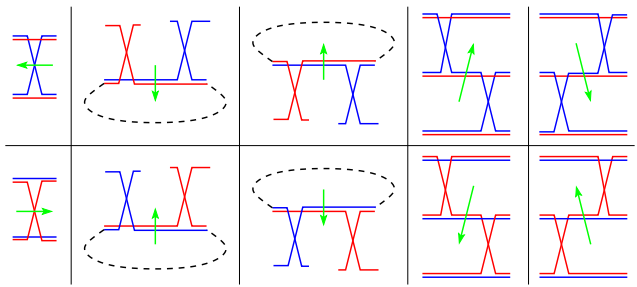}}$$ 
\caption{Local rules for arrows of a quiver coming from a double wiring diagram.}
\label{fig:wd quiver rules}
\end{figure}

\begin{figure}
$$\resizebox{.7\textwidth}{!}{\input{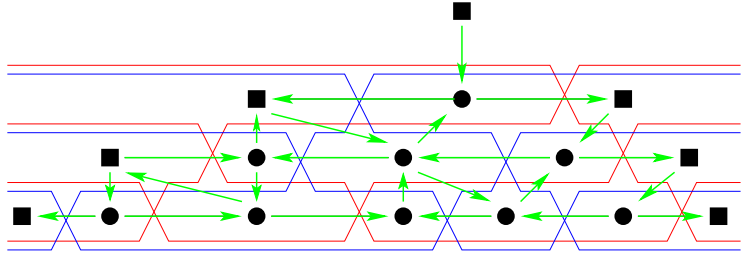}}$$ 
\caption{An example of a quiver corresponding to a double wiring diagram.}
\label{fig:quiver_example}
\end{figure}

Now we describe the initial cluster. Let $X = (x_{ij})$ be an $n\times n$ matrix with distinct variables as entries (recall that we are using axis-coordinates to denote matrix entries). The variables attached to vertices are the minors of $X$ whose columns are the red labels of a chamber and whose rows are the blue labels of a chamber. 

\bigskip

\section{Double rim hooks}
\subsection{Double Rim Hooks} \label{ss:introboa}

A {\em North-East lattice path} $\lambda$ is a finite word in the letters N and E. We think of $\lambda$ as a zigzag segment, starting from a point on the plane, going up one unit when a letter N is read, and going right one unit when a letter E is read, with the reading order of the word $\lambda$ being from the left to the right. For example, we consider the path NNE as the zigzag path starting from a point on the plane, going up two units and then going right one unit. 

The lines $X = k$ for all $k \in \mathbb{Z}$ and $Y = \ell$ for all $\ell \in \mathbb{Z}$ make the plane an infinite chessboard. For a North-East lattice path $\lambda$ of length $l = l(\lambda)$, we define the {\em $\lambda$-array} as a subset of the infinite chessboard as follows. Consider the path $\lambda$ starting at the vertex $(1,1)$. The {\em $\lambda$-array} is defined to be the union of all the $2 \times 2$-squares whose centers are lattice points on $\lambda$. Evidently, the $\lambda$-array contains $2\cdot l(\lambda)+4$ connected unit squares (cells). We denote the set of these cells by $\cC(\lambda)$. We may associate a rational function to each cell of $\cC(\lambda)$. A {\em $\lambda$-Double Rim Hook} (or {\em $\lambda$-DRH}) is the data of the $\lambda$-array together with the $2 \cdot l(\lambda) + 4$ rational functions. When we put the indeterminate variable $a_{ij}$ into the cell whose upper right vertex is $(i,j)$ for all cells in $\cC(\lambda)$, the resulting $\lambda$-DRH will be called the {\em initial $\lambda$-DRH}.

For example, the initial NNE-DRH is shown below. The NNE path is shown in orange.

\medskip

\begin{center}
\begin{tikzpicture}
\ytableausetup{notabloids}
\ytableausetup{mathmode, boxsize=2.0em}
\node (n) {\begin{ytableau}
a_{14} & a_{24} & a_{34} \\
a_{13} & a_{23} & a_{33} \\
a_{12} & a_{22} & \none \\
a_{11} & a_{21} & \none \\
\end{ytableau}};
\draw[very thick,orange] (-0.42,-0.84) -- (-0.42,0.84) -- (0.42,0.84);
\end{tikzpicture}

\medskip

\text{The initial NNE-DRH}
\end{center}

\subsection{Initial DRH quiver} \label{ss:iniquiv} In this section, we shall describe the construction of the {\em initial $\lambda$-DRH quiver}. In the initial $\lambda$-DRH, let $c_{ij} \in \cC(\lambda)$ denote the cell with $a_{ij}$.

First, we freeze the lower left corner cell, $c_{11}$, and the upper right corner cell, $c_{pq}$, where $p$ is the number of E's in $\lambda$ plus $2$, and $q$ is the number of N's in $\lambda$ plus $2$. Then, we define a subset $\cM(\lambda) \subseteq \cC(\lambda)$ algorithmically as follows. Suppose initially there is a bug in the cell $c_{12}$, which is the one above the lower-left frozen cell. The goal of the bug is to jump one unit at a time, north or east, until he reaches a cell adjacent to the upper right frozen cell, at which point the process terminates. Starting in the cell $c_{12}$, as long as he is not next to the upper right frozen cell, he goes right as much as possible until he hits a wall, then goes up as much as possible, then right, then up, and so on, and he eventually stops in a cell adjacent to the upper right frozen cell. The set $\cM(\lambda)$ is defined to be the set of $l+1$ cells the bug has been in.

The quiver $Q_{\lambda}$ is specified as follows. The mutable vertices are precisely the $l(\lambda)+1$ cells in $\cM(\lambda)$. There are $l(\lambda)+3$ frozen vertices, including the $l(\lambda)+1$ connected $2 \times 2$-squares in the $\lambda$-array, and the two previously frozen cells ($c_{11}$ and $c_{pq}$). The variables associated with the vertices are given as follows. To each mutable vertex $c_{ij} \in \cM(\lambda)$, we associate the variable $a_{ij}$. To $c_{11}$ and $c_{pq}$, we associate $a_{11}$ and $a_{pq}$, respectively. To each $2 \times 2$-frozen vertex, we associate the $2 \times 2$-determinant of the square.

The arrows between the mutable vertices in $Q_{\lambda}$ are defined as follows. Two mutable vertices $V$ and $V'$ are connected by an arrow if the corresponding cells share an edge. If $V'$ is to the right of $V$, the arrow is from $V$ to $V'$. If $V'$ is above $V$, the arrow is from $V'$ to $V$.

In order to describe the arrows between the frozen vertices and the mutable vertices, we first look at some determinantal identities. Consider a $2 \times 2$ array with variables inside as shown below.

\medskip

\begin{center}
\ytableausetup{notabloids}
\ytableausetup{mathmode, boxsize=2.0em}
\begin{ytableau}
a & b \\
c & d \\
\end{ytableau}
\end{center}

\medskip

Here, we have the first determinantal identity 
\begin{itemize}
\item (D1) $a \cdot d =  \begin{vmatrix} a & b \\ c & d \end{vmatrix} + b \cdot c$. 
\end{itemize}
Next, consider a $2 \times 3$ array with variables inside as shown below.

\medskip

\begin{center}
\ytableausetup{notabloids}
\ytableausetup{mathmode, boxsize=2.0em}
\begin{ytableau}
a & b & c \\
d & e & f \\
\end{ytableau}
\end{center}

\medskip

Here, we have two determinantal identities: 
\begin{itemize}
\item (D2) $b \cdot \begin{vmatrix} a & c \\ d & f \end{vmatrix} = a \cdot \begin{vmatrix} b & c \\ e & f \end{vmatrix} + c \cdot \begin{vmatrix} a & b \\ d & e \end{vmatrix}$, and 
\item (D3) $e \cdot \begin{vmatrix} a & c \\ d & f \end{vmatrix} = d \cdot \begin{vmatrix} b & c \\ e & f \end{vmatrix} + f \cdot \begin{vmatrix} a & b \\ d & e \end{vmatrix}$.
\end{itemize}

Finally, consider a $3 \times 2$ array with variables inside as shown below.

\medskip

\begin{center}
\ytableausetup{notabloids}
\ytableausetup{mathmode, boxsize=2.0em}
\begin{ytableau}
a & d \\
b & e \\
c & f \\
\end{ytableau}
\end{center}

\medskip

We have two more determinantal identities:
\begin{itemize}
\item (D4) $b \cdot \begin{vmatrix} a & d \\ c & f \end{vmatrix} = a \cdot \begin{vmatrix} b & e \\ c & f \end{vmatrix} + c \cdot \begin{vmatrix} a & d \\ b & e \end{vmatrix}$, and 
\item (D5) $e \cdot \begin{vmatrix} a & d \\ c & f \end{vmatrix} = d \cdot \begin{vmatrix} b & e \\ c & f \end{vmatrix} + f \cdot \begin{vmatrix} a & d \\ b & e \end{vmatrix}$.
\end{itemize}

The arrows between the frozen vertices and the mutable vertices are defined locally at each mutable vertex, so that the resulting exchange relation is one of the five determinantal identities above.

For example, if $\lambda = E$, then the initial quiver $Q_{\lambda}$ is shown in Figure \ref{fig:Q_E}. In this quiver, the two determinantal identities involved are
\begin{align*}
& a_{12} \cdot \ovalbox{$a_{21}$} = \begin{vmatrix}
a_{12} & a_{22} \\
a_{11} & a_{21} 
\end{vmatrix} + a_{11} \cdot a_{22} \\
& a_{22} \cdot \ovalbox{$\begin{vmatrix}
a_{12} & a_{32} \\
a_{11} & a_{31}
\end{vmatrix}$} = a_{12} \cdot \begin{vmatrix}
a_{22} & a_{32} \\
a_{21} & a_{31}
\end{vmatrix} +
a_{32} \cdot \begin{vmatrix}
a_{12} & a_{22} \\
a_{11} & a_{21}
\end{vmatrix}.
\end{align*}
The boxed expression in each equation is the cluster variable into which the original mutable variable transforms when we mutate the quiver at the corresponding node.

The $\lambda$-DRH cluster algebra $\cA_{\lambda}$ is defined as the type-A cluster algebra generated from the quiver $Q_{\lambda}$. Our main result is the explicit description of all the cluster variables of $\cA_{\lambda}$, for every North-East lattice path $\lambda$.

\begin{center}
\begin{figure}
\begin{tikzpicture}
  [scale=2.5,auto=left,every node/.style={circle,fill=cyan!20,minimum width=2 em, draw = black}]
  \node[rectangle] (D1) at (1,3) {$\begin{vmatrix} a_{12} & a_{22} \\ a_{11} & a_{21} \end{vmatrix}$};
  \node[rectangle] (D2) at (2,3) {$\begin{vmatrix} a_{22} & a_{32} \\ a_{21} & a_{31} \end{vmatrix}$};
  \node[fill = none] (M1) at (1,2)  {$a_{12}$};
  \node[fill = none] (M2) at (2,2)  {$a_{22}$};
  \node[rectangle] (F1) at (1,1) {$a_{11}$};
  \node[rectangle] (F2) at (2,1) {$a_{32}$};
  
  \draw[line width = 0.1 em,->,>=stealth] (D1) -- (M1);
  \draw[line width = 0.1 em,->,>=stealth] (M1) -- (F1);
  \draw[line width = 0.1 em,->,>=stealth] (D2) -- (M2);
  \draw[line width = 0.1 em,->,>=stealth] (M2) -- (F2);
  \draw[line width = 0.1 em,->,>=stealth] (M1) -- (M2);
  \draw[line width = 0.1 em,->,>=stealth] (M2) -- (D1);
\end{tikzpicture}
\caption{the initial quiver of the E-DRH. The mutable vertices are drawn as circles and the frozen ones as rectangles.}\label{fig:Q_E}
\end{figure}
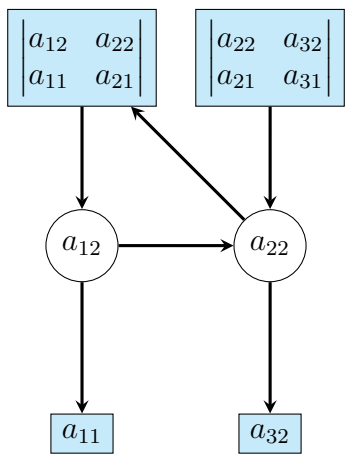
\end{center}

\bigskip

\section{Constructing a double wiring diagram from a DRH}

It is sufficient to construct a double wiring diagram for a DRH whose skeleton has as many E's as it has N's (since we can extend any DRH to one of these). 

The bottom layer is formed as follows. Extend the worm to the southwest cell and to the northeast cell. Following this extended worm from southwest to northeast, place a blue crossing for each N-step, and a red crossing for each each E-step (see Figures \ref{fig:DRH2dwd_1} and \ref{fig:DRH2dwd_2}).

\begin{figure}
$$\resizebox{.3\textwidth}{!}{\input{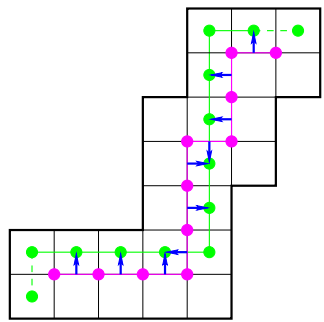}}$$ 
\caption{A DRH with its extended worm, skeleton, and a choice of correspondence.}
\label{fig:DRH2dwd_1}
\end{figure}

\begin{figure}
\centering
\resizebox{\textwidth}{!}{\input{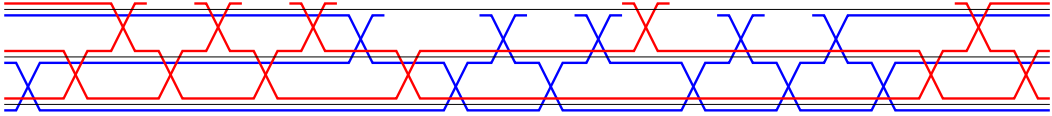}}
\caption{The bottom two rows of the wiring diagram corresponding to the DRH and correspondence choice in the previous figure.}
\label{fig:DRH2dwd_2}
\end{figure}

The second row from the bottom is constructed from the skeleton. First, for each step (edge) of the skeleton, choose a vertex of the worm that is in a cell adjacent to it (an example of such a choice is given by the blue arrows in Figure \ref{fig:DRH2dwd_1}). We will refer to this choice as a choice of correspondence; it is clear that such a choice always exists, but it may not be unique. In the same way as for the bottom row, looking at the skeleton from southwest to northeast, place a blue crossing for every N step of the skeleton and a red crossing for every E step of the skeleton. The placement of each crossing is determined by the associated vertex of the worm, namely the second row cross goes between the bottom crosses corresponding to the steps of the worm adjacent to the vertex (see Figures \ref{fig:DRH2dwd_1} and \ref{fig:DRH2dwd_2}). 

\begin{lemma}
\label{lem:wiring_ok}
In the above construction, between two crosses of the same color in the bottom row, there is a cross of the same color in the top row.
\end{lemma}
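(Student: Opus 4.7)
The plan is to reduce to two consecutive same-color bottom crosses (with no same-color cross strictly between them) via transitivity, and then exploit the bug's alternating R-then-U rule together with the NE-monotonicity of $\lambda$. By the symmetry swapping N$\leftrightarrow$E and rows$\leftrightarrow$columns, I can assume the two crosses are blue, at extended-worm positions $i < j$. The maximality of the bug's runs then forces steps $i+1, \ldots, j-1$ to form a single maximal R-run of the extended worm at some row $b$, so the cells $C_i, \ldots, C_{j-1}$ occupy a horizontal strip with $x$-range $[a, a+j-1-i]$, and $a+j-1-i$ equals $X_{\max}(b)$, the rightmost $x$-coordinate of row $b$ in the DRH.

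I would next identify $\lambda$'s unique N-step at the $y$-transition $b-1 \to b$ and let $X_2$ be its $x$-coordinate. The key inequality to establish is $a-1 \le X_2 \le a+j-2-i$: the lower bound follows from monotonicity of $\lambda$ combined with the observation that the column of the preceding extended-worm U-run equals $X_{\max}$ of the previous R-run's row; the upper bound comes from the requirement that $c_{X_2+1, b}$ itself lies in the DRH. This places $c_{X_2+1, b}$ inside the worm-cell range $\{C_i, \ldots, C_{j-1}\}$. Since $c_{X_2, b}$ is either outside the R-run's strip (in which case the correspondence is forced to pick $c_{X_2+1, b}$) or is itself a worm cell in the strip, any valid correspondence assigns this N-step of $\lambda$ to some $C_k$ with $k \in [i, j-1]$, so the corresponding top cross sits strictly between bottom positions $i$ and $j$.

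The degenerate subcase $j = i+1$ (two adjacent N-crosses from a common maximal U-run) is handled analogously with the unique relevant cell $C_i = (a,b)$: using that both $C_i$ and $C_{i+1} = (a, b+1)$ lie in the DRH, one verifies $X_2 \in \{a-1, a\}$, and since row $b$ is visited by the worm only at $C_i$, the correspondence is forced to assign this N-step to $C_i$. The main obstacle I anticipate is the careful bookkeeping at the extensions: when $i = 1$ (the SW extension step, always N) or $j = l+2$ (an NE extension that happens to be N), the inequalities for $X_2$ must be re-derived using that $\lambda$ begins at $(1,1)$ and ends at $(p-1, q-1)$, rather than via the previous-R-run observation.
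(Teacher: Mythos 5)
Your plan is sound and, as far as I can check, each of its claims is correct, but it is a genuinely different argument from the paper's. The paper splits into two cases: for two \emph{adjacent} same-color bottom crosses it observes that the non-corner worm cell between them has a skeleton edge on one of its two sides parallel to the worm (using that only the two corner cells of the DRH lack skeleton edges, together with the no-three-cells-on-a-diagonal condition), and the arrow from that edge is forced to point to that cell; for two same-color crosses separated by opposite-color ones it argues topologically that between two consecutive bends the skeleton must cross the worm, and the perpendicular edge at the crossing supplies the needed cross. Your argument is uniform and coordinate-based: for the gap corresponding to the worm's maximal run in row $b$ you pin down the unique N-step of $\lambda$ at the height transition $b-1\to b$ and show that any correspondence must send it into that run. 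What your route buys is an explicit bijection between gaps and second-row crosses, plus (via the $X_2$ inequalities) a verification that the correspondence actually exists at these edges, which the paper only asserts; what the paper's buys is brevity. One simplification worth noting: the inequalities $a-1\le X_2\le a+j-2-i$, the bookkeeping of $X_{\max}$ and the preceding U-run, the degenerate subcase $j=i+1$, and the boundary cases $i=1$, $j=l+2$ are all unnecessary for the lemma as stated. The decisive fact is simply that \emph{both} cells adjacent to a vertical skeleton edge spanning heights $[b-1,b]$ lie in row $b$ of the cell array, while the worm, being a monotone NE path of cells, meets row $b$ in exactly the run $C_i,\dots,C_{j-1}$; hence whichever worm cell a given valid correspondence assigns to that edge, the resulting blue cross lands strictly between bottom crosses $i$ and $j$. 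This phrasing also makes the red/blue symmetry immediate, since it never invokes the bug's right-before-up rule.
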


Before proving the lemma, let us finish the description. Continue filling the upper rows in some way to achieve a valid double wiring diagram (since we will freeze the corresponding variables as well anything connected to them, we do not care about the way this is achieved).

\begin{ex}
The wiring diagram in Figure \ref{fig:wd} corresponds to the ENNE DRH (with some choice of correspondence).
\end{ex}

\begin{proof}[Proof of Lemma \ref{lem:wiring_ok}]
First we show that every vertex of the worm which is not a corner has an arrow pointing at it. Note that the only cells of the DRH which do not have the skeleton on one edge are the corners. Consider a a vertex $v$ of the worm which is not a corner of it (hence it is not in a corner of the DRH). We already noted that the skeleton covers one edge of the cell containing this vertex. Moreover, one of the two edges parallel to the worm must be covered by the skeleton since otherwise we will violate the condition that the DRH cannot have three cells on the same diagonal. The arrow from this edge must necessarily point to $v$. 

The above paragraph, proves the lemma in case when the two crosses of the same color in the bottom row come consecutively. Suppose not, i.e. we have two crosses of the same color separated by some crosses of the opposite color. For the worm, we are looking at two consecutive bends. Now we notice that, by construction, for every corner of the extended worm, there exists a vertex of the skeleton adjacent to both edges of the worm. Using this observation, we see that between the two corner vertices of the worm, the skeleton must cross the worm. The edge of the skeleton at the crossing corresponds to the sought crossing of the same color in the second row.
\end{proof}

\begin{prop}
Consider a DRH $V$, and its initial seed $(Q, \mathbf{x})$. Form a double wiring diagram from $V$ as described above, take its initial seed, freeze the all the vertices corresponding to the chambers which are not in the bottom row, and get rid of all the disconnected frozen vertices. Call the resulting seed $(Q', \mathbf{x'})$. Then the seeds $(Q, \mathbf{x})$ and $(Q', \mathbf{x'})$ are equivalent.
\end{prop}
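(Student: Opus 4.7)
The plan is to exhibit an explicit bijection $\phi$ between the vertex set of $Q$ (the DRH quiver) and the vertex set of $Q'$ (the truncated wiring-diagram quiver), and then to check separately that $\phi$ preserves initial cluster variables and preserves the arrows (up to possibly arrows between frozens, which do not affect mutation). Once both are verified, $(Q,\mathbf{x})$ and $(Q',\mathbf{x}')$ are literally the same seed under $\phi$, so they are in particular equivalent.

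To define $\phi$, I would first send each cell of the extended worm---that is, each vertex of $\mathcal{M}(\lambda)\cup\{c_{11},c_{pq}\}$---to the bottom-row chamber of the wiring diagram lying between the two crossings that come from the two worm-edges incident to that cell (with the outer cells going to the leftmost and rightmost bottom chambers). This is a bijection by construction, since each extended-worm step produces exactly one bottom-row crossing, and the crossing color (red for E, blue for N) matches. For the $l(\lambda)+1$ frozen $2\times 2$-squares of $Q$, I would send each one to the second-row chamber placed over the skeleton step corresponding to it under the chosen correspondence; Lemma~\ref{lem:wiring_ok} guarantees these second-row crossings actually exist and sit in the right places, and every other second-row chamber turns out to be disconnected from the bottom row once we freeze everything above and is therefore discarded in $(Q',\mathbf{x}')$. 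To check the variables, observe that a bottom-row chamber has a single red and a single blue strand crossing below it, so its attached minor is one entry $x_{ij}$; chasing through the bottom-row construction, $(i,j)$ is exactly the axis-coordinate of the worm cell, so this entry equals $a_{ij}$. Similarly, a surviving second-row chamber has 2-element red and blue labels, and its $2\times 2$ minor of $X$, by inspection of which strands have passed underneath at that point, is precisely the $2\times 2$ determinant of the frozen square of the DRH to which it corresponds under $\phi$.

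The remaining and principal task is to check that $\phi$ preserves arrows. Arrows within the bottom row are governed by the first column of Figure~\ref{fig:wd quiver rules}: red crossings contribute a left-to-right arrow and blue crossings a right-to-left arrow, which translates under $\phi$ to an arrow $c\to c'$ whenever $c'$ is the cell to the right of $c$ along the worm and $c'\to c$ whenever $c'$ is above $c$; this is exactly the mutable-to-mutable rule for $Q_\lambda$. Arrows between a bottom-row chamber and a surviving second-row chamber come from the remaining columns of Figure~\ref{fig:wd quiver rules}, and here one proceeds by a local case analysis depending on whether the corresponding worm vertex lies on a straight E-segment, a straight N-segment, or a corner (of either orientation). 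In each case, one reads off the arrows incident to the mutable vertex from the local geometry of the wiring diagram and checks that they coincide with the arrows placed at that mutable vertex in $Q_\lambda$ so that mutation produces one of the determinantal identities (D1)--(D5); the small example in Figure~\ref{fig:Q_E} serves as a prototype for the straight-E case and the NNE configuration serves as one of the two corner cases.

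The main obstacle is this case analysis at worm corners: at a corner, two second-row chambers of opposite colors meet the same bottom-row vertex, the choice of correspondence determines which skeleton step is assigned to which worm vertex, and one must verify that regardless of that choice the resulting local quiver is $Q_\lambda$ up to a relabeling by $\phi$, so that the exchange relation agrees with the appropriate instance of (D2)--(D5). The rest of the argument---straight segments, the two extended end cells, variable matching---is bookkeeping once the bijection is set up.
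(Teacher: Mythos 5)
Your proposal follows essentially the same route as the paper's proof: set up the natural bijection between worm cells/frozen squares and bottom-row/second-row chambers, then verify locally at each mutable vertex that the arrows (and hence exchange relations) agree, splitting into the straight-segment and corner cases. The paper likewise leaves the corner case as "analogous," so your level of rigor matches; the argument is correct.
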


\begin{proof}
There is a natural bijection between the vertices of the two quivers which takes frozen vertices precisely to frozen vertices. We only need to check that the connections from every mutable vertex in $Q$, i.e. an internal vertex of the extended worm, are the same in both scenarios. Choose an internal vertex $v$ of the extended worm. There are two cases based on whether or not $v$ is a corner of the extended worm.

Suppose $v$ is not a corner of the extended worm; without loss of generality assume the worm is horizontal at $v$. Let $v_p$ and $v_n$ be the previous, and next vertices of the worm, respectively. As seen in the proof of Lemma \ref{lem:wiring_ok}, a horizontal edge of the skeleton covers an edge of the cell containing $v$. Let $w$, $w'$ be the two vertices of that edge, with $w$ earlier than $w'$ in the skeleton. Every vertex of the skeleton corresponds to a frozen variable; let $\Delta$ and $\Delta'$ be the frozen variables corresponding to $w$ and $w'$, respectively. Then in $Q$, $v$ has incoming arrows from $v_p$ and $\Delta'$ and outgoing arrows to $v_n$ and $\Delta$. 

Now let us analyze the wiring diagram picture. The extended worm edges from $v_p$ to $v$ and from $v$ to $v_n$ correspond to two consecutive red crossings in the bottom row. The skeleton edge from $w$ to $w'$ corresponds to a red crossing in the second row between the two. There may, or may not, be at most one additional blue crossing in the second row between the two. Regardless of that there will be an arrow into the chamber corresponding to $v$ from the chamber corresponding to $w'$ and an arrow out of the chamber corresponding to $v$ to the chamber corresponding to $w$. Thus indeed the two local pictures of the quivers coincide.

The case when $v$ is a corner of the extended worm, the analysis is similar; this is the case corresponding to the $2\times 2$ matrix determinantal identity.
\end{proof}

\bigskip

\section{DRH staircase}

\subsection{Construction of the DRH staircase} \label{ss:constboa}
A convenient way to study the cluster variables is through the DRH staircase, a ``board'' which serves as a bookkeeping device to record the cluster variables and the combinatorial relationships thereof.

To construct the DRH staircase for each given North-East lattice path $\lambda$, we consider the infinite chessboard $\mathbb{Z} \times \mathbb{Z}$. We will label the cells using the matrix convention: we write the integer row indices in the increasing order {\em from the top down to the bottom}, and write the integer column indices in the increasing order {\em from the left to the right}. We embed the $\lambda$-array into this chessboard so that the bottom-leftmost cell of the DRH coincides with the $(1,0)$-cell of the chessboard. Here, the $(i,j)$-cell of the chessboard refers to the cell in column $i$ and row $j$. The row and column indices of this board will then be taken modulo $l+4$. For example, Figure \ref{fig:NNE_SS} shows the NNE-DRH staircase. There, the embedded NNE-DRH is drawn in cyan. We will explain the thick zigzag lines and the green cells later.

\begin{center}
\begin{figure}
\begin{tikzpicture}
\ytableausetup{notabloids}
\ytableausetup{mathmode, boxsize=2.0em}
\node (n) {\ytableausetup{nosmalltableaux}
\ytableausetup{notabloids}
\ydiagram[*(cyan)]{3,3,2,2}
*[*(green)]{0,0,0,0,3+4,3+4,3+2}
*[*(white)]{3,3,4,5,2+4,3+4,3+5}};
\draw[line width= 2.0 pt,black] (-3.24,0.42) -- (-2.40,0.42) -- (-2.40,-0.42) -- (-1.56,-0.42) -- (-1.56,-1.26) -- (-0.72,-1.26) -- (-0.72,-2.10) -- (0.12,-2.10) -- (0.12,-2.94) -- (0.96,-2.94);
\draw[line width= 2.0 pt,black] (-2.40,2.94) -- (-1.56,2.94) -- (-3.24+1.68,0.42+1.68) -- (-2.40+1.68,0.42+1.68) -- (-2.40+1.68,-0.42+1.68) -- (-1.56+1.68,-0.42+1.68) -- (-1.56+1.68,-1.26+1.68) -- (-0.72+1.68,-1.26+1.68) -- (-0.72+1.68,-2.10+1.68) -- (0.12+1.68,-2.10+1.68) -- (0.12+1.68,-2.94+1.68) -- (0.96+1.68,-2.94+1.68) -- (0.96+1.68,-2.10) -- (3.48,-2.10) -- (3.48,-2.94);
\node at (-4.26,2.52) {$-3 = 4$};
\node at (-4.26,1.68) {$-2 = 5$};
\node at (-4.26,0.84) {$-1 = 6$};
\node at (-4.26,0) {$0 = 7$};
\node at (-3.84,-0.84) {$1$};
\node at (-3.84,-1.68) {$2$};
\node at (-3.84,-2.52) {$3$};
\node at (-2.82,3.36) {$1$};
\node at (-1.98,3.36) {$2$};
\node at (-1.14,3.36) {$3$};
\node at (-0.30,3.36) {$4$};
\node at (0.54,3.36) {$5$};
\node at (1.38,3.36) {$6$};
\node at (2.34,3.36) {$7$};
\draw (-2.82,0.84) node[circle,minimum size = 7 pt, inner sep = 0 pt, fill = red]{};
\draw (-1.98,0.84) node[circle,minimum size = 7 pt, inner sep = 0 pt, fill = red]{};
\draw (-1.98,1.68) node[circle,minimum size = 7 pt, inner sep = 0 pt, fill = red]{};
\draw (-1.98,2.52) node[circle,minimum size = 7 pt, inner sep = 0 pt, fill = red]{};
\draw[line width= 2.0 pt,red] (-2.82,0.84) -- (-1.98,0.84) -- (-1.98,1.68) -- (-1.98,2.52);
\end{tikzpicture}

\medskip

\caption{The NNE-DRH staircase} \label{fig:NNE_SS}
\end{figure}
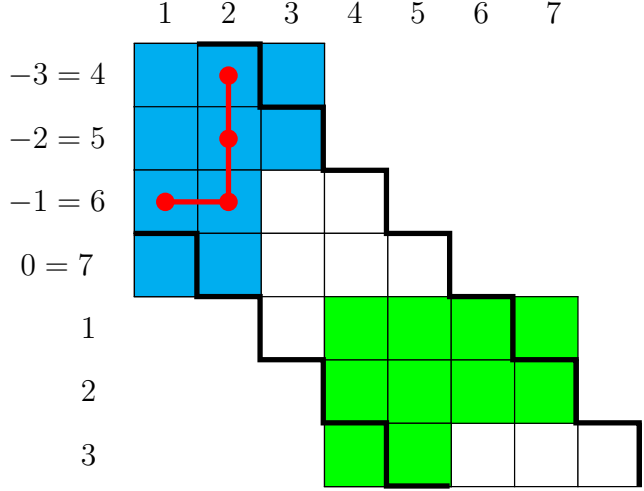
\end{center}

We will not concern all the cells in $\mathbb{Z}/(l+4) \times \mathbb{Z}/(l+4)$. Our focus will be on a subset of the chessboard, which we call the {\em DRH staircase}. In Figure \ref{fig:NNE_SS}, the NNE-DRH staircase is in fact the collection of cells bounded between two {\em staircase paths} drawn in thick black lines. These are zigzag paths which go right one unit and down one unit infinitely alternatively.

The two staircase paths are drawn according to the following rule. The lower one is the zigzag path which cuts out exactly one bottom-leftmost cell $c_{11}$ of the DRH, in a way that $c_{11}$ lies below the staircase path and the rest of the DRH lies above. Similarly, the upper one is the one which cuts out exactly one top-rightmost cell $c_{pq}$ of the DRH. 

Bounded between the two staircase paths is the DRH staircase. The $\lambda$-DRH staircase is $(l+1)$-cell wide, both horizontally and vertically. Currently, we still have not described how to assign a rational function to each cell in the $\lambda$-DRH staircase. We will see that the cells in the DRH staircase are in one-to-one correspondence with the mutable cluster variables of the DRH cluster algebra.

\begin{obs} \label{o:scdiag}
The cell $(i,j) \in \mathbb{Z}/(l+4) \times \mathbb{Z}/(l+4)$ is in the DRH staircase if and only if $i-j \notin \{-1, 0, 1\}$.
\end{obs}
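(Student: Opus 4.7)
The plan is to identify each of the two zigzag staircase paths with a pair of consecutive anti-diagonals on the torus $(\mathbb{Z}/(l+4))^2$, and then read off the DRH staircase as the annulus between them.

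First I would translate DRH coordinates to chessboard coordinates. Since $c_{11}$ is placed at $(1,0)$ and rows are indexed top-to-bottom, the cell $c_{ij}$ of the DRH occupies chessboard position $(i, 1-j)$, so its anti-diagonal value $i - (1-j)$ equals $i+j-1$. Using that $\lambda$ starts at $(1,1)$ and each N or E step raises $a+b$ by one, and that each $2\times 2$ square at a $\lambda$-vertex $(a,b)$ contains cells $c_{a,b}, c_{a+1,b}, c_{a,b+1}, c_{a+1,b+1}$ with $i+j \in \{a+b,\, a+b+1,\, a+b+2\}$, the DRH cells occupy exactly the anti-diagonals $\{1, 2, \ldots, l+3\}$, with $c_{11}$ the unique DRH cell on anti-diagonal $1$ (forced by $i+j=2$) and $c_{pq}$ the unique DRH cell on anti-diagonal $l+3$ (forced by $i+j = p+q = l+4$).

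Next I would prove the key geometric lemma: every right-down zigzag on the chessboard is characterized by an integer $k$ such that its vertices lie alternately on the two anti-diagonals $\{x - y = k\}$ and $\{x - y = k+1\}$; and a cell $(c,r)$ lies on the ``below-left'' side of the zigzag iff $c-r \leq k$, on the ``above-right'' side iff $c-r \geq k+1$. The critical cases are cells on the two boundary diagonals: a cell with $c-r = k$ has the zigzag running along its top and right edges and so sits in the lower-left wedge at the corresponding zigzag corner, while a cell with $c-r = k+1$ has the zigzag on its left and bottom edges and sits in the upper-right wedge. This is a short local case check from the explicit step formulas $V_{2t} = (a+t,\,a+t-k)$, $V_{2t+1} = (a+t+1,\,a+t-k)$.

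Applying the lemma to the construction in Section~\ref{ss:constboa}: for the lower zigzag to cut off exactly $c_{11}$ from the DRH, it must lie between anti-diagonals $1$ and $2$ (i.e.\ $k=1$); symmetrically, the upper zigzag lies between anti-diagonals $l+2$ and $l+3$. On the torus these are two parallel $(1,1)$-loops cobounding two annular regions, and the DRH staircase is the annulus containing the remaining DRH cells. By the lemma this annulus is the union of the $l+1$ anti-diagonals $\{2,3,\ldots,l+2\}$, whose complement modulo $l+4$ is $\{l+3,\,0,\,1\}\equiv\{-1,0,1\}$, proving the observation. The main obstacle is the local side-classification in the lemma, since the zigzag literally traces edges and corners of the boundary-diagonal cells; once that local picture is settled, the rest is just the translation between the three equivalent descriptions of the staircase (``between the two zigzag paths,'' ``the annulus containing the remaining DRH cells,'' and ``the diagonals $2,3,\ldots,l+2$'').
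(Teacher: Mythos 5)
Your proof is correct. The paper states Observation \ref{o:scdiag} without any proof, so there is no argument to compare against; your diagonal bookkeeping (the DRH cells occupy the chessboard anti-diagonals $i-j\in\{1,\dots,l+3\}$ with $c_{11}$ and $c_{pq}$ the unique cells on the two extremes, which forces the lower and upper zigzags to separate diagonals $1$ from $2$ and $l+2$ from $l+3$ respectively, leaving exactly the diagonals $\{2,\dots,l+2\}$ between them and $\{l+3,0,1\}\equiv\{-1,0,1\}$ outside) is precisely the verification the authors leave implicit, and it checks out against the NNE example in Figure \ref{fig:NNE_SS}.
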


Consider an $(l+4)$-gon $\cP := P_1 P_2 \dots P_{l+4}$. Observation \ref{o:scdiag} implies that there is a bijective correspondence between the cells in the DRH staircase and the directed diagonals of $\cP$: the cell $(i,j)$ corresponds to the directed diagonal $\overrightarrow{P_iP_j}$. If we ignore the direction, each diagonal then corresponds to exactly two different cells in the DRH staircase. This is called the {\em staircase-polygon} correspondence.

\medskip

\subsection{Connection to polygon triangulations and cluster variables} \label{ss:ptandcv}

It is well-known that there is a one-to-one correspondence between the triangulations of $\cP$ and the seeds of the type-A cluster algebra $\cA_{\lambda}$. (See, for example, \cite{Wil14}.) We can therefore assign a cluster variable to every diagonal of the $(l+4)$-gon in a way that for every triangulation, the $l+1$ diagonals in the triangulation are the $l+1$ variables in the corresponding seed. Furthermore, there are many such assignments possible. Given an assignment of variables to a seed (i.e. the diagonals in a triangulation), we can fill in the variables in all other diagonals uniquely in such a way that obeys variable mutations (i.e. diagonal flips).

The mutable part $\cM(\lambda)$ of $Q_{\lambda}$ can be considered inside the DRH staircase as a path consisting of $l+1$ cells, starting from a cell adjacent to the lower boundary of the DRH staircase, going north or east one step at a time, until it reaches a cell adjacent to the upper boundary of the DRH staircase (cf. the definition of $\cM(\lambda)$ in section \ref{ss:iniquiv}). Such path for $\lambda = NNE$ is shown in Figure \ref{fig:NNE_SS} in red. This path is an example of an object which we call a {\em worm} inside the DRH staircase.

\begin{defn}
Let $C$ be a cell inside the $\lambda$-DRH staircase on the lower boundary of the staircase. From $C$, we obtain the other $l(\lambda)$ cells by starting from $C$ and go north or east one step at a time until we reach the upper boundary. A {\em worm} inside the $\lambda$-DRH staircase is defined to be a set of $l+1$ cells obtained from such construction.
\end{defn}

For example, $\cM(\lambda)$ is a worm. We call it the {\em initial worm} of the $\lambda$-DRH staircase.

\begin{obs} \label{o:wormtri}
Every worm corresponds to a triangulation via the staircase-polygon correspondence.
\end{obs}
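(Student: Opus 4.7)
The plan is to show that the $l+1$ diagonals associated with the cells of a worm are distinct and pairwise non-crossing; since any triangulation of an $(l+4)$-gon consists of exactly $l+1$ diagonals, this will identify any such collection with a triangulation.

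Set $N := l+4$. Lift the worm to an integer-lattice path $(a_0, b_0), (a_1, b_1), \ldots, (a_l, b_l) \in \mathbb{Z}^2$, where each step is either $(+1, 0)$ (an E-step) or $(0, -1)$ (an N-step). Because the initial cell lies on the lower boundary of the staircase (where $i-j \equiv 2 \pmod N$), I may choose the lift so that $a_0 - b_0 = 2$; since every step increases $a-b$ by one, it follows that $a_k - b_k = 2+k$ for every $k$. The cell $(a_k, b_k)$ corresponds under the staircase-polygon correspondence to the diagonal $P_{a_k \bmod N} P_{b_k \bmod N}$, which is indeed a diagonal (not an edge) since $2 \le a_k - b_k \le l+2 = N-2$.

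The key observation is that the integer intervals $A_k := \{b_k + 1, b_k + 2, \ldots, a_k - 1\}$ are nested: $A_0 \subseteq A_1 \subseteq \cdots \subseteq A_l$, because $a_k$ is nondecreasing and $b_k$ is nonincreasing in $k$. Each $A_k$ has $k+1$ elements; in particular $|A_l| = l+1 = N-3 < N$, so every $A_k$ reduces modulo $N$ to a proper open arc of the $N$-gon and the nesting is preserved. Now fix $0 \le k < m \le l$: if $a_k \equiv a_m$ or $b_k \equiv b_m \pmod N$, the two diagonals share a vertex and do not cross. Otherwise $a_k - 1 \in A_k \subseteq A_m$ and $b_k + 1 \in A_k \subseteq A_m$, so both residues $a_k \bmod N$ and $b_k \bmod N$ lie in the open arc $A_m \bmod N$; hence both endpoints of the $k$-th diagonal sit strictly on one side of the $m$-th diagonal, and the two are non-crossing. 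Distinctness of the $l+1$ diagonals follows by the same mod-$N$ bookkeeping, using that the oriented differences $a_k - b_k$ are the distinct residues $2, 3, \ldots, N-2$ in $\mathbb{Z}/N$.

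The main obstacle is the modular bookkeeping: although nesting of integer intervals is trivial, one must verify that it survives the cyclic identification of rows and columns modulo $N$ used to define the DRH staircase. The essential ingredient enabling this is the bound $|A_l| = l+1 \le N-3$, which prevents any $A_k$ from swallowing the full circle; with that bound in hand, a single uniform arc-nesting argument eliminates all crossings, independent of where the worm begins on the staircase.
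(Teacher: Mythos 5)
Your proof is correct. Note that the paper offers no argument for this statement at all --- it is recorded as an unproved Observation, with the verification left implicit --- so there is nothing to compare against; your write-up supplies a genuine proof where the paper relies on the reader's inspection of the staircase picture. The argument is sound: lifting the worm to $\mathbb{Z}^2$ so that $a_k-b_k=2+k$, noting that all endpoints lie in the window $[b_l,a_l]$ of length $a_l-b_l=N-2<N$ (so reduction mod $N$ is injective there and the integer order is faithfully reflected on the circle), and then using the nesting $A_0\subseteq\cdots\subseteq A_l$ to place both endpoints of the $k$-th diagonal weakly inside the arc cut off by the $m$-th one does rule out all crossings, and $l+1=(l+4)-3$ pairwise noncrossing diagonals of an $(l+4)$-gon are automatically a triangulation. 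The only spot worth tightening is the distinctness claim: the oriented differences $2,\dots,N-2$ being distinct residues only rules out $\{a_k,b_k\}=\{a_m,b_m\}$ with matching orientation; the reversed identification $a_k\equiv b_m$, $b_k\equiv a_m$ would require $(2+k)+(2+m)\equiv 0\pmod N$, which is not excluded by the differences alone (e.g.\ $k+m=l$), and must instead be killed by the same window argument, since $b_m\le b_k<a_k\le a_m$ forces $a_k\ne b_m$ outright. That is a one-line fix, not a gap in the approach.
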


The $l+1$ cells in the initial worm $\cM(\lambda)$ correspond to $l+1$ diagonals which form a triangulation in $\cP$. As an example, the triangulation which corresponds to the initial worm $\cM(\lambda)$ when $\lambda = NNE$ is shown in Figure \ref{fig:NNEtri}. Recall from the construction of the initial DRH quiver $Q_{\lambda}$ that we associate to every cell in $\cM(\lambda)$ a mutable variable $a_{ij}$. Therefore, we can assign these $l+1$ variables to the $l+1$ diagonals of the seed (triangulation) corresponding to $\cM(\lambda)$. 

\begin{center}
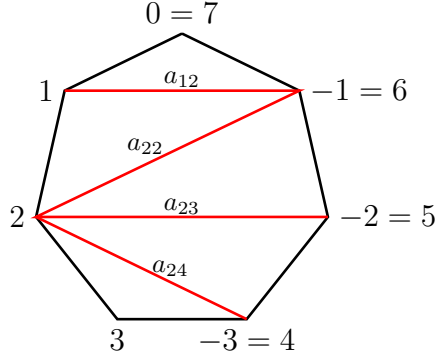
\begin{figure}
\begin{tikzpicture}[scale = 2.0]
\draw[line width = 1.0 pt, black] (0,1) -- (-0.78,0.62) -- (-0.97,-0.22) -- (-0.43,-0.90) -- (0.43,-0.90) -- (0.97,-0.22) -- (0.78,0.62) -- (0,1);
\node[above] at (0,1) {$0=7$};
\node[left] at (-0.78,0.62) {$1$};
\node[left] at (-0.97,-0.22) {$2$};
\node[below] at (-0.43,-0.90) {$3$};
\node[below] at (0.43,-0.90) {$-3 = 4$};
\node[right] at (0.97,-0.22) {$-2 = 5$};
\node[right] at (0.78,0.62) {$-1 = 6$};
\draw[line width= 1.0 pt,red] (-0.78,0.62) -- (0.78,0.62) -- (-0.97,-0.22) -- (0.97,-0.22);
\draw[line width= 1.0 pt,red] (0.43,-0.90) -- (-0.97,-0.22);

\node[above] at (0,0.62-0.05) {\footnotesize $a_{12}$};
\node[left] at (-0.10+0.05,0.20+0.05) {\footnotesize $a_{22}$};
\node[above] at (0,-0.22-0.05) {\footnotesize $a_{23}$};
\node[right] at (-0.27,-0.56) {\footnotesize $a_{24}$};
\end{tikzpicture}
\caption{The heptagon triangulation that corresponds to the initial worm of the NNE-DRH.} \label{fig:NNEtri}
\end{figure}
\end{center}

A sequence of diagonal flips can transform this triangulation to any triangulation while keeping track of how cluster variables are mutated. In particular, the cluster variables associated with any diagonal of $\cP$ can be specified. Since the cells in the staircase correspond to the diagonals of $\cP$, we can associate to every cell of the staircase a mutable cluster variable. Consequently, we obtain all the mutable cluster variables of $\cA_{\lambda}$ in the staircase. Once we fill all the cells in the staircase, we have a complete description of all the cluster variables. The main result of this paper is an explicit description of the variable associated to a given cell in the staircase.

\medskip

\subsection{Climbing worms} \label{ss:climb}
Consider any worm $w = c_1 c_2 \dots c_{l+1}$ in the $\lambda$-DRH staircase. We shall say that the cell $c_i$ is a {\em bend} if $1<i<l+1$ and the centers of the cells $c_{i-1}$, $c_i$, and $c_{i+1}$ are not collinear. If a cell in $w$ is not a bend, then we shall say that it is a {\em non-bend}. Note that the starting vertex $c_1$ and the finishing vertex $c_{l+1}$ are always non-bends.

There is a local operation applied to $w$ that changes $w$ at exactly one cell. This operation is applied to bends, the starting vertex, or the finishing vertex. Suppose that $c_i$ is a bend. It can be either an NE-bend or an EN-bend. An NE-bend is one in which $c_{i-1} c_i$ is vertical while $c_i c_{i+1}$ is horizontal. An EN-bend is one in which $c_{i-1} c_i$ is horizontal while $c_i c_{i+1}$ is vertical. When $c_i$ is an NE-bend, if we put coordinates $c_k = (q_k,r_k)$ for $k = i-1, i, i+1$, we have $q_{i+1} = q_i + 1 = q_{i-1} +1$ and $r_{i+1} = r_i = r_{i-1} - 1$. (Recall how we give coordinates to the cells in \S \ref{ss:constboa}.) In this case, the worm operation at $c_i$ applied to $w$ gives the worm $w'$ with $c_i(q_i,r_i)$ replaced by $\wt{c}_i(q_i+1, r_i+1)$. On the other hand, if $c_i$ is an EN-bend, then the worm operation at $c_i$ applied to $w$ gives the worm $w'$ with $c_i(q_i,r_i)$ replaced by $\wt{c}_i(q_i-1,r_i-1)$.

The worm operation at the starting vertex $c_1(q_1,r_1)$ changes $c_1$ into $\wt{c}_1(q_1+1,r_1+1)$ if $c_1 c_2$ is horizontal, and into $\wt{c}_1(q_1-1,r_1-1)$ if $c_1 c_2$ is vertical. Analogously, the worm operation at the finishing vertex $c_{l+1}(q_{l+1}, r_{l+1})$ changes $c_{l+1}$ into $\wt{c}_{l+1}(q_{l+1}+1,r_{l+1}+1)$ if $c_l c_{l+1}$ is vertical, and into $\wt{c}_{l+1}(q_{l+1}-1,r_{l+1}-1)$ if $c_l c_{l+1}$ is horizontal.

It is not hard to see that given any two worms $w_1, w_2$, there is a sequence of worm operations as described above to transform $w_1$ to $w_2$. In particular, starting with any worm, one can transform it into a worm that contains any cluster variable in $\cA_{\lambda}$.

\begin{lemma} \label{l:woisfd}
Suppose that $w$ is a worm inside the $\lambda$-DRH staircase. Let $w'$ be a worm obtained by applying the worm operation on $w$ at $c_i$. Then, the triangulation of $\cP$ corresponding to $w'$ is obtained by flipping the diagonal corresponding to $c_i$ in the triangulation corresponding to $w$.
\end{lemma}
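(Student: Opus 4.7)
The plan is to invoke the staircase-polygon correspondence from Subsection~\ref{ss:ptandcv} and verify directly that each type of worm operation realizes a diagonal flip.

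First, consider the NE-bend case. Write $c_i = (q, r)$, so that $c_{i-1} = (q, r+1)$ sits directly below $c_i$, $c_{i+1} = (q+1, r)$ directly to its right, and $\tilde{c}_i = (q+1, r+1)$ at the diagonally opposite corner of the resulting $2 \times 2$ square of cells. Under the staircase-polygon correspondence, these four cells map to the (undirected) segments $P_qP_r$, $P_{q+1}P_r$, $P_qP_{r+1}$, and $P_{q+1}P_{r+1}$ on $\cP$. Since $c_i$ lies in the staircase, Observation~\ref{o:scdiag} gives $q - r \notin \{-1, 0, 1\} \pmod{l+4}$, which forces the four polygon vertices $P_q, P_{q+1}, P_r, P_{r+1}$ to be pairwise distinct. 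The two segments $P_qP_{q+1}$ and $P_rP_{r+1}$ join cyclically adjacent polygon vertices and are therefore edges of $\cP$; hence the cyclic order of the four vertices around $\cP$ is $P_q, P_{q+1}, P_r, P_{r+1}$, and the quadrilateral they span has sides $P_qP_{q+1}$, $P_{q+1}P_r$, $P_rP_{r+1}$, $P_{r+1}P_q$ with diagonals $P_qP_r$ and $P_{q+1}P_{r+1}$.

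Second, observe that the two quadrilateral sides $P_{q+1}P_r$ and $P_{r+1}P_q$ are the images of $c_{i+1}$ and $c_{i-1}$, both of which lie in the worm $w$ and hence in its triangulation $T$ by Observation~\ref{o:wormtri}. Therefore the quadrilateral $P_qP_{q+1}P_rP_{r+1}$ is a union of two adjacent triangles of $T$ sharing the diagonal $P_qP_r = c_i$. Flipping this diagonal replaces it with $P_{q+1}P_{r+1}$, which under the correspondence is precisely $\tilde{c}_i$. Since $w'$ agrees with $w$ outside of position $i$, the triangulation $T'$ associated to $w'$ by Observation~\ref{o:wormtri} agrees with the flip of $T$ at the $c_i$-diagonal.

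Third, the EN-bend case is strictly symmetric: the coordinate shifts are opposite in sign, but an identical $2 \times 2$ square argument identifies $c_i$ and $\tilde{c}_i$ as the two diagonals of a polygon quadrilateral whose other two non-polygon-edge sides are $c_{i\pm 1}$. For the starting vertex $c_1$ (respectively the finishing vertex $c_{l+1}$), the only modification is that one of the two neighbors in the $2 \times 2$ square lies outside the staircase and, by Observation~\ref{o:scdiag}, corresponds to a polygon edge of $\cP$ rather than to a diagonal of $T$; since every polygon edge is trivially present in every triangulation, the flip picture is unchanged.

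The main subtlety I anticipate is bookkeeping: identifying, in each of the four cases, which of the four segments of the $2 \times 2$ square are polygon edges of $\cP$ versus genuine diagonals, and confirming that the cyclic order around $\cP$ really is $P_q, P_{q+1}, P_r, P_{r+1}$ (so that $P_qP_r$ and $P_{q+1}P_{r+1}$ are the quadrilateral diagonals rather than sides). Both points are settled uniformly by Observation~\ref{o:scdiag}, which keeps $q - r$ bounded away from $\{-1, 0, 1\} \pmod{l+4}$ whenever $(q,r)$ is in the staircase.
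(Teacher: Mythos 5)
Your proof is correct and follows essentially the same route as the paper's: translate the cells of the $2\times 2$ square at $c_i$ into segments of $\cP$ via the staircase--polygon correspondence, identify $P_qP_r$ and $P_{q+1}P_{r+1}$ as the two diagonals of the quadrilateral $P_qP_{q+1}P_rP_{r+1}$, and match the coordinate shift $(q,r)\mapsto(q\pm1,r\pm1)$ with the flip. Your write-up is in fact slightly more careful than the paper's, since you justify why that quadrilateral is triangulated by two triangles of $\Delta(w)$ meeting along $P_qP_r$ (its remaining sides are either polygon edges or the diagonals corresponding to $c_{i\pm 1}$, which lie in the worm), a point the paper asserts without comment.
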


Before proving Lemma \ref{l:woisfd}, we note that the lemma says that worm operations are diagonal flips. Therefore, we can assign quivers to the worms in the unique manner that respects quiver mutations. We start with the initial worm $\cM(\lambda)$ with the quiver $Q_\lambda$ associated to this particular seed. Then, we do a worm operation on $\cM(\lambda)$ to obtain another worm. We did not know hitherto how we should transform $Q_\lambda$ compatibly with this worm operation. Now, by the lemma, we discover that the right thing to do is to mutate at the vertex we do the worm operation. As a result, the worm operation changes $(w,Q)$ to $(\wt{w},\wt{Q})$, where the changes from $w$ to $\wt{w}$ and from $Q$ to $\wt{Q}$ occur at the same vertex. Therefore, every worm $w$ has an associated quiver $Q_w$. With this new notation, $Q_{\cM(\lambda)} = Q_\lambda$. 

We now prove Lemma \ref{l:woisfd}.

\begin{proof}
For convenience, if $v$ is a worm, let $\Delta(v)$ denote the triangulation of $\cP$ corresponding to $v$ under the staircase-polygon correspondence. If the worm operation occurs on $w$ at an NE-bend $c_i(q_i,r_i)$, then the diagonal corresponding to $c_i$ in $\Delta(w)$ is $P_{q_i} P_{r_i}$. It is a diagonal of the quadrilateral $P_{q_i} P_{q_i + 1} P_{r_i} P_{r_i+1}$ in $\Delta(w)$. Therefore, the diagonal $P_{q_i} P_{r_i}$ flips to $P_{q_i+1} P_{r_i+1}$. This agrees with the worm operation which changes $(q_i,r_i)$ to $(q_i+1,r_i+1)$. The case when $c_i$ is an EN-bend is done similarly.

If the worm operation occurs on $w$ at the starting vertex $c_1(r_1+2,r_1)$ when $c_1 c_2$ is horizontal. We have that $c_2$ has coordinate $(r_1+3, r_1)$. Thus, the diagonal corresponding to $c_1$ is $P_{r_1} P_{r_1+2}$, which is a diagonal of the quadrilateral $P_{r_1} P_{r_1+1} P_{r_1+2} P_{r_1+3}$. Therefore, $P_{r_1} P_{r_1+2}$ flips to $P_{r_1+1} P_{r_1+3}$. This agrees with the worm operation which changes $(r_1+2,r_1)$ to $(r_1+3,r_1+1)$. The case when $c_1 c_2$ is vertical is the reverse flip of this case. The case of worm operation at the finishing vertex is done similarly.
\end{proof}

For instance, starting with the seed of the NNE-cluster algebra shown in Figure \ref{fig:NNEtri}, we may flip the diagonal $P_2P_4$. This produces a new triangulation which has $P_3P_5$ instead of $P_2P_4$, as shown below.

\begin{center}
\begin{tikzpicture}[scale = 1.0]
\begin{scope}[shift={(-2,0)},font = \footnotesize]
\node[above] at (0,1) {$0$};
\node[left] at (-0.78,0.62) {$1$};
\node[left] at (-0.97,-0.22) {$2$};
\node[below] at (-0.43,-0.90) {$3$};
\node[below] at (0.43,-0.90) {$4$};
\node[right] at (0.97,-0.22) {$5$};
\node[right] at (0.78,0.62) {$6$};
\draw[line width = 1.0 pt, black] (0,1) -- (-0.78,0.62) -- (-0.97,-0.22) -- (-0.43,-0.90) -- (0.43,-0.90) -- (0.97,-0.22) -- (0.78,0.62) -- (0,1);
\draw[line width= 1.0 pt,red] (-0.78,0.62) -- (0.78,0.62) -- (-0.97,-0.22) -- (0.97,-0.22);
\draw[line width= 1.0 pt,red] (0.43,-0.90) -- (-0.97,-0.22);
\end{scope}
\begin{scope}
$\mapsto$
\end{scope}
\begin{scope}[shift={(3,0)},font = \footnotesize]
\node[above] at (0,1) {$0$};
\node[left] at (-0.78,0.62) {$1$};
\node[left] at (-0.97,-0.22) {$2$};
\node[below] at (-0.43,-0.90) {$3$};
\node[below] at (0.43,-0.90) {$4$};
\node[right] at (0.97,-0.22) {$5$};
\node[right] at (0.78,0.62) {$6$};
\draw[line width = 1.0 pt, black] (0,1) -- (-0.78,0.62) -- (-0.97,-0.22) -- (-0.43,-0.90) -- (0.43,-0.90) -- (0.97,-0.22) -- (0.78,0.62) -- (0,1);
\draw[line width= 1.0 pt,red] (-0.78,0.62) -- (0.78,0.62) -- (-0.97,-0.22) -- (0.97,-0.22);
\draw[line width= 1.0 pt,red] (-0.43,-0.90) -- (0.97,-0.22);
\end{scope}
\end{tikzpicture}
\end{center}

In the setting of DRH staircase, the transformation produces a new worm by moving $(2,4)$ to $(3,5)$ As shown in Figure \ref{fig:NNEwormop24to35}.

\begin{center}
\begin{figure}
\begin{tikzpicture}
\ytableausetup{notabloids}
\ytableausetup{mathmode, boxsize=2.0em}
\node (n) {\ytableausetup{nosmalltableaux}
\ytableausetup{notabloids}
\ydiagram[*(cyan)]{3,3,2,2}
*[*(white)]{3,3,4,5}};
\node at (-2.58,1.26) {$4$};
\node at (-2.58,0.42) {$5$};
\node at (-2.58,-0.42) {$6$};
\node at (-2.58,-1.26) {$0$};
\node at (-1.56,2.10) {$1$};
\node at (-0.72,2.10) {$2$};
\node at (0.12,2.10) {$3$};
\draw (-2.82+1.26,0.84-1.26) node[circle,minimum size = 7 pt, inner sep = 0 pt, fill = red]{};
\draw (-1.98+1.26,0.84-1.26) node[circle,minimum size = 7 pt, inner sep = 0 pt, fill = red]{};
\draw (-1.98+1.26,1.68-1.26) node[circle,minimum size = 7 pt, inner sep = 0 pt, fill = red]{};
\draw (-1.98+2.10,1.68-1.26) node[circle,minimum size = 7 pt, inner sep = 0 pt, fill = red]{};
\draw (-1.98+1.26,2.52-1.26) node[circle,minimum size = 7 pt, inner sep = 0 pt, fill = red!30]{};
\draw[line width= 2.0 pt,red] (-2.82+1.26,0.84-1.26) -- (-1.98+1.26,0.84-1.26) -- (-1.98+1.26,1.68-1.26) -- (-1.98+2.10,1.68-1.26);
\draw[line width= 2.0 pt,red!30]  (-1.98+1.26,1.68-1.26+0.12) -- (-1.98+1.26,2.52-1.26);
\end{tikzpicture}
\caption{The worm operation on the initial worm of the NNE-DRH at the final vertex} \label{fig:NNEwormop24to35}
\end{figure}
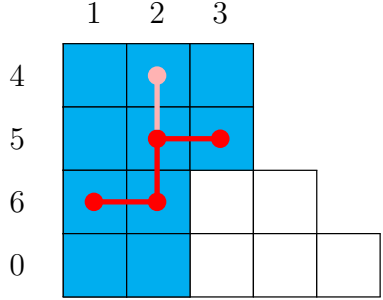
\end{center}

In the construction of the staircase, we saw that the $\lambda$-DRH was initially embedded into the DRH staircase, except for the two cells $c_{11}$ and $c_{pq}$ which lie outside the staircase. We see that in the $l+1$ cells in $\cM(\lambda)$, which are in both the DRH and the DRH staircase, the variables are put into the staircase so that they match the variables inside the DRH by construction.

There are $l+1$ more cells which are in the DRH inside the DRH staircase. In these cells, we associate variables $a_{ij}$ in Section \ref{ss:introboa} when they are considered in the DRH. However, when they are considered inside the DRH staircase, we associate variables to them using quiver mutations. We claim that the variables in the $l+1$ cells from quiver mutations agree with the variables we initially gave in the DRH construction. In particular, this implies that all $a_{ij}$ except $a_{11}$ and $a_{pq}$ are mutable cluster variables.

The result in the following lemma is easy to believe, although its proof may seem tedious. We give the proof anyway to illustrate a preliminary approach to DRH quivers.

\begin{lemma} \label{l:insideboa}
For any cell inside the DRH in the DRH staircase, the variable associated to the cell in the DRH staircase (obtained from quiver mutations) is the same as the variable $a_{ij}$ previously assigned to the DRH in Section \ref{ss:introboa}.
\end{lemma}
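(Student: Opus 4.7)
The plan is to induct on the number of worm operations needed to transform the initial worm $\cM(\lambda)$ into a worm $w$ that contains the target cell $c_{ij}$. The base case is immediate: if $c_{ij}\in\cM(\lambda)$, the variable assigned to $c_{ij}$ in the initial seed is exactly $a_{ij}$ by the construction of Section \ref{ss:iniquiv}. For the inductive step I would fix a worm $w$, all of whose cells lie in the DRH and carry DRH labels $a_{\cdot\cdot}$, and consider a single worm operation at a cell $c\in w$ that replaces $c$ by a new cell $\wt{c}$ still inside the DRH. By Lemma \ref{l:woisfd}, this worm operation is precisely the diagonal flip corresponding to mutation at $c$ in the quiver $Q_w$, so it suffices to verify that the resulting cluster variable at $\wt{c}$ coincides with the DRH label $a_{i'j'}$ of the cell $\wt{c}$ in the $\lambda$-array.

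The case analysis would split according to the type of $c$: (a) a starting vertex of $w$, (b) an ending vertex of $w$, (c) an NE-bend, or (d) an EN-bend. Using the local quiver rules of Section \ref{ss:iniquiv}, together with bookkeeping for how those rules propagate under the earlier worm operations, I would show that the arrows of $Q_w$ incident to $c$ are exactly those required to match one of the five determinantal identities (D1)–(D5). Cases (a) and (b) recover (D1) applied to the $2\times 2$ sub-array abutting the boundary of the DRH; cases (c) and (d) recover one of (D2)–(D5) depending on the orientation of the bend and on whether the surrounding cells form a $2\times 3$ or a $3\times 2$ sub-array of the $\lambda$-array. In each case, the identity itself forces the new cluster variable at $\wt{c}$ to be precisely $a_{i'j'}$, which completes the induction.

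The hard part will be the bookkeeping needed to confirm that in \emph{any} worm $w$ reached from $\cM(\lambda)$ by a legal sequence of worm operations that stays inside the DRH, the quiver $Q_w$ has exactly the arrows into and out of the mutating vertex $c$ prescribed by (D1)–(D5), with no extraneous arrows appearing from previous mutations and no required arrows missing. Since worm operations are local and cells of the DRH interact only through the $2\times 2$ frozen sub-squares that sit between consecutive positions of the worm, this reduces to a finite case-check on the neighborhoods of endpoints and bends, making the argument tedious but mechanical.
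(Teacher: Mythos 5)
Your proposal follows essentially the same route as the paper: an induction on the number of worm operations from $\cM(\lambda)$, carrying the full quiver (not just the cluster variables) through each mutation and checking that the exchange relation at the mutated vertex is the determinantal identity producing the next $a_{i'j'}$; the paper carries out exactly the bookkeeping you defer, organized by whether the first operation occurs at $i=1$, at $i=l+1$, or at an interior bend, and exploiting the fact that reaching a given cell requires operations at consecutive vertices only. One small correction: for worm operations that stay inside the DRH the exchange relations are all of type (D1), since the old cell, the new cell, and the two worm neighbors complete a $2\times 2$ block whose determinant is frozen, whereas (D2)--(D5) govern mutations that leave the DRH and so do not actually arise in this lemma.
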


\begin{proof}
It suffices to show that for any worm $w = c_1 c_2 \dots c_{l+1}$ inside the $\lambda$-DRH, the mutable variable at the vertex $c_i$ of the quiver $Q_w$ is the corresponding variable associated to $c_i$, as given in Section \ref{ss:introboa}. Suppose that after $k$ worm operations, the worm $w$ is transformed into $\wh{w}$, which contains the desired cell $\wh{c}$. Our strategy of this proof is to write down the quiver explicitly after each $i \le k$ worm operations. We notice that in all the cases the quivers are similar.

Start with the initial worm and its quiver $(\cM(\lambda),Q_\lambda)$, where $\cM(\lambda) = c_1 c_2 \dots c_{l+1}$. Suppose we are given any cell $\wh{c}$ inside the DRH that is not frozen. Then, there is a sequence of worm operations at consecutive vertices to transform the initial worm to a worm containing $\wh{c}$. The sequence of operations is either of the form $i, i+1, i+2, \dots, i+k-1$, or $i, i-1, i-2, \dots, i-k+1$. Here, operation at $j$ means operation at the $j$-th vertex. We consider three cases: when $i= 1$, when $i=l+1$, and when $1< i < l+1$.

\underline{Case 1.} Suppose that $i=1$. This means that the desired cell $\wh{c}$ is on the bottom row. That is, $\wh{c} = c_{k1}$ for some $k>1$. To get from $w$ to a worm $\wh{w}$ that contains $c_{k1}$, we perform the sequence of worm operations: $1,2,\dots, k-1$. The picture below shows the start of the initial worm $\cM(\lambda)$. The worm starts at the cell $a_{12}$, which initially has the variable $a_{12}$. The assumption that the sequence $1,2, \dots, k-1$ takes $w$ to the desired cell $c_{k1}$ shows that at least the first $(k-1)$ steps of the worm are east-steps. The diagonal dotted segment shown denotes the fact that there may be more steps, possibly N or E, after $c_{k2}$.

\begin{center}
\begin{tikzpicture}
\ytableausetup{notabloids}
\ytableausetup{mathmode, boxsize=2.0em}
\node (n) {\ytableausetup{nosmalltableaux}
\ytableausetup{notabloids}
\ydiagram[*(cyan!20)]{0,1}
*[*(white)]{4,4}};

\def \a {0.84};

\draw (-1.14,0.42) node[circle,minimum size = 7 pt, inner sep = 0 pt, fill = black]{};
\draw ({-1.14+\a},0.42) node[circle,minimum size = 7 pt, inner sep = 0 pt, fill = black]{};
\draw ({-1.14+2*\a},0.42) node[circle,minimum size = 7 pt, inner sep = 0 pt, fill = black]{};
\draw ({-1.14+3*\a},0.42) node[circle,minimum size = 7 pt, inner sep = 0 pt, fill = black]{};
\draw[line width= 2.0 pt,black] (-1.14,0.42) -- ({-1.14+\a},0.42) -- ({-1.14+2*\a},0.42) -- ({-1.14+3*\a},0.42);
\draw[line width= 2.0 pt, dashed, black] ({-1.14+3*\a},0.42) -- ({-1.14+4*\a},{0.42+\a});

\node at ({-1.14},{0.42-\a}) {$a_{11}$};
\node at ({-1.14+3*\a},{0.42-\a}) {$a_{k1}$};
\end{tikzpicture}
\end{center}

Before the operations, the initial quiver $\cM(\lambda)$ has the following cluster variables in the first $k$ vertices, in order: $a_{12}, a_{22}, \dots, a_{k2}$. For each $i = 1, 2, \dots, k-1$, let
\[
\Delta_i := \begin{vmatrix}
a_{i,2} & a_{i+1,2} \\
a_{i,1} & a_{i+1,1}
\end{vmatrix}
\]
be the $i$-th $2 \times 2$ frozen minor that occurs in the DRH. The local picture of the quiver $Q_\lambda$ near the first $k-1$ vertices is the following.

\begin{center}
\begin{tikzpicture}
  [scale=2.0,auto=left,every node/.style={fill=cyan!20,minimum width=2 em, draw = black}]
  \node[rectangle] (D1) at (1,3) {$\Delta_1$};
  \node[rectangle] (D2) at (2,3) {$\Delta_2$};
  \node[rectangle] (D3) at (3,3) {$\Delta_3$};
  \node[draw = none, fill = none] (D4) at (4,3) {};
  \node[rectangle] (D5) at (5,3) {$\Delta_{k-1}$};
  \node[circle, fill = none] (M1) at (1,2)  {$a_{12}$};
  \node[circle, fill = none] (M2) at (2,2)  {$a_{22}$};
  \node[circle, fill = none] (M3) at (3,2)  {$a_{32}$};
  \node[draw = none, fill = none] (M4) at (4,2)  {$\dots$};
  \node[circle, fill = none] (M5) at (5,2)  {$a_{k-1,2}$};
  \node[draw = none, fill = none] (M6) at (6,2)  {$\dots$};
  \node[rectangle] (F1) at (1,1) {$a_{11}$};
  
  \draw[line width = 0.1 em,->,>=stealth] (M1) -- (M2);
  \draw[line width = 0.1 em,->,>=stealth] (M2) -- (M3);
  \draw[line width = 0.1 em,->,>=stealth] (M3) -- (M4);
  \draw[line width = 0.1 em,->,>=stealth] (M4) -- (M5);
  \draw[line width = 0.1 em,->,>=stealth] (M5) -- (M6);
  \draw[line width = 0.1 em,->,>=stealth] (M6) -- (D5);
  \draw[line width = 0.1 em,->,>=stealth] (D5) -- (M5);
  \draw[line width = 0.1 em,->,>=stealth] (M5) -- (D4);
  \draw[line width = 0.1 em,->,>=stealth] (M4) -- (D3);
  \draw[line width = 0.1 em,->,>=stealth] (D3) -- (M3);
  \draw[line width = 0.1 em,->,>=stealth] (M3) -- (D2);
  \draw[line width = 0.1 em,->,>=stealth] (D2) -- (M2);
  \draw[line width = 0.1 em,->,>=stealth] (M2) -- (D1);
  \draw[line width = 0.1 em,->,>=stealth] (D1) -- (M1);
  \draw[line width = 0.1 em,->,>=stealth] (M1) -- (F1);
\end{tikzpicture}
\end{center}

The mutable vertex $a_{12}$ has out-arrows to the mutable vertex $a_{22}$ and the frozen vertex $a_{11}$, and it has an in-arrow from the frozen vertex $\Delta_1$. For $1<i<k$, the mutable vertex $a_{i2}$ has out-arrows to the mutable vertex $a_{i+1,2}$ and to the frozen vertex $\Delta_{i-1}$, and it has in-arrows from the mutable vertex $a_{i-1,2}$ and from the frozen vertex $\Delta_i$.

It is straightforward by induction that after the first $i$ mutations, which mutate $a_{12}, a_{22}, \dots, a_{i2}$, for $i \le k-1$, the quiver becomes as the following picture. The mutable vertex $a_{21}$ has in-arrows from $\Delta_2$ and $a_{11}$, and has out-arrows to $\Delta_1$ and $a_{31}$. For $3 \le j \le i$, the mutable vertex $a_{j1}$ has in-arrows from $a_{j-1,1}$ and $\Delta_j$, and has out-arrows to $a_{j+1,1}$ and $\Delta_{j-1}$. The mutable vertex $a_{i+1,1}$ has in-arrows from $a_{i,1}$ and $a_{i+1,2}$, and has an out-arrow to $\Delta_i$. The mutable vertex $a_{i+1,2}$ has an in-arrow from $\Delta_{i+1}$, and has out-arrows to $a_{i+1,1}$ and $a_{i+2,2}$. The rest of the diagram remains unchanged.

\begin{center}
\begin{tikzpicture}
  [scale=2.0,auto=left,every node/.style={fill=cyan!20,minimum width=2 em, draw = black}]
  \node[rectangle] (D1) at (1,3) {$\Delta_1$};
  \node[rectangle] (D2) at (2,3) {$\Delta_2$};
  \node[draw = none, fill = none] (D3) at (3,3) {};
  \node[rectangle] (D4) at (4,3) {$\Delta_i$};
  \node[rectangle] (D5) at (5,3) {$\Delta_{i+1}$};
  \node[rectangle] (D6) at (6,3) {$\Delta_{i+2}$};
  \node[circle, fill = none] (M1) at (1,2)  {$a_{21}$};
  \node[circle, fill = none] (M2) at (2,2)  {$a_{31}$};
  \node[draw = none, fill = none] (M3) at (3,2)  {$\dots$};
  \node[circle, fill = none] (M4) at (4,2)  {$a_{i+1,1}$};
  \node[circle, fill = none] (M5) at (5,2)  {$a_{i+1,2}$};
  \node[circle, fill = none] (M6) at (6,2)  {$a_{i+2,2}$};
  \node[draw = none, fill = none] (M7) at (7,2)  {$\dots$};
  \node[rectangle] (F1) at (1,1) {$a_{11}$};
  
  \draw[line width = 0.1 em,->,>=stealth] (M1) -- (M2);
  \draw[line width = 0.1 em,->,>=stealth] (M2) -- (M3);
  \draw[line width = 0.1 em,->,>=stealth] (M3) -- (M4);
  \draw[line width = 0.1 em,->,>=stealth] (M5) -- (M4);
  \draw[line width = 0.1 em,->,>=stealth] (M5) -- (M6);
  \draw[line width = 0.1 em,->,>=stealth] (M6) -- (M7);
  \draw[line width = 0.1 em,->,>=stealth] (M7) -- (D6);
  \draw[line width = 0.1 em,->,>=stealth] (D6) -- (M6);
  \draw[line width = 0.1 em,->,>=stealth] (M6) -- (D5);
  \draw[line width = 0.1 em,->,>=stealth] (D5) -- (M5);
  
  \draw[line width = 0.1 em,->,>=stealth] (M4) -- (D4);
  \draw[line width = 0.1 em,->,>=stealth] (D4) -- (M3);
  \draw[line width = 0.1 em,->,>=stealth] (D3) -- (M2);
  \draw[line width = 0.1 em,->,>=stealth] (M2) -- (D2);
  \draw[line width = 0.1 em,->,>=stealth] (D2) -- (M1);
  \draw[line width = 0.1 em,->,>=stealth] (M1) -- (D1);
  \draw[line width = 0.1 em,->,>=stealth] (F1) -- (M1);
\end{tikzpicture}
\end{center}

If the resulting worm has not reached the cell $\wh{c}$ yet, the next vertex to mutate is $a_{i+1,2}$. The variable $a_{i+1,2}$ will change into $a_{i+2,1}$ after mutation, because of the exchange relation
\[
a_{i+1,2} \cdot a_{i+2,1} = \Delta_{i+1} + a_{i+1,1} \cdot a_{i+2,2}.
\]
Therefore, after $k$ steps, the worm will reach the cell $\wh{c} = c_{k1}$, and give the cell the cluster variable $a_{k1}$ as desired.

\underline{Case 2.} Suppose that $i=l+1$. Then, the sequence of worm operations which transforms $w$ into $\wh{w}$ containing $\wh{c}$ is of the form $l+1, l, \dots, l+1-(k-1)$. There are two possibilities, whether the final step of $w$ is horizontal or vertical.

\underline{Case 2.1.} If the final step is horizontal, then the last cell in $\cM(\lambda)$ must be $a_{p,q-1}$. This is because if the last cell were $a_{p-1,q}$ instead, the first worm operation at the last vertex would take the worm outside of the DRH, which violates our assumption. Consequently, the cell $\wh{c}$ must be $c_{p-(k-1),q}$.

\begin{center}
\begin{tikzpicture}
\ytableausetup{notabloids}
\ytableausetup{mathmode, boxsize=2.0em}
\node (n) {\ytableausetup{nosmalltableaux}
\ytableausetup{notabloids}
\ydiagram[*(cyan!20)]{3+1,0}
*[*(white)]{4,4}};

\def \a {0.84};

\draw (-1.14,{0.42-\a}) node[circle,minimum size = 7 pt, inner sep = 0 pt, fill = black]{};
\draw ({-1.14+\a},{0.42-\a}) node[circle,minimum size = 7 pt, inner sep = 0 pt, fill = black]{};
\draw ({-1.14+2*\a},{0.42-\a}) node[circle,minimum size = 7 pt, inner sep = 0 pt, fill = black]{};
\draw ({-1.14+3*\a},{0.42-\a}) node[circle,minimum size = 7 pt, inner sep = 0 pt, fill = black]{};
\draw[line width= 2.0 pt,black] (-1.14,{0.42-\a}) -- ({-1.14+\a},{0.42-\a}) -- ({-1.14+2*\a},{0.42-\a}) -- ({-1.14+3*\a},{0.42-\a});
\draw[line width= 2.0 pt, dashed, black] (-1.14,{0.42-\a}) -- ({-1.14-\a},{0.42-2*\a});

\node at ({-1.14},{0.42}) {$\wh{c}$};
\node at ({-1.14+3*\a},{0.42}) {$a_{pq}$};
\end{tikzpicture}
\end{center}

Note that the local picture of $Q_\lambda$ at the last $k$ mutable vertices is as the following. Here, $\Delta_i$ denotes the $i$-th frozen $2 \times 2$-minor that occurs in the DRH. To save space, in the diagram below, we may write $a^i_j$ to denote $a_{i,j}$.

\begin{center}
\begin{tikzpicture}
  [scale=2.0,auto=left,every node/.style={fill=cyan!20,minimum width=2 em, draw = black}]
  \node[draw = none, fill = none] (D1) at (1,3) {};
  \node[rectangle] (D2) at (2,3) {$\Delta_{l+1-k}$};
  \node[rectangle] (D3) at (4,3) {$\Delta_{l+1-(k-1)}$};
  \node[draw = none, fill = none] (D4) at (5,3) {};
  \node[rectangle] (D5) at (6,3) {$\Delta_{l+1}$};
  \node[draw = none, fill = none] (M1) at (1,2)  {$\dots$};
  \node[circle, fill = none] (M2) at (2,2)  {$a^{p-(k-1)}_{q-1}$};
  \node[circle, fill = none] (M3) at (4,2)  {$a^{p-(k-2)}_{q-1}$};
  \node[draw = none, fill = none] (M4) at (5,2)  {$\dots$};
  \node[circle, fill = none] (M5) at (6,2)  {$a^p_{q-1}$};
  \node[rectangle] (F5) at (6,1) {$a_{pq}$};
  
  \draw[line width = 0.1 em,->,>=stealth] (M1) -- (M2);
  \draw[line width = 0.1 em,->,>=stealth] (M2) -- (M3);
  \draw[line width = 0.1 em,->,>=stealth] (M3) -- (M4);
  \draw[line width = 0.1 em,->,>=stealth] (M4) -- (M5);
  \draw[line width = 0.1 em,->,>=stealth] (F5) -- (M5);
  \draw[line width = 0.1 em,->,>=stealth] (M5) -- (D5);
  \draw[line width = 0.1 em,->,>=stealth] (D5) -- (M4);
  
  \draw[line width = 0.1 em,->,>=stealth] (D4) -- (M3);
  \draw[line width = 0.1 em,->,>=stealth] (M3) -- (D3);
  \draw[line width = 0.1 em,->,>=stealth] (D3) -- (M2);
  \draw[line width = 0.1 em,->,>=stealth] (M2) -- (D2);
  \draw[line width = 0.1 em,->,>=stealth] (D2) -- (M1);
\end{tikzpicture}
\end{center}

We observe the similarity of this quiver with the one is Case 1. The rest of the argument is by induction analogous to the previous case.

Similarly, for \underline{Case 2.2}, when the final step is vertical, we use an analogous induction argument to show that the variable to give to $\wh{c}$ is the same as the variable $a_{ij}$ at the cell in the DRH.

\begin{center}
\begin{tikzpicture}
\ytableausetup{notabloids}
\ytableausetup{mathmode, boxsize=2.0em}
\node (n) {\ytableausetup{nosmalltableaux}
\ytableausetup{notabloids}
\ydiagram[*(cyan!20)]{1+1,0,0,0}
*[*(white)]{2,2,2,2}};

\def \a {0.84};

\draw ({-1.14+\a},{0.42+\a}) node[circle,minimum size = 7 pt, inner sep = 0 pt, fill = black]{};
\draw ({-1.14+\a},{0.42}) node[circle,minimum size = 7 pt, inner sep = 0 pt, fill = black]{};
\draw ({-1.14+\a},{0.42-\a}) node[circle,minimum size = 7 pt, inner sep = 0 pt, fill = black]{};
\draw ({-1.14+\a},{0.42-2*\a}) node[circle,minimum size = 7 pt, inner sep = 0 pt, fill = black]{};
\draw[line width= 2.0 pt,black] ({-1.14+\a},{0.42+\a}) -- ({-1.14+\a},{0.42}) -- ({-1.14+\a},{0.42-\a}) -- ({-1.14+\a},{0.42-2*\a});
\draw[line width= 2.0 pt, dashed, black] ({-1.14+\a},{0.42-2*\a}) -- ({-1.14},{0.42-3*\a});

\node at ({-1.14+2*\a},{0.42-2*\a}) {$\wh{c}$};
\node at ({-1.14+2*\a},{0.42+\a}) {$a_{pq}$};
\end{tikzpicture}
\end{center}

\underline{Case 3.} Suppose that $1<i<l+1$. The sequence of worm operations can either take the form $i, i+1, \dots, i+k-1$ or $i, i-1, \dots, i-k+1$. In each of the two cases, the bend at $c_i$ can either be an NE-bend or an EN-bend. Again, we will see that the four cases are similar. We will explicitly show one case: when the sequence takes the form $i, i+1, \dots, i+k-1$ and the bend at the $i$-th vertex is an NE-bend.
\begin{center}
\begin{tikzpicture}
\ytableausetup{notabloids}
\ytableausetup{mathmode, boxsize=2.0em}
\node (n) {\ytableausetup{nosmalltableaux}
\ytableausetup{notabloids}
\ydiagram[*(cyan!20)]{0,0}
*[*(white)]{4,4}};

\def \a {0.84};

\draw (-1.14,{0.42-\a}) node[circle,minimum size = 7 pt, inner sep = 0 pt, fill = black]{};
\draw (-1.14,0.42) node[circle,minimum size = 7 pt, inner sep = 0 pt, fill = black]{};
\draw ({-1.14+\a},0.42) node[circle,minimum size = 7 pt, inner sep = 0 pt, fill = black]{};
\draw ({-1.14+2*\a},0.42) node[circle,minimum size = 7 pt, inner sep = 0 pt, fill = black]{};
\draw ({-1.14+3*\a},0.42) node[circle,minimum size = 7 pt, inner sep = 0 pt, fill = black]{};
\draw[line width= 2.0 pt,black] (-1.14,{0.42-\a}) -- (-1.14,0.42) -- ({-1.14+\a},0.42) -- ({-1.14+2*\a},0.42) -- ({-1.14+3*\a},0.42);
\draw[line width= 2.0 pt, dashed, black] ({-1.14+3*\a},0.42) -- ({-1.14+4*\a},{0.42+\a});
\draw[line width= 2.0 pt, dashed, black] ({-1.14},{0.42-\a}) -- ({-1.14-\a},{0.42-2*\a});

\node at ({-1.14+3*\a},{0.42-\a}) {$\wh{c}$};
\end{tikzpicture}
\end{center}

Let the cell at the bend be $c_{r,s}$. Thus, $\wh{c}$ is $c_{r+k,s-1}$. The initial quiver can be drawn locally as follows. As before, we may write $a^i_j$ to denote $a_{i,j}$.

\begin{center}
\begin{tikzpicture}
  [scale=2.0,auto=left,every node/.style={fill=cyan!20,minimum width=2 em, draw = black}]
  \node[draw = none, fill = none] (D1) at (1,3) {};
  \node[rectangle] (D2) at (2,3) {$\Delta_{r+s-2}$};
  \node[rectangle] (D3) at (3,3) {$\Delta_{r+s-1}$};
  \node[draw = none, fill = none] (D4) at (4,3) {};
  \node[rectangle] (D5) at (5,3) {$\Delta_{r+k+s-3}$};
  \node[draw = none, fill = none] (D6) at (6,3) {};
  \node[draw = none, fill = none] (M1) at (1,2)  {$\dots$};
  \node[circle, fill = none] (M2) at (2,2)  {$a^r_s$};
  \node[circle, fill = none] (M3) at (3,2)  {$a^{r+1}_s$};
  \node[draw = none, fill = none] (M4) at (4,2)  {$\dots$};
  \node[circle, fill = none] (M5) at (5,2)  {$a^{r+k-1}_s$};
  \node[draw = none, fill = none] (M6) at (6,2)  {$\dots$};
  
  \draw[line width = 0.1 em,->,>=stealth] (M2) -- (M1);
  \draw[line width = 0.1 em,->,>=stealth] (M2) -- (M3);
  \draw[line width = 0.1 em,->,>=stealth] (M3) -- (M4);
  \draw[line width = 0.1 em,->,>=stealth] (M4) -- (M5);
  \draw[line width = 0.1 em,->,>=stealth] (M5) -- (M6);
  \draw[line width = 0.1 em,->,>=stealth] (M6) -- (D5);
  \draw[line width = 0.1 em,->,>=stealth] (D5) -- (M5);
  \draw[line width = 0.1 em,->,>=stealth] (M5) -- (D4);
  \draw[line width = 0.1 em,->,>=stealth] (M4) -- (D3);
  \draw[line width = 0.1 em,->,>=stealth] (D3) -- (M3);
  \draw[line width = 0.1 em,->,>=stealth] (M3) -- (D2);
  \draw[line width = 0.1 em,->,>=stealth] (D2) -- (M2);
\end{tikzpicture}
\end{center}

We start mutating at the cell with $a_{r,s}$ and continue to the right. For $1 \le i \le k$, after $i$ mutations the quiver becomes as follows. 

\begin{center}
\begin{tikzpicture}
  [scale=2.0,auto=left,every node/.style={fill=cyan!20,minimum width=2 em, draw = black}]
  \node[draw = none, fill = none] (D1) at (1,3) {};
  \node[rectangle] (D2) at (2,3) {$\Delta_{r+s-2}$};
  \node[rectangle] (D3) at (3,3) {$\Delta_{r+s-1}$};
  \node[draw = none, fill = none] (D4) at (4,3) {};
  \node[rectangle] (D5) at (5,3) {$\Delta_{r+k+s-3}$};
  \node[rectangle] (D6) at (6,3) {$\Delta_{r+i+s-2}$};
  \node[draw = none, fill = none] (D7) at (7,3) {};
  \node[draw = none, fill = none] (M1) at (1,2)  {$\dots$};
  \node[circle, fill = none] (M2) at (2,2)  {$a^{r+1}_{s-1}$};
  \node[circle, fill = none] (M3) at (3,2)  {$a^{r+2}_{s-1}$};
  \node[draw = none, fill = none] (M4) at (4,2)  {$\dots$};
  \node[circle, fill = none] (M5) at (5,2)  {$a^{r+i}_{s-1}$};
  \node[circle, fill = none] (M6) at (6,2)  {$a^{r+i}_s$};
  \node[draw = none, fill = none] (M7) at (7,2)  {$\dots$};
  
  \draw[line width = 0.1 em,->,>=stealth] (M1) -- (M2);
  \draw[line width = 0.1 em,->,>=stealth] (M2) -- (M3);
  \draw[line width = 0.1 em,->,>=stealth] (M3) -- (M4);
  \draw[line width = 0.1 em,->,>=stealth] (M4) -- (M5);
  \draw[line width = 0.1 em,->,>=stealth] (M6) -- (M5);
  \draw[line width = 0.1 em,->,>=stealth] (M6) -- (M7);
  
  \draw[line width = 0.1 em,->,>=stealth] (M7) -- (D6);
  \draw[line width = 0.1 em,->,>=stealth] (D6) -- (M6);
  \draw[line width = 0.1 em,->,>=stealth] (M5) -- (D5);
  \draw[line width = 0.1 em,->,>=stealth] (D5) -- (M4);
  \draw[line width = 0.1 em,->,>=stealth] (D4) -- (M3);
  \draw[line width = 0.1 em,->,>=stealth] (M3) -- (D3);
  \draw[line width = 0.1 em,->,>=stealth] (D3) -- (M2);
  \draw[line width = 0.1 em,->,>=stealth] (M2) -- (D2);
\end{tikzpicture}
\end{center}

Note that if $i = k$, the arrows at $a_{r+i,s}$ might be different from what is shown, but the quiver gives $a_{r+k,s}$ to the cell $\wh{c} = c_{r+k,s}$ as claimed. If $i <k$, then the next vertex to mutate is $a_{r+i,s}$. After the $i+1$-st mutation, the cluster variable $a_{r+i,s}$ will be changed into $a_{r+i+1,s-1}$ because of the exchange relation
\[
a_{r+i,s} \cdot a_{r+i+1,s-1} = a_{r+i,s-1} \cdot a_{r+i+1,s} + \Delta_{r+i+s-2}.
\]
This finishes the proof.
\end{proof}

In the proof of the previous lemma, we also observe that the quiver for worms inside the DRH behaves nicely. In particular, a mutable vertex only has frozen arrows from frozen vertices that are not very far from it: if there is an arrow between the $i$-th mutable vertex and $\Delta_j$, then we expect $i$ and $j$ to be close. Later, when we allow the worm to leave the DRH, this will no longer be the case. The frozen arrows can come from frozen vertices very far from the mutable vertex we consider.

Given the $\lambda$-DRH staircase, we have now filled variables into the cells which are also in the DRH. In Figure \ref{fig:NNE_SS}, these cells are shown in cyan. Recall from the staircase-polygon correspondence that a certain cluster variable (diagonal) corresponds to two cells in the staircase. Namely, if we put variable $\alpha$ into the cell with staircase coordinate $(i,j)$, then we also put the same variable into the cell $(j,i)$. This gives us the {\em transpose DRH}: the cells $(i,j)$ in the DRH staircase for which $(j,i)$ is in the DRH. An example of the transpose NNE-DRH is shown in green in Figure \ref{fig:NNE_SS}.

We can think of worm operations as a worm climbing down the DRH staircase. From the red worm in Figure \ref{fig:NNEwormop24to35}, if we continue to mutate at the vertex at $(2,5)$, the red point at $(2,5)$ will move to $(3,6)$. 

\begin{center}
\begin{tikzpicture}
\ytableausetup{notabloids}
\ytableausetup{mathmode, boxsize=2.0em}
\node (n) {\ytableausetup{nosmalltableaux}
\ytableausetup{notabloids}
\ydiagram[*(cyan)]{3,3,2,2}
*[*(white)]{3,3,4,5}};
\node at (-2.58,1.26) {$4$};
\node at (-2.58,0.42) {$5$};
\node at (-2.58,-0.42) {$6$};
\node at (-2.58,-1.26) {$0$};
\node at (-1.56,2.10) {$1$};
\node at (-0.72,2.10) {$2$};
\node at (0.12,2.10) {$3$};
\draw (-2.82+1.26,0.84-1.26) node[circle,minimum size = 7 pt, inner sep = 0 pt, fill = red]{};
\draw (-1.98+1.26,0.84-1.26) node[circle,minimum size = 7 pt, inner sep = 0 pt, fill = red]{};
\draw (-1.98+1.26,1.68-1.26) node[circle,minimum size = 7 pt, inner sep = 0 pt, fill = red]{};
\draw (-1.98+2.10,1.68-1.26) node[circle,minimum size = 7 pt, inner sep = 0 pt, fill = red]{};
\draw (-1.98+1.26+0.84,1.68-1.26-0.84) node[circle,minimum size = 7 pt, inner sep = 0 pt, fill = red]{};
\draw (-1.98+1.26,2.52-1.26-0.84) node[circle,minimum size = 7 pt, inner sep = 0 pt, fill = red!30]{};
\draw[line width= 2.0 pt,red] (-2.82+1.26,0.84-1.26) -- (-1.98+1.26,0.84-1.26) -- (-1.98+1.26+0.84,1.68-1.26-0.84) -- (-1.98+2.10,1.68-1.26);
\draw[line width= 2.0 pt,red!30]  (-1.98+1.26,1.68-1.26+0.12-0.84) -- (-1.98+1.26,2.52-1.26-0.84);
\draw[line width= 2.0 pt,red!30]  (-1.98+1.26+0.84-0.12,2.52-1.26-0.84)-- (-1.98+1.26,2.52-1.26-0.84);
\end{tikzpicture}
\end{center}

By only mutating at the bends, the starting vertex, and the finishing vertex of worms, the resulting $l+1$ cells in the new seed will always form a worm. As we mutate in this way, we will not -- except when the DRH is small -- obtain all the seeds of the cluster algebra, but as the worm climbs down the staircase, it passes all the different cells. Thus, by considering all the cluster variables of climbing worms, we obtain all the cluster variables for the algebra.

\bigskip

\section{DRH subskeleta}

Let $\lambda$ be a North-East lattice path. By a {\em subskeleton} of $\lambda$, we mean a non-empty connected subword of $\lambda$ together with the data of its location in $\lambda$. Formally, it can be thought of as a triple $(\lambda,r,s)$, where $1 \le r \le s \le l(\lambda)$. If $\lambda = w_1 \dots w_n \in \{N,E\}^n$, then a subskeleton of the $\lambda$-DRH has the form $w_r w_{r+1} \dots w_s$ for some $1 \le r \le s \le n$.

It is convenient to attach with each letter in $\lambda$ its location in the word. The letter $W \in \{N,E\}$ which is the $i$-th letter from the left may be written as $W^i$. For example, instead of writing $\lambda = NENE$, we may write $N^1E^2N^3E^4$. Indeed, there is no addition or loss of information when writing in this way. Labeling so simply makes it easier to keep track of which N (or E) is which.

This convention makes discussions of subskeleta easy. For instance, we have that the subskeleta of $N^1E^2N^3E^4$ are $N^1$, $E^2$, $N^3$, $E^4$, $N^1E^2$, $E^2N^3$, $N^3E^4$, $N^1E^2N^3$, $E^2N^3E^4$, $N^1E^2N^3E^4$. The subskeleta $N^1E^2$ and $N^3E^4$ are considered different.

The number of subskeleta of $\lambda$ is $\frac{l(l+1)}{2}$, which is the same as the number of ``not-yet-filled'' cells in the $\lambda$-DRH staircase between the original DRH and the transpose DRH (the white cells between the cyan and the green in Figure \ref{fig:NNE_SS}). The equality can be understood combinatorially. We will establish a combinatorial bijective correspondence between the subskeleta and the not-yet-filled cells.

For convenience, let $\cW_{\lambda}$ denote the finite board containing the $\frac{l(l+1)}{2}$ cells in the DRH staircase which are in between the original DRH and the transpose DRH. For example, in Figure \ref{fig:NENEEN_SS}, when $\lambda = NENEEN$, $\cW_{\lambda}$ is the finite board of twenty-one white cells in the middle (while the original DRH is in cyan and the transpose DRH is in green). Recall that the row and column indices are taken modulo $l+4 = 10$ in this case.

When there is no confusion, we may use $\cW_{\lambda}$ to also mean the set of $\frac{l(l+1)}{2}$ cells, instead of the board itself. As we mentioned in the previous section, we are interested in the subskeleta of the DRH. We shall denote by $S_{\lambda}$ the set of $\frac{l(l+1)}{2}$ subskeleta of $\lambda$. The goal of this section is to establish the combinatorial bijective map
\[
\Subsk: \cW_{\lambda} \rightarrow S_{\lambda}
\]
assigning a subskeleton to each cell of $\cW_{\lambda}$.

Note that the ``northwest'' boundary of $\cW_{\lambda}$ between the original DRH and $\cW_\lambda$ is a $\lambda$-lattice path, while the ``southeast'' boundary of $\cW_{\lambda}$ between the transpose DRH and $\cW_\lambda$ is the transpose $\lambda^t$. Here, the transpose of the word $w_1 w_2 \dots w_n$ is the word $w_n^t w_{n-1}^t \dots w_1^t$, where $E^t = N$ and $N^t = E$. Thus, we can label each of the $l$ unit segments on the northwest boundary by $E^i$ or $N^i$ following its corresponding letter in $\lambda$. Similarly, we can label each of the $l$ unit segments on the southeast boundary by $E^i$ or $N^i$ with its corresponding letter in $\lambda$. For convenience in this section, let's denote by $B_{NW}$ the collection of $l$ unit segments on the northwest boundary, and by $B_{SE}$ the collection of $l$ unit segments on the southeast boundary.

Observe that each row (or column) of the staircase that intersects (i.e. share at least one cell) with $\cW_\lambda$ contains exactly one element of $B_{NW} \cup B_{SE}$. Therefore, we may label the row (or column) by the label $E^i$ or $N^i$ of the unit segment in $B_{NW} \cup B_{SE}$ that the row (or column) contains. In total, there will be $l$ labeled rows and $l$ labeled columns. By considering the $l \times l$ chessboard obtained from these labeled rows and columns, we embed $\cW_\lambda$ into an $l \times l$ chessboard. Each cell in the $l \times l$ chessboard can be identified with an ordered-pair $(W^r,W^s)$ where $r,s \in 1,2,\dots, l$ with $W^r$ being the row label and $W^s$ being the column label. The letter $W$ here denotes a symbol that is either $N$ or $E$. The label $W^i$ is either $N^i$ or $E^i$, depending on whether the $i$-th letter in $\lambda$ is $N$ or $E$.

For example, the row and column labels for $\cW_{NNE}$ is shown in Figure \ref{fig:rclabelsNNE}. The northwest and the southeast boundaries are shown in think broken segments. The ordered-pair identification of the top-left cell is $(N^2, E^3)$. Given a cell $(W^r, W^s)$ in the $l \times l$ chessboard, how do we know whether it is in $\cW_{\lambda}$? The following lemma answers this question.

\begin{center}
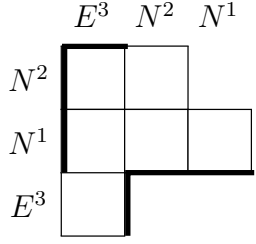
\begin{figure}
\begin{tikzpicture}
\ytableausetup{notabloids}
\ytableausetup{mathmode, boxsize=2.0em}
\node (n) {\ytableausetup{nosmalltableaux}
\ytableausetup{notabloids}
\ydiagram[*(white)]{2,3,1}};

\def \a {0.84};

\node at (-1.56,-0.84) {$E^3$};
\node at (-1.56,{-0.84+\a}) {$N^1$};
\node at (-1.56,{-0.84+2*\a}) {$N^2$};

\node at ({-1.56+\a},{-0.84+3*\a}) {$E^3$};
\node at ({-1.56+2*\a},{-0.84+3*\a}) {$N^2$};
\node at ({-1.56+3*\a},{-0.84+3*\a}) {$N^1$};

\draw[line width= 2.0 pt,black] (-1.08,-0.42) -- (-1.08,{-0.42+2*\a}) -- ({-1.08+\a},{-0.42+2*\a});

\draw[line width= 2.0 pt,black] ({-1.08+\a},{-0.42-\a}) -- ({-1.08+\a},{-0.42}) -- ({-1.08+3*\a},{-0.42});
\end{tikzpicture}

\caption{The row and column labels for $\cW_{NNE}$.} \label{fig:rclabelsNNE}
\end{figure}
\end{center}

\begin{lemma} \label{l:Wliffrles}
In the $l \times l$ chessboard with row and column labels as described, the cell $(W^r,W^s)$ belongs to $\cW_\lambda$ if and only if $r \le s$.
\end{lemma}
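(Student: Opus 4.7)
The plan is to prove the lemma by a direct case analysis, after explicitly locating each labeled row and column inside the $l \times l$ chessboard and then identifying which cells each component of the boundary of $\cW_\lambda$ cuts off.

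I would begin by fixing notation: write $\lambda = w_1 w_2 \cdots w_l$, and let $q_1 < \cdots < q_m$ be the positions of $N$ in $\lambda$ and $p_1 < \cdots < p_k$ the positions of $E$, so $k + m = l$. Write also $N_i$ and $E_i$ for the numbers of $N$'s and $E$'s in $w_1 \cdots w_i$. Tracing the NW boundary from its southwest endpoint to its northeast endpoint traces $\lambda$ letter-by-letter, and each vertical ($N$) segment lies inside a unique row of the staircase while each horizontal ($E$) segment lies inside a unique column. A symmetric analysis for the SE boundary uses that its shape is $\lambda^t$ while its labels are the letters of $\lambda$ read in reverse order SW to NE. Combining these, I can show that in the $l \times l$ chessboard the top $m$ rows (bottom-to-top) are labeled $N^{q_1},\ldots,N^{q_m}$, the bottom $k$ rows are $E^{p_k},\ldots,E^{p_1}$, the left $k$ columns (left-to-right) are $E^{p_1},\ldots,E^{p_k}$, and the right $m$ columns are $N^{q_m},\ldots,N^{q_1}$. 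Geometrically, placing the bounding box in Cartesian coordinates with $y$ increasing upward, NW traces $\lambda$ from $(0,k)$ to $(k,l)$ and SE traces $\lambda^t$ from $(k,0)$ to $(l,k)$.

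Next I would enumerate the cells of the bounding box not in $\cW_\lambda$: they fall into four families, cut off respectively by NW (to its upper-left), by SE (to its lower-right), and by the two zigzag staircase paths of the ambient DRH-staircase, which trim right-triangular regions of sizes $m(m-1)/2$ and $k(k-1)/2$ from the top-right and bottom-left corners of the bounding box. A count check $\bigl(l(l+1)/2\bigr) + m(m-1)/2 + k(k-1)/2 + (\text{NW cuts}) + (\text{SE cuts}) = l^2$ can serve as a sanity test.

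The heart of the proof is then a four-way case analysis on whether $W^r$ and $W^s$ are $N$ or $E$. In the $(N,E)$-quadrant the column $E^s$ is chess-column $c = E_s$; the NW boundary enters this column at height $k + (p_c - c)$, cutting off the top $m + c - p_c$ cells, and these are precisely the rows whose label $N^{q_j}$ satisfies $q_j > p_c$, i.e.\ $r > s$. The $(E,N)$-quadrant is handled analogously using SE: one computes the SE height $y_c = k + N_s - s$ in column $b = l+1-N_s$, and the cut-off condition $a + y_c \ge l + 1$ unravels via $E_r = r - N_r$ to $E_r + N_s \ge s + 1$, which is exactly $r > s$. In the $(N,N)$-quadrant the upper staircase cuts off cells at chess position $(a,b)$ satisfying $b > a + k$; substituting $a = m + 1 - N_r$ and $b = l + 1 - N_s$ turns this into $N_r > N_s$, which by strict monotonicity of the $q_j$ is exactly $r > s$. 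The $(E,E)$-quadrant is symmetric, with the lower-staircase cut-off condition $a > b + m$ becoming $E_r > E_s$, and hence $r > s$. In every case the cut-off cells are precisely those with $r > s$, yielding the lemma.

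The main obstacle will be the bookkeeping in the two corner quadrants $(N,N)$ and $(E,E)$, whose cut-off comes from the ambient staircase rather than from a $\lambda$-boundary; converting the geometric cut-off condition into $r > s$ cleanly uses the strict monotonicity of the sequences $q_j$ and $p_j$ together with the elementary identities $E_r + N_r = r$ and $q_{N_r} = r$ (respectively $p_{E_r} = r$) whenever $w_r = N$ (respectively $w_r = E$).
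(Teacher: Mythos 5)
Your proposal is correct, and its backbone — splitting the $l\times l$ board into four quadrants by the DRH axes and analyzing $(N,E)$, $(E,N)$, $(N,N)$, $(E,E)$ separately — is the same decomposition the paper uses. The genuine difference is in how the two directions of the equivalence are obtained. The paper only proves the forward implication ($(W^r,W^s)\in\cW_\lambda \Rightarrow r\le s$) by the quadrant analysis, and in the off-diagonal quadrants it does so qualitatively (trace a horizontal and a vertical ray from the cell to the NW boundary and observe the order in which the two labeled segments occur along $\lambda$), then gets the converse for free from the cardinality count $|\cW_\lambda| = \tfrac{l(l+1)}{2} = \#\{(r,s): 1\le r\le s\le l\}$. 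You instead prove the full biconditional in each quadrant by explicitly coordinatizing every labeled row and column and computing exactly which cells each boundary component (NW path, SE path, and the two ambient staircase zigzags) cuts off. Your computations check out — e.g.\ in the $(N,E)$ quadrant the NW boundary crosses column $E^s$ at height $k + N_s$, so the excluded rows are exactly those $N^{q_j}$ with $j > N_s$, i.e.\ $q_j > s$; and in the corner quadrants the cut-off conditions reduce via $q_{N_r}=r$, $p_{E_r}=r$ to $N_r>N_s$ resp.\ $E_r>E_s$, hence $r>s$. What your route buys is a fully explicit description of the label placement (useful elsewhere in the paper, e.g.\ for Figure \ref{fig:Wlambdalabels}); what it costs is roughly twice the bookkeeping, all of which the paper's counting trick sidesteps. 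Both arguments are sound.
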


\begin{proof}
There exists a unique vertical line in the DRH staircase which separates the DRH and the transpose DRH. There also exists a horizontal line in the DRH staircase which separates the DRH and the transpose DRH. These two lines form an XY-axis, which we call the {\em DRH axis}. In Figure \ref{fig:NENEEN_SS}, the axis is drawn in red. Suppose that each cell has one unit sidelength. Let $\nu$ and $\ve$, respectively, be the numbers of N's and E's in $\lambda$. Considered in this XY-coordinate system, the northwest boundary then starts at $(-\ve,0)$ and ends at $(0,\nu)$. The southeast boundary starts at $(\nu,0)$ and ends at $(0,-\ve)$. The part of the upper staircase path that bounds $\cW_\lambda$ starts at $(0,\nu)$, goes right and then down alternatively, and ends at $(\nu,0)$. On the other hand, the part of the lower staircase path that bounds $\cW_\lambda$ starts at $(-\ve,0)$, goes down first and then right alternatingly, and ends at $(0,-\ve)$.

We make a quick note about the notations that $(u,v)$ refers to a point in the XY-plane if $u$ and $v$ are real numbers, while $(W^i,W^j)$ refers to a cell in $\cW_\lambda$ if $W^i$ is a row label and $W^j$ is a column label.

($\Rightarrow$) We will show that a cell $(W^r, W^s)$ in $\cW_\lambda$ satisfies $r \le s$. When considered with the DRH axis, $(W^r,W^s)$ can be in one of the four quadrants. If $(W^r,W^s)$ is in the first quadrant, then $(W^r,W^s) = (N^r,N^s)$. If it is in the second quadrant, then $(W^r,W^s) = (N^r,E^s)$. If it is in the third quadrant, then $(W^r, W^s) = (E^r,E^s)$. Lastly, if it is in the fourth quadrant, then $(W^r,W^s) = (E^r,N^s)$.

\underline{Case 1.} Suppose that $(W^r, W^s) = (N^r, N^s)$ is in the first quadrant. In XY-coordinate, the cell can be written as $[u,u+1] \times [v,v+1]$ for some $u,v \in \mathbb{Z}_{\ge 0}$ such that $u+v \le \nu -1$. By construction, the cell is in the row $N^r$, where $N^r$ is the $(v+1)$-st $N$ in $\lambda$. Analogously, the column label $N^s$ is the $(\nu - u)$-th $N$ in $\lambda$. Because $v+1 \le \nu -u$, we see that $N^r$ occurs before $N^s$ in $\lambda$, which shows that $r \le s$.

\underline{Case 2.} Suppose that $(W^r, W^s) = (N^r, E^s)$ is in the second quadrant. If we draw a horizontal line from the cell to the left, the line hits the northwest boundary of $\cW_\lambda$ at the segment $N^r$ in $B_{NW}$. If we draw a vertical line from the cell upward, the line hits the northwest boundary at the segment $E^s$ in $B_{NW}$. It is evident that $N^r$ comes before $E^s$ is $\lambda$. Therefore, $r \le s$.

We note that \underline{Case 3} when the cell is in the third quadrant is analogous to Case 1, and \underline{Case 4} when the cell is in the fourth quadrant is analogous to Case 2. Therefore, and cell $(W^r,W^s) \in \cW_\lambda$ satisfies $r \le s$.

($\Leftarrow$) The first part of this proof shows that if $(W^r, W^s) \in \cW_\lambda$, then $r \le s$. Conversely, we know that there are exactly $\frac{l(l+1)}{2}$ cells in $\cW_\lambda$, while there are also exactly $\frac{l(l+1)}{2}$ ordered pairs $(r,s)$ such that $1 \le r \le s \le l$. Therefore, for any such pair $(r,s)$, we must have that $(W^r,W^s) \in \cW_\lambda$.
\end{proof}

\begin{center}
\begin{figure}
\begin{tikzpicture}
\ytableausetup{notabloids}
\ytableausetup{mathmode, boxsize=2.0em}
\node (n) {\ytableausetup{nosmalltableaux}
\ytableausetup{notabloids}
\ydiagram[*(cyan)]{3+2,1+4,0+5,0+3,0+2}
*[*(green)]{0,0,0,0,0,7+3,6+4,6+3,5+3,5+3}
*[*(white)]{0,0,5+1,3+4,2+6,2+5,3+3,4+2}};

\def \a {0.84};

\begin{scope}[shift={(-0.84,+0.42)}]
\draw[line width= 2.0 pt,black] (-3.24,0.42) -- (-2.40,0.42) -- (-2.40,-0.42) -- (-1.56,-0.42) -- (-1.56,-1.26) -- (-0.72,-1.26) -- (-0.72,-2.10) -- (0.12,-2.10) -- (0.12,-2.94) -- (0.96,-2.94) -- (0.96,-3.78) -- (1.80,-3.78) -- (1.80,-4.62) -- (2.64,-4.62);

\draw[red] ({-1.56-2*\a},-0.42) -- ({-1.56+8*\a},-0.42);
\draw[red] ({-1.53+3*\a},{-0.42+5*\a}) -- ({-1.53+3*\a},{-0.42-5*\a});
\end{scope}

\begin{scope}[shift = {(+0.84,+1.26)}]
\draw[line width= 2.0 pt,black] (-2.40,2.94) -- (-1.56,2.94) -- (-3.24+1.68,0.42+1.68) -- (-2.40+1.68,0.42+1.68) -- (-2.40+1.68,-0.42+1.68) -- (-1.56+1.68,-0.42+1.68) -- (-1.56+1.68,-1.26+1.68) -- (-0.72+1.68,-1.26+1.68) -- (-0.72+1.68,-2.10+1.68) -- (0.12+1.68,-2.10+1.68) -- (0.12+1.68,-2.94+1.68) -- (0.96+1.68,-2.94+1.68) -- (0.96+1.68,-2.10) -- (3.48,-2.10) -- (3.48,-2.94);
\end{scope}

\begin{scope}[shift = {(-0.84,+0.42)}]
\node at (-4.26,3.36) {$-4 = 6$};
\node at (-4.26,2.52) {$-3 = 7$};
\node at (-4.26,1.68) {$-2 = 8$};
\node at (-4.26,0.84) {$-1 = 9$};
\node at (-4.26,0) {$0 = 10$};
\node at (-3.84,-0.84) {$1$};
\node at (-3.84,-1.68) {$2$};
\node at (-3.84,-2.52) {$3$};
\node at (-3.84,-3.36) {$4$};
\node at (-3.84,-4.20) {$5$};
\end{scope}

\begin{scope}[shift = {(-0.84,+1.26)}]
\node at (-2.82,3.36) {$1$};
\node at (-1.98,3.36) {$2$};
\node at (-1.14,3.36) {$3$};
\node at (-0.30,3.36) {$4$};
\node at (0.54,3.36) {$5$};
\node at (1.38,3.36) {$6$};
\node at (2.22,3.36) {$7$};
\node at (3.06,3.36) {$8$};
\node at (3.90,3.36) {$9$};
\node at (4.74,3.36) {$0$};
\end{scope}
\end{tikzpicture}

\caption{The NENEEN-DRH staircase} \label{fig:NENEEN_SS}
\end{figure}
\end{center}

From Lemma \ref{l:Wliffrles}, we now define the desired map $\Subsk: \cW_\lambda \rightarrow S_\lambda$. We have established a bijective correspondence between the cells in $\cW_\lambda$ and $(r,s) \in \mathbb{Z} \times \mathbb{Z}$ such that $1 \le r \le s \le l$. Given a cell $\gamma \in \cW_\lambda$, we let $(r_\gamma, s_\gamma)$ denote its corresponding ordered pair. Then, we define
\[
\Subsk(\gamma) := W^{r_\gamma} W^{r_\gamma+1} \cdots W^{s_\gamma}
\]
where $\lambda = W^1 W^2 \cdots W^l$. For example, the twenty-one subskeleta of $\lambda$ are filled in the twenty-one cells of $\cW_\lambda$ in Figure \ref{fig:W_NENEEN}.

\begin{center}
\begin{figure}
\begin{tikzpicture}
\ytableausetup{notabloids}
\ytableausetup{mathmode, boxsize=5.0em}
\node (n) {\ytableausetup{nosmalltableaux}
\ytableausetup{notabloids}
\ydiagram[*(white)]{3+1,1+4,0+6,0+5,1+3,2+2}};

\begin{scope}[scale = 2.1, shift = {(-2.45,-2.5)}]
\tiny
\node at (0,2) {$E^2$};
\node at (0,3) {$N^1 E^2$};
\node at (1,1) {$E^4$};
\node at (1,2) {$E^2N^3E^4$};
\node at (1,3) {$N^1E^2N^3E^4$};
\node at (1,4) {$N^3E^4$};
\node at (2,0) {$E^5$};
\node at (2,1) {$E^4E^5$};
\node at (2,2) {$E^2N^3E^4E^5$};
\node at (2,3) {$NENEE$};
\node at (2,4) {$N^3E^4E^5$};
\node at (3,4) {$N^3E^4E^5N^6$};
\node at (3,5) {$N^6$};
\node at (3,3) {$NENEEN$};
\node at (3,2) {$ENEEN$};
\node at (3,1) {$E^4E^5N^6$};
\node at (3,0) {$E^5N^6$};
\node at (4,2) {$E^2N^3$};
\node at (4,3) {$N^1E^2N^3$};
\node at (4,4) {$N^3$};
\node at (5,3) {$N^1$};
\normalsize
\end{scope}

\begin{scope}[scale = 2.1, shift = {(-2.92,0.02)}]
\draw[line width= 3.0 pt,black] (0,0) -- (0,-1) -- (1,-1) -- (1,-2) -- (2,-2) -- (2,-3) -- (3,-3);
\end{scope}

\begin{scope}[scale = 2.1, shift = {(0.06,3.98)}]
\draw[line width= 3.0 pt,black] (0,-1) -- (1,-1) -- (1,-2) -- (2,-2) -- (2,-3) -- (3,-3) -- (3,-4);
\end{scope}

\end{tikzpicture}
\caption{$\Subsk \left(\cW_{NENEEN}\right)$} \label{fig:W_NENEEN}
\end{figure}
\end{center}

Before moving on, let us prove a nice corollary of Lemma \ref{l:Wliffrles}. The result is not difficult to prove by carefully checking all possible cases, but here we will use bookkeeping techniques we have developed to establish the result swiftly.

\begin{cor} \label{c:notbothD&Dt}
Let $\lambda$ be a North-East lattice path. For positive integers $m,n$, an $m \times n$-rectangle inside the $\lambda$-DRH staircase cannot simultaneously contain both a cell from the DRH and a cell from the DRH transpose.
\end{cor}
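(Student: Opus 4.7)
The plan is to pin down the chessboard coordinates of DRH cells and of transpose DRH cells, and then deduce the result by a short span argument combined with Observation~\ref{o:scdiag}.

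First, I would unpack the embedding of the $\lambda$-DRH into the chessboard as set up in Sections~\ref{ss:introboa} and~\ref{ss:constboa}. Because the bottom-leftmost cell $a_{11}$ is placed at the chessboard cell $(1,0)$ and $a_{xy}$ is the unit square whose upper-right vertex is $(x,y)$, the cell $a_{xy}$ has chessboard coordinates $(x,1-y)$. Since the $\lambda$-array has $x \in [1,p]$ and $y \in [1,q]$ and since $p+q = l+4$, taking row representatives in $\{1,\dots,l+4\}$ shows that every DRH cell has column in $[1,p]$ and row in $[p+1,l+4]$. Reflecting via $(i,j) \mapsto (j,i)$, every transpose DRH cell lies in a column from $[p+1,l+4]$ and a row from $[1,p]$.

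Next, I would suppose for contradiction that some $m \times n$-rectangle $R$ inside the $\lambda$-DRH staircase contains a DRH cell $D = (i_1,j_1)$ and a transpose DRH cell $T = (i_2,j_2)$. From the coordinate ranges just derived, $i_1 \le p < p+1 \le i_2$ and $j_2 \le p < p+1 \le j_1$. Since $R$ consists of cells at a range of consecutive columns and a range of consecutive rows, its column range contains all integers between $i_1$ and $i_2$, in particular $p$ and $p+1$; likewise its row range contains $p$ and $p+1$. Hence the cell at column $p$ and row $p+1$ lies in $R$.

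Finally, that cell has $i-j = -1 \in \{-1,0,1\}$, so by Observation~\ref{o:scdiag} it is not in the DRH staircase. This contradicts $R \subseteq \text{staircase}$ and completes the proof.

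The main obstacle I anticipate is the coordinate translation in the first step: one must carefully use the embedding conventions together with the identity $p+q = l+4$ to convert the original DRH coordinates $(x,y)$ into chessboard coordinates $(i,j)$, and keep track of the modular identification of rows. After that the span argument and the appeal to Observation~\ref{o:scdiag} are completely routine.
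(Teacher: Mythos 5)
Your coordinate computation is correct (the DRH occupies columns $[1,p]$ and rows $[p+1,l+4]$, the transpose the reflected ranges), but the span argument that follows has a genuine gap: the rectangle's row range does \emph{not} contain the rows $p$ and $p+1$. The point is that ``consecutive rows'' means consecutive in the geometry of the staircase, not consecutive among the representatives you chose in $\{1,\dots,l+4\}$. Geometrically the embedded DRH sits in planar rows $1-q,\dots,0$, directly \emph{above} the transpose DRH, which sits in planar rows $1,\dots,p$; so a rectangle meeting both crosses the boundary between rows $0\equiv l+4$ and $1$, i.e.\ its row range contains the residues $l+4$ and $1$, not $p$ and $p+1$. (You cannot have it both ways: if the column range crosses $p\,|\,p+1$, the row range crosses $l+4\,|\,1$, and vice versa.) The four cells the rectangle is actually forced to contain, at columns $\{p,p+1\}$ and rows $\{l+4,1\}$, have $i-j\equiv p-1,p,p+1$, which for any $\lambda$ containing both an N and an E all lie outside $\{-1,0,1\}$ --- indeed these are the four cells of $\cW_\lambda$ surrounding the crossing point of the two DRH axes (visible, e.g., at columns $5,6$ and rows $10,1$ in Figure \ref{fig:NENEEN_SS}). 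So Observation \ref{o:scdiag} yields no contradiction, and the cell $(p,p+1)$ you exhibit (which is just the excluded corner $c_{pq}$) need not lie in the rectangle at all.

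The underlying issue is that the statement cannot be deduced from Observation \ref{o:scdiag} together with the bounding boxes of the DRH and its transpose alone; one must use the actual monotone staircase shape of the DRH. A repaired elementary argument along your lines would instead examine the cells $(i_1,j_2)$ and $(i_2,j_1)$ (column of one distinguished cell paired with the row of the other) and show, using that the DRH is a union of $2\times 2$ blocks along a monotone NE path, that at least one of them has $i-j\in\{-1,0,1\}$. The paper avoids this by a different route entirely: it uses the row/column labels of Lemma \ref{l:Wliffrles} and derives the contradiction $s\ge r'>s'\ge r>s$ from the characterization of $\cW_\lambda$ by $r\le s$.
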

\begin{proof}
Suppose, for sake of contradiction, that there exists a rectangle containing both a cell $c$ from the DRH and a cell $c'$ from the DRH transpose. Both DRH axes must therefore pass through this rectangle, with $c$ being in the second quadrant and $c'$ being in the fourth quadrant. Since there are no cells from the DRH or the DRH transpose in the first or the third quadrants, every row and every column of this rectangle contain a cell in $\cW_{\lambda}$, and must therefore have a label. Let $r$ and $s$, respectively, be the row and the column labels of the cell $c$. Let $r'$ and $s'$, respectively, be the row and the column labels of the cell $c'$. Thus, $(W^r,W^s), (W^{r'}, W^{s'}) \notin \cW_{\lambda}$, while $(W^r, W^{s'}), (W^{r'}, W^s) \in \cW_{\lambda}$. Lemma \ref{l:Wliffrles} yields
\[
s \ge r' > s' \ge r > s,
\]
a contradiction.
\end{proof}

Recall that we start with the initial $\lambda$-DRH, create the DRH staircase, and aim to describe the variables to associate to each cell in the staircase. The cells in the DRH or in the transpose DRH are promptly filled. Our goal is to describe the variable in the cell $\gamma \in \cW_\lambda$. The mutable cluster variable we associate to $\gamma$ will be denoted $c(\gamma)$. We will see that $c(\gamma)$ has a close relationship with $\Subsk(\gamma)$.

As a $\lambda$-DRH comes with the data of $2\cdot l(\lambda) +4$ variables filled in the $\lambda$-array, a subskeleton $\mu = (\lambda,r,s)$ of $\lambda$ is also endowed with $2 \cdot l(\mu) + 4$ variables filled in the $\mu$-array. Namely, we keep all the variables from the $\lambda$-DRH near lattice points of $\mu$. In this way, for $\gamma \in \cW_\lambda$, the subskeleton $\Subsk(\gamma)$ is actually a DRH itself. Hence, the map $\Subsk$ gives a DRH to every cell in $\cW_\lambda$. We call such a subskeleton $\mu$ with the data of $2 \cdot l(\mu)+4$ rational functions endowed from the larger DRH a {\em DRH subskeleton}.

Observe also that if we look from the bottom to the top, we see that the row labels for cells in $\cW_{\lambda}$ start from the biggest indexed $E$, go in the decreasing order to the smallest indexed $E$, continue with the smallest indexed $N$, and go in the increasing order to the biggest indexed $N$, with the horizontal DRH axis lying in between the $E$'s and the $N$'s. The column labels are similar. If we look from the left to the right, we see that column labels go from the smallest indexed $E$ to the highest indexed $E$, and then from the highest indexed $N$ to the lowest indexed $N$, with the vertical DRH axis lying in between the $E$'s and the $N$'s.

In Figure \ref{fig:Wlambdalabels}, the axes are the DRH axes. The vertical arrows denote the ordering of the row labels as described above. Likewise, the horizontal arrows denote the ordering of the column labels. The words ``start'' and ``end'' denote the fact that in each cell in $\cW_{\lambda}$, we use the row label to indicate the starting point of the corresponding subskeleton and the column label to indicate the ending point.

To obtain the cluster variable $c(\gamma)$ from $\Subsk(\gamma)$, we introduce a process called {\em DRH standardization} in the next section. This process takes in a DRH and gives out a DRH in {\em standard} form. In particular, it produces a square matrix whose determinant is a desired cluster variable up to a sign.

\begin{center}
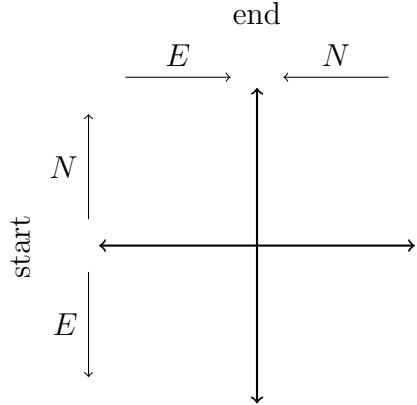
\begin{figure}
\begin{tikzpicture}[scale = 0.7]
\draw[<->, thick] (-3,0) -- (3,0);
\draw[<->, thick] (0,-3) -- (0,3);
\draw[->] (-3.2,0.5) -- (-3.2,2.5);
\draw[->] (-3.2,-0.5) -- (-3.2,-2.5);
\draw[->] (-2.5,3.2) -- (-0.5,3.2);
\draw[->] (2.5,3.2) -- (0.5,3.2);

\node[left] at (-3.2,1.5) {$N$};
\node[left] at (-3.2,-1.5) {$E$};
\node[above] at (-1.5,3.2) {$E$};
\node[above] at (1.5,3.2) {$N$};

\node[left,rotate=90] at (-4.5,0.8) {start};
\node[above] at (0,4) {end};

\end{tikzpicture}
\caption{Ordering of the row and column labels of cells in $\cW_{\lambda}$} \label{fig:Wlambdalabels}
\end{figure}
\end{center}

\section{DRH standardization}
Given a DRH subskeleton $\mu$ of a DRH $\lambda$, the cluster variable $c(\mu)$ is the determinant of a matrix obtained as a result of a process called {\em DRH standardization}. In order to describe the process, we shall discuss certain families of matrices which will appear frequently later. We shall also introduce the notion of matrix concatenation. This notation greatly helps us avoid writing clumsy matrices.

\subsection{Matrix concatenation} We will look at double rim hooks from a more algebraic point of view. We can think of each DRH as a concatenation of small matrices, of dimension $1 \times 2$, $2 \times 1$, or $2 \times 2$. This makes it easier for us to do algebraic manipulations with double rim hooks.

\begin{defn}
A {\em $2 \times 2$ block} is a $2 \times 2$ array with the data of four variables associated to the cells.
\end{defn}

In other words, a $2 \times 2$ block is an $\varnothing$-DRH. It can also be thought of a $2 \times 2$ matrix. The collection of all $2 \times 2$ blocks is denoted $\cB$.

When there are two $2 \times 2$ blocks $b_1, b_2 \in \cB$, we write $b_1 b_2$ to denote the {\em concatenation} of the two blocks, by putting $b_2$ to the right of $b_1$, forming a $2 \times 4$ matrix, which we think of as an $EE$-DRH.

If there is another block $b_3$, we write $b_1 b_2 b_3$ to denote the DRH formed by attaching $b_3$ right above $b_2$ in $b_1 b_2$, thus forming an $EENN$-DRH. If there is also a block $b_4$, the DRH $b_1 b_2 b_3 b_4$ is obtained by attaching $b_4$ to the right of $b_3$. The process continues in this manner: when there is another block $b_k$, attach it to the right or to the top of $b_{k-1}$, so that the attachments are in the zigzagged right-top-right-top order.

\begin{defn}
A {\em horizontal domino} is a $1 \times 2$-matrix. A {\em vertical domino} is a $2 \times 1$-matrix. The collection of all horizontal dominoes is denoted $\cD^H$. The collection of all vertical dominoes is denoted $\cD^V$.
\end{defn}

Suppose that $s \in \cD^H$ is a horizontal domino. We shall write $s b_1 b_2 \cdots b_k$ to denote the DRH formed by attaching $s$ to the bottom of $b_1$ in $b_1 b_2 \cdots b_k$. If, instead $s \in \cD^V$ is a vertical domino, we write $s b_1 b_2 \cdots b_k$ to refer to the DRH formed by first attaching $b_1$ to the right of $s$, and then attaching $b_2$ on top of $b_1$, then $b_3$ to the right of $b_2$ and so on. The attachments are always alternatively right and top (or top and right). It is important to note that when $s \in \cD^V$, $sb_1 \cdots b_k$ is different from attaching $s$ to $b_1 \cdots b_k$.

We also write $b_1 \cdots b_k f$ when it makes sense as follows. If $k$ is even and $f \in \cD^H$, then $b_1 \cdots b_k f$ denotes the DRH obtained by attaching $f$ on top of $b_k$ in $b_1 \cdots b_k$. If $k$ is odd and $f \in \cD^V$, then $b_1 \cdots b_k f$ denotes the DRH obtained by attaching $f$ to the right of $b_k$.

Finally, we write $s b_1 \cdots b_k f$ when it makes sense. Starting with $s$, attaching $b_1$ to $s$, attaching $b_2$ to $b_1$, and so on, we can define $s b_1 \cdots b_k f$ as long as the attachments are always alternatively right and top (or top and right). See Figure \ref{fig:sb1b2bkf} for some cases of $s b_1 \cdots b_k f$.

\begin{center}
\begin{figure}
\begin{tikzpicture}[scale = 0.5]
\tiny
\begin{scope}[shift={(-8,0)}]
\draw (0,0) -- (2,0);
\draw (0,1) -- (4,1);
\draw (0,3) -- (6,3);
\draw (2,5) -- (6,5);
\draw (4,6) -- (6,6);
\draw (0,0) -- (0,3);
\draw (2,0) -- (2,5);
\draw (4,1) -- (4,6);
\draw (6,3) -- (6,6);

\node at (1.0,0.5) {$s$};
\node at (1.0,2.0) {$b_1$};
\node at (3.0,2.0) {$b_2$};
\node at (3.0,4.0) {$\iddots$};
\node at (5.0,4.0) {$b_k$};
\node at (5.0,5.5) {$f$};
\end{scope}

\begin{scope}[shift={(0,0)}]
\draw (0,0) -- (3,0);
\draw (0,2) -- (5,2);
\draw (1,4) -- (6,4);
\draw (3,6) -- (6,6);
\draw (0,0) -- (0,2);
\draw (1,0) -- (1,4);
\draw (3,0) -- (3,6);
\draw (5,2) -- (5,6);
\draw (6,4) -- (6,6);

\node at (0.5,1.0) {$s$};
\node at (2.0,1.0) {$b_1$};
\node at (2.0,3.0) {$b_2$};
\node at (4.0,3.0) {$\iddots$};
\node at (4.0,5.0) {$b_k$};
\node at (5.5,5.0) {$f$};
\end{scope}

\begin{scope}[shift={(+8,0)}]
\draw (0,0) -- (3,0);
\draw (0,2) -- (5,2);
\draw (1,4) -- (7,4);
\draw (3,6) -- (7,6);
\draw (5,7) -- (7,7);
\draw (0,0) -- (0,2);
\draw (1,0) -- (1,4);
\draw (3,0) -- (3,6);
\draw (5,2) -- (5,7);
\draw (7,4) -- (7,7);

\node at (0.5,1.0) {$s$};
\node at (2.0,1.0) {$b_1$};
\node at (2.0,3.0) {$b_2$};
\node at (4.0,3.0) {$b_3$};
\node at (4.0,5.0) {$\iddots$};
\node at (6.0,5.0) {$b_k$};
\node at (6.0,6.5) {$f$};
\end{scope}
\normalsize
\end{tikzpicture}
\caption{Some cases of the DRH $sb_1 b_2 \cdots b_k f$. (Left) $k \in 2 \mathbb{Z}$, $s, f \in \cD^H$. (Center) $k \in 2 \mathbb{Z}$, $s, f \in \cD^V$. (Right) $k \in 2 \mathbb{Z}+1$, $s \in \cD^V$, $f \in \cD^H$.} \label{fig:sb1b2bkf}
\end{figure}
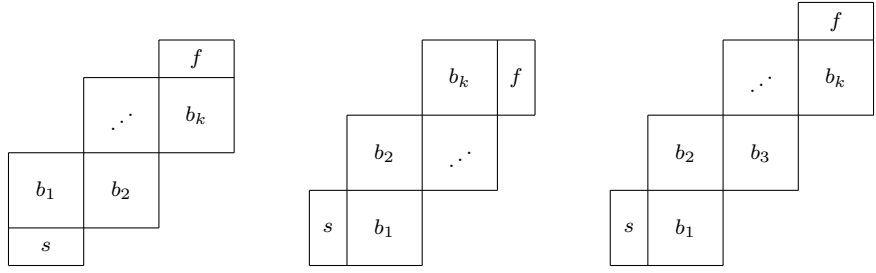
\end{center}

If we have $d_1, d_2 \in \cD^H$, we can form a block $d_1 d_2 \in \cB$ by attaching $d_2$ on the top of $d_1$. Similarly, if $d_1, d_2 \in \cD^V$, attaching $d_2$ to the right of $d_1$ forms a block $d_1 d_2 \in \cB$. 

We have the maps $v_+, v_-: \cB \rightarrow \cD^V$, defined so that for any block $b \in \cB$, $b = v_-(b) v_+(b)$. In other words, $v_-(b)$ is the left $2 \times 1$ column of $b$, while $v_+$ is the right. Similarly, we have $h_+, h_-: \cB \rightarrow \cD^H$ with $b = h_-(b) h_+(b)$ for any block $b$.

The matrices formed by concatenating elements of $\cB$, $\cD^H$, and $\cD^V$, and then adding zeroes in all irrelevant entries will be called {\em DRH matrices}.

The DRH $sb_1b_2\dots b_k f$ is always a square DRH matrix, as long as $s$ and $f$ are appropriate dominoes. We can define the {\em determinants} for these DRH matrices from the usual determinants. A crucial step in proving our main result is establishing determinantal identities of these DRH matrices. The identities we study in this paper are generalizations of Dodgson's identity on minors. Essential examples of these identities will be proved in Section \ref{ss:dodgson}.

\medskip

\subsection{End transformations} There are a certain number of ways of transforming a DRH into another that will be of interest to us. These transformations change a given DRH at one of the two ends of the DRH. First, we observe that given any DRH $\lambda$ of positive length, we can write $\lambda = s \lambda'$, where $s$ is the starting domino of $\lambda$. Here, $s \in \cD^H$ if the first letter in $\lambda$ is $N$, and $s \in \cD^V$ if the letter is $E$. Suppose that $\wt{s}$ is another domino of the same shape as $s$. Then, we consider the following {\em end} transformation of Type I.
\[
\tag{ET1s} \lambda = s \lambda' \mapsto \wt{s} \lambda'.
\]

Suppose $\lambda = s \lambda'$ as before. Let $u$ be another domino of the same shape as $s$, and $\wt{s}$ be another domino of the opposite shape to that of $s$. We have the end transformation of type II.
\[
\tag{ET2s} \lambda = s \lambda' \mapsto \wt{s} (s u) \lambda'.
\]
Finally, suppose again that $\lambda = s \lambda'$. Let $u, \wt{s}$ be dominoes of the same shape as $s$, and let $b \in \cB$ be any block. Then, there is the end transformation of type III.
\[
\tag{ET3s} \lambda = s \lambda' \mapsto \wt{s} b (s u) \lambda'.
\]

Analogous transformations happen at the finishing end of the DRH as well. Suppose that $\lambda = \lambda' f$ where $f$ is the finishing domino. We have
\begin{itemize}
\item (ET1f) $\lambda = \lambda' f \mapsto \lambda' \wt{f}$, where $\wt{f}$ is a domino of the same shape as $f$,
\item (ET2f) $\lambda = \lambda' f \mapsto \lambda' (u f) \wt{f}$, where $u$ is a domino of the same shape as $f$, and $\wt{f}$ is a domino of the opposite shape to that of $f$, and
\item (ET3f) $\lambda = \lambda' f \mapsto \lambda' (u f) b \wt{f}$, where $u$ and $\wt{f}$ are dominoes of the same shape as $f$ and $b$ is a block.
\end{itemize}

An inverse of an end transformation will also be considered an end transformation of the same type. Note that (ET1), whether at the starting or the finishing domino, always preserves the length of the DRH, (ET2) alters the length by $2$, and (ET3) alters the length by $4$.

\medskip

\subsection{Standardization} Given a DRH subskeleton $\mu$ in the DRH staircase for the initial $\lambda$-DRH, we describe the cluster variable $c(\mu)$ associated to the subskeleton via standardization.

\begin{defn}
A non-empty North-East lattice path is said to be {\em standard} (or {\em standardized}) if it is an alternating concatenation of $NE$ and $EN$. A standard path can start with either $NE$ or $EN$.
\end{defn}

In other words, the standard paths include $EN$, $NE$, $ENNE$, $NEEN$, $ENNEEN$, $NEENNE$, and so on. In a manner similar to what we defined in Section \ref{ss:climb}, a lattice point on a North-East lattice path is called a {\em bend} if the two segments which have the point as their endpoints are perpendicular. The starting and the finishing vertices are not considered to be bends. Lattice points on the path that are not bends are called {\em non-bends}. We have the following observations.

\begin{obs}
A non-empty North-East lattice path $\lambda$ is standard if and only if every unit segment in $\lambda$ has one bend endpoint and one non-bend endpoint.
\end{obs}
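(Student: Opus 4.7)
The plan is to reformulate the condition as a statement about the sequence of bend/non-bend status along the path, and then verify both directions of the equivalence by a direct combinatorial argument. Let $v_0, v_1, \dots, v_l$ denote the lattice points of $\lambda$ in order, where $l = l(\lambda) \ge 1$. My first observation would be that the condition that every unit segment has one bend endpoint and one non-bend endpoint is equivalent to the statement that the bend/non-bend labels on $v_0, v_1, \dots, v_l$ alternate. Since $v_0$ and $v_l$ are non-bends by convention, this alternation forces $l$ to be even, say $l = 2k$, with bends precisely at $v_1, v_3, \dots, v_{2k-1}$.

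For the forward direction, I would suppose $\lambda = B_1 B_2 \cdots B_k$ with each $B_j \in \{NE, EN\}$ and $B_j \ne B_{j+1}$. The middle vertex $v_{2j-1}$ of each two-letter block $B_j$ is a bend, because its two adjacent segments are perpendicular. At each junction $v_{2j}$ between $B_j$ and $B_{j+1}$ with $1 \le j < k$, the last letter of $B_j$ equals the first letter of $B_{j+1}$ (the blocks being opposites among $NE, EN$), so the two adjacent segments are parallel and $v_{2j}$ is a non-bend. Together with the non-bend status of $v_0$ and $v_{2k}$, this yields the required alternation.

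For the backward direction, I would assume the alternation holds, so $l = 2k$ is even and the bends are exactly $v_1, v_3, \dots, v_{2k-1}$. Group the letters of $\lambda$ into blocks $B_j := w_{2j-1} w_{2j}$ for $j = 1, \dots, k$. Since $v_{2j-1}$ is a bend, the two letters of $B_j$ correspond to perpendicular segments and are therefore distinct; hence $B_j \in \{NE, EN\}$. Since $v_{2j}$ (for $j < k$) is a non-bend, the two adjacent segments are parallel, which forces $B_{j+1}$ to begin with the letter on which $B_j$ ends. Combined with $B_{j+1} \in \{NE, EN\}$, this determines $B_{j+1}$ uniquely as the opposite of $B_j$, so consecutive blocks alternate and $\lambda$ is standard.

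The argument is essentially bookkeeping, so I do not anticipate any serious obstacle. The only subtlety is keeping the alternation of bend/non-bend labels properly synchronized with the endpoints, along with handling the short cases; for instance, when $l = 1$, the single segment has two non-bend endpoints (both being endpoints of the path), so the segment condition fails and, consistently, no length-$1$ word is standard, so the equivalence remains vacuously intact in this edge case.
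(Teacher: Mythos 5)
Your proof is correct. The paper states this as an Observation and offers no proof of its own, so there is nothing to compare against; your reformulation of the segment condition as alternation of bend/non-bend labels along $v_0,\dots,v_l$, the parity argument forcing $l=2k$ with bends exactly at the odd-indexed vertices, and the block-by-block verification in both directions constitute precisely the routine bookkeeping the authors evidently had in mind, including the correct handling of the $l=1$ edge case.
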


Let $\lambda$ be a non-empty North-East lattice path. A DRH whose underlying skeleton is $\lambda$ is said to be {\em standard} if $\lambda$ is.

\begin{obs}
A DRH with nontrivial skeleton is standard if and only if it can be decomposed in the form $s b_1 b_2 \dots b_k f$, where $s,f$ are dominoes of the same shape if $2|k$ and of the opposite shape if $2 \nmid k$, and $b_1, b_2, \dots, b_k \in \cB$.
\end{obs}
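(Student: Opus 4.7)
My plan is to prove both directions by reading off the skeleton of any DRH of the form $sb_1b_2\cdots b_k f$ directly from the matrix-concatenation rules. Those rules force the attachments to alternate right-top-right-top (or top-right-top-right), so I can list the skeleton letter by letter: the transition $s\to b_1$ contributes one letter ($N$ if $s\in\cD^H$, $E$ if $s\in\cD^V$); each transition $b_{i-1}\to b_i$ for $2\le i\le k$ contributes two identical letters ($EE$ for a right-attachment, $NN$ for a top-attachment); and the transition $b_k\to f$ contributes one final letter ($N$ if $f\in\cD^H$, $E$ if $f\in\cD^V$). In particular the skeleton has length exactly $2k$, and the interior doubled letters alternate between $EE$ and $NN$.

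For the ($\Leftarrow$) direction, I would carry out this skeleton computation assuming the decomposition exists (with the parity condition that $s$ and $f$ have the same shape iff $k$ is even). Grouping the $2k$ resulting letters into consecutive length-$2$ windows yields $k$ pairs, each equal to $NE$ or $EN$. Since the interior doubled letters alternate between $EE$ and $NN$ and the initial single letter from $s$ shifts the grouping by one, consecutive pairs necessarily differ: the list of pairs is $NE\,EN\,NE\,EN\cdots$ or $EN\,NE\,EN\,NE\cdots$, which is exactly an alternating concatenation of $NE$ and $EN$. Hence the skeleton is standard.

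For the ($\Rightarrow$) direction, suppose the DRH has nontrivial standard skeleton $\lambda = c_1 c_2\cdots c_m$ with $c_i\in\{NE,EN\}$ alternating, so $l(\lambda)=2m$. Define $s$ and $f$ by reading off the first and last letters of $\lambda$ ($\cD^H$ for $N$, $\cD^V$ for $E$), set $k=m$, and construct $b_1,\ldots,b_k\in\cB$ inductively by the alternating attachment rule---at each step the attachment direction, and hence the $2\times 2$ of cells to be assigned to $b_i$, are forced by the already-built piece and by the cells of the original DRH. The case analysis from the backward direction shows that the skeleton of the constructed $sb_1\cdots b_k f$ agrees with $\lambda$ letter-for-letter, so the two DRHs fill the same $2l(\lambda)+4$ cells and coincide. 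The parity condition is also automatic: when $m$ is even, $c_1\ne c_m$ forces the first and last letters of $\lambda$ to agree, so $s$ and $f$ have the same shape; when $m$ is odd, $c_1 = c_m$ forces them to differ.

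The main obstacle is the parallel bookkeeping of four parity indicators---the shape of $s$, the shape of $f$, $k\bmod 2$, and (on the standard side) $m\bmod 2$---and how they conspire to produce matching first-letter/last-letter structure in $\lambda$. A clean tactic is to reduce to the $s\in\cD^H$ case (two subcases indexed by $k\bmod 2$) and obtain the $s\in\cD^V$ cases via the $N\leftrightarrow E$ symmetry from transposing the DRH across $y=x$, which exchanges $\cD^H\leftrightarrow\cD^V$ while preserving both standardness and the decomposition structure.
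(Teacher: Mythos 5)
The paper records this statement as an Observation with no accompanying proof, so there is no argument of the authors' to compare against; your direct verification is correct and is exactly the check the definitions call for. Your letter-by-letter reading of the skeleton of $sb_1\cdots b_kf$ (one letter contributed by $s$, a doubled letter $EE$ or $NN$ by each interior attachment, one letter by $f$, with the doubled letters alternating and the single end letters differing from their neighbors) is accurate, and both directions, including the parity bookkeeping relating the shapes of $s$ and $f$ to $k\bmod 2$, go through.
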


\begin{defn}
Let $\mu$ be a DRH with nontrivial skeleton. Let $s$ and $f$ be the starting and the finishing dominoes, respectively, of $\mu$. Let $b_1, b_2, \dots, b_k$ be the $2 \times 2$ squares at the bends of $\mu$ in their usual order (from left to right in the underlying North-East word of $\mu$). Then, the {\em standardization} of the DRH $\mu$ is the DRH
\[
\Std(\mu) := s b_1 b_2 \dots b_k f.
\]
\end{defn}

\begin{defn}
Let $\mu$ be a DRH with nontrivial skeleton. Suppose that $\Std(\mu)$ is an $\eta \times \eta$ matrix. Then, the {\em cluster variable} associated to $\mu$ is defined to be the polynomial
\[
c(\mu) := (-1)^{\frac{(\eta-1)(\eta-2)}{2}} \cdot \det \left( \Std(\mu) \right).
\]
\end{defn}

The sign $(-1)^{\frac{(\eta-1)(\eta-2)}{2}}$ is positive when $\eta$ is $1$ or $2$ modulo $4$, and negative when $\eta$ is $3$ or $0$ modulo $4$.

Our main theorem states that these cluster variables $c(\mu)$, together with those already present as entries in the DRH, are the mutable cluster variables of the DRH cluster algebra $\cA_\lambda$ when $\mu \le \lambda$ are subskeleta.

Moreover, since $\Subsk$ is a combinatorial bijection between the cells in $\cW_\lambda$ and the nonempty subskeleta of $\lambda$. We may now put each mutable cluster variable $c(\mu)$ inside $\Subsk^{-1}(\mu) \in \cW_{\lambda}$. In this way, the DRH staircase is now completely filled with all the desired mutable cluster variables. For a cell $C$ inside the staircase (whether it is inside $\cW_{\lambda}$, or the original DRH, or the DRH transpose), we write $e(C)$ to denote the rational function that is the mutable cluster variable associated to the cell $C$.

\subsection{Generalization of Dodgson's identity} \label{ss:dodgson}

The main point of this section is to show that given an identity of a certain form on minors, one can adjoin the same rows and columns to all the minors involved while preserving the identity.

Let $n_1, n_2 \in\mathbb{Z}_{>0}$. Let $X = [x_{i,j}]$ be an $n_1 \times n_2$ matrix. For $I \subseteq [n_1]$ and $J \subseteq [n_2]$ of the same size let $\Delta_{I,J}$ denote the minor of $X$ with rows $I$ and columns $J$. We will be using the common convention that $\Delta_{\varnothing,\varnothing} = 1$.

\begin{lemma}
\label{lem:chunk}
Let $k, l, m \in\mathbb{Z}_{>0}$. Suppose we have a collection $\{I_i^j\}_{\substack{i\in[l]\\j\in[k]}}$ of subsets of $[n_1]$ and a collection $\{J_i^j\}_{\substack{i\in[l]\\j\in[k]}}$ of subsets of $[n_2]$, with $\forall i, j, \abs{I_i^j} = \abs{J_i^j}$ and $\forall i, \sum_j\abs{I_i^j} = m$. Moreover, suppose for some $c_1,\dots, c_l\in\mathbb{Z}$, we have
\begin{equation}
\label{eq:nochunk}
\sum_{i=1}^l c_i \left(\prod_{j=1}^k\Delta_{I_i^j,J_i^j}\right) = 0.
\end{equation} 
Then for any $I'\subseteq [n_1]\setminus\left(\bigcup_{i,j} I_i^j\right)$ and $J'\subseteq [n_2]\setminus\left(\bigcup_{i,j} J_i^j\right)$ of the same size we have 
\begin{equation}
\label{eq:yeschunk}
\sum_{i=1}^l c_i \left(\prod_{j=1}^k\Delta_{I_i^j\cup I',J_i^j\cup J'}\right) = 0.
\end{equation}
\end{lemma}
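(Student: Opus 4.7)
The plan is to relate the extended minors $\Delta_{I \cup I', J \cup J'}(X)$ to minors of the Schur complement of $A := X[I', J']$ in $X$, and then pull (\ref{eq:nochunk}) back through. Both (\ref{eq:nochunk}) and (\ref{eq:yeschunk}) are polynomial identities in the entries of $X$, so it is harmless to pass to the field of fractions in those entries and assume $\det(A)$ is invertible.

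Let $S := X[[n_1] \setminus I', [n_2] \setminus J'] - X[[n_1] \setminus I', J']\,A^{-1}\,X[I', [n_2] \setminus J']$ denote the Schur complement. A standard block-LU decomposition, combined with a shuffle to reorder rows and columns so that $I'$ and $J'$ come first, yields
\[
\Delta_{I \cup I',\, J \cup J'}(X) \;=\; \epsilon_{I, I'} \cdot \epsilon_{J, J'} \cdot \det(A) \cdot \Delta_{I, J}(S)
\]
for every $I \subseteq [n_1] \setminus I'$ and $J \subseteq [n_2] \setminus J'$ with $|I| = |J|$. Here $\epsilon_{I, I'}$ is the sign of the shuffle moving $I'$ to the front of $I \cup I'$, and the crucial observation is that it factorizes row-by-row as $\epsilon_{I, I'} = \prod_{r \in I} \eta_{I'}(r)$, where $\eta_{I'}(r) := (-1)^{|\{i' \in I' : i' > r\}|}$ depends only on $r$ and $I'$; analogous statements hold for the column signs.

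Set the row- and column-multiplicities $\mu_i(r) := |\{j : r \in I_i^j\}|$ and $\nu_i(c) := |\{j : c \in J_i^j\}|$, so that each product $\prod_j \Delta_{I_i^j, J_i^j}$ is multi-homogeneous with row-degree $\mu_i$ and column-degree $\nu_i$. Since (\ref{eq:nochunk}) is a polynomial identity, it decomposes into independent sub-identities indexed by fixed multi-degrees $(\mu, \nu)$, so it suffices to prove (\ref{eq:yeschunk}) under the assumption that $\mu_i \equiv \mu$ and $\nu_i \equiv \nu$ across the sum. Under this reduction, the multiplicative factorization of the shuffle signs makes $\prod_j \epsilon_{I_i^j, I'} \cdot \epsilon_{J_i^j, J'} = \prod_{r} \eta_{I'}(r)^{\mu(r)} \cdot \prod_{c} \eta_{J'}(c)^{\nu(c)}$ a constant $E$ independent of $i$. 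Substituting into (\ref{eq:yeschunk}) gives
\[
\sum_{i=1}^{l} c_i \prod_{j=1}^{k} \Delta_{I_i^j \cup I',\, J_i^j \cup J'}(X) \;=\; E \cdot \det(A)^{k} \cdot \sum_{i=1}^{l} c_i \prod_{j=1}^{k} \Delta_{I_i^j, J_i^j}(S),
\]
and the inner sum vanishes by applying (\ref{eq:nochunk}) to the matrix $S$ (valid because (\ref{eq:nochunk}) is a polynomial identity holding for any matrix of the appropriate size, and $S$ has entries in the enlarged field).

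The main obstacle is the sign bookkeeping; the two ingredients that tame it are the row-wise factorization $\epsilon_{I, I'} = \prod_{r \in I} \eta_{I'}(r)$ and the multi-graded decomposition of (\ref{eq:nochunk}). Without either one, the shuffle signs would depend on $i$ in a way that obstructs pulling them out of the sum before invoking the original identity on the Schur complement.
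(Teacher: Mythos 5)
Your proof is correct, and it reaches the crucial one‑minor identity by a different route than the paper. The paper first proves (via a Pl\"ucker relation and induction on $|I|$) that $\Delta_{I\cup I',J\cup J'}\cdot(\Delta_{I',J'})^{|I|-1}=P_{I,J}(\tilde{x}_{i,j})$, where $\tilde{x}_{i,j}=\Delta_{\{i\}\cup I',\{j\}\cup J'}$ and $P_{I,J}$ is the universal minor polynomial; the lemma then follows by substituting $\tilde{x}$ for $x$ in \eqref{eq:nochunk} and dividing by the power $(\Delta_{I',J'})^{m-k}$, which is uniform in $i$ precisely because of the hypothesis $\sum_j|I_i^j|=m$. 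Your Schur‑complement identity $\Delta_{I\cup I',J\cup J'}=\epsilon_{I,I'}\epsilon_{J,J'}\det(A)\,\Delta_{I,J}(S)$ is equivalent to theirs --- indeed $\tilde{x}_{i,j}=\eta_{I'}(i)\eta_{J'}(j)\det(A)S_{ij}$, so $P_{I,J}(\tilde{x})=\epsilon_{I,I'}\epsilon_{J,J'}\det(A)^{|I|}\Delta_{I,J}(S)$ --- but you obtain it from a block‑LU factorization rather than a Pl\"ucker relation, which is more off‑the‑shelf. The cost is the sign bookkeeping: the paper absorbs all signs into the dressed variables $\tilde{x}$, so a single ring substitution finishes the job, whereas your external shuffle signs force the extra multigraded decomposition of \eqref{eq:nochunk} (which is valid, and your row‑wise factorization $\epsilon_{I,I'}=\prod_{r\in I}\eta_{I'}(r)$ is the right observation to make the signs depend only on the multidegree). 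You could streamline this by twisting the Schur complement to $\tilde{S}_{rc}:=\eta_{I'}(r)\eta_{J'}(c)S_{rc}$, for which $\Delta_{I,J}(\tilde{S})=\epsilon_{I,I'}\epsilon_{J,J'}\Delta_{I,J}(S)$, and applying \eqref{eq:nochunk} directly to $\tilde{S}$; then, as in the paper, only the hypothesis $\sum_j|I_i^j|=m$ is needed. Both arguments ultimately rest on the same point, which you state correctly: since the relevant entries of $X$ are independent indeterminates, \eqref{eq:nochunk} is a formal identity in the universal minor polynomials and may therefore be evaluated on $S$.
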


We will now handle a special case of Lemma \ref{lem:chunk}.
\begin{lemma}
Suppose $n' \le \min(n_1, n_2)$, and $I = \{i_1,\dots,i_{n'}\} \subseteq [n_1]$, $J = \{j_1,\dots,j_{n'}\} \subseteq [n_2]$. Then $\Delta_{I,J}$ is a polynomial $P_{I,J}(x_{i,j})$. Fix a pair of equinumerous sets: $I'\subseteq [n_1]\setminus I$ and $J'\subseteq [n_2]\setminus J$. For $i\in I$ and $j\in J$ define $\tilde{x}_{i,j} = \Delta_{\{i\}\cup I', \{j\}\cup J'}$. Then 
\[\Delta_{I\cup I', J\cup J'}\left(\Delta_{I',J'}\right)^{n'-1} = P_{I,J}(\tilde{x}_{i,j}).\]
\end{lemma}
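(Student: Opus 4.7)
The plan is to derive this identity as an immediate specialization of Lemma \ref{lem:chunk}.  My starting point is the classical permutation expansion
\[
\Delta_{I,J} \;=\; \sum_{\sigma \in S_{n'}} \sgn(\sigma) \prod_{a=1}^{n'} x_{i_a, j_{\sigma(a)}} \;=\; P_{I,J}(x_{i,j}),
\]
which, after identifying each entry $x_{i,j}$ with the $1\times 1$ minor $\Delta_{\{i\},\{j\}}$ and rearranging, becomes
\[
\Delta_{I,J} \;-\; \sum_{\sigma \in S_{n'}} \sgn(\sigma) \prod_{a=1}^{n'} \Delta_{\{i_a\},\{j_{\sigma(a)}\}} \;=\; 0.
\]

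To fit this into the template (\ref{eq:nochunk}) I would pad the leading term with $n'-1$ trivial factors $\Delta_{\varnothing,\varnothing} = 1$, so that every summand becomes a product of exactly $k := n'$ minors.  Indexing the summands by $\{0\} \cup S_{n'}$, I take $I_0^1 = I$, $J_0^1 = J$, and $I_0^j = J_0^j = \varnothing$ for $2 \le j \le n'$, with coefficient $c_0 = 1$; and for each $\sigma \in S_{n'}$ I take $I_\sigma^a = \{i_a\}$, $J_\sigma^a = \{j_{\sigma(a)}\}$ with coefficient $c_\sigma = -\sgn(\sigma)$.  The hypotheses of Lemma \ref{lem:chunk} then hold transparently: row- and column-index sets have matching sizes in every factor, every term has total row support of size $n'$, and the unions $\bigcup_{i,j} I_i^j = I$ and $\bigcup_{i,j} J_i^j = J$ are disjoint from the chosen $I'$ and $J'$ by hypothesis.

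Applying Lemma \ref{lem:chunk} adjoins $I'$ and $J'$ to every minor in every product.  In the leading term each padded factor $\Delta_{\varnothing,\varnothing}$ becomes $\Delta_{I',J'}$, so the term turns into $\Delta_{I \cup I', J \cup J'} \cdot (\Delta_{I',J'})^{n'-1}$; in the $\sigma$-term each $\Delta_{\{i_a\},\{j_{\sigma(a)}\}}$ becomes $\Delta_{\{i_a\} \cup I',\{j_{\sigma(a)}\} \cup J'} = \tilde x_{i_a, j_{\sigma(a)}}$, and summing over $\sigma \in S_{n'}$ reassembles $P_{I,J}(\tilde x_{i,j})$.  Equating these and moving the second sum to the right gives precisely the claimed identity.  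I do not expect any real obstacle in this argument, since the substantive combinatorics has been isolated inside Lemma \ref{lem:chunk}; the only mild subtlety is the padding trick, which rebrands the single minor $\Delta_{I,J}$ as a length-$n'$ product of minors so that Lemma \ref{lem:chunk} applies uniformly to every summand.
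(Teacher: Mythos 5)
Your reduction is internally consistent: the permutation expansion of $\Delta_{I,J}$ is indeed a homogeneous identity of the form \eqref{eq:nochunk} (after padding the leading term with $n'-1$ copies of $\Delta_{\varnothing,\varnothing}$), and applying Lemma \ref{lem:chunk} to it formally yields the claimed identity. The problem is that, in the context of this paper, the argument is circular. Lemma \ref{lem:chunk} is stated before the present lemma but is \emph{proved after it, using it}: the paper's proof of Lemma \ref{lem:chunk} consists precisely of substituting $\tilde{x}_{i,j}$ for $x_{i,j}$ in each polynomial $P_{I_i^j,J_i^j}$ and dividing by the appropriate power of $\Delta_{I',J'}$, a step justified exactly by the identity you are asked to prove. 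So your proposal derives the special case from the general statement, while the paper derives the general statement from the special case; taken together nothing has been proved.

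The paper's own argument is self-contained and runs in the opposite direction. It first establishes the ``augmented row expansion''
\[
\Delta_{I\cup I', J\cup J'}\,\Delta_{I',J'}
   =\sum_{l=1}^{n'} (-1)^{l + 1}\,\Delta_{\{i_1\}\cup I', \{j_{l}\}\cup J'}\,\Delta_{\left(I\setminus\{i_1\}\right)\cup I',\left(J\setminus\{j_{l}\}\right)\cup J'}
\]
as a Pl\"ucker relation on a suitably augmented matrix (it degenerates to ordinary row expansion when $I'=J'=\varnothing$), and then inducts on $n'$: multiplying by $\left(\Delta_{I',J'}\right)^{n'-2}$, applying the inductive hypothesis to the $(n'-1)$-element index sets, and recognizing the resulting sum as the expansion of $P_{I,J}$ in the variables $\tilde{x}_{i,j}$. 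To repair your proposal you would have to supply an independent proof of Lemma \ref{lem:chunk} (or at least of the instance you invoke), which in effect forces you back to an argument of this Pl\"ucker/induction type.
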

\begin{proof}
We will need to use a certain Pl\"ucker relation; these are described nicely in \cite[Section 2.1]{Lec93}. In the notation thereof, we will take the special case of the matrix schematically pictured in Figure \ref{fig:matrix}. Note that in the actual matrix $X$, the indices $I$, $J$, $I'$ and $J'$ will interlace in some way. To be more precise we take the sub-matrix of $X$ on rows $I\cup I'$ and columns $J\cup J'$, and augment it on the right by a similar matrix except that (i) we put the identity matrix in place of the matrix with rows $I$ and columns $J$ and (ii) we put the zero matrix in place of the matrix with rows $I'$ and columns $J$. In the notation of Leclerc's, we take $a,b,\dots, d$ to be the first $n' + \abs{I'}$ columns and $e,f,\dots, h$ to be the last $n' + \abs{I'}$ columns, and apply Equation (2) in \cite[Section 2.1]{Lec93}.

\begin{center}
\begin{figure}
\begin{tikzpicture}[scale = 0.7]
\draw (0,0) -- (10,0);
\draw (0,2) -- (10,2);
\draw (0,5) -- (10,5);
\draw (0,0) -- (0,5);
\draw (3,0) -- (3,5);
\draw (5,0) -- (5,5);
\draw (8,0) -- (8,5);
\draw (10,0) -- (10,5);

\node at (5.5,4.5) {$1$};
\node at (6.0,4.0) {$1$};
\node at (6.5,3.5) {$1$};
\node at (7.0,3.0) {$1$};
\node at (7.5,2.5) {$1$};

\node at (-0.5,3.5) {$I$};
\node at (-0.5,1) {$I'$};

\node at (1.5,-0.5) {$J$};
\node at (4,-0.5) {$J'$};
\node at (6.5,-0.5) {$J$};
\node at (9,-0.5) {$J'$};
\node at (6.5,1) {\large $0$};

\fill[orange!20] (0.05,0.05) rectangle (3-0.05,2-0.05);
\fill[orange!30] (0.05,2.05) rectangle (3-0.05,5-0.05);
\fill[orange!40] (3.05,0.05) rectangle (5-0.05,2-0.05);
\fill[orange!50] (3.05,2.05) rectangle (5-0.05,5-0.05);

\fill[orange!40] (8.05,0.05) rectangle (10-0.05,2-0.05);
\fill[orange!50] (8.05,2.05) rectangle (10-0.05,5-0.05);

\end{tikzpicture}
\caption{The matrix concerning the Pl\"ucker relation of interest.} \label{fig:matrix}
\end{figure}
\end{center}

The resulting identity is 
\[\Delta_{I\cup I', J\cup J'}\Delta_{I',J'}
   =\sum_{l=1}^{n'} (-1)^{l + 1}\Delta_{\{i_1\}\cup I', \{j_{l}\}\cup J'}\Delta_{\left(I\setminus\{i_1\}\right)\cup I',\left(J\setminus\{j_{l}\}\right)\cup J'}.\]
In case $I'$ and $J'$ are empty, this is just row expansion of the determinant. 

Now we can finish the calculation
\[\begin{array}{rcl}\medskip
\displaystyle\Delta_{I\cup I', J\cup J'}\left(\Delta_{I',J'}\right)^{n'-1} 
    &=&\displaystyle\sum_{l=1}^{n'} (-1)^{l + 1}\Delta_{\{i_1\}\cup I', \{j_{l}\}\cup J'}\Delta_{\left(I\setminus\{i_1\}\right)\cup I',\left(J\setminus\{j_{l}\}\right)\cup J'} \left(\Delta_{I',J'}\right)^{n'-2}\\\medskip
		&=&\displaystyle\sum_{l=1}^{n'} (-1)^{l + 1}\tilde{x}_{i_1,j_{l}}\Delta_{\left(I\setminus\{i_1\}\right)\cup I',\left(J\setminus\{j_{l}\}\right)\cup J'} \left(\Delta_{I',J'}\right)^{n'-2}\\\medskip
		&=&\displaystyle\sum_{l=1}^{n'} (-1)^{l + 1}\tilde{x}_{i_1,j_{l}}P_{I\setminus\{i_1\},J\setminus\{j_{l}\}}(\tilde{x}_{i,j})\\
		&=&\displaystyle P_{I,J}(\tilde{x}_{i,j}),
\end{array}\]
Where the third equality holds by induction on $n'$ and the fourth is just the expansion of the determinant in $j_{n'}$-th row.
\end{proof}

\begin{proof}[Proof of Lemma \ref{lem:chunk}]
The proof now becomes easy. Equation \eqref{eq:nochunk}, in the notation of the previous lemma becomes
\[\sum_{i=1}^l c_i \left(\prod_{j=1}^k P_{I_i^j,J_i^j}(x_{i,j})\right) = 0,\]
while the left hand side of equation \eqref{eq:yeschunk} becomes 
\[\frac{\displaystyle\sum_{i=1}^l c_i \left(\prod_{j=1}^k P_{I_i^j,J_i^j}(\tilde{x}_{i,j})\right)}{\displaystyle\left(\Delta_{I',J'}\right)^{n'-1}}.\]
This is obtained from the left hand side of the previous equation via a substitution, thus it must be $0$, finishing the proof.
\end{proof}

We will briefly discuss here why Lemma \ref{lem:chunk} is referred to as a generalization of Dodgson's identity. Suppose we would like to calculate the $3 \times 3$ determinant:
\[
\begin{vmatrix}
x_{11} & x_{12} & x_{13} \\
x_{21} & x_{22} & x_{23} \\
x_{31} & x_{32} & x_{33} \\
\end{vmatrix}.
\]
Dodgson \cite{Dod1866} proposed the following method. First, construct the matrix of minors
\[
M = \begin{bmatrix}
\Delta_{12,12} & \Delta_{12,23} \\
\Delta_{23,12} & \Delta_{23,23}
\end{bmatrix}.
\]
Then, the desired $3 \times 3$ determinant is $\det(M)/x_{22}$. This procedure is called Dodgson condensation. The underlying algebraic identity is simply
\[
\Delta_{123,123} \Delta_{2,2} = \Delta_{12,12} \Delta_{23,23} - \Delta_{12,23} \Delta_{23,12}.
\]
More generally, the following identity, called Dodgson's identity (also known as Desnanot-Jacobi identity), holds true for $n \times n$ matrices:
\[
\Delta_{[n],[n]} \Delta_{[n]-\{1,n\}, [n] - \{1,n\}} = \Delta_{[n-1],[n-1]} \Delta_{[n]-\{1\},[n]-\{1\}} - \Delta_{[n-1],[n]-\{1\}} \Delta_{[n]-\{1\},[n-1]}.
\]
What is this identity in light of Lemma \ref{lem:chunk}? Note that we may take $I' = J' = [n] - \{1,n\}$. Removing $I'$ and $J'$ from every minor reveals the underlying identity:
\[
\Delta_{\{1,n\}, \{1,n\}} \Delta_{\varnothing, \varnothing} = \Delta_{1,1} \Delta_{n,n} - \Delta_{1,n} \Delta_{n,1}
\]
which is the most basic non-trivial determinantal identity. Our lemma says that we can always add new indices to every minor in a homogeneous determinantal identity -- like the one above -- to obtain a new higher-dimensional identity on minors. Thus, Dodgson's identity immediately follows.

From the lemma, we can deduce a number of corollaries. These algebraic results will be useful in the proof of Theorem \ref{thm:ultiDodgson}, which is one of our main results.

\begin{cor} \label{c:detid00++}
Let $k$ be an even positive integer. Let $s, \wt{s}, f, \wt{f} \in \cD^H$ and $b_1, \dots, b_k \in \cB$. Then, we have the following determinantal identity
\begin{align*}
&\det(s b_1 \cdots b_k \wt{f}) \det(\wt{s} b_1 \cdots b_k f) - \det(s b_1 \cdots b_k f) \det(\wt{s} b_1 \cdots b_k \wt{f}) \\
&= |\wt{s} s| \cdot |b_1| \cdot \, \cdots \, \cdot |b_k| \cdot |f \wt{f}|.
\end{align*}
\end{cor}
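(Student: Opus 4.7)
The plan is to expose a bilinear structure in the endpoints $s,\wt{s},f,\wt{f}$ shared across all four left-hand-side determinants, apply a $2\times 2$ Cauchy-Binet identity to collapse the alternating sum into a single scalar, and then identify that scalar with $|b_1|\cdots|b_k|$ by a three-term Pl\"ucker relation that itself follows from the $2\times 4$ case via Lemma~\ref{lem:chunk}.

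First I would Laplace-expand each determinant $\det(sb_1\cdots b_k\phi)$ (for $\phi\in\{f,\wt{f}\}$) along the top row (containing $\phi$) and the bottom row (containing $s$ or $\wt{s}$). Since $s$ is supported in the two columns of $b_1$ and $\phi$ in the two columns of $b_k$, exactly the four column pairs $\{i,k+j\}$ with $i,j\in\{1,2\}$ contribute; each such pair yields a $2\times 2$ minor on the outer two rows equal to $-s_i\phi_j$. Tracking the Laplace sign $(-1)^{1+(k+2)+i+(k+j)}=(-1)^{i+j+1}$ gives
\[
\det(sb_1\cdots b_k\phi)\;=\;\sum_{i,j\in\{1,2\}}(-1)^{i+j}\,s_i\,\phi_j\,N_{ij},
\]
where $N_{ij}$ denotes the complementary $k\times k$ minor of $B:=b_1\cdots b_k$ on columns $\{1,\ldots,k+2\}\setminus\{i,k+j\}$. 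Writing $\wt{N}:=[(-1)^{i+j}N_{ij}]$, each of the four determinants becomes $u\wt{N}v^{T}$ evaluated at one of the pairs $(s,\wt{f}),(\wt{s},f),(s,f),(\wt{s},\wt{f})$. Applying the identity $\det(U\wt{N}V^{T})=\det U\cdot\det\wt{N}\cdot\det V$ for $2\times 2$ matrices with $U=\begin{pmatrix}s\\\wt{s}\end{pmatrix}$ and $V=\begin{pmatrix}\wt{f}\\f\end{pmatrix}$ then yields
\[
\det(sb_1\cdots b_k\wt{f})\det(\wt{s}b_1\cdots b_k f)-\det(sb_1\cdots b_k f)\det(\wt{s}b_1\cdots b_k\wt{f})\;=\;\det(\wt{N})\cdot|\wt{s}s|\cdot|f\wt{f}|,
\]
because the two sign flips $\det\begin{pmatrix}s\\\wt{s}\end{pmatrix}=-|\wt{s}s|$ and $\det\begin{pmatrix}\wt{f}\\f\end{pmatrix}=-|f\wt{f}|$ cancel. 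As the diagonal sign twist does not affect the $2\times 2$ determinant, $\det(\wt{N})=N_{11}N_{22}-N_{12}N_{21}$.

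The remaining task, and the main obstacle, is to prove $N_{11}N_{22}-N_{12}N_{21}=|b_1|\cdots|b_k|$. I would apply the Pl\"ucker three-term relation for the $k$-minors of $B$ (a $k\times(k+2)$ matrix) with common column set $C:=\{3,\ldots,k\}$ and four distinguished columns $1,2,k+1,k+2$; this relation follows from the basic $2\times 4$ Pl\"ucker identity by chunking in the rows $\{3,\ldots,k\}$ and the columns $C$ via Lemma~\ref{lem:chunk}. It rearranges to
\[
N_{11}N_{22}-N_{12}N_{21}\;=\;\Delta_{[k],\{1,\ldots,k\}}\cdot\Delta_{[k],\{3,\ldots,k+2\}}.
\]
I then compute each factor directly from the staircase shape of $B$: restricting $B$ to columns $\{1,\ldots,k\}$ yields a block anti-triangular matrix whose antidiagonal blocks are $b_1,b_3,\ldots,b_{k-1}$, and reversing the $k/2$ block-rows (a product of an even number of row-transpositions, so a sign $+1$) produces a block-upper-triangular matrix of determinant $|b_1||b_3|\cdots|b_{k-1}|$; symmetrically $\Delta_{[k],\{3,\ldots,k+2\}}=|b_2||b_4|\cdots|b_k|$, and the two products combine to $|b_1|\cdots|b_k|$. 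The hard part throughout is sign bookkeeping: keeping track of the Laplace and block-reversal signs, and matching the three Pl\"ucker terms correctly with $N_{11}N_{22}$, $N_{12}N_{21}$, and $\Delta_{[k],\{1,\ldots,k\}}\Delta_{[k],\{3,\ldots,k+2\}}$.
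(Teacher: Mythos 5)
Your proposal is correct, but it takes a genuinely different route from the paper. The paper embeds all the data into a single $(k+4)\times(k+2)$ matrix $X$ (the block $\wt{s}s$ attached below $b_1$ and $f\wt{f}$ on top of $b_k$), writes the four determinants as maximal minors of $X$ that differ only in which of the rows $1,2,k+3,k+4$ they contain, strips the common chunk $I'=J'=\{3,\dots,k+2\}$ from all four products at once via Lemma \ref{lem:chunk} to reduce the entire four-term identity to the basic $2\times 2$ determinantal (Pl\"ucker) identity on rows $\{1,2,k+3,k+4\}$ and columns $\{1,2\}$, and then adds the chunk back to get $\mathrm{LHS}=\Delta_{[k+2],[k+2]}\cdot\Delta_{\{3,\dots,k+4\},[k+2]}$, which it evaluates blockwise as $\bigl(|\wt{s}s|\,|b_2|\cdots|b_k|\bigr)\bigl(|b_1|\,|b_3|\cdots|b_{k-1}|\,|f\wt{f}|\bigr)$. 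You instead first isolate the boundary dominoes by a two-row Laplace expansion, collapse the alternating combination with the multiplicativity of $2\times 2$ determinants to pull out $|\wt{s}s|\,|f\wt{f}|$, and only then invoke a three-term Pl\"ucker relation --- on the columns $\{1,2,k+1,k+2\}$ of the middle matrix $B=b_1\cdots b_k$ rather than on the rows of $X$ --- lifted from the $2\times 4$ case by Lemma \ref{lem:chunk}; your final blockwise evaluations of $\Delta_{[k],[k]}$ and $\Delta_{[k],\{3,\dots,k+2\}}$ agree with the paper's (both are block-bidiagonal, so the determinant is the product of the diagonal blocks with no sign). In effect the two arguments apply the same three-term relation on dual sides of the picture. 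What the paper's version buys is uniformity: the one-shot chunking template transfers verbatim to Corollaries \ref{c:detid-20++} and \ref{c:detid-1+20+}, where the four determinants have different sizes and different middle matrices, so your clean bilinear structure $u\wt{N}v^{T}$ (which relies on both end transformations being of type I, i.e.\ all four matrices sharing the same $B$) would not be available. What your version buys is that it makes the dependence on the endpoint dominoes completely explicit and uses Lemma \ref{lem:chunk} only for a standard Pl\"ucker relation; the only delicate points are the ones you already flag --- the Laplace and block-order signs --- and these are consistent as you have set them up.
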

\begin{proof}
Consider the $(k+4) \times (k+2)$ matrix $X$ formed by attaching the block $\wt{s} s$ right below $b_1$ in $b_1 b_2 \cdots b_k (f \wt{f})$ with rows and columns labeled as in the figure below.
\begin{center}
\begin{tikzpicture}[scale = 0.7]
\draw (0,0) -- (2,0);
\draw (0,1) -- (2,1);
\draw (0,2) -- (4,2);
\draw (0,4) -- (6,4);
\draw (2,6) -- (6,6);
\draw (4,7) -- (6,7);
\draw (4,8) -- (6,8);
\draw (0,0) -- (0,4);
\draw (2,0) -- (2,6);
\draw (4,2) -- (4,8);
\draw (6,4) -- (6,8);

\draw[gray!50, dashed] (0,4) -- (0,8) -- (4,8);
\draw[gray!50, dashed] (2,0) -- (6,0) -- (6,4);

\node at (1.0,0.5) {$\wt{s}$};
\node at (1.0,1.5) {$s$};
\node at (1.0,3.0) {$b_1$};
\node at (3.0,3.0) {$b_2$};
\node at (3.0,5.0) {$\iddots$};
\node at (5.0,5.0) {$b_k$};
\node at (5.0,6.5) {$f$};
\node at (5.0,7.5) {$\wt{f}$};

\node at (0.5,-0.5) {$1$};
\node at (1.5,-0.5) {$2$};
\node at (2.5,-0.5) {$3$};
\node at (4.0,-0.5) {$\dots$};
\footnotesize
\node at (5.5,-0.5) {$(k+2)$};
\normalsize

\node at (-0.5,0.5) {$1$};
\node at (-0.5,1.5) {$2$};
\node at (-0.5,2.5) {$3$};
\node at (-0.5,4.5) {$\vdots$};
\footnotesize
\node at (-1.0,6.5) {$(k+3)$};
\node at (-1.0,7.5) {$(k+4)$};
\normalsize
\end{tikzpicture}
\end{center}
The left-hand-side of the identity we want to prove can be written in terms of minors of $X$ as follows.
\begin{align*}
\Delta_{\{2, \dots, k+2, k+4\}, [k+2]} \Delta_{\{1,3,\dots, k+3\}, [k+2]} - \Delta_{\{2,\dots, k+3\},[k+2]} \Delta_{\{1,3,\dots, k+2, k+4\}, [k+2]}.
\end{align*}
Consider $I' = \{3, \dots, k+2\}$ and $J' = \{3, \dots, k+2\}$. In all the four minors, the rows and the columns include $I'$ and $J'$. Taking $I'$ and $J'$ out from the minors yields
\[
\Delta_{\{2,k+4\},\{1,2\}} \cdot \Delta_{\{1,k+3\}, \{1,2\}} - \Delta_{\{2,k+3\},\{1,2\}} \cdot \Delta_{\{1,k+4\},\{1,2\}}
\]
Since we have the identity
\begin{align*}
& \Delta_{\{2,k+4\},\{1,2\}} \cdot \Delta_{\{1,k+3\}, \{1,2\}} - \Delta_{\{2,k+3\},\{1,2\}} \cdot \Delta_{\{1,k+4\},\{1,2\}} \\
& = \Delta_{\{1,2\},\{1,2\}} \cdot \Delta_{\{k+3,k+4\}, \{1,2\}},
\end{align*}
Lemma \ref{lem:chunk} allows us to add back $I'$ and $J'$ to obtain the higher-dimensional identity
\begin{align*}
LHS &= \Delta_{[k+2],[k+2]} \cdot \Delta_{\{3,\dots, k+4\},[k+2]} \\
&= \big( |\wt{s} s| \cdot |b_2| \cdot \, \cdots \, \cdot |b_k| \big) \big( |b_1| \cdot |b_3| \cdot \, \cdots \, \cdot |b_{k-1}| \cdot |f\wt{f}| \big)
\end{align*}
as desired. It is helpful to look at the determinantal identity via the following diagram.
\begin{center}
\begin{tikzpicture}[scale = 0.25]
\begin{scope}[shift={(-26,0)}]
\fill[orange!50] (0,1) rectangle (2,4);
\fill[orange!50] (2,2) rectangle (4,6);
\fill[orange!50] (4,4) rectangle (6,6);
\fill[orange!50] (4,7) rectangle (6,8);

\draw (0,0) -- (2,0);
\draw (0,1) -- (2,1);
\draw (0,2) -- (4,2);
\draw (0,4) -- (6,4);
\draw (2,6) -- (6,6);
\draw (4,7) -- (6,7);
\draw (4,8) -- (6,8);
\draw (0,0) -- (0,4);
\draw (2,0) -- (2,6);
\draw (4,2) -- (4,8);
\draw (6,4) -- (6,8);

\node at (7,4) {$\cdot$};
\end{scope}

\begin{scope}[shift={(-18,0)}]
\fill[orange!50] (0,0) rectangle (2,1);
\fill[orange!50] (0,2) rectangle (4,4);
\fill[orange!50] (2,4) rectangle (6,6);
\fill[orange!50] (4,6) rectangle (6,7);

\draw (0,0) -- (2,0);
\draw (0,1) -- (2,1);
\draw (0,2) -- (4,2);
\draw (0,4) -- (6,4);
\draw (2,6) -- (6,6);
\draw (4,7) -- (6,7);
\draw (4,8) -- (6,8);
\draw (0,0) -- (0,4);
\draw (2,0) -- (2,6);
\draw (4,2) -- (4,8);
\draw (6,4) -- (6,8);
\node at (8,4) {$-$};
\end{scope}

\begin{scope}[shift={(-8,0)}]
\fill[orange!50] (0,1) rectangle (2,4);
\fill[orange!50] (2,2) rectangle (4,6);
\fill[orange!50] (4,4) rectangle (6,7);

\draw (0,0) -- (2,0);
\draw (0,1) -- (2,1);
\draw (0,2) -- (4,2);
\draw (0,4) -- (6,4);
\draw (2,6) -- (6,6);
\draw (4,7) -- (6,7);
\draw (4,8) -- (6,8);
\draw (0,0) -- (0,4);
\draw (2,0) -- (2,6);
\draw (4,2) -- (4,8);
\draw (6,4) -- (6,8);

\node at (7,4) {$\cdot$};
\end{scope}

\begin{scope}[shift={(0,0)}]
\fill[orange!50] (0,0) rectangle (2,1);
\fill[orange!50] (0,2) rectangle (4,4);
\fill[orange!50] (2,4) rectangle (6,6);
\fill[orange!50] (4,7) rectangle (6,8);

\draw (0,0) -- (2,0);
\draw (0,1) -- (2,1);
\draw (0,2) -- (4,2);
\draw (0,4) -- (6,4);
\draw (2,6) -- (6,6);
\draw (4,7) -- (6,7);
\draw (4,8) -- (6,8);
\draw (0,0) -- (0,4);
\draw (2,0) -- (2,6);
\draw (4,2) -- (4,8);
\draw (6,4) -- (6,8);

\node at (8,4) {$=$};
\end{scope}

\begin{scope}[shift={(10,0)}]
\fill[orange!50] (0,0) rectangle (2,4);
\fill[orange!50] (2,2) rectangle (4,6);
\fill[orange!50] (4,4) rectangle (6,6);

\draw (0,0) -- (2,0);
\draw (0,1) -- (2,1);
\draw (0,2) -- (4,2);
\draw (0,4) -- (6,4);
\draw (2,6) -- (6,6);
\draw (4,7) -- (6,7);
\draw (4,8) -- (6,8);
\draw (0,0) -- (0,4);
\draw (2,0) -- (2,6);
\draw (4,2) -- (4,8);
\draw (6,4) -- (6,8);

\node at (7,4) {$\cdot$};
\node at (-2,4) {$=$};
\end{scope}

\begin{scope}[shift={(18,0)}]
\fill[orange!50] (0,2) rectangle (2,4);
\fill[orange!50] (2,2) rectangle (4,6);
\fill[orange!50] (4,4) rectangle (6,8);

\draw (0,0) -- (2,0);
\draw (0,1) -- (2,1);
\draw (0,2) -- (4,2);
\draw (0,4) -- (6,4);
\draw (2,6) -- (6,6);
\draw (4,7) -- (6,7);
\draw (4,8) -- (6,8);
\draw (0,0) -- (0,4);
\draw (2,0) -- (2,6);
\draw (4,2) -- (4,8);
\draw (6,4) -- (6,8);
\end{scope}

\begin{scope}[shift={(10,-10)}]
\fill[orange!50] (0,0) rectangle (2,2);
\fill[orange!50] (2,2) rectangle (4,4);
\fill[orange!50] (4,4) rectangle (6,6);

\draw (0,0) -- (2,0);
\draw (0,1) -- (2,1);
\draw (0,2) -- (4,2);
\draw (0,4) -- (6,4);
\draw (2,6) -- (6,6);
\draw (4,7) -- (6,7);
\draw (4,8) -- (6,8);
\draw (0,0) -- (0,4);
\draw (2,0) -- (2,6);
\draw (4,2) -- (4,8);
\draw (6,4) -- (6,8);

\node at (7,4) {$\cdot$};
\node at (-2,4) {$=$};
\end{scope}

\begin{scope}[shift={(18,-10)}]
\fill[orange!50] (0,2) rectangle (2,4);
\fill[orange!50] (2,4) rectangle (4,6);
\fill[orange!50] (4,6) rectangle (6,8);

\draw (0,0) -- (2,0);
\draw (0,1) -- (2,1);
\draw (0,2) -- (4,2);
\draw (0,4) -- (6,4);
\draw (2,6) -- (6,6);
\draw (4,7) -- (6,7);
\draw (4,8) -- (6,8);
\draw (0,0) -- (0,4);
\draw (2,0) -- (2,6);
\draw (4,2) -- (4,8);
\draw (6,4) -- (6,8);
\end{scope}
\end{tikzpicture}
\end{center}
Each colored part in the equation above shows the minor the corresponding term represents.
\end{proof}

We have now seen the power of Lemma \ref{lem:chunk} illustrated in the proof of Corollary \ref{c:detid00++}. When we want to simplify a certain expression involving minors of many rows and columns, we can remove ``common chunks'' to reduce the minor identity to a low-dimensional identity, and then we add back the removed chunks.

This technique can be applied very generally. In fact, given a standard DRH $\mu$, a compatible starting end transformation $l_s$, and a compatible finishing end transformation $l_f$, we can always decompose
\[
\det(\mu) \cdot \det(l_s l_f \mu) - \det(l_s \mu) \cdot \det(l_f \mu)
\]
using Lemma \ref{lem:chunk}. Corollary \ref{c:detid00++} is an example in which both $l_s$ and $l_f$ are Type I end transformations. We illustrate further examples in the following corollaries.

\begin{cor} \label{c:detid-20++}
Let $k$ be an even positive integer. Let $s, \wt{s}, f, \wt{f} \in \cD^H$ and $b_1, \dots, b_k, b'_w, b'_0 \in \cB$ such that $f = h_+(b'_w)$. Then, we have the following determinantal identity
\begin{align*}
& \det(s b_1 \cdots b_k f) \det( \wt{s} b_1 \cdots b_k b'_w b'_0 \wt{f}) - \det(\wt{s} b_1 \cdots b_k f) \det( s b_1 \cdots b_k b'_w b'_0 \wt{f}) \\
&= |\wt{s} s| \cdot |b_1| \cdot \, \cdots \, \cdot |b_k| \cdot |b'_w| \cdot |h_+(b'_0) \wt{f}|.
\end{align*}
\end{cor}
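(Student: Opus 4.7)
The plan is to adapt the technique of Corollary \ref{c:detid00++}: construct one rectangular matrix $X$ encoding all of the DRHs in the identity, interpret all four determinants on the LHS as minors of $X$, and reduce the identity, via Lemma \ref{lem:chunk}, to a low-dimensional Pl\"ucker relation on a $4\times 2$ matrix.

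Specifically, I would build $X$ as the $(k+5)\times(k+4)$ matrix obtained by placing the $2\times 2$ block $\wt{s}\,s$ at rows $1,2$ and cols $1,2$, the zigzag $b_1 b_2\cdots b_k$ on rows $3,\ldots,k+2$ and cols $1,\ldots,k+2$ in the usual staircase pattern, the block $b'_w$ at rows $k+3,k+4$ and cols $k+1,k+2$, the block $b'_0$ at rows $k+3,k+4$ and cols $k+3,k+4$, and $\wt{f}$ at row $k+5$ and cols $k+3,k+4$, with zeros elsewhere. The hypothesis $f=h_+(b'_w)$ ensures that row $k+4$ of $X$ at cols $k+1,k+2$ is exactly $f$, so $\det(sb_1\cdots b_k f)$ and $\det(\wt{s}b_1\cdots b_k f)$ are the minors of $X$ on rows $\{2,3,\ldots,k+2,k+4\}$ and $\{1,3,\ldots,k+2,k+4\}$ with cols $\{1,\ldots,k+2\}$, while the two ``long'' determinants use rows $\{1,3,\ldots,k+5\}$ and $\{2,3,\ldots,k+5\}$ with cols $\{1,\ldots,k+4\}$. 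Next I define two auxiliary ``big'' minors $\Delta_A:=\Delta_{[k+5]\setminus\{k+3\},\,[k+4]}$ and $\Delta_B:=\Delta_{\{3,\ldots,k+4\},\,[k+2]}$. Grouping rows and columns into consecutive pairs, $\Delta_A$ becomes block lower triangular with diagonal blocks $\wt{s}s,\,b_2,\,b_4,\,\ldots,\,b_k,\,h_+(b'_0)\wt{f}$, and $\Delta_B$ becomes block upper triangular with diagonal blocks $b_1,\,b_3,\,\ldots,\,b_{k-1},\,b'_w$, so
\[
\Delta_A\cdot\Delta_B \;=\; |\wt{s}s|\cdot|b_1|\cdots|b_k|\cdot|b'_w|\cdot|h_+(b'_0)\wt{f}|,
\]
which is precisely the RHS of the claimed identity. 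Hence it suffices to prove
\[
\det(sb_1\cdots b_k f)\det(\wt{s}b_1\cdots b_k b'_w b'_0\wt{f}) - \det(\wt{s}b_1\cdots b_k f)\det(sb_1\cdots b_k b'_w b'_0\wt{f}) \;=\; \Delta_A\cdot\Delta_B.
\]

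All six minors in this identity contain the middle rows $\{3,\ldots,k+2\}$ and the middle cols $\{3,\ldots,k+2\}$, so by Lemma \ref{lem:chunk} it is enough to prove the ``small'' version after deleting these common chunks. The resulting identity lives on the $5\times 4$ submatrix $Y$ of $X$ on rows $\{1,2,k+3,k+4,k+5\}$ and cols $\{1,2,k+3,k+4\}$, and involves six small minors $\delta_1,\ldots,\delta_4,\delta_A,\delta_B$. Three are immediate: $\delta_1 = |s,f|$, $\delta_3 = |\wt{s},f|$, $\delta_B = |b'_w|$, while $\delta_A = |\wt{s},s|\cdot|h_+(b'_0),\wt{f}|$ by block lower triangularity of $Y$. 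Expanding the two $4\times 4$ minors $\delta_2,\delta_4$ along the top ($\wt{s}$ or $s$) row and then the bottom ($\wt{f}$) row of $Y$ yields
\[
\delta_i \;=\; |h_+(b'_0),\wt{f}|\cdot|v_i,h_-(b'_w)| \;-\; |h_-(b'_0),\wt{f}|\cdot|v_i,f|
\]
with $v_2=\wt{s}$ and $v_4=s$. Plugging into $\delta_1\delta_2 - \delta_3\delta_4$, the $|h_-(b'_0),\wt{f}|$ terms cancel and the remainder is $|h_+(b'_0),\wt{f}|\cdot\bigl(|s,f|\,|\wt{s},h_-(b'_w)| - |\wt{s},f|\,|s,h_-(b'_w)|\bigr)$. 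The parenthesised expression is the classical Pl\"ucker relation on the $4\times 2$ matrix with rows $\wt{s},s,h_-(b'_w),f$, so it equals $|\wt{s},s|\cdot|h_-(b'_w),f| = |\wt{s},s|\cdot|b'_w|$. This is the small identity $\delta_1\delta_2 - \delta_3\delta_4 = \delta_A\delta_B$, which Lemma \ref{lem:chunk} then lifts to the desired big identity.

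The main delicate step is the block-triangular analysis of $\Delta_A$ at the transition between $b_k$ and the $b'_w$--$b'_0$ region. This is where the hypothesis $f = h_+(b'_w)$ is essential: it guarantees that the block of $\Delta_A$ at row-block $(k/2+2)$, col-block $(k/2+1)$ is $\begin{pmatrix} f \\ 0\end{pmatrix}$, which sits strictly below the diagonal and so does not interfere with the product of diagonal blocks; without this hypothesis $\Delta_A$ would fail to be block lower triangular and the identification with the intended product would break. The rest of the proof is bookkeeping, the $4\times 4$ expansion of $\delta_2$ and $\delta_4$, and the cancellation producing the $4\times 2$ Pl\"ucker relation.
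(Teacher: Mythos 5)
Your setup is the same as the paper's: the $(k+5)\times(k+4)$ matrix $X$, the identification of the four determinants as the minors $\Delta_{\{2,\dots,k+2,k+4\},[k+2]}$, $\Delta_{\{1,3,\dots,k+5\},[k+4]}$, $\Delta_{\{1,3,\dots,k+2,k+4\},[k+2]}$, $\Delta_{\{2,\dots,k+5\},[k+4]}$, and the target factorization $\Delta_A\cdot\Delta_B=\Delta_{\{1,\dots,k+2,k+4,k+5\},[k+4]}\cdot\Delta_{\{3,\dots,k+4\},[k+2]}$ (your block-triangular evaluations of $\Delta_A$ and $\Delta_B$ are correct). The argument breaks at the chunk-removal step. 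Deleting the columns $\{3,\dots,k+2\}$ deletes in particular columns $k+1,k+2$, which is exactly where $b_k$, $b'_w$, and hence the domino $f$ sit. The surviving $5\times 4$ submatrix $Y$ on rows $\{1,2,k+3,k+4,k+5\}$ and columns $\{1,2,k+3,k+4\}$ contains no trace of $f$ or $b'_w$: its rows are $(\wt{s}\mid 0)$, $(s\mid 0)$, $(0\mid h_-(b'_0))$, $(0\mid h_+(b'_0))$, $(0\mid\wt{f})$. Consequently $\delta_1=\Delta_{\{2,k+4\},\{1,2\}}=0$ (row $k+4$ is zero in columns $1,2$), not the $2\times2$ determinant with rows $s$ and $f$; likewise $\delta_3=0$ and $\delta_B=\Delta_{\{k+3,k+4\},\{1,2\}}=0$, not $|b'_w|$. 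Worse, Lemma \ref{lem:chunk} can only lift relations that hold as polynomial identities in the retained entries (its proof substitutes $\tilde{x}_{i,j}$ into the vanishing polynomial), and the three-term relation your reduction would require,
\[
\Delta_{\{2,k+4\},\{1,2\}}\Delta_{\{1,k+3,k+4,k+5\},\{1,2,k+3,k+4\}}-\Delta_{\{1,k+4\},\{1,2\}}\Delta_{\{2,k+3,k+4,k+5\},\{1,2,k+3,k+4\}}=\Delta_{\{1,2,k+4,k+5\},\{1,2,k+3,k+4\}}\Delta_{\{k+3,k+4\},\{1,2\}},
\]
is false for a generic $5\times 4$ matrix, so it cannot be fed into the lemma at all.

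The fix is to choose the chunks asymmetrically, as the paper does: take $I'=\{3,\dots,k+2\}\cup\{k+4\}$ and $J'=\{2,\dots,k+2\}$, both of size $k+1$. The four left-hand minors then reduce to $\Delta_{\{2\},\{1\}}$, $\Delta_{\{1,k+3,k+5\},J}$, $\Delta_{\{1\},\{1\}}$, $\Delta_{\{2,k+3,k+5\},J}$ with $J=\{1,k+3,k+4\}$, and the generic base identity is the four-term Laplace expansion of the $4\times 4$ determinant on rows $\{1,2,k+3,k+5\}$ with column $1$ repeated:
\[
\Delta_{\{2\},\{1\}}\Delta_{\{1,k+3,k+5\},J}-\Delta_{\{1\},\{1\}}\Delta_{\{2,k+3,k+5\},J}=\Delta_{\{k+3\},\{1\}}\Delta_{\{1,2,k+5\},J}-\Delta_{\{k+5\},\{1\}}\Delta_{\{1,2,k+3\},J}.
\]
Lifting this by Lemma \ref{lem:chunk} produces a four-term identity in which the last term carries the factor $\Delta_{\{3,\dots,k+2,k+4,k+5\},[k+2]}$; this vanishes because row $k+5$ of $X$ is supported only in columns $k+3,k+4$, leaving exactly $\Delta_B\cdot\Delta_A$. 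With that substitution your computation of the right-hand side goes through; without it, the central step of your proof does not stand.
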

\begin{proof}
Similarly to the proof of the previous corollary, we consider the following $(k+5) \times (k+4)$ matrix $X$ formed by attaching the block $\wt{s} s$ right below $b_1$ in $b_1 b_2 \cdots b_k b'_w b'_0 \wt{f}$ with rows and columns labeled as in the figure below.
\begin{center}
\begin{tikzpicture}[scale = 0.7]
\draw (0,0) -- (2,0);
\draw (0,1) -- (2,1);
\draw (0,2) -- (4,2);
\draw (0,4) -- (6,4);
\draw (2,6) -- (8,6);
\draw (4,7) -- (6,7);
\draw (4,8) -- (8,8);
\draw (6,9) -- (8,9);
\draw (0,0) -- (0,4);
\draw (2,0) -- (2,6);
\draw (4,2) -- (4,8);
\draw (6,4) -- (6,9);
\draw (8,6) -- (8,9);

\draw[gray!50, dashed] (0,4) -- (0,9) -- (6,9);
\draw[gray!50, dashed] (2,0) -- (8,0) -- (8,6);

\node at (1.0,0.5) {$\wt{s}$};
\node at (1.0,1.5) {$s$};
\node at (1.0,3.0) {$b_1$};
\node at (3.0,3.0) {$b_2$};
\node at (3.0,5.0) {$\iddots$};
\node at (5.0,5.0) {$b_k$};
\node at (5.0,7.5) {$f$};
\node at (7.0,8.5) {$\wt{f}$};

\node at (0.5,-0.5) {$1$};
\node at (1.5,-0.5) {$2$};
\node at (2.5,-0.5) {$3$};
\node at (3.5,-0.5) {$4$};
\node at (5.5,-0.5) {$\dots$};
\footnotesize
\node at (7.5,-0.5) {$(k+4)$};
\normalsize

\node at (-0.5,0.5) {$1$};
\node at (-0.5,1.5) {$2$};
\node at (-0.5,2.5) {$3$};
\node at (-0.5,3.5) {$4$};
\node at (-0.5,5.5) {$\vdots$};
\footnotesize
\node at (-1.0,7.5) {$(k+4)$};
\node at (-1.0,8.5) {$(k+5)$};
\normalsize
\end{tikzpicture}
\end{center}
The left-hand-side of the identity to prove is
\[
\Delta_{\{2,\dots, k+2,k+4\},[k+2]} \Delta_{\{1,3,\dots, k+5\},[k+4]} - \Delta_{\{1,3,\dots,k+2,k+4\},[k+2]} \Delta_{\{2, \dots, k+5\},[k+4]}.
\]
Removing $I' = \{3, \dots, k+2\} \cup \{k+4\}$ and $J' = \{2, \dots, k+2\}$ yields
\[
\Delta_{\{2\},\{1\}} \cdot \Delta_{\{1,k+3,k+5\},\{1,k+3,k+4\}} - \Delta_{\{1\},\{1\}} \cdot \Delta_{\{2,k+3,k+5\}, \{1,k+3,k+4\}}.
\]
Note that we have the following low-dimensional identity:
\begin{align*}
&\Delta_{\{2\},\{1\}} \cdot \Delta_{\{1,k+3,k+5\},\{1,k+3,k+4\}} - \Delta_{\{1\},\{1\}} \cdot \Delta_{\{2,k+3,k+5\}, \{1,k+3,k+4\}} \\
& = \Delta_{\{k+3\},\{1\}} \cdot \Delta_{\{1,2,k+5\},\{1,k+3,k+4\}} - \Delta_{\{k+5\},\{1\}} \cdot \Delta_{\{1,2,k+3\}, \{1,k+3,k+4\}}.
\end{align*}
Adding back chunks yields
\begin{align*}
LHS &= \Delta_{\{3, \dots, k+4\},[k+2]} \cdot \Delta_{\{1,\dots, k+2, k+4, k+5\}, [k+4]} \\
& - \Delta_{\{3, \dots, k+2, k+4, k+5\},[k+2]} \cdot \Delta_{[k+4],[k+4]}.
\end{align*}
Observe that $\Delta_{\{3, \dots, k+2, k+4, k+5\},[k+2]} = 0$. Therefore,
\begin{align*}
LHS &= \Delta_{\{3, \dots, k+4\},[k+2]} \cdot \Delta_{\{1,\dots, k+2, k+4, k+5\}, [k+4]} \\
&= \big( |b_1| \cdot |b_3| \cdot \, \cdots \, \cdot |b_{k-1}| \cdot |b'_w| \big) \cdot \big( |\wt{s} s| \cdot |b_2| \cdot |b_4| \cdot \, \cdots \, \cdot |b_k| \cdot |h_+(b'_0) \wt{f}| \big)
\end{align*}
as desired.
\end{proof}

\begin{cor} \label{c:detid-1+20+}
Let $k$ be an even positive integer. Let $s, \wt{s}, f \in \cD^H$, $\varphi \in \cD^V$, $b_0, b_1, \dots, b_k, b_w, b'_w \in \cB$. Suppose that $v_-(b_w) = s$ and $v_+(b'_w) = f$. Then,
\begin{align*}
& \det(s b_1 \cdots b_k b'_w \varphi) \det(\wt{s} b_0 b_w b_1 \cdots b_k f) - \det(s b_1 \cdots b_k f) \det( \wt{s} b_0 b_w b_1 \cdots b_k b'_w \varphi) \\
&= |\wt{s} v_-(b_0)| \cdot |b_w| \cdot |b'_w| \cdot \delta \cdot |b_1| \cdot \, \cdots \, \cdot |b_k|,
\end{align*}
where $\delta$ is the upper entry in $\varphi$.
\end{cor}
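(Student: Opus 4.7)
The plan is to mimic the strategy used in the proofs of Corollaries \ref{c:detid00++} and \ref{c:detid-20++}: construct a single rectangular matrix $X$ which contains all four determinants appearing on the left-hand side as minors, then invoke Lemma \ref{lem:chunk} to peel off the bulk of the diagram (the part covering $b_1,\dots,b_k$) and reduce the identity to a low-dimensional determinantal relation that can be checked by direct expansion.

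First, I would take $X$ to be the big DRH $\wt{s} b_0 b_w b_1 \cdots b_k b'_w \varphi$, itself a $(k+5)\times(k+5)$ square matrix, which already realizes the fourth determinant as its own full minor. Using the hypotheses $v_-(b_w) = s$ and $v_+(b'_w) = f$ to identify the dominoes $s$ and $f$ with the appropriate boundary pieces of the blocks $b_w$ and $b'_w$ inside $X$, one checks that each of the other three determinants --- and each of the $2\times 2$ frozen minors appearing on the right-hand side --- can be written as $\Delta_{I_i,J_i}$ of $X$ for explicit index sets $I_i,J_i \subseteq [k+5]$ of matching sizes. Concretely, $\det(s b_1 \cdots b_k b'_w \varphi)$ should arise by deleting the rows and columns corresponding to $\wt{s}$ and the outer side of $b_0$, while $\det(\wt{s} b_0 b_w b_1 \cdots b_k f)$ arises by deleting the row and column corresponding to the outer side of $b'_w$ and the remaining column of $\varphi$.

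Next, I would identify the common chunk $(I', J')$ of row and column indices shared by all four minors: this chunk consists of the bulk indices covering $b_1,\dots,b_k$ together with the boundary indices identified with $s$ and $f$. Stripping this chunk away via Lemma \ref{lem:chunk}, the four-term identity collapses onto the residual indices, which live entirely near the two ends of the DRH: those coming from $\wt{s}$ and $b_0$ at the starting end, and those coming from $b'_w$ and $\varphi$ at the finishing end. Because the zero structure of $X$ makes the two ends independent, the residual low-dimensional identity should factor cleanly into a $2\times 2$ determinant of the form $|\wt{s}\, v_-(b_0)|$ contributed by the starting end, multiplied by the single entry $\delta$ at the upper corner of $\varphi$ contributed by the finishing end.

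Finally, restoring the common chunk via Lemma \ref{lem:chunk} reintroduces the product of the bulk frozen determinants $|b_w|\cdot|b'_w|\cdot|b_1|\cdots|b_k|$ on the right-hand side, matching the claimed formula. I expect the trickiest step will be the bookkeeping for the four embeddings into $X$ with consistent index sets and signs, and in particular verifying that the residual identity separates cleanly into the predicted starting-end and finishing-end contributions; as in the proof of Corollary \ref{c:detid-20++}, a colored-picture diagram displaying the four minors as shaded regions of $X$ should reduce this verification to an essentially mechanical check.
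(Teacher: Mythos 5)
Your proposal follows essentially the same route as the paper: it builds the same $(k+5)\times(k+5)$ matrix $X=\wt{s}\, b_0 b_w b_1\cdots b_k b'_w\varphi$, realizes all four determinants (and the frozen factors) as minors of $X$, strips the common chunk with Lemma \ref{lem:chunk}, checks a low-dimensional identity, and re-adjoins the chunk. The one wrinkle to expect when you carry this out is that the reduced identity is a two-term-equals-two-term Pl\"ucker rearrangement rather than an outright factorization; the clean splitting into $|\wt{s}\,v_-(b_0)|\cdot\delta\cdot(\text{bulk})$ emerges only after the chunk is added back, when one of the two resulting minors vanishes because of the zero pattern of $X$.
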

\begin{proof}
Consider the following $(k+5) \times (k+5)$ matrix $X = \wt{s} b_0 b_w b_1 \cdots b_k b'_w \varphi$, with rows and columns labeled as in the figure below.
\begin{center}
\begin{tikzpicture}[scale = 0.7]
\draw (0,0) -- (2,0);
\draw (0,1) -- (4,1);
\draw (2,2) -- (4,2);
\draw (0,3) -- (6,3);
\draw (2,5) -- (8,5);
\draw (4,7) -- (9,7);
\draw (6,8) -- (8,8);
\draw (6,9) -- (9,9);
\draw (0,0) -- (0,3);
\draw (2,0) -- (2,5);
\draw (4,1) -- (4,7);
\draw (6,3) -- (6,9);
\draw (8,5) -- (8,9);
\draw (9,7) -- (9,9);

\draw[gray!50, dashed] (0,3) -- (0,9) -- (6,9);
\draw[gray!50, dashed] (2,0) -- (9,0) -- (9,7);

\node at (1.0,0.5) {$\wt{s}$};
\node at (1.0,2.0) {$b_0$};
\node at (3.0,1.5) {$s$};
\node at (3.0,4.0) {$b_1$};
\node at (5.0,4.0) {$b_2$};
\node at (5.0,6.0) {$\iddots$};
\node at (7.0,6.0) {$b_k$};
\node at (7.0,8.5) {$f$};
\node at (8.5,8) {$\varphi$};

\node at (0.5,-0.5) {$1$};
\node at (1.5,-0.5) {$2$};
\node at (2.5,-0.5) {$3$};
\node at (3.5,-0.5) {$4$};
\node at (6.0,-0.5) {$\dots$};
\footnotesize
\node at (8.5,-0.5) {$(k+5)$};
\normalsize

\node at (-0.5,0.5) {$1$};
\node at (-0.5,1.5) {$2$};
\node at (-0.5,2.5) {$3$};
\node at (-0.5,3.5) {$4$};
\node at (-0.5,5.5) {$\vdots$};
\footnotesize
\node at (-1.0,7.5) {$(k+4)$};
\node at (-1.0,8.5) {$(k+5)$};
\normalsize
\end{tikzpicture}
\end{center}
The left-hand-side of the identity to prove is
\[
\Delta_{\{2,4,\dots,k+5\}, \{3, \dots, k+5\}} \Delta_{\{1, \dots, k+3, k+5\}, [k+4]} - \Delta_{\{2,4, \dots, k+3, k+5\}, \{3, \dots, k+4\}} \Delta_{[k+5],[k+5]}.
\]
Removing $I' = \{2,4, \dots, k+3, k+5\}$ and $J' = \{3, \dots, k+4\}$ yields
\[
\Delta_{\{k+4\},\{k+5\}} \cdot \Delta_{\{1,3\},\{1,2\}} - \Delta_{\varnothing, \varnothing} \Delta_{\{1,3,k+4\}, \{1,2,k+5\}}.
\]
We have the following low-dimensional identity
\begin{align*}
& \Delta_{\{k+4\},\{k+5\}} \cdot \Delta_{\{1,3\},\{1,2\}} - \Delta_{\varnothing, \varnothing} \Delta_{\{1,3,k+4\}, \{1,2,k+5\}} \\
& = \Delta_{\{3\},\{k+5\}} \cdot \Delta_{\{1,k+4\},\{1,2\}} - \Delta_{\{1\}, \{k+5\}} \cdot \Delta_{\{3,k+4\},\{1,2\}}.
\end{align*}
Adding back chunks yields
\begin{align*}
LHS &= \Delta_{\{2,\dots, k+3,k+5\}, \{3, \dots, k+5\}} \cdot \Delta_{\{1,2,4,\dots, k+5\},[k+4]} \\
& \hphantom{=} - \underbrace{\Delta_{\{1,2,4, \dots, k+3, k+5\}, \{3, \dots, k+5\}}}_{=0} \cdot \Delta_{\{2, \dots, k+5\}, [k+4]} \\
&= \big( |b_w| \cdot |b_2| \cdot |b_4| \cdot \, \cdots \, \cdot |b_k| \cdot \delta \big) \cdot \big( |\wt{s} v_-(b_0)| \cdot |b_1| \cdot |b_3| \cdot \, \cdots \, \cdot |b_{k-1}| \cdot |b'_w| \big)
\end{align*}
as desired.
\end{proof}

\section{The main decomposition theorem}

Recall that we have a proposed mutable cluster variable which is the determinant (up to a sign) of a standard DRH in each cell of $\cW_{\lambda}$. We say that $(m_{11}, m_{12}; m_{21}, m_{22})$ is a {\em quadruple inside $\cW_{\lambda}$} if they are the upper-left, upper-right, lower-left, and lower-right cells, respectively, of a $2 \times 2$-square inside $\cW_{\lambda}$.

Quadruples inside $\cW_{\lambda}$ are of great interest to us, because they are where worm operations happen. Consider, for example, the quadruple
\[
\fq = (m_{11}, m_{12}; m_{21}, m_{22})
\]
inside $\cW_{\lambda}$ and a worm $w$ that contains $m_{21} \rightarrow m_{11} \rightarrow m_{12}$. The worm operation on $w$ at $m_{11}$ sends the worm to another worm $w'$ with $m_{11}$ replaced with $m_{22}$. In Section \ref{ss:fcdesc}, we will describe the quiver $Q_w$ for every worm $w$. Understanding the determinant
\[
|\fq| := \begin{vmatrix}
e(m_{11}) & e(m_{12}) \\
e(m_{21}) & e(m_{22})
\end{vmatrix}
\]
is important in describing the frozen arrows at $m_{11}$ in $w$ (and therefore $m_{22}$ in $w'$). Our main result of this section is Theorem \ref{thm:ultiDodgson}, in which we factor the determinant $|\fq|$ into a product of frozen variables, and thus allowing us to describe the frozen arrows. In Corollaries \ref{c:detid00++}, \ref{c:detid-20++}, and \ref{c:detid-1+20+}, we have seen some examples of how such a $2 \times 2$ determinant of standard DRH determinants factors into small matrices. We will see in the following that such factorizations occur in all quadruples in $\cW_{\lambda}$.

\begin{obs} \label{o:lslf}
Let $(m_{11}, m_{12}; m_{21}, m_{22})$ be a quadruple inside $\cW_{\lambda}$. Let $\mu_{ij} := \Subsk(m_{ij})$. Then, there is a finishing-domino end transformation $l_f$ which sends $\Std(\mu_{11})$ to $\Std(\mu_{12})$ and there is a starting-domino end transformation $l_s$ which sends $\Std(\mu_{11})$ to $\Std(\mu_{21})$. These end transformations are necessarily unique. Furthermore, $l_f$ sends $\Std(\mu_{21})$ to $\Std(\mu_{22})$ and $l_s$ sends $\Std(\mu_{12})$ to $\Std(\mu_{22})$.
\end{obs}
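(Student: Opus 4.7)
The plan is to reduce the statement to the structural observation that the starting and finishing ends of a standardized DRH are independent of one another. Using the bijection of Lemma \ref{l:Wliffrles}, I would attach index pairs $(r_{ij}, s_{ij})$ to the cells, so $\mu_{ij} = W^{r_{ij}} W^{r_{ij}+1} \cdots W^{s_{ij}}$. The quadruple hypothesis forces $r_{11} = r_{12}$, $r_{21} = r_{22}$, $s_{11} = s_{21}$, and $s_{12} = s_{22}$; write $r, r', s, s'$ for these four shared indices.

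First I would construct $l_f$ and verify uniqueness. Since $m_{11}$ and $m_{12}$ share a row, the subskeleta $\mu_{11} = W^r \cdots W^s$ and $\mu_{12} = W^r \cdots W^{s'}$ agree on their common initial segment up to position $\min(s, s')$ and differ only beyond it. Inspecting the column ordering of $\cW_\lambda$ recorded in Figure \ref{fig:Wlambdalabels}, the pair $(s, s')$ must lie in one of six structural adjacency cases: both are $E$-positions (either consecutive in $\lambda$ itself or separated by a run of $N$'s), both are $N$-positions (similarly subdivided), or the $E$-to-$N$ transition across the middle of the column ordering (in two subcases depending on whether the rightmost $N$ of $\lambda$ lies to the right or to the left of the rightmost $E$). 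For each case I would directly count how the number of bends of the subskeleton changes, finding that the difference is always $0$, $\pm 1$, or $\pm 2$, and that the shape of the finishing domino is preserved when the difference is even and flipped when it is odd. Matching these data against (ET1f), (ET2f), (ET3f) and their inverses gives a single end transformation; uniqueness follows because the three types are distinguished by their net change in matrix size together with their effect on the shape of the finishing domino. An entirely parallel analysis, using the row ordering of $\cW_\lambda$, yields the unique $l_s$.

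For the commutation claims, the key point is that $l_s$ alters only the cells and blocks of $\Std$ coming from positions $\le \max(r, r')$ of the underlying word, while $l_f$ alters only those from positions $\ge \min(s, s')$; since $\max(r, r') \le \min(s, s')$, these ranges are disjoint. Consequently $\Std(\mu_{11})$ and $\Std(\mu_{21})$ have identical finishing portions---their terminal dominoes and last few blocks depend only on the shared tail of the subskeleta near position $s$---so applying $l_f$ to $\Std(\mu_{21})$ performs the same terminal replacement as applying it to $\Std(\mu_{11})$, producing $\Std(\mu_{22})$. The statement that $l_s$ sends $\Std(\mu_{12})$ to $\Std(\mu_{22})$ is symmetric.

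The main obstacle will be the bookkeeping in the case analysis: one must verify that each of the six column-adjacency cases truly matches exactly one finishing-domino end transformation or its inverse, and that the bend counts and domino shapes line up correctly across the junction of the common middle and the modified tail. Once this table is in place, the symmetric construction of $l_s$ and the commutation in the last step are formal consequences of the spatial independence of the two ends of a DRH.
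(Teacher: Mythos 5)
Your argument is correct: the shared row and column labels force the four subskeleta to differ only at their two ends, your six adjacency cases for consecutive column (resp.\ row) labels match exactly the three finishing-domino (resp.\ starting-domino) end transformations and their inverses via the bend count and domino shape, and the disjointness of the two affected ends (guaranteed by $\max(r,r')\le\min(s,s')$ from Lemma \ref{l:Wliffrles}) yields the commutation claims. The paper states Observation \ref{o:lslf} without proof, but the casework it carries out in the proof of Theorem \ref{thm:ultiDodgson} (e.g.\ identifying $\mu_{11}=\wt{\mu}E^{\sigma}N$ with $l_f$ of Type III) instantiates precisely your classification, so your approach is essentially the authors' intended one.
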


If $(m_{11}, m_{12}; m_{21}, m_{22})$ is a quadruple inside $\cW_{\lambda}$. Then, we may consider the row labels $r_1, r_2$ of the rows containing $m_{11}, m_{21}$, respectively. We also consider the column labels $s_1, s_2$ of the columns containing $m_{11}$ and $m_{12}$, respectively. Then, for $i,j \in \{1,2\}$, $\Subsk(m_{ij})$ starts at segment $w_{r_i}$ and ends at segment $w_{s_j}$, inclusive. Therefore, among the four subskeleta, there is a unique one that is the shortest, and there is a unique one that is the longest.

\begin{defn} \label{d:descF}
Suppose that $\fq := (m_{11}, m_{12}; m_{21}, m_{22})$ is a quadruple inside $\cW_{\lambda}$. Let $\mu_{ij} := \Subsk(m_{ij})$, and let $\mu'$ and $\mu''$, respectively, be the shortest and the longest of the four subskeleta among $\mu_{ij}$. Then, we define $\cF(\fq)$ to be the subset of frozen variables of $Q_{\lambda}$ given by the following rule.

\begin{enumerate}
\item If $\fq$ is not split by any DRH axis, then $\cF(\fq)$ includes
\begin{enumerate}[label=(\roman*)]
\item the frozen coefficients $\Delta_i$ ($1 \le i \le l+1$) corresponding to the bends of $\mu'$,
\item the frozen coefficients $\Delta_i$ ($1 \le i \le l+1$) corresponding to the endpoints of $\mu''$, and
\item the frozen coefficients $\Delta_i$ ($1 \le i \le l+1$) corresponding to the endpoints of $\mu'$ that are also bends of $\mu''$.
\end{enumerate}

\item If $\fq$ is split by a DRH axis, then follow (1), but also include the following modifications.
\begin{enumerate}
\item If $\fq$ is split by the vertical DRH axis, then necessarily, (ii) includes $\Delta_{l+1}$. In this case, remove $\Delta_{l+1}$ and insert $a_{pq}$ in $\cF(\fq)$.
\item If $\fq$ is split by the horizontal DRH axis, then necessarily, (ii) includes $\Delta_1$. In this case, remove $\Delta_1$ and insert $a_{11}$ in $\cF(\fq)$.
\end{enumerate}
\end{enumerate}
\end{defn}

\begin{thm}[The Main Decomposition Theorem] \label{thm:ultiDodgson}
Let $\fq$ be a quadruple inside $\cW_{\lambda}$. Then,
\[
|\fq| = \prod_{\Delta \in \cF (\fq)} \Delta.
\]
\end{thm}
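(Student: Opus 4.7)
The plan is to reduce Theorem \ref{thm:ultiDodgson} to a finite case analysis via Observation \ref{o:lslf} and then apply the generalized Dodgson technique of Lemma \ref{lem:chunk}, exactly in the spirit of Corollaries \ref{c:detid00++}--\ref{c:detid-1+20+}. By Observation \ref{o:lslf}, the four standardizations $\Std(\mu_{ij})$ are related by a unique pair of commuting end transformations: $l_s$ acts on the starting domino and $l_f$ acts on the finishing domino, and $\Std(\mu_{22})$ is obtained from $\Std(\mu_{11})$ by applying both. Each of $l_s$ and $l_f$ is of Type I, II, or III, giving nine primary combinations to analyze. Writing $c(\mu)=\pm\det(\Std(\mu))$ with the uniform sign convention, the quantity $|\fq|$ equals, up to an overall sign that must be tracked carefully,
\[
\det(\Std(\mu_{11}))\,\det(\Std(\mu_{22})) \; - \; \det(\Std(\mu_{12}))\,\det(\Std(\mu_{21})).
\]

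For each of the nine cases I would form a single augmented matrix $X$ that simultaneously exhibits all four standardizations as minors, exactly as in the proofs of Corollaries \ref{c:detid00++}, \ref{c:detid-20++}, and \ref{c:detid-1+20+}. The displayed expression then becomes a Pl\"ucker-style alternating sum of products of minors of $X$ which share a large common index chunk $(I',J')$ corresponding to the interior blocks $b_1,\dots,b_k$. Stripping this chunk using Lemma \ref{lem:chunk} in reverse reduces everything to a low-dimensional determinantal identity involving only the outermost rows and columns; this small identity is always a basic Pl\"ucker relation and can be verified by direct expansion. Re-adjoining the chunk via Lemma \ref{lem:chunk} produces the desired factorization into the $|b_i|$'s plus two end $2\times 2$'s. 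The three published corollaries already handle the cases $(l_s,l_f)=(\mathrm{I,I})$, $(\mathrm{I,III})$, and $(\mathrm{III,II})$; the remaining six cases are completely analogous, and I would state them uniformly as a single technical lemma with one representative proof.

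The combinatorial content of the theorem lies in identifying the factors produced this way with the elements of $\cF(\fq)$ listed in Definition \ref{d:descF}. I would build a uniform dictionary: each interior factor $|b_i|$ corresponds to a bend of the shortest subskeleton $\mu'$, and each end $2\times 2$ produced by $l_s$ or $l_f$ corresponds either to an endpoint of $\mu''$ or to an endpoint of $\mu'$ that is simultaneously a bend of $\mu''$, with the precise assignment governed by the type of the relevant end transformation. With this dictionary, checking that the product from the Dodgson identity matches $\cF(\fq)$ in every one of the nine cases becomes largely mechanical.

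Finally, the axis-split modifications (2a) and (2b) require only a single additional observation. When $\fq$ crosses the vertical DRH axis, one of the frozen $2\times 2$'s produced by $l_f$ is in fact the corner frozen coefficient $\Delta_{l+1}$, which by the construction of the initial DRH quiver degenerates to the variable $a_{pq}$; analogously $\Delta_1\mapsto a_{11}$ for the horizontal axis. This is exactly the substitution prescribed in Definition \ref{d:descF}, so the unsplit proof transfers verbatim under this replacement. The main obstacle I anticipate is bookkeeping: tracking the signs $(-1)^{(\eta-1)(\eta-2)/2}$ across the four corners of the quadruple so that they cancel and $|\fq|$ emerges as a positive product of frozen variables, and keeping the nine-case analysis sufficiently disciplined and uniform to be readable.
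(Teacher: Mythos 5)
Your proposal follows essentially the same route as the paper's proof: both reduce $|\fq|$ to a finite case analysis governed by the types of the end transformations $l_s$ and $l_f$ (the paper indexes the cases by degree differences and axis position as $(i,j)^{\alpha\beta}$, which encodes the same information), both establish the required determinantal identities by the chunk-removal technique of Lemma \ref{lem:chunk} as packaged in Corollaries \ref{c:detid00++}--\ref{c:detid-1+20+}, and both match the resulting factors against Definition \ref{d:descF}, treating the axis-split cases by the substitutions $\Delta_{l+1}\mapsto a_{pq}$ and $\Delta_1\mapsto a_{11}$. The plan is correct and matches the paper's argument in structure and substance.
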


\begin{proof}
The identities to prove in the various cases of $\fq$ will be different. In this proof, we will show the identities for certain important cases. The rest can be argued analogously.

The collection $\cF(\fq)$ depends crucially on where $\fq$ is in $\cW_{\lambda}$. In particular, whether $\fq$ is cut by either or both of the DRH axes will affect the elements of $\cF(\fq)$.

The DRH axes split $\cW_{\lambda}$ into four quadrants. Let the center of $\fq$ have coordinate $(q_x, q_y)$ with respect to the DRH axes. We will introduce the following notation to bookkeep the casework: we say that $\fq$ is in the case $(i,j)^{\alpha \beta}$ where $i,j \in \mathbb{Z}$ and $\alpha, \beta \in \{-,0,+\}$, if $\deg(e(m_{12})) - \deg(e(m_{11})) = i$, $\deg(e(m_{21})) - \deg(e(m_{11})) = j$, and the signs of $q_x$ and $q_y$ are $\alpha$ and $\beta$, respectively.

for example, the case $(0,2)^{-+}$ refers to when the whole $\fq$ is to the left of the vertical DRH axis and is above the horizontal DRH axis, $\deg(e(m_{12})) = \deg(e(m_{11}))$, and $\deg(e(m_{21})) = \deg(e(m_{11})) + 2$.

In light of Observation \ref{o:lslf}, we use the notation $\mu_{ij}$ to denote $\Subsk(m_{ij})$. We will also use $l_s$ and $l_f$ to denote the end transformations which transform $\Std(\mu_{11}) \mapsto \Std(\mu_{21})$ and $\Std(\mu_{11}) \mapsto \Std(\mu_{12})$, respectively.

The number of possible $(i,j)^{\alpha \beta}$ is finite. It is not hard to notice the following:
\begin{itemize}
\item if $\alpha = -$, then $i \in \{0, +2\}$,
\item if $\alpha = 0$, then $i \in \{-1, +1\}$,
\item if $\alpha = +$, then $i \in \{-2, 0\}$,
\item if $\beta = -$, then $j \in \{-2, 0\}$,
\item if $\beta = 0$, then $j \in \{-1, +1\}$, and
\item if $\beta = +$, then $j \in \{0, +2\}$.
\end{itemize}

There are hence $36$ possibilities for $(i,j)^{\alpha \beta}$, each of which gives a certain algebraic identities we will prove. Most cases will be analogous and we will show only particular cases.

As we saw earlier, there is a unique shortest subskeleton and a unique longest one among the four $\mu_{ij}$. We let $\wt{\mu}$ denote the shortest subskeleton $\mu_{ij}$. Consider the lattice path $\lambda$ inside the initial $\lambda$-DRH again, as we did in Section \ref{ss:introboa}. We can now consider $\wt{\mu}$ inside $\lambda$ inside the DRH as well. Let $v_1, \dots, v_k$ denote all the bends in $\wt{\mu}$ (in that order). We will denote by $b_i \in \cB$ the $2 \times 2$-square in the DRH centered at $v_i$. 

Let $w$ and $w'$ denote the starting vertex and the final vertex of $\wt{\mu}$. Let $u$ and $u'$ denote the starting vertex and the final vertex of the longest subskeleton. Note that from $u$ to $w$, there may or may not be a bend. Nevertheless, since transformations at endpoints are end transformations, there can be at most one bend. If it exists, call it $v_0$. Analogously, if a bend between $w'$ and $u'$ exists, we call it $v'_0$. The $2 \times 2$-squares at $u, u', w, w', v_0, v'_0$ will be called $b_u, b'_u, b_w, b'_w, b_0, b'_0$, respectively. These notations will come in handy when we standardize many DRHs from subskeleta.

Recall that if $b$ is a block, we use $h_+(b), h_-(b) \in \cD^{H}$ to denote the upper and the lower $2 \times 1$ submatrices of $b$. Similarly, we use $v_+(b), v_-(b) \in \cD^{V}$ to denote the right and the left $1 \times 2$ submatrices of $b$.

\underline{Case $(0,0)^{++}$.} In this case, $\wt{\mu} := \mu_{12}$. Necessarily, $\wt{\mu}$ starts with $N$ and ends with $N$ (and thus $2|k$). We have $\mu_{11} = \wt{\mu} N$, $\mu_{22} = N \wt{\mu}$, and $\mu_{21} = N \wt{\mu} N$. Note also that both $l_s$ and $l_f$ are of type I. Write $s := h_-(b_w) \in \cD^{H}$, $f := h_+(b'_w) \in \cD^{H}$, $\wt{s} := h_-(b_u) \in \cD^{H}$, and $\wt{f} := h_+(b'_u) \in \cD^{H}$. We have
\begin{align*}
& \Std(\wt{\mu}) = s b_1 \cdots b_k f \\
& \Std(N \wt{\mu}) = \wt{s} b_1 \cdots b_k f \\
& \Std(\wt{\mu} N) = s b_1 \cdots b_k \wt{f} \\
& \Std(N \wt{\mu} N) = \wt{s} b_1 \cdots b_k \wt{f}.
\end{align*}
Note that all these four matrices have the same dimension $(k+2) \times (k+2)$. We then have
\[
e(m_{11}) = c(\mu_{11}) = (-1)^{\frac{(k+1)k}{2}} \cdot \det (s b_1 \cdots b_k \wt{f})
\]
and similar expressions for $e(m_{12})$, $e(m_{21})$, and $e(m_{22})$. We have obtained:
\[
|\fq| = \det(s b_1 \cdots b_k \wt{f}) \det(\wt{s} b_1 \cdots b_k f) - \det(s b_1 \cdots b_k f) \det(\wt{s} b_1 \cdots b_k \wt{f}).
\]
We want to show that $|\fq|$ equals the product of frozen variables in $\cF(\fq)$. Going back to Definition \ref{d:descF}, we find that $\cF(\fq)$ contains the frozen cluster variables that correspond to: (i) all the bends inside $\wt{\mu}$ and (ii) the endpoints of $N \wt{\mu} N$. Note that the endpoints of $\wt{\mu}$ are not bends in $N \wt{\mu} N$ and therefore we do not have contributions from (iii) from Definition \ref{d:descF}.

The frozen variables that correspond to the bends inside $\wt{\mu}$ are precisely $|b_1|, |b_2|, \dots, |b_k|$. The frozen variables that correspond to the endpoints of $N \wt{\mu} N$ are $|b_u| = |\wt{s} s|$ and $|b'_u| = |f \wt{f}|$. Therefore, it remains to show that
\begin{align*}
&\det(s b_1 \cdots b_k \wt{f}) \det(\wt{s} b_1 \cdots b_k f) - \det(s b_1 \cdots b_k f) \det(\wt{s} b_1 \cdots b_k \wt{f}) \\
&= |\wt{s} s| \cdot |b_1| \cdot \, \cdots \, \cdot |b_k| \cdot |f \wt{f}|.
\end{align*}
This is precisely Corollary \ref{c:detid00++}. Now we start to see how our hard work with algebraic identities pays off. The identities we proved will be directly useful in dealing with the cases in this proof.

\underline{Case $(-2,0)^{++}$.} We still have $\wt{\mu} = \mu_{12}$, $\wt{\mu}$ starts with $N$ and ends with $N$ forcing $2|k$. Now that the degree of $e(m_{11})$ is two greater than that of $e(m_{12})$. There is a positive integer $\sigma$ such that $\mu_{11} = \wt{\mu} E^{\sigma} N$. We also have $\mu_{22} = N \wt{\mu}$ and $\mu_{21} = N \wt{\mu} E^{\sigma} N$. The end transformation $l_s$ in this case is of Type I, while $l_f$ is of Type III. As in the previous case, let $s := h_-(b_w)$, $f := h^+(b'_w)$, $\wt{s} = h_-(b_u)$, and $\wt{f} := h^+(b'_u)$. We have
\begin{align*}
& \Std(\wt{\mu}) = s b_1 \cdots b_k f \\
& \Std(N \wt{\mu}) = \wt{s} b_1 \cdots b_k f \\
& \Std(\wt{\mu} E^{\sigma} N) = s b_1 \cdots b_k b'_w b'_0 \wt{f} \\
& \Std(N \wt{\mu} E^{\sigma} N) = \wt{s} b_1 \cdots b_k b'_w b'_0 \wt{f}.
\end{align*}
Therefore,
\[
|\fq| = \det(s b_1 \cdots b_k f) \det( \wt{s} b_1 \cdots b_k b'_w b'_0 \wt{f}) - \det(\wt{s} b_1 \cdots b_k f) \det( s b_1 \cdots b_k b'_w b'_0 \wt{f}).
\]
On the other hand, we have $\cF(\fq) = \{b_1, \dots, b_k, b_u, b'_u, b'_w\}$. It remains to show that
\begin{align*}
& \det(s b_1 \cdots b_k f) \det( \wt{s} b_1 \cdots b_k b'_w b'_0 \wt{f}) - \det(\wt{s} b_1 \cdots b_k f) \det( s b_1 \cdots b_k b'_w b'_0 \wt{f}) \\
&= |\wt{s} s| \cdot |b_1| \cdot \, \cdots \, \cdot |b_k| \cdot |b'_w| \cdot |h_+(b'_0) \wt{f}|.
\end{align*}
This is precisely Corollary \ref{c:detid-20++}. The work in all other cases is analogous in the sense that we follow the following procedure: first, write down the four matrices $\Std(\mu_{ij})$ (and $e(m_{ij})$); second, notice which types of end transformations $l_s$ and $l_f$ are; third, decompose the determinant $|\fq|$ using the generalization of Dodgson's identity in Lemma \ref{lem:chunk}.

The two examples we showed above concern end transformations of Types I and III. It will be interesting to see how decomposition of $|\fq|$ behaves for end transformations of Type II. Therefore, we will show another case.

\underline{Case $(-1,+2)^{0+}$.} The $2 \times 2$-square $\fq$ is cut in half by the vertical DRH axis, but is above the horizontal DRH axis. As a result, all $\mu_{ij}$ starts with $N$, while $\mu_{11}$ and $\mu_{21}$ end with $E$ but $\mu_{12}$ and $\mu_{22}$ end with $N$. The shortest subskeleton is $\wt{\mu} = \mu_{12}$. Let $s = h_-(b_w)$, $f = h^+(b'_w)$, $\wt{s} = h_-(b_u)$, and $\varphi = v_+(b'_u)$. Note that we use $\varphi$ instead of $f$ to indicate that the shape of the finishing domino of the longest subskeleton is different from that of $\wt{\mu}$. We have
\begin{align*}
& e(m_{11}) = (-1)^{\frac{(k+2)(k+1)}{2}} \cdot \det(s b_1 \cdots b_k b'_w \varphi) \\
& e(m_{12}) = (-1)^{\frac{(k+1)k}{2}} \cdot \det( s b_1 \cdots b_k f) \\
& e(m_{21}) = (-1)^{\frac{(k+4)(k+3)}{2}} \cdot \det( \wt{s} b_0 b_w b_1 \cdots b_k b'_w \varphi) \\
& e(m_{22}) = (-1)^{\frac{(k+3)(k+2)}{2}} \cdot \det(\wt{s} b_0 b_w b_1 \cdots b_k f).
\end{align*}
Note that $k$ is still even in this case. We have
\[
|\fq| = \det(s b_1 \cdots b_k b'_w \varphi) \det(\wt{s} b_0 b_w b_1 \cdots b_k f) - \det(sb_1 \cdots b_k f) \det(\wt{s} b_0 b_w b_1 \cdots b_k \varphi).
\]
Note that the end transformation $l_s$ is of Type III, while $l_f$ is of Type II. From Definition \ref{d:descF}, we have $\cF(\fq) = \{b_1, \dots, b_k, b_u, a_{pq}, b_w, b'_w\}$. Again, the equality $|\fq| = \prod_{\Delta \in \cF(\fq)} \Delta$ follows directly from Lemma \ref{lem:chunk}. We proved this identity specifically in Corollary \ref{c:detid-1+20+}.
\end{proof}

\begin{cor} \label{c:detiszero}
Suppose that $M = [m_{ij}]_{1 \le i,j \le 3}$ is a $3 \times 3$ square inside $\cW_{\lambda}$. Then,
\[
\det \begin{bmatrix}
e(m_{11}) & e(m_{12}) & e(m_{13}) \\
e(m_{21}) & e(m_{22}) & e(m_{23}) \\
e(m_{31}) & e(m_{32}) & e(m_{33})
\end{bmatrix} = 0.
\]
\end{cor}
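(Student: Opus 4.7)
The plan is to derive the identity from the classical Desnanot--Jacobi identity (Dodgson condensation) applied to the $3\times 3$ matrix $E = [e(m_{ij})]$. Writing $e_{ij} := e(m_{ij})$ and letting $\fq_{ij}$ denote the $2\times 2$ quadruple inside $\cW_{\lambda}$ whose upper-left cell is $m_{ij}$, for $i,j\in\{1,2\}$, Desnanot--Jacobi yields
\[
e_{22}\cdot \det E \;=\; |\fq_{11}|\cdot|\fq_{22}| \;-\; |\fq_{12}|\cdot|\fq_{21}|.
\]
Since each cluster variable is a nonzero element of the ambient ring of rational functions, $e_{22}\neq 0$, and so it suffices to prove the polynomial identity
\[
|\fq_{11}|\cdot|\fq_{22}| \;=\; |\fq_{12}|\cdot|\fq_{21}|.
\]

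The leverage then comes from the Main Decomposition Theorem (Theorem~\ref{thm:ultiDodgson}), which expresses each $|\fq_{ij}|$ as the product of the frozen variables making up $\cF(\fq_{ij})$. The desired identity thereby reduces to the combinatorial multiset equality
\[
\cF(\fq_{11}) \,\sqcup\, \cF(\fq_{22}) \;=\; \cF(\fq_{12}) \,\sqcup\, \cF(\fq_{21}).
\]
I would verify this by parametrizing the three rows of $M$ by their starting labels $r_1,r_2,r_3$ and the three columns by their ending labels $s_1,s_2,s_3$ as positions in $\lambda$, so that $\Subsk(m_{ij})$ runs from $r_i$ to $s_j$, and then unfolding Definition~\ref{d:descF} for each of the four sub-quadruples. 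In the generic case where no DRH axis cuts $M$, each $\cF(\fq_{ij})$ is the multiset of frozen variables at bends of $\lambda$ in the closed interval spanned by the shortest subskeleton of $\fq_{ij}$, together with the two frozen variables at the endpoints of the longest subskeleton of $\fq_{ij}$. A direct interval-counting argument then matches the two sides: for the bend contributions, the symmetric difference of the two unions of intervals (counted with multiplicity) is empty, and for the endpoint contributions the two sides are $\{r_2,s_2,r_3,s_3\}$ versus $\{r_2,s_3,r_3,s_2\}$, which coincide as multisets.

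The main obstacle will be the bookkeeping when one or both DRH axes pass through the $3\times 3$ square. By Corollary~\ref{c:notbothD&Dt} such configurations are constrained, and where an axis does split $M$, it necessarily splits exactly two of the four sub-quadruples, one appearing on each side of the proposed identity (the horizontal axis splits either $\fq_{11},\fq_{12}$ or $\fq_{21},\fq_{22}$, and similarly for the vertical axis). Consequently the replacements $\Delta_1\leftrightarrow a_{11}$ and $\Delta_{l+1}\leftrightarrow a_{pq}$ prescribed in Definition~\ref{d:descF}(2) enter symmetrically on the two sides and cancel out of the multiset equality. Once this matching is verified in all axis-split sub-cases, the multiset equality, and hence the vanishing of $\det E$, follows.
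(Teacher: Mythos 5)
Your proposal is correct and follows essentially the same route as the paper's proof: Dodgson condensation reduces the claim to $|\fq_{11}||\fq_{22}| = |\fq_{12}||\fq_{21}|$, the Main Decomposition Theorem turns this into a multiset equality of frozen variables, and the matching is carried out by observing that the shortest/longest subskeleton data (and the axis-splitting corrections $\Delta_1\leftrightarrow a_{11}$, $\Delta_{l+1}\leftrightarrow a_{pq}$) separate row-wise and column-wise, exactly as in the paper's bookkeeping with $B$, $E$, $I$, $\chi^X$, $\chi^Y$. The only difference is that the paper writes out the separability argument in full notation, while you leave it as a verification plan.
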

\begin{proof}
By Dodgson condensation, it suffices to show that
\[
|\fq_{11}| \cdot |\fq_{22}| = |\fq_{12}| \cdot |\fq_{21}|
\]
where $\fq_{ij}$ denotes the quadruple $(m_{i,j}, m_{i,j+1};m_{i+1,j}, m_{i+1,j+1})$. By the main decomposition theorem, we need to show that
\[
\Big( \prod_{\Delta \in \cF(\fq_{11})} \Delta \Big) \cdot \Big( \prod_{\Delta \in \cF(\fq_{22})} \Delta \Big) = \Big( \prod_{\Delta \in \cF(\fq_{12})} \Delta \Big) \cdot \Big( \prod_{\Delta \in \cF(\fq_{21})} \Delta \Big). \tag{$\ast$}
\]
We will introduce some notations for this proof. If $\mu$ is a subskeleton of $\lambda$, let $E(\mu)$ denote the product of the two frozen variables corresponding to the endpoints of $\mu$, let $B(\mu)$ denote the product of the frozen variables corresponding to the bends inside $\mu$, and let $I(\mu)$ denote the product of the frozen variables corresponding to the endpoints of $\mu$ that are also bends in $\lambda$.

Note that if $i,j,k,l \in \{1,2,3\}$, then one of the following four cases is true
\begin{itemize}
\item $\mu_{ik} \cap \mu_{jl} = \mu_{il}$ and $\mu_{ik} \cup \mu_{jl} = \mu_{jk}$,
\item $\mu_{ik} \cap \mu_{jl} = \mu_{jk}$ and $\mu_{ik} \cup \mu_{jl} = \mu_{il}$,
\item $\mu_{il} \cap \mu_{jk} = \mu_{ik}$ and $\mu_{il} \cup \mu_{jk} = \mu_{jl}$, or
\item $\mu_{il} \cap \mu_{jk} = \mu_{jl}$ and $\mu_{il} \cup \mu_{jk} = \mu_{ik}$
\end{itemize}
depending on which of the four subskeleta is the longest. In every case, the following relations always hold:
\begin{align*}
& B(\mu_{ik}) B(\mu_{jl}) = B(\mu_{il}) B(\mu_{jk}) \\
& E(\mu_{ik}) E(\mu_{jl}) = E(\mu_{il}) E(\mu_{jk}) \\
& I(\mu_{ik}) I(\mu_{jl}) = I(\mu_{il}) I(\mu_{jk}).
\end{align*}
For each $i,j \in \{1,2\}$, let $\mu_{r_{ij},s_{ij}}$ and $\mu_{R_{ij},S_{ij}}$ denote the shortest and the longest, respectively, subskeleta among $\mu_{i,j}, \mu_{i,j+1}, \mu_{i+1,j}, \mu_{i+1,j+1}$. Recall that, for each $i=1,2$, there is a starting end transformation $l_{s_i}$ that changes $\mu_{i,j}$ to $\mu_{i+1,j}$, for every $j$. Analogously, for each $j=1,2$, there is a finishing end transformation $l_{f_j}$ that changes $\mu_{i,j}$ to $\mu_{i,j+1}$, for every $i$. This implies that necessarily $r_{11} = r_{12} =: r_1$, $r_{21} = r_{22} =: r_2$, $s_{11} = s_{21} =: s_1$, and $s_{12} = s_{22} =: s_2$. We can define $R_i$ and $S_j$ similarly.

For each quadruple $\fq$, define
\[
\chi^X(\fq) = \begin{cases}
\frac{a_{11}}{\Delta_1} & \text{if } \fq \text{ is split by the horizontal DRH axis,} \\
1 & \text{otherwise.}
\end{cases}
\]
and
\[
\chi^Y(\fq) = \begin{cases}
\frac{a_{pq}}{\Delta_{l+1}} & \text{if } \fq \text{ is split by the vertical DRH axis,} \\
1 & \text{otherwise.}
\end{cases}
\]

The main decomposition theorem says that
\begin{align*}
\prod_{\Delta \in \cF(\fq_{ij})} \Delta &= B(\mu_{r_{ij},s_{ij}}) E(\mu_{R_{ij},S_{ij}}) I(\mu_{r_{ij},s_{ij}}) \chi^X(\fq_{ij}) \chi^Y(\fq_{ij}) \\
&= B(\mu_{r_{i},s_{j}}) E(\mu_{R_{i},S_{j}}) I(\mu_{r_{i},s_{j}}) \chi^X(\fq_{ij}) \chi^Y(\fq_{ij}).
\end{align*}
Note that $\chi^X(\fq_{ij}) = \chi^X(\fq_{ij'})$ and $\chi^Y(\fq_{ij}) = \chi^Y(\fq_{i'j})$. Thus,
\[
\chi^X(\fq_{11})\chi^Y(\fq_{11})\chi^X(\fq_{22})\chi^Y(\fq_{22}) = \chi^X(\fq_{12})\chi^Y(\fq_{12})\chi^X(\fq_{21})\chi^Y(\fq_{21}).
\]

From ($\ast$), it remains to show that
\begin{align*}
& B(\mu_{r_1,s_1}) E(\mu_{R_1,S_1}) I(\mu_{r_1,s_1}) B(\mu_{r_2,s_2}) E(\mu_{R_2,S_2}) I(\mu_{r_2,s_2}) \\
& = B(\mu_{r_1,s_2}) E(\mu_{R_1,S_2}) I(\mu_{r_1,s_2}) B(\mu_{r_2,s_1}) E(\mu_{R_2,S_1}) I(\mu_{r_2,s_1}),
\end{align*}
which is indeed true because of the relations we noted above for the indices $(i,j,k,l) = (r_1, r_2, s_1, s_2)$ and $(R_1, R_2, S_1, S_2)$. We have finished the proof.
\end{proof}

\medskip

\bigskip

\section{The main theorem} 
In this section, we state and prove the main theorem.

\begin{restatable}{thm}{main} 
\label{thm:main}
Let $\lambda$ be a North-East lattice path. The set of mutable cluster variables of the DRH algebra $\cA_{\lambda}$ is
\[
\{e(m): m \text{ is a cell in the }\lambda\text{-DRH staircase.}\}.
\]
\end{restatable}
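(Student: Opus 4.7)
The plan is to prove by induction on the number of worm operations needed to reach a worm $w$ from the initial worm $\cM(\lambda)$ that the seed $(Q_w, \mathbf{x}_w)$ assigns to each cell $c$ of $w$ the variable $e(c)$. The base case $w = \cM(\lambda)$ holds by the definition of the initial seed $Q_\lambda$, and Lemma \ref{l:insideboa} extends this to every worm whose cells all sit inside the DRH; the bulk of the work concerns worms that have climbed down the staircase into $\cW_\lambda$.

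For the inductive step, suppose the claim holds for $w$, and let $w'$ be obtained by a worm operation at a cell $m_{11}$ that is replaced by $m_{22}$. Let $m_{12}$ and $m_{21}$ be the worm neighbors of $m_{11}$ in $w$, so that $\fq = (m_{11}, m_{12}; m_{21}, m_{22})$ is a quadruple in the staircase. By Lemma \ref{l:woisfd} the worm operation is a diagonal flip, hence cluster mutation at $m_{11}$, so the new cluster variable $y$ satisfies an exchange relation
$$
e(m_{11}) \cdot y \;=\; M_+ + M_-,
$$
where $M_\pm$ are the monomials coming from the heads and tails of arrows incident to $m_{11}$ in $Q_w$. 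I would identify the mutable neighbors of $m_{11}$ as its two worm neighbors, contributing $e(m_{12}) \cdot e(m_{21})$, and the frozen neighbors as the vertices indexed by $\cF(\fq)$, contributing $\prod_{\Delta \in \cF(\fq)} \Delta$. Then the Main Decomposition Theorem (Theorem \ref{thm:ultiDodgson}) yields
$$
e(m_{11}) e(m_{22}) \;=\; e(m_{12}) e(m_{21}) + \prod_{\Delta \in \cF(\fq)} \Delta \;=\; e(m_{11}) \cdot y,
$$
hence $y = e(m_{22})$, closing the induction. Surjectivity of $m \mapsto e(m)$ onto the set of mutable cluster variables then follows from the finite type A structure of $\cA_\lambda$: every mutable cluster variable corresponds to a diagonal of the $(l+4)$-gon, every diagonal corresponds to a cell under the staircase-polygon correspondence, and every cell appears in some worm reachable from $\cM(\lambda)$ by worm operations.

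The main obstacle is the structural claim that the arrows incident to a mutable vertex $m_{11}$ in $Q_w$ are distributed as above. When the worm lies inside the DRH this is handled by the explicit quiver bookkeeping in the proof of Lemma \ref{l:insideboa}; but once the worm climbs into $\cW_\lambda$, the frozen arrows may come from frozen vertices that are far from $m_{11}$ in the staircase, as foreshadowed by the remark immediately following Lemma \ref{l:insideboa}. So alongside the variable-induction above one must inductively describe the quiver $Q_w$ itself, verifying at each worm operation that the new mutable neighborhood is the pair of new worm neighbors and that the new frozen neighborhood is precisely the one that $\cF(\fq)$ prescribes. Corollary \ref{c:detiszero}, the vanishing of every $3 \times 3$ staircase minor, is a useful consistency check throughout: it ensures that the candidate variables $e(m)$ already satisfy the linear dependencies forced by the finite-type cluster structure, so that the assignment obtained by the induction is automatically well defined on cells regardless of which sequence of worm operations one chooses to reach them.
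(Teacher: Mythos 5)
Your overall strategy is the same as the paper's: induct over worm operations, use Lemma \ref{l:woisfd} to identify a worm operation with a mutation, and use the Main Decomposition Theorem to verify that the exchange relation at a bend $m_{11}$ produces $e(m_{22})$. You also correctly locate the hard part in describing the quiver $Q_w$, in particular its frozen arrows, once the worm leaves the DRH. The gap is that your structural claim --- that the frozen neighborhood of $m_{11}$ in $Q_w$ is the set $\cF(\fq)$, contributing $\prod_{\Delta\in\cF(\fq)}\Delta$ to one exchange monomial --- is only correct for an interior bend whose relevant corner is a non-special lattice point of $\cW_\lambda$. At the two endpoints of the worm the $2\times 2$ square $\fq$ is not contained in the staircase at all (one of its cells lies below the lower or above the upper zigzag path), so $\cF(\fq)$ is not defined and Theorem \ref{thm:ultiDodgson} does not apply; the paper instead gives an explicit eight-way case analysis (Cases 2.1.1.1--2.1.1.8 of Section \ref{ss:fcdesc}) in which the frozen coefficient is a genuine Laurent monomial such as $\FC(u)/(\zeta\cdot\FC(v))$ involving $a_{11}$ or $a_{pq}$. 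Likewise, at the ``special points'' on the boundary between the DRH and $\cW_\lambda$ the $2\times 2$ determinant is non-homogeneous, the frozen coefficient is of the form $\Delta_i\Delta_{i+2}/\Delta_{i+1}$ rather than a product of frozen variables, and the exchange relation cannot be read off from the decomposition theorem alone.

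Relatedly, the consistency you delegate to Corollary \ref{c:detiszero} is not automatic. What must be checked is that the frozen coefficient assigned to a non-bend cell $c_{22}$ is the same whether one arrives at the configuration by mutating the cell above or the cell below (the paper's Lemmas \ref{l:abcompat} and \ref{l:startabcompat}); via Dodgson condensation this reduces to the vanishing of a $3\times 3$ determinant of staircase entries, but Corollary \ref{c:detiszero} only covers squares inside $\cW_\lambda$. The paper needs Proposition \ref{p:gendet} to extend the vanishing to squares that meet the DRH or its transpose, and even that statement excludes the special points, where the $3\times 3$ determinant genuinely fails to vanish and the two lemmas must be proved by direct computation of the Laurent frozen coefficients case by case. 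Without this layer --- the multi-case definition of $\FC$ at lattice points, the endpoint rules, and the well-definedness lemmas --- the induction does not close, so your outline is the right skeleton but is missing the substance of the paper's Sections \ref{ss:FCdef} and \ref{ss:fcdesc}.
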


To prove the main theorem, we will describe the quiver $Q_w$ for every worm $w$ inside the DRH staircase. By construction, the proposed quiver for the initial worm $\cM(\lambda)$ will agree with the initial DRH quiver $Q_\lambda$. We made a remark earlier that any worm can be transformed to any other worm via a sequence of worm operations. Therefore, to prove the main theorem, it suffices to show that our proposed quivers are compatible with worm operations everywhere in the DRH staircase.

In an arbitrary worm $w$ inside the DRH staircase, the frozen arrows to each mutable vertex can get complicated. Instead of always talking about frozen arrows, we may use {\em frozen coefficients} at mutable vertices instead.

\medskip

\subsection{Frozen coefficients for quivers} Frozen coefficients are defined at mutable vertices of a given quiver.

\begin{defn}
Let $Q$ be a quiver. Suppose that $v$ is a mutable vertex in $Q$. Let $f_1, f_2, \dots, f_n$ denote the frozen variables of $Q$. For each $i$, we denote by $m_i$, the multiplicity of frozen arrows from $v$ to $f_i$. Here, an out-arrow from $v$ to $f_i$ is counted as $+1$ arrow, while an in-arrow from $f_i$ to $v$ is counted as $-1$ arrow. Then, the {\em frozen coefficient} at $v$ in $Q$ is defined to be
\[
\fc_Q(v) := \prod_{i=1}^n f_i^{m_i}.
\]
\end{defn}

Frozen coefficients are useful because they record the data of all frozen arrows at a certain mutable vertex in one expression. When the frozen variables are algebraically independent, the data of frozen coefficients at all mutable vertices and the data of arrows between the mutable vertices are sufficient to recover the whole quiver.

\begin{lemma} \label{l:inmuta}
Let $Q$ be a quiver. Let $A$ and $B$ be mutable vertices of $Q$ such that there is only one arrow from $B$ to $A$ between the two vertices. Suppose also that all the frozen arrows at $A$ are out-arrows. Let $\wh{Q}$ be the resulting quiver after mutating $Q$ at $A$. Then, we have the following equations.
\[
\fc_{\wh{Q}}(A) = \frac{1}{\fc_Q(A)}
\]
and
\[
\fc_{\wh{Q}}(B) = \fc_Q(A) \cdot \fc_Q(B).
\]
\end{lemma}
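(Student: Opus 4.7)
The plan is to unwind the definition of $\fc_Q(v)$ and apply the standard quiver mutation rule at $A$ directly, tracking what happens to each arrow between $A$ (or $B$) and the frozen vertices. Recall that mutation at $A$ proceeds in three steps: (i) for every oriented two-path $X \to A \to Y$, add an arrow $X \to Y$; (ii) reverse every arrow incident to $A$; (iii) cancel any resulting oriented $2$-cycles.

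For the first identity, I will observe that the only one of these steps that can alter an arrow between $A$ and a frozen vertex is step (ii): step (i) does not produce arrows incident to $A$ at all, and step (iii) cannot cancel anything at $A$ since, by hypothesis, there are no incoming frozen arrows into $A$ to pair with. Each frozen out-arrow $A \to f_i$ of multiplicity $m_i$ is simply reversed, so its signed multiplicity becomes $-m_i$. Hence $\fc_{\wh Q}(A) = \prod_i f_i^{-m_i} = 1/\fc_Q(A)$.

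For the second identity, I will analyze the three steps at $B$. Step (ii) does not affect any arrow between $B$ and a frozen vertex, since those arrows are not incident to $A$. Step (i) is where the action occurs: the single arrow $B \to A$ combines with each out-arrow $A \to f_i$ of multiplicity $m_i$ to produce $m_i$ new arrows $B \to f_i$; and because there are no arrows $f_i \to A$, no arrows $f_i \to B$ are created. Finally in step (iii), the newly created arrows $B \to f_i$ point in the same direction as any preexisting $B \to f_i$ arrows, so they add as signed multiplicities rather than cancel. Summing over all frozen vertices gives $\fc_{\wh Q}(B) = \fc_Q(B) \cdot \prod_i f_i^{m_i} = \fc_Q(A) \cdot \fc_Q(B)$.

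The only subtle point, and really the reason for the hypotheses, is the absence of $2$-cycle cancellation: the one-sidedness of the frozen arrows at $A$ guarantees both that no cancellation occurs on the $A$ side and that the arrows created in step (i) share orientation with any preexisting frozen arrows at $B$. There is no serious obstacle here; the lemma is a bookkeeping observation about mutation, packaged in a form that will apply cleanly in the DRH setting where these hypotheses hold by construction of the worm quivers.
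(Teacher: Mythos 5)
Your proof is correct and follows essentially the same route as the paper's: the first identity comes from reversing the frozen arrows at $A$, and the second from the two-paths $B \to A \to f_i$ creating $m_i$ new arrows $B \to f_i$ for each frozen vertex. Your extra care about the $2$-cycle cancellation step is a harmless elaboration (and in fact, since $\fc_Q$ is defined via signed multiplicities, the second identity would survive cancellation against preexisting arrows $f_i \to B$ anyway); the paper simply states these points more tersely.
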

\begin{proof}
The first equation is clear because the mutation at $A$ flips all the frozen arrows at $A$. For the second equation, consider any frozen vertex $F$. If $F$ is not connected to $A$, then mutating at $A$ does not affect the frozen arrows at $F$. If $F$ is connected to $A$, then by assumption there are $m \ge 1$ arrows from $A$ to $F$. Mutating at $A$ adds $m$ arrows from $B$ to $F$. By considering all the frozen vertices in this way, we conclude that $\fc_{\wh{Q}}(B) = \fc_Q(A) \cdot \fc_Q(B)$ as desired.
\end{proof}

Indeed, the following analogous result holds when the arrows go in the opposite direction.

\begin{cor} \label{c:outmuta}
Let $Q$ be a quiver. Let $A$ and $B$ be mutable vertices of $Q$ such that there is only one arrow from $A$ to $B$ between the two vertices. Suppose also that all the frozen arrows at $A$ are in-arrows. Let $\wh{Q}$ be the resulting quiver after mutating $Q$ at $A$. Then, we have the following equations.
\[
\fc_{\wh{Q}}(A) = \frac{1}{\fc_Q(A)}
\]
and
\[
\fc_{\wh{Q}}(B) = \fc_Q(A) \cdot \fc_Q(B).
\]
\end{cor}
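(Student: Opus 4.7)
The plan is to mirror the proof of Lemma \ref{l:inmuta} with all arrow directions reversed. For the first identity, note that mutating at $A$ flips every arrow incident to $A$, including all frozen ones. Because the exponent $m_i$ in $\fc_Q(A) = \prod_i f_i^{m_i}$ is a signed count (out-arrows from $A$ contributing $+1$, in-arrows contributing $-1$), reversing every arrow at $A$ replaces each $m_i$ by $-m_i$, giving $\fc_{\wh{Q}}(A) = \prod_i f_i^{-m_i} = 1/\fc_Q(A)$. This step is identical in spirit to the first step of the lemma and uses no hypothesis beyond the mutation rule for arrows at the mutated vertex.

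For the second identity, the essential input is the standard creation rule: when mutating at $A$, for each pair of composable arrows $X \to A$ and $A \to Y$, one new arrow $X \to Y$ is added (and cancellations take place only at pairs of anti-parallel arrows, which cannot happen between a frozen and a mutable vertex here). Under our hypotheses the only mutable--mutable arrow at $A$ is $A \to B$, and every frozen arrow at $A$ has the form $F \to A$; thus for each frozen vertex $F$ with $m \ge 1$ arrows $F \to A$, mutation at $A$ produces $m$ new arrows $F \to B$. The vertex $F$ contributes $f_F^{-m}$ to $\fc_Q(A)$, and correspondingly $\fc_{\wh{Q}}(B)$ inherits an extra factor of $f_F^{-m}$ relative to $\fc_Q(B)$. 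Multiplying over all frozen vertices yields $\fc_{\wh{Q}}(B) = \fc_Q(A) \cdot \fc_Q(B)$.

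There is no real obstacle here; the bookkeeping is the mirror image of the lemma's. If one wishes to avoid redoing it, the corollary also follows formally by applying Lemma \ref{l:inmuta} to the opposite quiver $Q^{\mathrm{op}}$ (obtained by reversing every arrow of $Q$): under this reversal our hypotheses turn into those of the lemma, the frozen coefficient of every mutable vertex is inverted, and mutation commutes with arrow reversal, so applying the lemma at $A$ in $Q^{\mathrm{op}}$ and inverting both resulting equations produces exactly the two identities claimed.
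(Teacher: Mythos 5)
Your argument is correct and is exactly the mirror-image bookkeeping that the paper intends, since it states this corollary without proof as the "analogous result" to Lemma \ref{l:inmuta}; both your direct computation and your remark about passing to the opposite quiver are valid. (One tiny imprecision: anti-parallel arrows between $B$ and a frozen vertex \emph{can} in principle cancel after mutation, but since the frozen coefficient is defined via the signed multiplicity, such cancellation does not affect $\fc_{\wh{Q}}(B)$, so your conclusion stands.)
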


\medskip

\subsection{Frozen coefficients for lattice points inside the DRH staircase} \label{ss:FCdef}
In this section, we will describe how to associate a frozen coefficient to each lattice point inside the DRH staircase. These {\em frozen coefficients} are different from ones we associate to mutable vertices of quivers. We will establish their connections later. The reason we introduce them is so that the description of quivers $Q_w$ for worms can be given more easily. Like frozen coefficients for mutable vertices in quivers, these coefficients are Laurent monomials in the frozen variables of $Q_\lambda$.

A lattice point inside the DRH staircase refers to the center of any $2 \times 2$-square inside the staircase. Our description of frozen coefficients for the points will be given in three steps. Through combinatorial descriptions below, we will associate a frozen coefficient $\FC(u)$ to each lattice point $u$ in each of the three cases: (1) that is also inside the DRH or the DRH transpose, (2) that is also inside $\cW_\lambda$, and (3) that is on either the northwest boundary or the southeast boundary of $\cW_\lambda$.

In almost all the cases, the frozen coefficient $\FC(u)$ is defined to be the determinant of the $2 \times 2$ square centered at $u$. Namely, if $u$ is the center of $[m_{ij}]_{1 \le i, j \le 2}$, then we define
\[
\FC(u) := \begin{vmatrix}
e(m_{11}) & e(m_{12}) \\
e(m_{21}) & e(m_{22})
\end{vmatrix}.
\]
As we will see in the steps below, this is the case as long as the determinant is a homogeneous polynomial; in other words,
\[
\deg(e(m_{11})) + \deg(e(m_{22})) = \deg(e(m_{12})) + \deg(e(m_{21})).
\]
The exceptional cases are Cases 3.1 and 3.1' where there are $3$ of the $\deg(e(m_{ij}))$ are $1$ and the other one is $3$. Only in these cases do we give a special rule for the coefficients. We call these lattice points with nonhomogeneous determinants {\em special points}.

\underline{Step 1.} Let $u$ be the center of a $2 \times 2$-square inside the original DRH or the transpose DRH. Note that the determinant of the $2 \times 2$ square at $u$ is a frozen variable $\Delta_i$ by definition. Then, to $u$, associate $\FC(u) := \Delta_i$.

It will also be convenient to associate the following frozen coefficients to these extra points:
\begin{itemize}
\item $\Delta_1$ to the upper right corner of $c_{11}$,
\item $\Delta_0 := a_{11}$ to the lower left corner of $c_{11}$,
\item $\Delta_{l+1}$ to the lower left corner of $c_{pq}$, and
\item $\Delta_{l+2} := a_{pq}$ to the upper right corner of $c_{pq}$.
\end{itemize}
These exceptional lattice points are not inside the staircase, but having frozen coefficients there turns out to be convenient in Step 3. Do analogous extra associations for the transpose DRH so that the frozen coefficients conform with the DRH under transposition. For convenience, we will colloquially call the lattice points to which we associate frozen coefficients in this step {\em Step-1 lattice points}.

\underline{Step 2.} Suppose that $u$ is a lattice point inside $\cW_\lambda$. Then, let the quadruple centered at $u$ be denoted by $\fq := (m_{11}, m_{12}; m_{21}, m_{22})$. As before, let $\mu_{ij} := \Subsk(m_{ij})$ be the corresponding subskeleton of $\lambda$ at the cell $m_{ij}$. By the main decomposition theorem (Theorem \ref{thm:ultiDodgson}), the determinant $c(\mu_{11}) c(\mu_{22}) - c(\mu_{12}) c(\mu_{21})$ is a product of frozen variables of $Q_\lambda$. We define the frozen coefficient $\FC(u)$ at $u$ to be this product:
\[
\FC(u) := \prod_{\Delta \in \cF(\fq)} \Delta.
\]

\underline{Step 3.} Let $u$ be a lattice point on the boundary of $\cW_\lambda$. As above, let $m_{11}$, $m_{12}$, $m_{21}$, and $m_{22}$ be four cells around $u$. In this step, some of $m_{ij}$ will be inside the DRH (or DRH transpose), while some will be in $\cW_\lambda$.

\underline{Case 3.1.} Suppose that $m_{11}$, $m_{12}$, and $m_{21}$ are in the DRH, but $m_{22}$ is in $\cW_{\lambda}$. This case happens when the DRH has a bend at $u$. If we let $u_{11}$, $u_{12}$, $u_{21}$, and $u_{22} = u$, respectively, be the upper-left, upper-right, lower-left, and lower-right corners of $m_{11}$. Then, the frozen coefficients at $u_{11}$, $u_{12}$, and $u_{21}$ are described in Step 1. We associate
\[
\FC(u) := \frac{\FC(u_{12}) \FC(u_{21})}{\FC(u_{11})}
\]
to the lattice point $u$. Necessarily, $\FC(u)$ is in the form $\frac{\Delta_i \Delta_{i+2}}{\Delta_{i+1}}$ in this case.

\underline{Case 3.1'.} Suppose that $m_{12}$, $m_{21}$, and $m_{22}$ are in the transpose DRH, but $m_{11}$ is in $\cW_\lambda$. This case is analogous to Case 1. We define $\FC(u)$ similarly: if $v_{11} = u$, $v_{12}$, $v_{21}$, $v_{22}$ are the corners of $m_{22}$, then define $\FC(u) := \frac{\FC(v_{12}) \FC(v_{21})}{\FC(v_{22})}$.

\underline{Case 3.2.} Suppose that $m_{11}$ and $m_{12}$ are in the original DRH, while $m_{21}$ and $m_{22}$ are in $\cW_{\lambda}$. If we start from $u$ and go up along the vertical line $u$ is in, the first Step-1 lattice point we meet is the point one unit right above $u$. Call this point $v$. On the other hand, if we start from $u$ and go left, the first Step-1 lattice point we meet is at least two units away from $u$. Call this point $w$. The point $w$ may be the exceptional points we associated frozen coefficients to, which may not be in the DRH skeleton. This is why we associate frozen variables to extra points in Step 1. In this case, define
\[
\FC(u) := \FC(v) \cdot \FC(w).
\]
This coefficient is necessarily of the form $\Delta_i \Delta_j$, for some $i,j = 0,1, \dots, l+2$.

\underline{Case 3.2'} Analogous to Case 3.2 is when $m_{21}$ and $m_{22}$ are in the transpose DRH, while $m_{11}$ and $m_{12}$ are in $\cW_\lambda$. Starting from $u$, instead of going up and left to find $v$ and $w$, we go down and right to find $v$ and $w$. Then, define $\FC(u) := \FC(v) \cdot \FC(w)$.

\underline{Case 3.3} Suppose that $m_{11}$ and $m_{21}$ are in the original DRH, while $m_{12}$ and $m_{22}$ are in $\cW_{\lambda}$. This case is similar to Case 3.2. We perform the same procedure of starting at $u$, going up until we find a Step-1 lattice point $v$, going left until we find a Step-1 lattice point $w$. In this case, $w$ is one unit away from $u$, while $v$ is farther away. Define $\FC(u) := \FC(v) \cdot \FC(w)$.

\underline{Case 3.3'} Analogous to Case 3.3 is when $m_{12}$ and $m_{22}$ are in the transpose DRH, while the other two are in $\cW_\lambda$. Starting from $u$, we go down until we find a Step-1 lattice point $v$, and go right until we find a Step-1 lattice point $w$. Define $\FC(u) := \FC(v) \cdot \FC(w)$.

\underline{Case 3.4} Suppose that only $m_{11}$ is in the original DRH, while the others are in $\cW_{\lambda}$. Let $x$ be the upper left corner of $m_{11}$. Note that $x$ is a Step-1 lattice point. Similar to the previous cases, we start from $u$ and go up until we meet a Step-1 lattice point $v$. Starting from $u$ again, we go left until we meet a Step-1 lattice point $w$. Define
\[
\FC(u) := \FC(v) \FC(w) \FC(x).
\]
Necessarily, $\FC(u)$ is in the form $\Delta_i \Delta_j \Delta_k$ for some $i <j <k$.

\underline{Case 3.4'} Suppose that only $m_{22}$ is in the transpose DRH. Starting at $u$, we go down until we meet a Step-1 point $v$, and go right until we meet a Step-1 point $w$. Let $x$ be the lower-right corner of $m_{22}$. Define $\FC(u) := \FC(v) \FC(w) \FC(x)$.

We note that in all steps, except Cases 3.1 and 3.1' (the special points), the frozen coefficients we associate to the lattice points are monomials of $\Delta_0, \Delta_1, \dots, \Delta_{l+2}$ with the degree of $\Delta_i$ being either $0$ or $1$ for each $i$. Only in Cases 3.1 and 3.1' do we associate a frozen coefficient of the form $\frac{\Delta_i \Delta_j}{\Delta_k}$ to the lattice point. See Table \ref{t:FCsummary} for a summary.

\begin{center}
\setlength{\tabcolsep}{10 pt}
\def\arraystretch{1.5}
\begin{table}
\begin{tabular}{|c|c|}
\hline
non-special lattice points $v$ & special lattice points $v$ \\ [0.5 ex]
\hline
\hline
Exactly $1$ cell in $\fq$ is in $\cW_{\lambda}$ & Either $0$, $2$, $3$, or $4$ cells in $\fq$ are in $\cW_{\lambda}$. \\
\hline
$\FC(v)$ is a monomial in $\Delta_i$'s & $\FC(v)$ is in the form $\frac{\Delta_i \Delta_{i+2}}{\Delta_{i+1}}$ \\
\hline
$\FC(v) = \det(\fq)$ & $\FC(v) \neq \det(\fq)$ \\
\hline
$\det(\fq)$ is homogeneous. & $\det(\fq)$ is non-homogeneous. \\
\hline
\end{tabular}
\bigskip
\caption{A summary of frozen coefficients at lattice points $v$ in the DRH staircase. Here, $\fq$ denotes the $2 \times 2$-square centered at $v$.} \label{t:FCsummary}
\end{table}
\end{center}

An example of frozen coefficients for the lattice points inside $NENNEE$-DRH staircase is given in Figure \ref{fig:NENNEE_LFC}. Notice the resemblance between our staircase diagram and Auslander-Reiten quivers.

\begin{center}
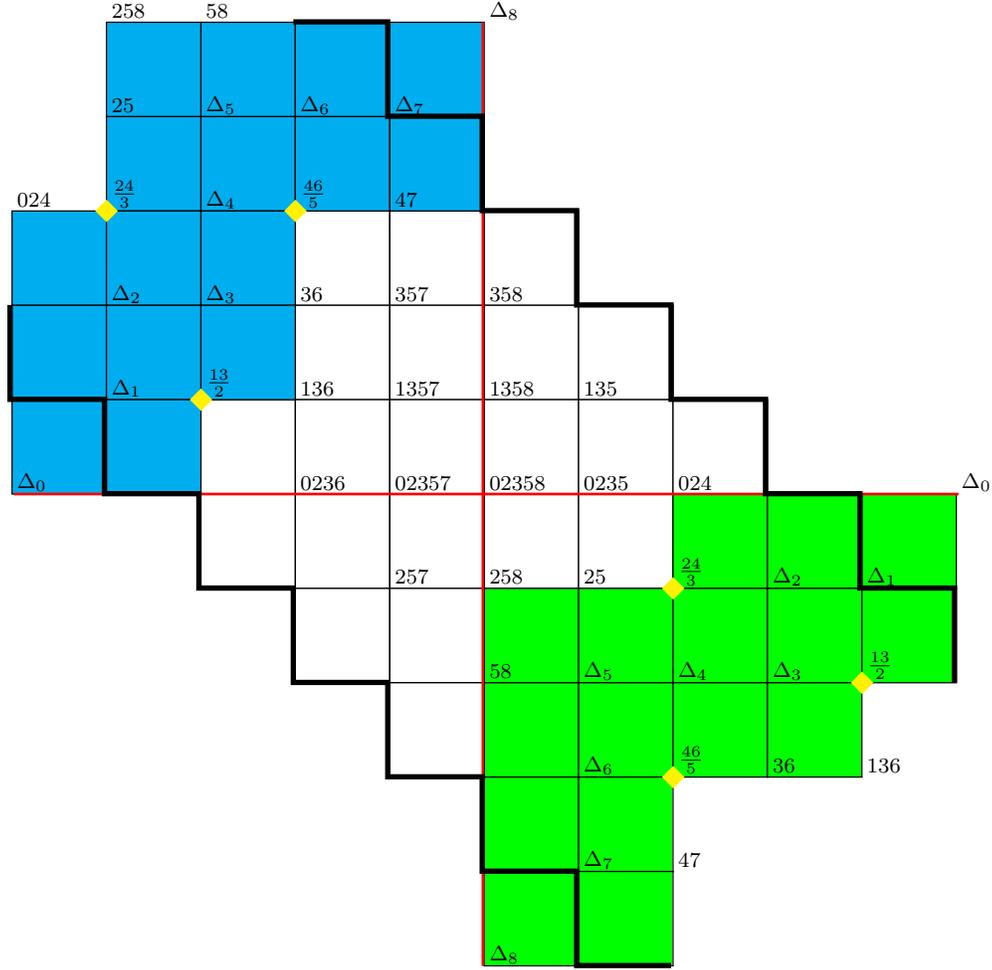
\begin{figure}
\begin{tikzpicture}[scale = 1.5]
\ytableausetup{notabloids}
\ytableausetup{mathmode, boxsize=3.0em}
\node (n) {\ytableausetup{nosmalltableaux}
\ytableausetup{notabloids}
\ydiagram[*(cyan)]{1+4,1+4,0+3,0+3,0+2}
*[*(green)]{0,0,0,0,0,7+3,5+5,5+4,5+2,5+2}
*[*(white)]{0,0,3+3,3+4,2+6,2+5,3+2,4+1}};

\def \a {0.837};

\def \zxs {-3.27};
\def \zys {0.42};

\begin{scope}[shift={(-0.84,+0.42)}]
\draw[red, line width = 1 pt] ({-1.56-2*\a},-0.42) -- ({-1.56+8*\a},-0.42);
\draw[red, line width = 1 pt] ({-1.59+3*\a},{-0.42+5*\a}) -- ({-1.59+3*\a},{-0.42-5*\a});

\draw[line width= 2.0 pt,black] (\zxs,\zys+\a) -- (\zxs, \zys) -- (\zxs+\a, \zys) -- (\zxs+\a, \zys-\a) -- (\zxs+2*\a, \zys-\a) -- (\zxs+2*\a, \zys-2*\a) -- (\zxs+3*\a, \zys-2*\a) -- (\zxs+3*\a, \zys-3*\a) -- (\zxs+4*\a, \zys-3*\a) -- (\zxs+4*\a, \zys-4*\a) -- (\zxs+5*\a, \zys-4*\a) -- (\zxs+5*\a, \zys-5*\a) -- (\zxs+6*\a, \zys-5*\a) -- (\zxs+6*\a, \zys-6*\a) -- (\zxs+7*\a, \zys-6*\a);
\end{scope}

\begin{scope}[shift = {(-0.84+3*\a,+0.42+4*\a)}]
\draw[line width= 2.0 pt,black] (\zxs, \zys) -- (\zxs+\a, \zys) -- (\zxs+\a, \zys-\a) -- (\zxs+2*\a, \zys-\a) -- (\zxs+2*\a, \zys-2*\a) -- (\zxs+3*\a, \zys-2*\a) -- (\zxs+3*\a, \zys-3*\a) -- (\zxs+4*\a, \zys-3*\a) -- (\zxs+4*\a, \zys-4*\a) -- (\zxs+5*\a, \zys-4*\a) -- (\zxs+5*\a, \zys-5*\a) -- (\zxs+6*\a, \zys-5*\a) -- (\zxs+6*\a, \zys-6*\a) -- (\zxs+7*\a, \zys-6*\a) -- (\zxs+7*\a, \zys-7*\a);
\end{scope}

\def \xs {0.042};
\def \ys {0.1};
\def \frl {0.05};

\begin{scope}
\node[anchor = west] at (\xs-4*\a,\ys+5*\a) {\tiny $258$};
\node[anchor = west] at (\xs-3*\a,\ys+5*\a) {\tiny $58$};
\node[anchor = west] at (\xs,\ys+5*\a) {\tiny $\Delta_8$};

\node[anchor = west] at (\xs-4*\a,\ys+4*\a) {\tiny $25$};
\node[anchor = west] at (\xs-3*\a,\ys+4*\a) {\tiny $\Delta_5$};
\node[anchor = west] at (\xs-2*\a,\ys+4*\a) {\tiny $\Delta_6$};
\node[anchor = west] at (\xs-\a,\ys+4*\a) {\tiny $\Delta_7$};

\node[anchor = west] at (\xs-5*\a,\ys+3*\a) {\tiny $024$};
\node[anchor = west] at (\xs-4*\a,\ys+3*\a+\frl) {\tiny $\frac{24}{3}$};
\node[anchor = west] at (\xs-3*\a,\ys+3*\a) {\tiny $\Delta_4$};
\node[anchor = west] at (\xs-2*\a,\ys+3*\a+\frl) {\tiny $\frac{46}{5}$};
\node[anchor = west] at (\xs-\a,\ys+3*\a) {\tiny $47$};

\node[anchor = west] at (\xs-4*\a,\ys+2*\a) {\tiny $\Delta_2$};
\node[anchor = west] at (\xs-3*\a,\ys+2*\a) {\tiny $\Delta_3$};
\node[anchor = west] at (\xs-2*\a,\ys+2*\a) {\tiny $36$};
\node[anchor = west] at (\xs-\a,\ys+2*\a) {\tiny $357$};
\node[anchor = west] at (\xs,\ys+2*\a) {\tiny $358$};

\node[anchor = west] at (\xs-4*\a,\ys+\a) {\tiny $\Delta_1$};
\node[anchor = west] at (\xs-3*\a,\ys+\a+\frl) {\tiny $\frac{13}{2}$};
\node[anchor = west] at (\xs-2*\a,\ys+\a) {\tiny $136$};
\node[anchor = west] at (\xs-\a,\ys+\a) {\tiny $1357$};
\node[anchor = west] at (\xs,\ys+\a) {\tiny $1358$};
\node[anchor = west] at (\xs+\a,\ys+\a) {\tiny $135$};

\node[anchor = west] at (\xs-5*\a,\ys) {\tiny $\Delta_0$};
\node[anchor = west] at (\xs-2*\a,\ys) {\tiny $0236$};
\node[anchor = west] at (\xs-\a,\ys) {\tiny $02357$};
\node[anchor = west] at (\xs,\ys) {\tiny $02358$};
\node[anchor = west] at (\xs+\a,\ys) {\tiny $0235$};
\node[anchor = west] at (\xs+2*\a,\ys) {\tiny $024$};
\node[anchor = west] at (\xs+5*\a,\ys) {\tiny $\Delta_0$};

\node[anchor = west] at (\xs-\a,\ys-\a) {\tiny $257$};
\node[anchor = west] at (\xs,\ys-\a) {\tiny $258$};
\node[anchor = west] at (\xs+\a,\ys-\a) {\tiny $25$};
\node[anchor = west] at (\xs+2*\a,\ys-\a+\frl) {\tiny $\frac{24}{3}$};
\node[anchor = west] at (\xs+3*\a,\ys-\a) {\tiny $\Delta_2$};
\node[anchor = west] at (\xs+4*\a,\ys-\a) {\tiny $\Delta_1$};

\node[anchor = west] at (\xs,\ys-2*\a) {\tiny $58$};
\node[anchor = west] at (\xs+\a,\ys-2*\a) {\tiny $\Delta_5$};
\node[anchor = west] at (\xs+2*\a,\ys-2*\a) {\tiny $\Delta_4$};
\node[anchor = west] at (\xs+3*\a,\ys-2*\a) {\tiny $\Delta_3$};
\node[anchor = west] at (\xs+4*\a,\ys-2*\a+\frl) {\tiny $\frac{13}{2}$};

\node[anchor = west] at (\xs+\a,\ys-3*\a) {\tiny $\Delta_6$};
\node[anchor = west] at (\xs+2*\a,\ys-3*\a+\frl) {\tiny $\frac{46}{5}$};
\node[anchor = west] at (\xs+3*\a,\ys-3*\a) {\tiny $36$};
\node[anchor = west] at (\xs+4*\a,\ys-3*\a) {\tiny $136$};

\node[anchor = west] at (\xs+\a,\ys-4*\a) {\tiny $\Delta_7$};
\node[anchor = west] at (\xs+2*\a,\ys-4*\a) {\tiny $47$};

\node[anchor = west] at (\xs,\ys-5*\a) {\tiny $\Delta_8$};

\node[scale = 0.4, diamond, fill=yellow,minimum width=1 em] at (-2.417,+\a) {\hphantom{o}};
\node[scale = 0.4, diamond, fill=yellow,minimum width=1 em] at (-2.417+\a,+3*\a) {\hphantom{o}};
\node[scale = 0.4, diamond, fill=yellow,minimum width=1 em] at (-2.417+5*\a,-\a) {\hphantom{o}};

\node[scale = 0.4, diamond, fill=yellow,minimum width=1 em] at (-2.417-\a,+3*\a) {\hphantom{o}};
\node[scale = 0.4, diamond, fill=yellow,minimum width=1 em] at (-2.417+5*\a,-3*\a) {\hphantom{o}};
\node[scale = 0.4, diamond, fill=yellow,minimum width=1 em] at (-2.417+7*\a,-2*\a) {\hphantom{o}};
\end{scope}

\end{tikzpicture}

\caption{Frozen coefficients for lattice points inside $NENNEE$-DRH staircase. Here, the digit $i$ denotes the frozen variable $\Delta_i$. For example, $\frac{24}{3}$ denotes $\frac{\Delta_2 \Delta_4}{\Delta_3}$. Recall that $\Delta_0 := a_{11}$ and $\Delta_{l+2} = a_{pq}$. Special lattice points are marked with diamonds.} \label{fig:NENNEE_LFC}
\end{figure}
\end{center}

The special points are exactly the centers of $2 \times 2$-squares inside the DRH staircase whose determinant is non-homogeneous. Inside $\cW_{\lambda}$, every $2 \times 2$-square has homogeneous determinant. This reflects the fact that the special points only occur on the boundary between $\cW_{\lambda}$ and the DRH (or the DRH transpose). Away from the special lattice points, there is a generalization to Corollary \ref{c:detiszero}.

\begin{prop} \label{p:gendet}
Suppose that $[c_{ij}]_{1 \le i,j \le 3}$ is a $3 \times 3$ square inside the DRH staircase. If all the four vertices of $c_{22}$ are non-special, then
\[
\det \begin{bmatrix}
e(c_{11}) & e(c_{12}) & e(c_{13}) \\
e(c_{21}) & e(c_{22}) & e(c_{23}) \\
e(c_{31}) & e(c_{32}) & e(c_{33})
\end{bmatrix} = 0.
\]
\end{prop}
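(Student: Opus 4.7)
The plan is to reduce, via Dodgson condensation, to a monomial identity on frozen coefficients, and then verify that identity case by case. Let $u_{ij}$ for $i,j \in \{1,2\}$ denote the four vertices of $c_{22}$; equivalently, $u_{ij}$ is the center of the $2 \times 2$-square $\fq_{ij} := (c_{i,j}, c_{i,j+1}; c_{i+1,j}, c_{i+1,j+1})$. Dodgson condensation applied to the $3 \times 3$ matrix yields
\[
e(c_{22}) \cdot \det [e(c_{ij})]_{1 \le i,j \le 3} = |\fq_{11}| \cdot |\fq_{22}| - |\fq_{12}| \cdot |\fq_{21}|.
\]
Since $e(c_{22})$ is a nonzero polynomial, it suffices to establish $|\fq_{11}| \cdot |\fq_{22}| = |\fq_{12}| \cdot |\fq_{21}|$.

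Next I would exploit the non-speciality hypothesis. By the dichotomy recorded in Table \ref{t:FCsummary}, a lattice point $u$ is non-special precisely when $|\fq(u)|$ is homogeneous and equals the Laurent monomial $\FC(u)$ in the frozen variables. Since the four corners of $c_{22}$ are assumed non-special, the desired equality reduces to the monomial identity
\[
\FC(u_{11}) \cdot \FC(u_{22}) = \FC(u_{12}) \cdot \FC(u_{21}). \tag{$\ast$}
\]
If all four $u_{ij}$ lie in the interior of $\cW_\lambda$, then $(\ast)$ follows directly from Corollary \ref{c:detiszero}. Otherwise some of the $u_{ij}$ are boundary lattice points, and I would use the explicit recipes from Step 3 of Section \ref{ss:FCdef}: at such a point $\FC(u) = \FC(v) \cdot \FC(w)$ (or $\FC(v)\FC(w)\FC(x)$), where $v$ and $w$ are the first Step-1 points encountered by marching vertically and horizontally from $u$. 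Because $u_{11}, u_{12}$ share a row and $u_{21}, u_{22}$ share a row (and analogously for columns), the vertical ``witnesses'' split into pairs that telescope on both sides of $(\ast)$, and similarly for the horizontal witnesses.

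The main obstacle will be the mixed cases, where some $u_{ij}$ lie in $\cW_\lambda$ (Step 2) while others are boundary points (Step 3), so the formulas for $\FC$ have genuinely different shapes. The plan is to handle these by noting that a Step-2 formula $\prod_{\Delta \in \cF(\fq)} \Delta$ is itself multiplicatively separable along the four sides of $\fq$ (endpoints of the longest subskeleton, bends of the shortest, and common-endpoint bends), and these separable factors align with the vertical/horizontal witnesses used on the boundary. Matching the factors uniformly across the two diagonals requires a finite but tedious enumeration: the quadruple $\fq_{ij}$ can fall in one of eight boundary configurations (three straight boundaries on each of two sides, plus the two corner types where the DRH or its transpose itself has a bend), and the possible combinations for the four corners of $c_{22}$ must be checked, with the special-point configurations (excluded by hypothesis) being precisely those where the above matching would fail.

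Finally, I would remark that this is consistent with the purpose of excluding special points: at a special point, $\FC(u) = \tfrac{\Delta_i\Delta_{i+2}}{\Delta_{i+1}}$ differs from the actual determinant $|\fq(u)|$, so the reduction from the $3 \times 3$ vanishing to $(\ast)$ would fail there. Once $(\ast)$ is verified in each boundary configuration, Dodgson condensation completes the proof.
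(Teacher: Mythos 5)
Your proposal follows the paper's proof essentially step for step: Dodgson condensation reduces the vanishing of the $3\times 3$ determinant to the identity $|\fq_{11}|\,|\fq_{22}| = |\fq_{12}|\,|\fq_{21}|$, non-speciality converts each $|\fq_{ij}|$ into the monomial $\FC(u_{ij})$, and the resulting monomial identity $(\ast)$ is verified by a finite case analysis over the possible positions of the square relative to the DRH and its transpose (the paper's eight cases, with Corollary \ref{c:notbothD&Dt} ruling out mixed DRH/transpose configurations). The only difference is that you leave the case enumeration as a plan rather than carrying it out, but the tools you name --- the Step 3 recipes for boundary lattice points and the $B$, $E$, $I$ factorization supplied by the main decomposition theorem --- are exactly the ones the paper uses to close each case.
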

\begin{proof}
The case when the $3\times 3$-square is in $\cW_{\lambda}$ was done in Corollary \ref{c:detiszero}. From now, suppose that the $3 \times 3$-square has some cells from the DRH or the DRH transpose. Recall from Corollary \ref{c:notbothD&Dt} that the $3\times 3$-square cannot simultaneously have both cells from the DRH and the DRH transpose. By symmetry, we may assume that it contains a cell from the DRH. As long as all the four vertices of $c_{22}$, the middle cell, are non-special, there are eight following cases.

\begin{center}
\begin{tikzpicture}
\def \a {0.0};
\def \b {+1.6};

\begin{scope}[shift={(\a,0)}]
\ytableausetup{notabloids}
\ytableausetup{mathmode, boxsize=0.5 em}
\node (n) {\ytableausetup{nosmalltableaux}
\ytableausetup{notabloids}
\ydiagram[*(cyan)]{3,3,0}
*[*(white)]{0,0,3}};
\node at (-0.5,0) {1)};
\end{scope}

\begin{scope}[shift={(\a+\b,0)}]
\ytableausetup{notabloids}
\ytableausetup{mathmode, boxsize=0.5 em}
\node (n) {\ytableausetup{nosmalltableaux}
\ytableausetup{notabloids}
\ydiagram[*(cyan)]{3,0,0}
*[*(white)]{0,3,3}};
\node at (-0.5,0) {2)};
\end{scope}

\begin{scope}[shift={(\a+2*\b,0)}]
\ytableausetup{notabloids}
\ytableausetup{mathmode, boxsize=0.5 em}
\node (n) {\ytableausetup{nosmalltableaux}
\ytableausetup{notabloids}
\ydiagram[*(cyan)]{2,2,2}
*[*(white)]{2+1,2+1,2+1}};
\node at (-0.5,0) {3)};
\end{scope}

\begin{scope}[shift={(\a+3*\b,0)}]
\ytableausetup{notabloids}
\ytableausetup{mathmode, boxsize=0.5 em}
\node (n) {\ytableausetup{nosmalltableaux}
\ytableausetup{notabloids}
\ydiagram[*(cyan)]{2,2,0}
*[*(white)]{2+1,2+1,3}};
\node at (-0.5,0) {4)};
\end{scope}

\begin{scope}[shift={(\a+4*\b,0)}]
\ytableausetup{notabloids}
\ytableausetup{mathmode, boxsize=0.5 em}
\node (n) {\ytableausetup{nosmalltableaux}
\ytableausetup{notabloids}
\ydiagram[*(cyan)]{2,0,0}
*[*(white)]{2+1,3,3}};
\node at (-0.5,0) {5)};
\end{scope}

\begin{scope}[shift={(\a+5*\b,0)}]
\ytableausetup{notabloids}
\ytableausetup{mathmode, boxsize=0.5 em}
\node (n) {\ytableausetup{nosmalltableaux}
\ytableausetup{notabloids}
\ydiagram[*(cyan)]{1,1,1}
*[*(white)]{1+2,1+2,1+2}};
\node at (-0.5,0) {6)};
\end{scope}

\begin{scope}[shift={(\a+6*\b,0)}]
\ytableausetup{notabloids}
\ytableausetup{mathmode, boxsize=0.5 em}
\node (n) {\ytableausetup{nosmalltableaux}
\ytableausetup{notabloids}
\ydiagram[*(cyan)]{1,1,0}
*[*(white)]{1+2,1+2,3}};
\node at (-0.5,0) {7)};
\end{scope}

\begin{scope}[shift={(\a+7*\b,0)}]
\ytableausetup{notabloids}
\ytableausetup{mathmode, boxsize=0.5 em}
\node (n) {\ytableausetup{nosmalltableaux}
\ytableausetup{notabloids}
\ydiagram[*(cyan)]{1,0,0}
*[*(white)]{1+2,3,3}};
\node at (-0.5,0) {8)};
\end{scope}
\end{tikzpicture}
\end{center}

In each diagram above, the white cells denote cells in $\cW_{\lambda}$ and the cyan cells denote cells in the DRH. (Compare Figure \ref{fig:NENNEE_LFC}.)

Let $u_{11}$, $u_{12}$, $u_{21}$, and $u_{22}$ be the upper-left, upper-right, lower-left, and lower-right vertices, respectively, of $c_{22}$. Also, let $\fq_{ij}$ denote the quadruple centered at $u_{ij}$. Since all $u_{ij}$ are non-special, we have that $|\fq_{ij}| = \FC(u_{ij})$. Therefore, it suffices to show in each case that $\FC(u_{11}) \FC(u_{22}) = \FC(u_{12}) \FC(u_{21})$.

\underline{Case 1.} In this case, $\FC(u_{11}) = \Delta_i$ and $\FC(u_{12}) = \Delta_{i+1}$ for some $i$. The frozen coefficients $\FC(u_{21})$ and $\FC(u_{22})$ were defined in Case 3.2 in Section \ref{ss:FCdef}. Following the description, if we let $w$ be the closest Step-1 lattice point to the left to $u_{21}$, we obtain $\FC(u_{21}) = \Delta_i \cdot \FC(w)$ and $\FC(u_{22}) = \Delta_{i+1} \cdot \FC(w)$. Hence, $\FC(u_{11}) \FC(u_{22}) = \Delta_i \Delta_{i+1} \FC(w) = \FC(u_{12}) \FC(u_{21})$.

\underline{Case 2.} We write the row and column labels for the six squares as follows.

\begin{center}
\begin{tikzpicture}[scale = 1.0]
\begin{scope}
\ytableausetup{notabloids}
\ytableausetup{mathmode, boxsize=2.0 em}
\node (n) {\ytableausetup{nosmalltableaux}
\ytableausetup{notabloids}
\ydiagram[*(cyan)]{3,0,0}
*[*(white)]{0,3,3}};
\node[rotate = -90] at (-0.7,-1.53) {$\alpha$};
\node[rotate = -90] at (0.15,-1.9) {$\alpha+1$};
\node[rotate = -90] at (1.0,-1.9) {$\alpha+2$};

\node at (1.8,0) {$\beta$};
\node at (1.8,-0.9) {$\gamma$};
\end{scope}

\begin{scope}
\draw[dashed] (0.7,0.42) -- (-2.5,0.42);
\draw[fill] (-2.5,0.4) circle [radius = 0.3 em];
\node at (-2.8,0.7) {$w$};

\draw[fill] (0.56,1.25) circle [radius = 0.3 em];
\draw[fill] (-0.27,1.25) circle [radius = 0.3 em];

\node at (-0.27,1.5) {\tiny $\Delta_{\alpha+2}$};
\node at (0.56,1.5) {\tiny $\Delta_{\alpha+3}$};
\end{scope}

\end{tikzpicture}
\end{center}

Let $w$ be the closest Step-1 lattice point to the left of $u_{11}$. Note that $\FC(u_{11}) = \Delta_{\alpha+2} \cdot \FC(w)$ and $\FC(u_{12}) = \Delta_{\alpha+3} \cdot \FC(w)$. Recall the notations $B$, $E$, $I$, $\chi^X$, and $\chi^Y$ we used in the proof of Corollary \ref{c:detiszero}. We will use these functions again. For convenience, we will use $[r,s]$ to denote the subskeleton $(\lambda,r,s) = W^r W^{r+1} \cdots W^s$ of $\lambda$. The main decomposition theorem gives
\[
\FC(u_{21}) = B([\beta \vee \gamma, \alpha]) E([\beta \wedge \gamma, \alpha+1]) I([\beta \vee \gamma, \alpha]) \chi^X(\fq_{21}) \chi^Y(\fq_{21}),
\]
where $\beta \wedge \gamma := \min\{\beta, \gamma\}$ and $\beta \vee \gamma := \max \{\beta, \gamma\}$, and also
\[
\FC(u_{22}) = B([\beta \vee \gamma, \alpha+1]) E([\beta \wedge \gamma, \alpha+2]) I([\beta \vee \gamma, \alpha+1]) \chi^X(\fq_{22}) \chi^Y(\fq_{22}).
\]
Note that $[\beta \vee \gamma, \alpha]$ and $[\beta \vee \gamma, \alpha+1]$ share the same bends. Thus, $B([\beta \vee \gamma, \alpha]) = B([\beta \vee \gamma, \alpha+1])$. Also, since the $\alpha$-th, $(\alpha+1)$-st, and $(\alpha+2)$-nd letter in $\lambda$ are all $E$'s, the end points of both $[\beta \vee \gamma, \alpha]$ and $[\beta \vee \gamma, \alpha+1]$ are not bends in $\lambda$. Thus, $I([\beta \vee \gamma, \alpha]) = I([\beta \vee \gamma, \alpha+1])$. On the other hand, by considering the ending vertices of $[\beta \wedge \gamma, \alpha+2]$ and $[\beta \wedge \gamma, \alpha+3]$, we have
\[
\frac{E([\beta \wedge \gamma, \alpha+2])}{E([\beta \wedge \gamma, \alpha+1])} = \frac{\Delta_{\alpha+3}}{\Delta_{\alpha+2}}.
\]
Note also that $\chi^Y(\fq_{21}) = 1 = \chi^Y(\fq_{22})$, since both $\fq_{21}$ and $\fq_{22}$ are not split by the vertical DRH axis, and that $\chi^X(\fq_{21}) = \chi^X(\fq_{22})$. This gives 
\[
\FC(u_{11}) \FC(u_{22}) = \FC(u_{12}) \FC(u_{21}).
\]

\underline{Case 3.} This case is analogous to Case 1.

\underline{Case 4.} Let $v$ be the closest Step-1 lattice point above $u_{12}$ and $w$ be the closest Step-1 lattice point to the left of $u_{21}$. As $u_{11}$ is a lattice point inside the DRH, write $\FC(u_{11}) = \Delta_i$. We have $\FC(u_{12}) = \Delta_i \FC(v)$, $\FC(u_{21}) = \Delta_i \FC(w)$, and $\FC(u_{22}) = \Delta_i \FC(v) \FC(w)$. It is clear that $\FC(u_{11}) \FC(u_{22}) = \FC(u_{12}) \FC(u_{21})$.

\underline{Case 5.} We write the row and column labels and define $v$ and $w$ as in the following diagram.

\begin{center}
\begin{tikzpicture}[scale = 1.0]
\begin{scope}
\ytableausetup{notabloids}
\ytableausetup{mathmode, boxsize=2.0 em}
\node (n) {\ytableausetup{nosmalltableaux}
\ytableausetup{notabloids}
\ydiagram[*(cyan)]{2,0,0}
*[*(white)]{2+1,3,3}};
\node[rotate = -90] at (-0.7,-1.53) {$\alpha$};
\node[rotate = -90] at (0.15,-1.9) {$\alpha+1$};
\node[rotate = -90] at (1.0,-1.53) {$\beta$};

\node at (2.15,0.9) {$\alpha+2$};
\node at (1.8,0) {$\gamma$};
\node at (1.8,-0.9) {$\delta$};
\end{scope}

\begin{scope}
\draw[dashed] (0.7,0.42) -- (-2.5,0.42);
\draw[fill] (-2.5,0.4) circle [radius = 0.3 em];
\node at (-2.8,0.7) {$w$};

\draw[dashed] (0.57,0.42) -- (0.57,2.1);
\draw[fill] (0.57,2.1) circle [radius = 0.3 em];
\node[above] at (0.57,2.2) {$v$};

\draw[fill] (-1.08,1.25) circle [radius = 0.3 em];
\draw[fill] (-0.27,1.25) circle [radius = 0.3 em];

\node at (-1.08,1.5) {\tiny $\Delta_{\alpha+1}$};
\node at (-0.27,1.5) {\tiny $\Delta_{\alpha+2}$};
\end{scope}
\end{tikzpicture}
\end{center}
Note that we have $\beta \ge \alpha+2$ and $\alpha \ge \gamma, \delta$. Using a similar analysis to Case 2, we find that $\FC(u_{11}) = \Delta_{\alpha+2} \FC(w)$ and $\FC(u_{12}) = \Delta_{\alpha+2} \FC(v) \FC(w)$. The main decomposition theorem gives
\[
\FC(u_{21}) = B([\gamma \vee \delta,\alpha]) E([\gamma \wedge \delta, \alpha + 1]) I([\gamma \vee \delta, \alpha]) \chi^X(\fq_{21}) \chi^Y(\fq_{21})
\]
and
\[
\FC(u_{22}) = B([\gamma \vee \delta,\alpha+1]) E([\gamma \wedge \delta, \beta]) I([\gamma \vee \delta, \alpha+1]) \chi^X(\fq_{22}) \chi^Y(\fq_{22}).
\]
Therefore,
\begin{align*}
\frac{\FC(u_{22})}{\FC(u_{21})} &= \frac{B([\gamma \vee \delta,\alpha+1])}{B([\gamma \vee \delta,\alpha])} \frac{E([\gamma \wedge \delta, \beta]) \chi^Y(\fq_{22})}{E([\gamma \wedge \delta, \alpha + 1]) \chi^Y(\fq_{21})} \frac{I([\gamma \vee \delta, \alpha+1])}{I([\gamma \vee \delta, \alpha])} \\
&= 1 \cdot \frac{\FC(v)}{\Delta_{\alpha+2}} \cdot \Delta_{\alpha+ 2} = \FC(v) = \frac{\FC(u_{12})}{\FC(u_{11})}.
\end{align*}

\underline{Case 6.} This case is analogous to Case 2.

\underline{Case 7.} This case is analogous to Case 5.

\underline{Case 8.} We give the row and column labels and define $v$ and $w$ in the diagram below.

\begin{center}
\begin{tikzpicture}[scale = 1.0]
\def \a {0.84}
\begin{scope}
\ytableausetup{notabloids}
\ytableausetup{mathmode, boxsize=2.0 em}
\node (n) {\ytableausetup{nosmalltableaux}
\ytableausetup{notabloids}
\ydiagram[*(cyan)]{1,0,0}
*[*(white)]{1+2,3,3}};
\node at (-0.7,-1.53) {$\alpha$};
\node at (0.15,-1.53) {$\beta$};
\node at (1.0,-1.53) {$\gamma$};

\node at (2.15,0.9) {$\alpha+1$};
\node at (1.8,0) {$\delta$};
\node at (1.8,-0.9) {$\ve$};
\end{scope}

\begin{scope}
\draw[dashed] (0.7,0.42) -- (-2.5,0.42);
\draw[fill] (-2.5,0.4) circle [radius = 0.3 em];
\node at (-2.8,0.7) {$w$};

\draw[dashed] (0.57-\a,0.42) -- (0.57-\a,2.1);
\draw[fill] (0.57-\a,2.1) circle [radius = 0.3 em];
\node[above] at (0.57-\a,2.2) {$v$};

\draw[fill] (-1.08,1.25) circle [radius = 0.3 em];

\node at (-1.08,1.5) {\tiny $\Delta_{\alpha+1}$};
\end{scope}
\end{tikzpicture}
\end{center}

Note that $\beta, \gamma \ge \alpha + 1$ and $\alpha \ge \delta, \ve$. We have $\FC(u_{11}) = \Delta_{\alpha+1} \FC(v) \FC(w)$. The main decomposition theorem gives
\[
\FC(u_{12}) = B([\alpha+1,\beta \wedge \gamma]) E([\delta, \beta \vee \gamma]) I([\alpha+1, \beta \wedge \gamma]) \chi^X (\fq_{12}) \chi^Y(\fq_{12}),
\]
\[
\FC(u_{21}) = B([\delta \vee \ve, \alpha]) E([\delta \wedge \ve, \beta]) I([\delta \vee \ve,\alpha]) \chi^X (\fq_{21}) \chi^Y(\fq_{21}),
\]
and
\[
\FC(u_{22}) = B([\delta \vee \ve, \beta \wedge \gamma]) E([\delta \wedge \ve, \beta \vee \gamma]) I([\delta \vee \ve, \beta \wedge \gamma]) \chi^X (\fq_{22}) \chi^Y(\fq_{22}).
\]
Hence,
\[
\frac{\FC(u_{21})}{\FC(u_{22})} = \frac{1}{\Delta_{\alpha+1} \cdot B([\alpha+1, \beta \wedge \gamma])} \cdot \frac{\FC(v)}{\FC(\wh{v})} \cdot \frac{\Delta_{\alpha+1}}{\eta},
\]
where $\wh{v}$ denotes the lattice point inside the DRH corresponding to $\beta \vee \gamma$, while $\eta$ denotes the frozen coefficient at the lattice point corresponding to $\beta \wedge \gamma$ if the point is a bend in $\lambda$ and denotes $1$ if it is not.

Note that we have
\[
\FC(u_{12}) = B([\alpha+1, \beta \wedge \gamma]) \cdot \left( \FC(w)\FC(\wh{v}) \right) \cdot \left( \Delta_{\alpha+1} \cdot \eta \right).
\]
Therefore,
\[
\frac{\FC(u_{12}) \FC(u_{21})}{\FC(u_{22})} = \Delta_{\alpha+1} \FC(v) \FC(w) = \FC(u_{11}).
\]

We have finished the proof.
\end{proof}

\medskip

\subsection{Description of the quiver of each worm inside the DRH staircase} \label{ss:fcdesc}
In this section, we will use the frozen coefficients defined at the lattice points inside the DRH staircase to propose the quivers for the worms inside the DRH staircase. Our strategy of the proof of our main theorem (Theorem \ref{thm:main}) is as follows.
\begin{enumerate}
\item Propose, for every worm $w$ inside the DRH staircase, a quiver $Q_w$.
\item Show that the proposed quiver agrees with the description for the initial worm $(w_0, Q_{w_0})$.
\item Prove that if a worm operation sends the worm $w$ to the worm $w'$, then the corresponding quiver mutation sends the proposed quiver $Q_w$ to the quiver $Q_{w'}$.
\end{enumerate}
Once the steps above are finished, we conclude that the description of the proposed quiver at each worm is the correct description of the desired quiver for the cluster algebra, and therefore we obtain all the cluster variable of the DRH algebra.

We now propose the quivers. Although the amount of casework that will follow may look daunting, the idea of how we will propose the quivers $Q_w$ is simple: since we have proposed all the cluster variables $e(c)$ in every cell $c$ inside the staircase, we will propose the quiver $Q_w$ so that it is compatible with worm operations. Namely, suppose $c$ is a bend or an endpoint of $w$ and let $w'$ be the resulting worm once we perform the worm operation on $w$ at $c$. Then, we want the quiver $Q_w$ to be such that the exchange relation changes the variable $e(c)$ to the variable $e(c')$. 

Let $w = c_1 c_2 \cdots c_{l+1}$ be a worm inside the staircase. To describe the quiver $Q_w$, we need to propose (1) the arrows between the mutable vertices $c_1, \dots, c_{l+1}$ and (2) the frozen coefficients $\fc_{Q_w}(c_i)$ for all $i = 1, \dots, l+1$. The arrows between the mutable vertices are easy to describe. Since we would like the quivers $Q_w$ to be the correct quivers we would observe when we mutate from the original worm to $w$, the arrows between the mutable vertices in $Q_w$ must still follow the same rule we gave in the construction of the initial DRH quiver. Namely, there is a unique arrow between $c_i$ and $c_{i+1}$ for $i = 1, \dots, l$ and the arrow goes $c_i \rightarrow c_{i+1}$ if the cell $c_{i+1}$ is to the right of $c_i$ while it goes $c_i \leftarrow c_{i+1}$ if the cell $c_{i+1}$ is above $c_i$. It is direct to verify that this rule respects quiver mutations.

Next, we propose $\fc_{Q_w}(c_i)$ for all $i$. We consider whether $c_i$ is a bend in $w$. \underline{Case 1.1.} If $c_i$ is an EN-bend (that is, there are arrows from the two mutable vertices $c_{i-1}$ and $c_{i+1}$), then we propose
\[
\fc_{Q_w}(c_i) := \FC(u)
\]
where $u$ is the upper-left corner of the cell $c_i$. \underline{Case 1.2.} If $c_i$ is an NE-bend (that is, there are arrows from $c_i$ to the two mutable vertices $c_{i-1}$ and $c_{i+1}$), then we propose
\[
\fc_{Q_w}(c_i) := \frac{1}{\FC(v)}
\]
where $v$ is the lower-right corner of the cell $c_i$.

When $c_i$ is not a bend in $w$, we consider whether $c_i$ is an endpoint of $w$ (that is, $i = 1$ or $i=l+1$). When $c_i$ is an endpoint, say $i=1$, it is straightforward to see what $\fc_{Q_w}(c_1)$ must be. This is because we know that the mutation at $c_1$ must change $c_1$ to another cell on the lower zigzag path where the associated variable has a relatively simple formula. \underline{Case 2.1.} Suppose that $c_i$ is the starting point of the worm $w$ (That is, $i=1$). \underline{Case 2.1.1.} Suppose that $c_2$ is to the right of $c_1$ (That is, there is a mutable arrow $c_1 \rightarrow c_2$). Let $\gamma$ denote the cell immediately below $c_2$. In this case, a mutation at $c_1$ should move $c_1$ to $\gamma$. Note that $\gamma$ is on the lower zigzag line of the DRH staircase. We will consider the degrees of the polynomials $e(c_1)$, $e(c_2)$, and $e(\gamma)$. Evidently, there are only a few possibilities for the triple of degrees:
\[
\Big( \deg(e(c_1)), \deg(e(c_2)), \deg(e(\gamma)) \Big).
\]
Since both $c_1$ and $\gamma$ are on the lower zigzag path, the degrees $\deg(e(c_1))$ and $\deg(e(\gamma))$ can only be $1$ or $2$. The degree of such a cell is $1$ precisely when the cell is inside the DRH or the DRH transpose; otherwise, the degree is $2$ when the cell is in $\cW_{\lambda}$. The triple $\Big( \deg(e(c_1)), \deg(e(c_2)), \deg(e(\gamma)) \Big)$ may be one of the following eight possibilities: $(1,1,1)$, $(1,1,2)$, $(1,3,2)$, $(2,1,1)$, $(2,3,1)$, $(2,2,2)$, $(2,4,2)$, and $(1,2,1)$. 

\underline{Case 2.1.1.1.} Either all of $c_1$, $c_2$, $\gamma$ are in the DRH or they are in the DRH transpose. Let $\zeta$ be the frozen variable in the cell below $c_1$ and let $u$ be the lower-right corner of $c_1$. Note that necessarily
\[
\FC(u) = \begin{vmatrix}
e(c_1) & e(c_2) \\
\zeta & e(\gamma)
\end{vmatrix}.
\]
We propose
\[
\fc_{Q_w}(c_1) := \frac{\zeta}{\FC(u)}.
\]
In other words, we propose that there is an arrow from $c_1$ to $\zeta$ and an arrow from $\FC(u)$ to $c_1$. 

\underline{Case 2.1.1.2.} $\Big( \deg(e(c_1)), \deg(e(c_2)), \deg(e(\gamma)) \Big) = (1,1,2)$. This happens when both $c_1$ and $c_2$ are inside the DRH or the DRH transpose and $\gamma$ is outside. Let $u$ and $v$, respectively, denote the upper-left and the upper-right corners of the cell $c_1$. Let $\zeta$ be the frozen variable in the cell to the left of $c_1$ (which is $a_{11}$ if $c_1$ is in the DRH and which is $a_{pq}$ if $c_1$ is in the DRH transpose). We propose
\[
\fc_{Q_w}(c_1) := \frac{\FC(u)}{\zeta \cdot \FC(v)}.
\]

\underline{Case 2.1.1.3.} $\Big( \deg(e(c_1)), \deg(e(c_2)), \deg(e(\gamma)) \Big) = (1,3,2)$. Let $\zeta$ be the frozen cell to the left of $c_1$ and let $w$ be the upper right corner of $c_1$. From $w$, go upwards until we find the first lattice point which is the center of a $2 \times 2$-square inside the DRH or the transpose DRH, and call that lattice point $u$. We propose
\[
\fc_{Q_w}(c_1) := \frac{1}{\zeta \cdot \FC(u)}.
\]

\underline{Case 2.1.1.4.} $\Big( \deg(e(c_1)), \deg(e(c_2)), \deg(e(\gamma)) \Big) = (2,1,1)$. This case is similar to Case 2.1.1.2. Let $\zeta$ be the frozen variable right below $\gamma$. Let $u$ and $v$, respectively, be the lower-right and upper-right corners of $\gamma$. Then, we propose
\[
\fc_{Q_w}(c_1) := \frac{\FC(u)}{\zeta \cdot \FC(v)}.
\]

\underline{Case 2.1.1.5.} $\Big( \deg(e(c_1)), \deg(e(c_2)), \deg(e(\gamma)) \Big) = (2,3,1)$. This case is similar to Case 2.1.1.3. Let $\zeta$ be the frozen variable in the cell right below $\gamma$. Let $w$ be the upper-right corner of $\gamma$. From $w$, go right until we find the first lattice point which is the center of a $2 \times 2$-square inside the DRH or the transpose DRH, and call that lattice point $u$. We propose
\[
\fc_{Q_w}(c_1) = \frac{1}{\zeta \cdot \FC(u)}.
\]

The two cases 2.1.1.6 and 2.1.1.7 concern the situation in which $\deg(e(c_1))$ and $\deg(e(\gamma))$ are both $2$. This happens when $c_1$ and $\gamma$ are inside $\cW_{\lambda}$. Consider the columns of cells inside $\cW_{\lambda}$ that $c_1$ and $\gamma$ are in. Let $c^{\ast}_1$ and $c^{\ast}_2 \in \cW_{\lambda}$, respectively, be the highest cells inside $\cW_{\lambda}$ that are in the same column as $c_1$ and $\gamma$. It is evident that the cell $c^{\ast}_2$ is always higher or at the same height as the cell $c^{\ast}_1$. If $c^{\ast}_1$ and $c^{\ast}_2$ are at the same height (that is, $c^{\ast}_2$ is the next cell to the right of $c^{\ast}_1$), then $\deg(e(c_2))$ is $2$. Otherwise, $\deg(e(c_2)) = 4$.

\underline{Case 2.1.1.6.} $\Big( \deg(e(c_1)), \deg(e(c_2)), \deg(e(\gamma)) \Big) = (2,2,2)$. Let $w$ be the upper-right corner of the cell $c_1$. From $w$, go upwards until we find the first lattice point which is the center of a $2 \times 2$-square inside the DRH or the transpose DRH, and call that lattice point $u_1$. Let $u_2$ be the lattice point one unit to the left of $u_1$, and let $u_3$ be the lattice point one unit to the left of $u_2$. Necessarily, all of $u_1$, $u_2$, and $u_3$ are the centers of some $2 \times 2$-squares inside the DRH or the DRH transpose. In particular, $\FC(u_1)$, $\FC(u_2)$, and $\FC(u_3)$ are frozen cluster variables. We propose
\[
\fc_{Q_w}(c_1) := \frac{\FC(u_2)}{\FC(u_1) \FC(u_3)}.
\]

\underline{Case 2.1.1.7.} $\Big( \deg(e(c_1)), \deg(e(c_2)), \deg(e(\gamma)) \Big) = (2,4,2)$. Let $w_1$ and $w_2$, respectively, be the upper-left corners of $c_1$ and $c_2$. For each $i = 1,2$, starting from $w_i$, we go upwards until we find the first lattice point that is the center of a $2 \times 2$-square inside the DRH or the transpose DRH, and call that lattice point $u_i$. For each $i = 1,2$, let $v_i$ be the lattice point one unit to the left of $u_i$. Necessarily, $v_i$ is also the center of a $2 \times 2$-square inside the DRH or the DRH transpose. We propose
\[
\fc_{Q_w}(c_1) := \frac{1}{\FC(v_1) \FC(u_2)}.
\]

\underline{Case 2.1.1.8.} $\Big( \deg(e(c_1)), \deg(e(c_2)), \deg(e(\gamma)) \Big) = (1,2,1)$. This is a rather exceptional case, when one of $c_1$ and $\gamma$ is in the DRH and the other is in the DRH transpose. This only happens when $\lambda = NN \cdots N$. We define
\[
\fc_{Q_w}(c_1) := \frac{1}{a_{11} a_{pq}}.
\]

Now that we have proposed the quiver in all the subcases of Case 2.1.1, the rest of Case 2 are straightforward: either each of the remaining cases is analogous to or it is one step away from some case in Case 2.1.1. 

\underline{Case 2.1.2.} Suppose that $c_2$ is above $c_1$ (That is, there is a mutable arrow $c_1 \leftarrow c_2$). Let $\xi$ be the cell to the left of $c_2$. Note that a mutation at $c_1$ in $w$ changes $c_1$ to $\xi$. Let $\wt{w}$ be the resulting worm after the mutation. That is, $\wt{w}$ has $\xi$ as the starting cell instead of $c_1$, while all other $l$ cells remain the same as those in $w$. Case 2.1.1 has proposed the quiver $Q_{\wt{w}}$. Thus, we may propose $Q_w$ to be the resulting quiver when we mutate $Q_{\wt{w}}$ at $\xi$.

\underline{Case 2.2.} Suppose $c_i$ is the final cell. That is, $i = l+1$. This case is analogous to Case 2.1. We may conjugate the whole $\lambda$-DRH staircase to obtain the $\lambda^t$-DRH staircase. The cell $c_{l+1}$ inside the worm $w$ will become the starting cell inside $w^t$. Describe the quiver at the cell analogously.

The remaining case of our description is when $c_i$ is neither an endpoint nor a bend. The mutable arrows near $c_i$ are either $c_{i-1} \rightarrow c_i \rightarrow c_{i+1}$ or $c_{i-1} \leftarrow c_i \leftarrow c_{i+1}$.

\underline{Case 3.1.} Suppose that $c_{i-1}$, $c_i$, and $c_{i+1}$ are on the same row. That is, the arrows go as $c_{i-1} \rightarrow c_i \rightarrow c_{i+1}$. Let $c^{\ast}$ and $c_{\ast}$, respectively, be the cell above and the one below $c_i$. Let $c^{\ast \ast}$ be the cell to the right of $c^{\ast}$ and let $c_{\ast \ast}$ be the cell to the left of $c_{\ast}$. Consider any worm $w^\ast$ which has $c_{i-1} \rightarrow c_i \rightarrow c^{\ast} (\rightarrow c^{\ast \ast})$ inside, and any worm $w_{\ast}$ which has $(c_{\ast \ast} \rightarrow) c_{\ast} \rightarrow c_i \rightarrow c_{i+1}$ inside. (The parentheses about $c^{\ast \ast}$ and $c_{\ast \ast}$ indicate that the parts of the worm may not be there if $c^{\ast}$ or $c_{\ast}$ is already the endpoint of the worm.) In Cases 1 and 2, we have given the description of the quiver $Q_{w_{\ast}}$ locally at $c_{\ast}$ and $c_i$ and the quiver $Q_{w^{\ast}}$ locally at $c_i$ and $c^{\ast}$. We can mutate $Q_{w_{\ast}}$ at $c_{\ast}$ to obtain a quiver $Q'_{w_{\ast}}$. We can also mutate $Q_{w^{\ast}}$ at $c^{\ast}$ to obtain a quiver $Q'_{w^{\ast}}$. Note that both quivers $Q'_{w_{\ast}}$ and $Q'_{w^{\ast}}$ has $c_{i-1} \rightarrow c_i \rightarrow c_{i+1}$ inside their underlying worms. 

To describe the quiver $Q_w$ at $c_i$, we claim that both quivers have the same local data of arrows at $c_i$, and therefore, we can use these local data to propose the arrows at $c_i$ in $Q_w$. If $c_i$ is not next to an endpoint of the worm, this follows as a result of Lemma \ref{l:abcompat} below. On the other hand, $c_{i-1}$ is the starting cell or if $c_{i+1}$ is the finishing cell. If it is the former case but not the latter, that is $i=2$, then this follows as a result of Lemma \ref{l:startabcompat}. If it is the latter but not the former, then we can argue analogously. Finally, if both $c_{i-1}$ is the starting cell and $c_{i+1}$ is the finishing cell, then $l=2$, which is a small case where we can write down all the quivers and cluster variables explicitly.

\underline{Case 3.2.} Suppose that $c_{i-1}$, $c_i$, and $c_{i+1}$ are on the same column. That is, the arrows go as $c_{i-1} \leftarrow c_i \leftarrow c_{i+1}$. We can define the quiver $Q_w$ locally at $c_i$ analogously to Case 3.1.

We have now given a complete description of the quiver for every worm.

\begin{lemma} \label{l:abcompat}
Suppose that $[c_{ij}]_{1 \le i, j \le 3}$ is a $3 \times 3$ square of cells inside the DRH staircase. Let $w_1$ be a worm that contains $c_{21} \rightarrow c_{22} \leftarrow c_{12} \rightarrow c_{13}$. Let $w_2$ be a worm that contains $c_{31} \rightarrow c_{32} \leftarrow c_{22} \rightarrow c_{23}$. For a worm $w$, let $Q_w$ denote its proposed quiver. Let $(w'_1,Q_{w'_1})$ be the resulting quiver when mutating $(w_1,Q_{w_1})$ at $c_{12}$. Let $(w'_2,Q_{w'_2})$ be the resulting quiver when mutating $(w_2,Q_{w_2})$ at $c_{32}$. Then,
\[
\fc_{Q_{w'_1}}(c_{22}) = \fc_{Q_{w'_2}}(c_{22}).
\]
\end{lemma}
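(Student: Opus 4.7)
The plan is to expand both $\fc_{Q_{w'_1}}(c_{22})$ and $\fc_{Q_{w'_2}}(c_{22})$ using the explicit quiver description in \S\ref{ss:fcdesc} together with the mutation formulas of Lemma \ref{l:inmuta} and Corollary \ref{c:outmuta}, and then reduce the desired equality to the identity $\FC(\alpha)\FC(\delta) = \FC(\beta)\FC(\gamma)$, where $\alpha, \beta, \gamma, \delta$ denote respectively the upper-left, upper-right, lower-left, and lower-right corners of the central cell $c_{22}$. That identity is essentially the content of Proposition \ref{p:gendet} whenever all four corners are non-special; the special regime will have to be handled by a small additional case check.

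First I would unfold the $w_1$ side. In $w_1$, the cell $c_{22}$ is an EN-bend (the worm enters $c_{22}$ from the west at $c_{21}$ and leaves to the north at $c_{12}$) and the cell $c_{12}$ is an NE-bend, so by Cases 1.1 and 1.2 of \S\ref{ss:fcdesc} we have $\fc_{Q_{w_1}}(c_{22}) = \FC(\alpha)$ and $\fc_{Q_{w_1}}(c_{12}) = 1/\FC(\beta)$. Since the only mutable arrow between $c_{12}$ and $c_{22}$ is $c_{12} \to c_{22}$, and since the reciprocal form of $\fc_{Q_{w_1}}(c_{12})$ witnesses that all frozen arrows at $c_{12}$ are in-arrows, Corollary \ref{c:outmuta} gives
\[
\fc_{Q_{w'_1}}(c_{22}) \;=\; \fc_{Q_{w_1}}(c_{12}) \cdot \fc_{Q_{w_1}}(c_{22}) \;=\; \frac{\FC(\alpha)}{\FC(\beta)}.
\]
The parallel computation for $w_2$ identifies $c_{32}$ as an EN-bend and $c_{22}$ as an NE-bend, giving $\fc_{Q_{w_2}}(c_{32}) = \FC(\gamma)$ and $\fc_{Q_{w_2}}(c_{22}) = 1/\FC(\delta)$. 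The mutable arrow is now $c_{22} \to c_{32}$ and all frozen arrows at $c_{32}$ are out-arrows, so Lemma \ref{l:inmuta} produces
\[
\fc_{Q_{w'_2}}(c_{22}) \;=\; \fc_{Q_{w_2}}(c_{32}) \cdot \fc_{Q_{w_2}}(c_{22}) \;=\; \frac{\FC(\gamma)}{\FC(\delta)}.
\]
The lemma thus reduces to showing $\FC(\alpha)\FC(\delta) = \FC(\beta)\FC(\gamma)$.

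In the generic regime, when none of the four corners of $c_{22}$ is special, each $\FC(u_{ij})$ coincides with the $2 \times 2$ determinant $|\fq_{ij}|$ of the quadruple centered at $u_{ij}$, and the desired identity follows by Dodgson condensation on the $3 \times 3$ matrix $[e(c_{ij})]$ combined with the vanishing of that matrix's determinant granted by Proposition \ref{p:gendet}. In fact the eight-case analysis inside the proof of Proposition \ref{p:gendet} already establishes $\FC(u_{11})\FC(u_{22}) = \FC(u_{12})\FC(u_{21})$ directly at the level of monomials in frozen variables, so one may invoke it verbatim.

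The main obstacle is the special regime, where one or more of $\alpha, \beta, \gamma, \delta$ lies at a bend of the DRH or the transpose DRH and $\FC$ takes the fractional form $\Delta_i \Delta_{i+2} / \Delta_{i+1}$ from Cases 3.1 and 3.1' of \S\ref{ss:FCdef}. By Corollary \ref{c:notbothD&Dt}, the DRH and its transpose cannot coexist inside the $3 \times 3$ block, which sharply restricts the admissible configurations: all special corners must sit on the same side of $c_{22}$. I would finish by enumerating those few configurations and, in each one, invoking the explicit fractional formula to check that the consecutive $\Delta_i$'s telescope so that both sides of $\FC(\alpha)\FC(\delta) = \FC(\beta)\FC(\gamma)$ collapse to the same monomial in the frozen variables. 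This case work is closely analogous to, but simpler than, the boundary casework performed in the proof of Proposition \ref{p:gendet}.
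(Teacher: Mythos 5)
Your generic-case argument coincides with the paper's own preamble to this proof: when all four corners of $c_{22}$ are non-special, each $\FC$ equals the corresponding $2\times2$ determinant, Corollary \ref{c:outmuta} and Lemma \ref{l:inmuta} give $\fc_{Q_{w'_1}}(c_{22}) = \FC(v_{11})/\FC(v_{12})$ and $\fc_{Q_{w'_2}}(c_{22}) = \FC(v_{21})/\FC(v_{22})$, and Dodgson condensation together with Proposition \ref{p:gendet} finishes. The gap is in your treatment of the special regime, and it is not a small one: the identity $\FC(v_{11})\FC(v_{22}) = \FC(v_{12})\FC(v_{21})$ that you propose to verify there by telescoping is actually \emph{false}, and so is the reduction to it. Take the paper's Case 2.1, where $v_{12}$ is special, so that $\FC(v_{12}) = \FC(v_{11})\FC(v_{02})/\FC(v_{01})$ while $\FC(v_{11})$ is a single frozen variable. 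Then $\fc_{Q_{w_1}}(c_{12}) = 1/\FC(v_{12}) = \FC(v_{01})/\bigl(\FC(v_{11})\FC(v_{02})\bigr)$ has a frozen \emph{out}-arrow (to the vertex at $v_{01}$), so the hypothesis of Corollary \ref{c:outmuta} --- that all frozen arrows at the mutated vertex are in-arrows --- fails. A genuine quiver mutation transfers only the in-arrows of $c_{12}$ across the arrow $c_{12}\to c_{22}$, yielding $\fc_{Q_{w'_1}}(c_{22}) = 1/\FC(v_{02})$, not the product $\fc_{Q_{w_1}}(c_{22})\cdot\fc_{Q_{w_1}}(c_{12}) = \FC(v_{01})/\FC(v_{02})$. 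Correspondingly, $\FC(v_{11})\FC(v_{22})$ and $\FC(v_{12})\FC(v_{21})$ differ by the nontrivial factor $\FC(v_{01})$ in this configuration, so the check you plan to run cannot succeed. (Note also that Proposition \ref{p:gendet} is stated only under the hypothesis that all four corners of the middle cell are non-special, so it cannot be invoked here.)

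What the paper actually does in the special regime is abandon the product shortcut and recompute both mutated frozen coefficients from scratch: it expands each special $\FC(v_{ij})$ via the boundary rules of Section \ref{ss:FCdef} and the main decomposition theorem, performs the mutation honestly (tracking in-arrows and out-arrows separately, including the $2$-cycle cancellations), and verifies equality case by case --- four cases according to which corner of $c_{22}$ is special, with subcases according to which neighbouring cells lie in the DRH. Your outline would need to be rewritten along these lines wherever a special corner occurs; as written, it terminates in an identity that does not hold.
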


Before proving the lemma, let us discuss the motivation behind this result. Let $v_{11}$, $v_{12}$, $v_{21}$, and $v_{22}$, respectively, be the upper-left, upper-right, lower-left, and lower-right corners of $c_{22}$. The result is complicated when some of the $v_{ij}$ are special points. On the other hand, when all the four points are non-special, the proof is rather straightforward. If all four are non-special, then the frozen coefficients at the lattice points are the corresponding $2 \times 2$ determinants at the points. Namely,
\[
\FC(v_{ij}) := \begin{vmatrix}
e(c_{i,j}) & e(c_{i,j+1}) \\
e(c_{i+1,j}) & e(c_{i+1,j+1})
\end{vmatrix}.
\]
Inside the worm $w_1$, the cell $c_{22}$ is an EN-bend, while the cell $c_{12}$ is an NE-bend. From the description in the beginning of this section, we have
\[
\fc_{Q_{w_1}}(c_{22}) = \FC(v_{11})
\]
and
\[
\fc_{Q_{w_1}}(c_{12}) = \frac{1}{\FC(v_{12})}.
\]
When all $v_{ij}$ are non-special, $\FC(v_{ij})$ are products of frozen variables of non-negative degrees. Hence, all the arrows at $c_{22}$ in $w_1$ are out-arrows, while all the arrows at $c_{12}$ in $w_1$ are in-arrows. In this situation, Corollary \ref{c:outmuta} applies. We have
\[
\fc_{Q_{w'_1}}(c_{22}) = \fc_{Q_{w_1}}(c_{22}) \cdot \fc_{Q_{w_1}}(c_{12}) = \frac{\FC(v_{11})}{\FC(v_{12})}.
\]
Analogously, in $w_2$, Lemma \ref{l:inmuta} gives
\[
\fc_{Q_{w'_2}}(c_{22}) = \frac{\FC(v_{21})}{\FC(v_{22})}.
\]
Lemma \ref{l:abcompat} says that $\frac{\FC(v_{11})}{\FC(v_{12})} = \frac{\FC(v_{21})}{\FC(v_{22})}$ and therefore we can define the quiver locally at $c_{22}$ either from ``above'' or from ``below''. To see why the lemma is true in this case, note that the $\FC(v_{ij})$ are $2 \times 2$-minors inside the matrix
\[
\begin{bmatrix}
e(c_{11}) & e(c_{12}) & e(c_{13}) \\
e(c_{21}) & e(c_{22}) & e(c_{23}) \\
e(c_{31}) & e(c_{32}) & e(c_{33}) \\
\end{bmatrix}.
\]
By Dodgson condensation, we have
\[
\FC(v_{11}) \FC(v_{22}) - \FC(v_{12}) \FC(v_{21}) = e(c_{22}) \cdot \begin{vmatrix}
e(c_{11}) & e(c_{12}) & e(c_{13}) \\
e(c_{21}) & e(c_{22}) & e(c_{23}) \\
e(c_{31}) & e(c_{32}) & e(c_{33}) \\
\end{vmatrix}
\]
which is zero by Proposition \ref{p:gendet}. Therefore, $\frac{\FC(v_{11})}{\FC(v_{12})} = \frac{\FC(v_{21})}{\FC(v_{22})}$ as desired.

As mentioned earlier, the result becomes more complicated when some $v_{ij}$ are special points. We now finish the proof of Lemma \ref{l:abcompat} below.

\begin{proof}
Recall that there are two types of special points: the ones on the boundary of the DRH and the ones on the boundary of the DRH transpose. We observe that both types of special points cannot occur in $\{v_{11}, v_{12}, v_{21}, v_{22}\}$ simultaneously. By symmetry of quiver mutation, we may in fact assume that the only special points that occur are on the boundary of the original DRH. In the following, when we refer to a special point, we shall mean the one which is on the boundary of the original DRH.

If none of the four $v_{ij}$ are special, then the analysis above yields the result. The following casework considers which of the points are special. Note that $v_{12}$ and $v_{21}$ may be simultaneously special. Aside from this pair, no two of $v_{ij}$'s can be special at the same time.

\underline{Case 1.} $v_{11}$ is special. This means that $c_{11}$, $c_{12}$, and $c_{21}$ are all in the DRH, while $c_{22}$, $c_{23}$, $c_{32}$, and $c_{33}$ are not in the DRH. It depends on the shape of the DRH to tell whether $c_{13}$ and $c_{31}$ are in the DRH.

\underline{Case 1.1.} Both $c_{13}$ and $c_{31}$ are not in the DRH. 

\begin{center}
\begin{tikzpicture}
\ytableausetup{notabloids}
\ytableausetup{mathmode, boxsize=1.0 em}
\node (n) {\ytableausetup{nosmalltableaux}
\ytableausetup{notabloids}
\ydiagram[*(cyan)]{2,1,0}
*[*(white)]{2+1,1+2,3}};
\node at (0.15,-1.0) {\tiny Case 1.1};
\end{tikzpicture}
\end{center}

Extend the labels $c_{ij}$ to include $c_{00}$, $c_{10}$, $c_{20}$, $c_{01}$, and $c_{02}$ in the natural way. Also, let $v_{ij}$ be the lower right corner of $c_{ij}$. Note that the five cells $c_{00}$, $c_{10}$, $c_{20}$, $c_{01}$, and $c_{02}$ are in the DRH. From $v_{12}$, we go upwards until we meet the first Step-1 lattice point, and call it $\nu$. From $v_{21}$, we go to the left until we meet the first Step-1 lattice point, and call it $\tau$. Referring to the description of frozen arrows earlier in this section, we find
\[
\fc_{Q_{w_1}}(c_{22}) = \FC(v_{11}) = \frac{\FC(v_{10}) \FC(v_{01})}{\FC(v_{00})}
\]
and
\[
\fc_{Q_{w_1}}(c_{12}) = \frac{1}{\FC(v_{12})} = \frac{1}{\FC(v_{01})\FC(v_{10}) \FC(\nu)}.
\]
Corollary \ref{c:outmuta} yields
\[
\fc_{Q_{w'_1}}(c_{22}) = \fc_{Q_{w_1}}(c_{22}) \cdot \fc_{Q_{w_1}}(c_{12}) = \frac{1}{\FC(\nu) \FC(v_{00})}.
\]
Similarly, we have
\[
\fc_{Q_{w_2}}(c_{32}) = \FC(v_{21}) = \FC(\tau) \FC(v_{10}) \FC(v_{01})
\]
and
\[
\fc_{Q_{w_2}}(c_{22}) = \frac{1}{\FC(v_{22})}.
\]
To compute $\FC(v_{22})$, we use the main decomposition theorem. Let $\mu_{ij}$ denote the subskeleton corresponding to the cell $c_{ij}$ and let $\fq_{ij}$ denote the $2 \times 2$-square centered at $v_{ij}$. Using the notations $B$, $E$, $I$, $\chi^X$, and $\chi^Y$ as before, we see that the theorem gives
\begin{align*}
\FC(v_{22}) &= B(\mu_{22}) E(\mu_{33}) I(\mu_{22}) \chi^X (\fq_{22}) \chi^Y(\fq_{22}) \\
&= B(\mu_{22}) \cdot \left( E(\mu_{33}) \chi^X (\fq_{22}) \chi^Y(\fq_{22}) \right) \cdot I(\mu_{22}) \\
&= \left( \FC(v_{00}) \right) \cdot \left( \FC(\tau) \FC(\nu) \right) \cdot \left( \FC(v_{10}) \FC(v_{01}) \right).
\end{align*}
Therefore,
\[
\fc_{Q_{w_2}}(c_{22}) = \frac{1}{\FC(v_{22})} = \frac{1}{\FC(v_{00}) \FC(v_{10}) \FC(v_{01}) \FC(\tau) \FC(\nu)}.
\]

Lemma \ref{l:inmuta} yields
\[
\fc_{Q_{w'_2}}(c_{22}) = \fc_{Q_{w_2}}(c_{32}) \cdot \fc_{Q_{w_2}}(c_{22}) = \frac{1}{\FC(\nu) \FC(v_{00})}.
\]
Thus, $\fc_{Q_{w'_1}}(c_{22}) = \fc_{Q_{w'_2}}(c_{22})$ as desired.

\underline{Case 1.2.} $c_{13}$ is in the DRH, but $c_{31}$ is not. 

\begin{center}
\begin{tikzpicture}
\def \a {0.21}
\ytableausetup{notabloids}
\ytableausetup{mathmode, boxsize=1.0 em}
\node (n) {\ytableausetup{nosmalltableaux}
\ytableausetup{notabloids}
\ydiagram[*(cyan)]{3,1,0}
*[*(white)]{0,1+2,3}};
\node at (0.15,-1.0) {\tiny Case 1.2};
\end{tikzpicture}
\end{center}

Extend $c_{ij}$ and $v_{ij}$ as we did in the previous case. (When there is no confusion, we will continue to use this convention in other cases as well.) From $v_{21}$, go left until we find the first Step-1 lattice point and call it $\tau$. Similar to the case above, we can directly compute
\[
\fc_{Q_{w_1}}(c_{22}) = \FC(v_{11}) = \frac{\FC(v_{10}) \FC(v_{01})}{\FC(v_{00})}
\]
and
\[
\fc_{Q_{w_1}}(c_{12}) = \frac{1}{\FC(v_{12})} = \frac{1}{\FC(v_{02}) \FC(v_{10})}.
\]
Thus, we have
\[
\fc_{Q_{w'_1}}(c_{22}) = \fc_{Q_{w_1}}(c_{22}) \cdot \fc_{Q_{w_1}}(c_{12}) = \frac{\FC(v_{01})}{\FC(v_{00}) \FC(v_{02})}.
\]
Also, the main decomposition theorem gives
\[
\fc_{Q_{w_2}}(c_{32}) = \FC(v_{21}) = \FC(\tau) \FC(v_{10}) \FC(v_{01})
\]
and
\[
\fc_{Q_{w_2}}(c_{22}) = \frac{1}{\FC(v_{22})} = \frac{1}{\FC(\tau) \FC(v_{10}) \FC(v_{00}) \FC(v_{02})}.
\]
Therefore,
\[
\fc_{Q_{w'_2}}(c_{22}) = \fc_{Q_{w_2}}(c_{32}) \cdot \fc_{Q_{w_2}}(c_{22}) = \frac{\FC(v_{01})}{\FC(v_{00}) \FC(v_{02})}.
\]
This shows that $\fc_{Q_{w'_1}}(c_{22}) = \fc_{Q_{w'_2}}(c_{22})$.

\underline{Case 1.3.} $c_{13}$ is not in the DRH, while $c_{31}$ is. 

\begin{center}
\begin{tikzpicture}
\def \a {0.21}
\ytableausetup{notabloids}
\ytableausetup{mathmode, boxsize=1.0 em}
\node (n) {\ytableausetup{nosmalltableaux}
\ytableausetup{notabloids}
\ydiagram[*(cyan)]{2,1,1}
*[*(white)]{2+1,1+2,1+2}};
\node at (0.15,-1.0) {\tiny Case 1.3};
\end{tikzpicture}
\end{center}

By considering the conjugate DRH, this case is analogous to Case 1.2.

\underline{Case 1.4.} Both $c_{13}$ and $c_{31}$ are in the DRH.

\begin{center}
\begin{tikzpicture}
\def \a {0.21}
\ytableausetup{notabloids}
\ytableausetup{mathmode, boxsize=1.0 em}
\node (n) {\ytableausetup{nosmalltableaux}
\ytableausetup{notabloids}
\ydiagram[*(cyan)]{3,1,1}
*[*(white)]{0,1+2,1+2}};
\node at (0.15,-1.0) {\tiny Case 1.4};
\end{tikzpicture}
\end{center}

In this case, all of $e(c_{22})$, $e(c_{23})$, $e(c_{32})$, and $e(c_{33})$ are cubic polynomials. As usual, we directly compute the frozen coefficients. Direct computations give
\[
\fc_{Q_{w_1}}(c_{22}) = \FC(v_{11}) = \frac{\FC(v_{10}) \FC(v_{01})}{\FC(v_{00})}
\]
and
\[
\fc_{Q_{w_1}}(c_{12}) = \frac{1}{\FC(v_{12})} = \frac{1}{\FC(v_{10}) \FC(v_{02})}.
\]
We then have
\[
\fc_{Q_{w'_1}}(c_{22}) = \fc_{Q_{w_1}}(c_{22}) \cdot \fc_{Q_{w_1}}(c_{12}) = \frac{\FC(v_{01})}{\FC(v_{00}) \FC(v_{02})}.
\]
Next, we look at the worm $w_2$. We have
\[
\fc_{Q_{w_2}}(c_{32}) = \FC(v_{21}) = \FC(v_{20}) \FC(v_{01})
\]
and
\[
\fc_{Q_{w_2}}(c_{22}) = \frac{1}{\FC(v_{22})} = \frac{1}{\FC(v_{20}) \FC(v_{00}) \FC(v_{02})}.
\]
Therefore,
\[
\fc_{Q_{w'_2}}(c_{22}) = \fc_{Q_{w_2}}(c_{32}) \cdot \fc_{Q_{w_2}}(c_{22}) = \frac{\FC(v_{01})}{\FC(v_{00}) \FC(v_{02})}.
\]
This shows that $\fc_{Q_{w'_1}}(c_{22}) = \fc_{Q_{w'_2}}(c_{22})$.

\underline{Case 2.} $v_{12}$ is special. In this case, the cells $c_{11}$, $c_{12}$, $c_{13}$, $c_{21}$, and $c_{22}$ are all in the DRH. The cells $c_{23}$ and $c_{33}$ are in $\cW_{\lambda}$. Depending on the shape of the DRH, $c_{31}$ and $c_{32}$ may be inside the DRH or in $\cW_{\lambda}$. Note that if $c_{32}$ is in the DRH, then $c_{31}$ must also be in the DRH.

\underline{Case 2.1.} Both $c_{31}$ and $c_{32}$ are not in the DRH. 

\begin{center}
\begin{tikzpicture}
\ytableausetup{notabloids}
\ytableausetup{mathmode, boxsize=1.0 em}
\node (n) {\ytableausetup{nosmalltableaux}
\ytableausetup{notabloids}
\ydiagram[*(cyan)]{3,2,0}
*[*(white)]{0,2+1,3}};
\node at (0.15,-1.0) {\tiny Case 2.1};
\end{tikzpicture}
\end{center}

From $v_{21}$, we go left until we meet the first Step-1 lattice point and call it $\tau$. We have
\[
\fc_{Q_{w_1}}(c_{22}) = \FC(v_{11})
\]
and
\[
\fc_{Q_{w_1}}(c_{12}) = \frac{1}{\FC(v_{12})} = \frac{\FC(v_{01})}{\FC(v_{11}) \FC(v_{02})}.
\]
Note that $\FC(v_{11})$ is already a frozen variable, because $v_{11}$ is the center of a $2 \times 2$ square inside the DRH. By quiver mutation, we find
\[
\fc_{Q_{w'_1}}(c_{22}) = \frac{1}{\FC(v_{02})}.
\]
We also have
\[
\fc_{Q_{w_2}}(c_{32}) = \FC(v_{21}) = \FC(\tau) \FC(v_{11})
\]
and
\[
\fc_{Q_{w_2}}(c_{22}) = \frac{1}{\FC(v_{22})} = \frac{1}{\FC(\tau) \FC(v_{11}) \FC(v_{02})}.
\]
Therefore,
\[
\fc_{Q_{w'_2}}(c_{22}) = \frac{1}{\FC(v_{02})}.
\]
This implies $\fc_{Q_{w'_1}}(c_{22}) = \fc_{Q_{w'_2}}(c_{22})$.

\underline{Case 2.2.} $c_{31}$ is in the DRH, but $c_{32}$ is not. 

\begin{center}
\begin{tikzpicture}
\ytableausetup{notabloids}
\ytableausetup{mathmode, boxsize=1.0 em}
\node (n) {\ytableausetup{nosmalltableaux}
\ytableausetup{notabloids}
\ydiagram[*(cyan)]{3,2,1}
*[*(white)]{0,2+1,1+2}};
\node at (0.15,-1.0) {\tiny Case 2.2};
\end{tikzpicture}
\end{center}

This is the case in which both $v_{12}$ and $v_{21}$ are special. Similar to Case 2.1, we have
\[
\fc_{Q_{w'_1}}(c_{22}) = \frac{1}{\FC(v_{02})}.
\]
Direct computations give
\[
\fc_{Q_{w_2}}(c_{32}) = \FC(v_{21}) = \frac{\FC(v_{20}) \FC(v_{11})}{\FC(v_{10})}
\]
and
\[
\fc_{Q_{w_2}}(c_{22}) = \frac{1}{\FC(v_{22})} = \frac{1}{\FC(v_{20}) \FC(v_{11}) \FC(v_{02})}.
\]
Therefore, we have
\[
\fc_{Q_{w'_2}}(c_{22}) = \frac{1}{\FC(v_{02})}.
\]
Thus, $\fc_{Q_{w'_1}}(c_{22}) = \fc_{Q_{w'_2}}(c_{22})$.

\underline{Case 2.3.} Both $c_{31}$ and $c_{32}$ are in the DRH. 

\begin{center}
\begin{tikzpicture}
\ytableausetup{notabloids}
\ytableausetup{mathmode, boxsize=1.0 em}
\node (n) {\ytableausetup{nosmalltableaux}
\ytableausetup{notabloids}
\ydiagram[*(cyan)]{3,2,2}
*[*(white)]{0,2+1,2+1}};
\node at (0.15,-1.0) {\tiny Case 2.3};
\end{tikzpicture}
\end{center}

Similar to Case 2.1, we have
\[
\fc_{Q_{w'_1}}(c_{22}) = \frac{1}{\FC(v_{02})}.
\]
We also have
\[
\fc_{Q_{w_2}}(c_{32}) = \FC(v_{21})
\]
and
\[
\fc_{Q_{w_2}}(c_{22}) = \frac{1}{\FC(v_{22})} = \frac{1}{\FC(v_{21}) \FC(v_{02})}.
\]
Therefore,
\[
\fc_{Q_{w'_2}}(c_{22}) = \frac{1}{\FC(v_{02})}.
\]
Hence, $\fc_{Q_{w'_1}}(c_{22}) = \fc_{Q_{w'_2}}(c_{22})$.

\underline{Case 3.} $v_{21}$ is special. By considering the conjugate DRH, this case is analogous to Case 2.

\underline{Case 4.} $v_{22}$ is special. 

\begin{center}
\begin{tikzpicture}
\ytableausetup{notabloids}
\ytableausetup{mathmode, boxsize=1.0 em}
\node (n) {\ytableausetup{nosmalltableaux}
\ytableausetup{notabloids}
\ydiagram[*(cyan)]{3,3,2}
*[*(white)]{0,0,2+1}};
\node at (0.15,-1.0) {\tiny Case 4};
\end{tikzpicture}
\end{center}

In this case, all $c_{ij}$ with $1 \le i,j \le 3$ except $c_{33}$ are in the DRH. We have
\[
\fc_{Q_{w_1}}(c_{22}) = \FC(v_{11})
\]
and
\[
\fc_{Q_{w_1}}(c_{12}) = \frac{1}{\FC(v_{12})}.
\]
Thus,
\[
\fc_{Q_{w'_1}}(c_{22}) = \frac{\FC(v_{11})}{\FC(v_{12})}.
\]
We also have
\[
\fc_{Q_{w_2}}(c_{32}) = \FC(v_{21})
\]
and
\[
\fc_{Q_{w_2}}(c_{22}) = \frac{1}{\FC(v_{22})} = \frac{\FC(v_{11})}{\FC(v_{12}) \FC(v_{21})}.
\]
Therefore,
\[
\fc_{Q_{w'_2}}(c_{22}) = \frac{\FC(v_{11})}{\FC(v_{12})}.
\]
Hence, $\fc_{Q_{w'_1}}(c_{22}) = \fc_{Q_{w'_2}}(c_{22})$.  We have finished the proof of the lemma.
\end{proof}

Next, we show a lemma similar to the previous one for the case when the quiver mutation happens near the staircase zigzag lines.

\begin{lemma} \label{l:startabcompat}
Suppose that $[c_{ij}]_{1 \le i, j \le 3}$ is a $3 \times 3$ square of cells, with all except $c_{31}$ inside the DRH staircase. Let $w_1$ be a worm that starts as $c_{21} \rightarrow c_{22} \leftarrow c_{12} \rightarrow c_{13}$. Let $w_2$ be a worm that starts as $c_{32} \leftarrow c_{22} \rightarrow c_{23}$. For a worm $w$, let $Q_w$ denote its proposed quiver. Let $(w'_1,Q_{w'_1})$ be the resulting quiver when mutating $w_1$ at $c_{12}$. Let $(w'_2, Q_{w'_2})$ be the resulting quiver when mutating $w_2$ at $c_{32}$. Then,
\[
\fc_{Q_{w'_1}}(c_{22}) = \fc_{Q_{w'_2}}(c_{22}).
\]
\end{lemma}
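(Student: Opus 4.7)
The plan is to mimic the structure of the proof of Lemma \ref{l:abcompat}, but replace the ``bend'' computation at $c_{32}$ in $w_2$ with the appropriate starting-vertex computation from Case 2.1 of the description in \S\ref{ss:fcdesc}. Since $c_{31}$ lies outside the staircase, the cell $c_{22}$ is the starting vertex of $w_2$ and $c_{32}$ is the second vertex; in particular the mutation of $w_2$ at $c_{32}$ (which turns it into $c_{22}$ of the new worm) is governed by the rules for mutating the endpoint of the worm ``downward'' into a staircase-adjacent cell. The calculation on the $w_1$ side, by contrast, is identical to the one in Lemma \ref{l:abcompat}: there $c_{22}$ is an EN-bend and $c_{12}$ is an NE-bend, so
\[
\fc_{Q_{w_1}}(c_{22}) = \FC(v_{11}), \qquad \fc_{Q_{w_1}}(c_{12}) = \frac{1}{\FC(v_{12})},
\]
where $v_{ij}$ is the lower-right corner of $c_{ij}$, and Corollary \ref{c:outmuta} gives $\fc_{Q_{w'_1}}(c_{22}) = \FC(v_{11})/\FC(v_{12})$.

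Next I would organize the casework on the $w_2$ side by the same trichotomy used in the description of starting-vertex frozen coefficients: namely, by the triple of degrees $\big(\deg e(c_{22}),\deg e(c_{32}),\deg e(\gamma)\big)$, where $\gamma$ is the cell directly below $c_{32}$ (this is the cell that $c_{22}$ moves to under the mutation). The fact that $c_{31}$ is outside the staircase means $\gamma$ lies on the lower zigzag boundary, so we are in one of the eight subcases 2.1.1.1--2.1.1.8 of \S\ref{ss:fcdesc}. In each of those subcases the starting-vertex formula expresses $\fc_{Q_{w_2}}(c_{22})$ as a specific Laurent monomial in frozen $\Delta$'s (or in $\FC$'s of lattice points above $c_{22}$ and to the left of $c_{22}$), and the value $\fc_{Q_{w_2}}(c_{32})$ is given by the usual EN-bend rule $\FC(u)$ where $u$ is the upper-left corner of $c_{32}$. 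After applying Lemma \ref{l:inmuta} to mutate at $c_{32}$, the resulting coefficient $\fc_{Q_{w'_2}}(c_{22})$ is the product of these two, and I need to show it equals $\FC(v_{11})/\FC(v_{12})$.

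To carry out the verification efficiently, I would split into two blocks of cases. In the ``generic'' block, none of the four lattice points $v_{11},v_{12},v_{21},v_{22}$ is a special point of the DRH/transpose-DRH boundary. There, $\FC$ coincides with the $2\times 2$ determinant at each corner, so Proposition \ref{p:gendet} applies to the full $3\times 3$ matrix $[e(c_{ij})]$ --- the key observation being that even though $c_{31}$ is outside the staircase, the relation we need only uses the ratio $\FC(v_{11})/\FC(v_{12})$ on one side and a ratio of frozen coefficients extracted on the other side, and the non-homogeneous $(1,3,2)$ or $(2,3,1)$ scenarios simply do not arise when all four $v_{ij}$ are non-special. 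Thus Dodgson condensation on the $3\times 3$ minor reduces the identity to the vanishing of the determinant guaranteed by Proposition \ref{p:gendet}. In the ``special'' block, at least one of $v_{ij}$ is special, and I would repeat the explicit case-by-case bookkeeping already illustrated in Lemma \ref{l:abcompat} Cases 1--4, replacing the bend-rule computation at $c_{32}$ by the appropriate one of subcases 2.1.1.1--2.1.1.8. In each subcase the frozen coefficient ratios telescope, via the definition of $\FC$ on boundary points in Step 3 of \S\ref{ss:FCdef}, to precisely $\FC(v_{11})/\FC(v_{12})$.

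The main obstacle I anticipate is the bookkeeping in the exceptional subcase 2.1.1.8 (when $\lambda = NN\cdots N$) together with the configurations where the lower zigzag passes between the original DRH and its transpose, because there $\fc_{Q_{w_2}}(c_{22}) = 1/(a_{11}a_{pq})$ is defined by fiat rather than by the main decomposition theorem, and I will need to check compatibility ``by hand'' using the extra Step-1 points $\Delta_0 = a_{11}$ and $\Delta_{l+2} = a_{pq}$ introduced in Step 1 of \S\ref{ss:FCdef}. Once that small case is resolved, the remaining verifications are direct analogues of Cases 1--4 in the proof of Lemma \ref{l:abcompat}, and together they establish $\fc_{Q_{w'_1}}(c_{22}) = \fc_{Q_{w'_2}}(c_{22})$ in every configuration.
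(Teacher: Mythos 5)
There are two genuine gaps here, and both stem from the fact that this lemma is precisely the \emph{boundary} case that Lemma \ref{l:abcompat} cannot reach. First, you have the combinatorics of $w_2$ backwards. Worms start on the lower boundary and move only north or east, and in the expression $c_{32} \leftarrow c_{22} \rightarrow c_{23}$ the arrows are quiver arrows, not traversal order: the worm is $c_{32}, c_{22}, c_{23}, \dots$, so $c_{32}$ is the \emph{starting vertex} of $w_2$ and $c_{22}$ is an \emph{NE-bend} (its coefficient is $1/\FC(v_{22})$ with $v_{22}$ the lower-right corner of $c_{22}$, computed via the main decomposition theorem). Your proposal assigns the starting-vertex formula to $c_{22}$ and an ``EN-bend rule $\FC(u)$, $u$ the upper-left corner of $c_{32}$'' to $c_{32}$; the latter quantity is not even defined, since the upper-left corner of $c_{32}$ is the corner of the square containing $c_{31}$, which lies outside the staircase. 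The mutation at $c_{32}$ is governed by Case 2.1.2 of \S\ref{ss:fcdesc} (it moves $c_{32}$ to $c_{21}$, the cell left of $c_{22}$, not ``into $c_{22}$''), and it is $\fc_{Q_{w_2}}(c_{32})$, not $\fc_{Q_{w_2}}(c_{22})$, that is read off from the starting-vertex rules.

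Second, the ``generic block'' reduction to Proposition \ref{p:gendet} cannot work. That proposition requires the entire $3\times 3$ square to lie inside the staircase, whereas here $c_{31}$ does not; consequently $e(c_{31})$ is undefined, the $3\times 3$ determinant does not exist, and $\FC(v_{21})$ (the minor on $c_{21},c_{22},c_{31},c_{32}$) is undefined, so the Dodgson-condensation identity $\FC(v_{11})/\FC(v_{12}) = \FC(v_{21})/\FC(v_{22})$ has no meaning. This is exactly why the paper states Lemma \ref{l:startabcompat} separately. What the paper actually does in the generic case is exploit the constraint that the lower staircase path imposes on the row/column labels (the middle row label equals the left column label and the bottom row label equals the middle column label, forcing two consecutive $E$'s in $\lambda$), and then compute both sides explicitly from the main decomposition theorem and the starting-vertex rules, with sub-casework on the letters of $\lambda$ adjacent to $E^\alpha$ and $E^\beta$; the remaining sixteen configurations involving cells of the DRH or its transpose are dispatched by direct tabulated computation. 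Your instinct to isolate the special-point configurations and the $a_{11}, a_{pq}$ bookkeeping is sound, but the core of the argument must be rebuilt around the correct identification of $c_{32}$ as the worm's starting cell and around explicit use of the decomposition theorem rather than Proposition \ref{p:gendet}.
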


\begin{center}
\begin{figure}
\begin{tikzpicture}
\def \a {+1.5};
\def \b {+2.0};
\def \s {0.32}
\def \m {-0.34}
\def \n {0.15}
\def \g {0.6}
\def \h {0.1}

\begin{scope}[shift={(-6*\b,0)}]
\ytableausetup{notabloids}
\ytableausetup{mathmode, boxsize=0.7 em}
\node (n) {\ytableausetup{nosmalltableaux}
\ytableausetup{notabloids}
\ydiagram[*(cyan)]{0,0,0}
*[*(white)]{3,3,3}};
\node at (-\g,0) {1)};
\draw[line width = 0.17 em] (\m,\n) -- (\m,{\n-\s}) -- ({\m + \s},{\n - \s}) -- ({\m+\s},{\n-2*\s}) -- ({\m+2*\s},{\n-2*\s});
\end{scope}

\begin{scope}[shift={(-5*\b,0)}]
\ytableausetup{notabloids}
\ytableausetup{mathmode, boxsize=0.7 em}
\node (n) {\ytableausetup{nosmalltableaux}
\ytableausetup{notabloids}
\ydiagram[*(cyan)]{1,0,0}
*[*(white)]{1+2,3,3}};
\node at (-\g,0) {2)};
\draw[line width = 0.17 em] (\m,\n) -- (\m,{\n-\s}) -- ({\m + \s},{\n - \s}) -- ({\m+\s},{\n-2*\s}) -- ({\m+2*\s},{\n-2*\s});
\end{scope}

\begin{scope}[shift={(-4*\b,0)}]
\ytableausetup{notabloids}
\ytableausetup{mathmode, boxsize=0.7 em}
\node (n) {\ytableausetup{nosmalltableaux}
\ytableausetup{notabloids}
\ydiagram[*(cyan)]{2,0,0}
*[*(white)]{2+1,3,3}};
\node at (-\g,0) {3)};
\draw[line width = 0.17 em] (\m,\n) -- (\m,{\n-\s}) -- ({\m + \s},{\n - \s}) -- ({\m+\s},{\n-2*\s}) -- ({\m+2*\s},{\n-2*\s});
\end{scope}

\begin{scope}[shift={(-3*\b,0)}]
\ytableausetup{notabloids}
\ytableausetup{mathmode, boxsize=0.7 em}
\node (n) {\ytableausetup{nosmalltableaux}
\ytableausetup{notabloids}
\ydiagram[*(cyan)]{3,0,0}
*[*(white)]{0,3,3}};
\node at (-\g,0) {4)};
\draw[line width = 0.17 em] (\m,\n) -- (\m,{\n-\s}) -- ({\m + \s},{\n - \s}) -- ({\m+\s},{\n-2*\s}) -- ({\m+2*\s},{\n-2*\s});
\end{scope}

\begin{scope}[shift={(-2*\b,0)}]
\ytableausetup{notabloids}
\ytableausetup{mathmode, boxsize=0.7 em}
\node (n) {\ytableausetup{nosmalltableaux}
\ytableausetup{notabloids}
\ydiagram[*(cyan)]{1,1,0}
*[*(white)]{1+2,1+2,3}};
\node at (-\g,0) {5)};
\draw[line width = 0.17 em] (\m,\n) -- (\m,{\n-\s}) -- ({\m + \s},{\n - \s}) -- ({\m+\s},{\n-2*\s}) -- ({\m+2*\s},{\n-2*\s});
\end{scope}

\begin{scope}[shift={(-\b,0)}]
\ytableausetup{notabloids}
\ytableausetup{mathmode, boxsize=0.7 em}
\node (n) {\ytableausetup{nosmalltableaux}
\ytableausetup{notabloids}
\ydiagram[*(cyan)]{2,1,0}
*[*(white)]{2+1,1+2,3}};
\node at (-\g,0) {6)};
\draw[line width = 0.17 em] (\m,\n) -- (\m,{\n-\s}) -- ({\m + \s},{\n - \s}) -- ({\m+\s},{\n-2*\s}) -- ({\m+2*\s},{\n-2*\s});
\end{scope}

\begin{scope}[shift={(0,0)}]
\ytableausetup{notabloids}
\ytableausetup{mathmode, boxsize=0.7 em}
\node (n) {\ytableausetup{nosmalltableaux}
\ytableausetup{notabloids}
\ydiagram[*(cyan)]{3,1,0}
*[*(white)]{0,1+2,3}};
\node at (-\g,0) {7)};
\draw[line width = 0.17 em] (\m,\n) -- (\m,{\n-\s}) -- ({\m + \s},{\n - \s}) -- ({\m+\s},{\n-2*\s}) -- ({\m+2*\s},{\n-2*\s});
\end{scope}

\begin{scope}[shift={(-6*\b,-\a)}]
\ytableausetup{notabloids}
\ytableausetup{mathmode, boxsize=0.7 em}
\node (n) {\ytableausetup{nosmalltableaux}
\ytableausetup{notabloids}
\ydiagram[*(cyan)]{2,2,0}
*[*(white)]{2+1,2+1,3}};
\node at (-\g,0) {8)};
\draw[line width = 0.17 em] (\m,\n) -- (\m,{\n-\s}) -- ({\m + \s},{\n - \s}) -- ({\m+\s},{\n-2*\s}) -- ({\m+2*\s},{\n-2*\s});
\end{scope}

\begin{scope}[shift={(-5*\b,-\a)}]
\ytableausetup{notabloids}
\ytableausetup{mathmode, boxsize=0.7 em}
\node (n) {\ytableausetup{nosmalltableaux}
\ytableausetup{notabloids}
\ydiagram[*(cyan)]{3,2,0}
*[*(white)]{0,2+1,3}};
\node at (-\g,0) {9)};
\draw[line width = 0.17 em] (\m,\n) -- (\m,{\n-\s}) -- ({\m + \s},{\n - \s}) -- ({\m+\s},{\n-2*\s}) -- ({\m+2*\s},{\n-2*\s});
\end{scope}

\begin{scope}[shift={(-4*\b,-\a)}]
\ytableausetup{notabloids}
\ytableausetup{mathmode, boxsize=0.7 em}
\node (n) {\ytableausetup{nosmalltableaux}
\ytableausetup{notabloids}
\ydiagram[*(cyan)]{3,3,0}
*[*(white)]{0,0,3}};
\node at (-\g-\h,0) {10)};
\draw[line width = 0.17 em] (\m,\n) -- (\m,{\n-\s}) -- ({\m + \s},{\n - \s}) -- ({\m+\s},{\n-2*\s}) -- ({\m+2*\s},{\n-2*\s});
\end{scope}

\begin{scope}[shift={(-3*\b,-\a)}]
\ytableausetup{notabloids}
\ytableausetup{mathmode, boxsize=0.7 em}
\node (n) {\ytableausetup{nosmalltableaux}
\ytableausetup{notabloids}
\ydiagram[*(cyan)]{2,2,2}
*[*(white)]{2+1,2+1,2+1}};
\node at (-\g-\h,0) {11)};
\draw[line width = 0.17 em] (\m,\n) -- (\m,{\n-\s}) -- ({\m + \s},{\n - \s}) -- ({\m+\s},{\n-2*\s}) -- ({\m+2*\s},{\n-2*\s});
\end{scope}

\begin{scope}[shift={(-2*\b,-\a)}]
\ytableausetup{notabloids}
\ytableausetup{mathmode, boxsize=0.7 em}
\node (n) {\ytableausetup{nosmalltableaux}
\ytableausetup{notabloids}
\ydiagram[*(cyan)]{3,2,2}
*[*(white)]{0,2+1,2+1}};
\node at (-\g-\h,0) {12)};
\draw[line width = 0.17 em] (\m,\n) -- (\m,{\n-\s}) -- ({\m + \s},{\n - \s}) -- ({\m+\s},{\n-2*\s}) -- ({\m+2*\s},{\n-2*\s});
\end{scope}

\begin{scope}[shift={(-\b,-\a)}]
\ytableausetup{notabloids}
\ytableausetup{mathmode, boxsize=0.7 em}
\node (n) {\ytableausetup{nosmalltableaux}
\ytableausetup{notabloids}
\ydiagram[*(cyan)]{3,3,2}
*[*(white)]{0,0,2+1}};
\node at (-\g-\h,0) {13)};
\draw[line width = 0.17 em] (\m,\n) -- (\m,{\n-\s}) -- ({\m + \s},{\n - \s}) -- ({\m+\s},{\n-2*\s}) -- ({\m+2*\s},{\n-2*\s});
\end{scope}

\begin{scope}[shift={(-6*\b,-2*\a)}]
\ytableausetup{notabloids}
\ytableausetup{mathmode, boxsize=0.7 em}
\node (n) {\ytableausetup{nosmalltableaux}
\ytableausetup{notabloids}
\ydiagram[*(cyan)]{1,0,0}
*[*(white)]{1+2,3,2}
*[*(green)]{0,0,2+1}};
\node at (-\g-\h,0) {14)};
\draw[line width = 0.17 em] (\m,\n) -- (\m,{\n-\s}) -- ({\m + \s},{\n - \s}) -- ({\m+\s},{\n-2*\s}) -- ({\m+2*\s},{\n-2*\s});
\end{scope}

\begin{scope}[shift={(-5*\b,-2*\a)}]
\ytableausetup{notabloids}
\ytableausetup{mathmode, boxsize=0.7 em}
\node (n) {\ytableausetup{nosmalltableaux}
\ytableausetup{notabloids}
\ydiagram[*(cyan)]{1,0,0}
*[*(white)]{1+2,3,1}
*[*(green)]{0,0,1+2}};
\node at (-\g-\h,0) {15)};
\draw[line width = 0.17 em] (\m,\n) -- (\m,{\n-\s}) -- ({\m + \s},{\n - \s}) -- ({\m+\s},{\n-2*\s}) -- ({\m+2*\s},{\n-2*\s});
\end{scope}

\begin{scope}[shift={(-4*\b,-2*\a)}]
\ytableausetup{notabloids}
\ytableausetup{mathmode, boxsize=0.7 em}
\node (n) {\ytableausetup{nosmalltableaux}
\ytableausetup{notabloids}
\ydiagram[*(cyan)]{1,1,0}
*[*(white)]{1+2,1+2,2}
*[*(green)]{0,0,2+1}};
\node at (-\g-\h,0) {16)};
\draw[line width = 0.17 em] (\m,\n) -- (\m,{\n-\s}) -- ({\m + \s},{\n - \s}) -- ({\m+\s},{\n-2*\s}) -- ({\m+2*\s},{\n-2*\s});
\end{scope}

\begin{scope}[shift={(-3*\b,-2*\a)}]
\ytableausetup{notabloids}
\ytableausetup{mathmode, boxsize=0.7 em}
\node (n) {\ytableausetup{nosmalltableaux}
\ytableausetup{notabloids}
\ydiagram[*(cyan)]{1,1,0}
*[*(white)]{1+2,1+2,1}
*[*(green)]{0,0,1+2}};
\node at (-\g-\h,0) {17)};
\draw[line width = 0.17 em] (\m,\n) -- (\m,{\n-\s}) -- ({\m + \s},{\n - \s}) -- ({\m+\s},{\n-2*\s}) -- ({\m+2*\s},{\n-2*\s});
\end{scope}
\end{tikzpicture}
\caption{The seventeen possibilities in the proof of Lemma \ref{l:startabcompat}.} \label{fig:17casescompat}
\end{figure}
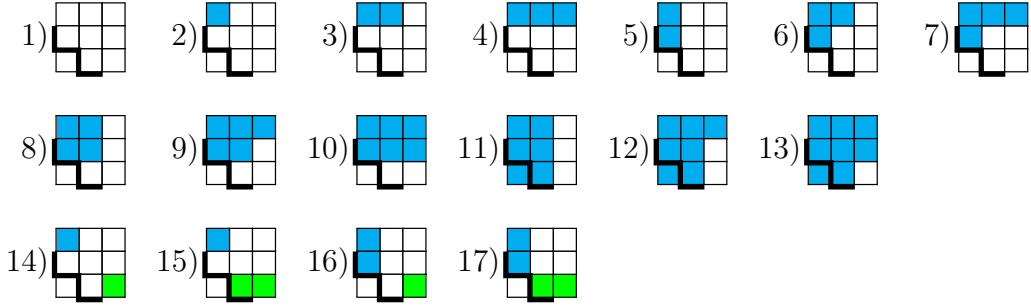
\end{center}

\begin{proof}
There are four major possibilities: (1) no cells in the square are from the DRH or the DRH transpose, (2) there are cells from the DRH, but not from the DRH transpose, (3) there are cells from the DRH transpose, but not from the DRH, and (4) there are cells from both the DRH and the DRH transpose. Observe that (3) can be argued analogously to (2). Thus we will only consider (1), (2), and (4). There are $17$ cases to consider, as shown in Figure \ref{fig:17casescompat}. Note that in Cases 14 - 17, when there are cells from both the DRH and the DRH transpose simultaneously, those cells cannot be in $\{c_{12}, c_{13}, c_{22}, c_{23}\}$. This is as a result of Corollary \ref{c:notbothD&Dt}.

We will deal with the first case by a similar analysis we have been doing in the proofs of previous results. The other $16$ cases, on the other hand, are more straightforward. The fact that the lower DRH staircase path cuts out $c_{31}$ while there is a part of the DRH in the $3 \times 3$-square shows that we are looking at the starting portion of the worm. For these cases, we can compute the frozen coefficients $\fc_{Q_{w'_1}}(c_{22})$ and $\fc_{Q_{w'_2}}(c_{22})$ directly.

Let us start with the first case. Note that because of the DRH staircase path, we know that the middle row label coincides with the left column label and the bottom row label coincides with the middle column label. We may give the labels as follows.

\begin{center}
\begin{tikzpicture}
\def \a {0.84};
\def \s {0.84}
\def \m {-1.13}
\def \n {0.42}

\ytableausetup{notabloids}
\ytableausetup{mathmode, boxsize=2.0 em}
\node (n) {\ytableausetup{nosmalltableaux}
\ytableausetup{notabloids}
\ydiagram[*(cyan)]{0,0,0}
*[*(white)]{3,3,3}};
\draw[line width = 0.17 em] (\m,\n) -- (\m,{\n-\s}) -- ({\m + \s},{\n - \s}) -- ({\m+\s},{\n-2*\s}) -- ({\m+2*\s},{\n-2*\s});

\node at (1.8,0.9) {$\gamma$};
\node at (1.8,0) {$\alpha$};
\node at (1.8,-0.9) {$\beta$};

\node at (1.8-3*\a,0.9-3*\a) {$\alpha$};
\node at (1.8-2*\a,0.9-3*\a) {$\beta$};
\node at (1.8-\a,0.9-3*\a) {$\delta$};
\end{tikzpicture}
\end{center}

Necessarily, $\delta > \beta > \alpha > \gamma$. We also have that the $\alpha$-th and the $\beta$-th letter in $\lambda$ are both $E$. In fact, there are no other $E$'s between $E^{\alpha}$ and $E^{\beta}$ in $\lambda$.

Define $\fq_{ij}$, $\mu_{ij}$, $v_{ij}$, and $[r,s]$ as before. We have that $\mu_{21} = [\alpha, \alpha] = E^{\alpha}$ and $\mu_{32} = [\beta, \beta] = E^{\beta}$.

\underline{Case 1.1.} Suppose there are no other letters between $E^{\alpha}$ and $E^{\beta}$. That is, $\beta = \alpha + 1$. Note that $E^{\beta}$ is not the final letter in $\lambda$, because at least there is $W^{\delta}$ with $\delta > \beta$. \underline{Case 1.1.1.} Suppose that the next letter after $E^{\beta}$ is also an $E$. Then, that letter must be $E^{\delta}$ which gives $\delta = \alpha + 2$. Using the main decomposition theorem, we can compute directly
\[
\fc_{Q_{w'_1}}(c_{22}) = \frac{\Delta_{\alpha+2}}{\Delta_{\alpha+3}}.
\]
We also have, by the main decomposition theorem, that
\[
\fc_{Q_{w_2}}(c_{22}) = \frac{1}{\Delta_\alpha \Delta_{\alpha + 3}}.
\]
By the description in Section \ref{ss:fcdesc}, we have
\[
\fc_{Q_{w_2}}(c_{32}) = \frac{\Delta_\alpha \Delta_{\alpha+2}}{\Delta_{\alpha+1}}.
\]
Hence, after mutating $Q_{w_2}$ at $c_{32}$, we obtain
\[
\fc_{Q_{w'_2}}(c_{22}) = \frac{\Delta_{\alpha+2}}{\Delta_{\alpha+3}} = \fc_{Q_{w'_1}}(c_{22}).
\]
\underline{Case 1.1.2.} Suppose that the next letter after $E^{\beta}$ is $N^{\beta+1}$. In this case, $\delta \ge \alpha + 3$. Like the previous case, we still have
\[
\fc_{Q_{w_2}}(c_{32}) = \frac{\Delta_{\alpha} \Delta_{\alpha+2}}{\Delta_{\alpha+1}}
\]
from the description in Section \ref{ss:fcdesc}. Using the main decomposition theorem, we have
\[
\fc_{Q_{w_2}}(c_{22}) = \frac{1}{\Delta_{\alpha} \Delta_{\alpha+2} \left( \Delta_{\delta+1} \chi^Y(\fq_{22}) \right)}.
\]
Note that $\Delta_{\delta+1} \chi^Y(\fq_{22})$ is either $\Delta_{\delta+1}$ or $\Delta_{l+2} = a_{pq}$ by definition. This means that $\Delta_{\delta+1} \chi^Y(\fq_{22})$ is a single frozen cluster variable that is not adjacent to $c_{32}$ in $w_2$. Mutating $w_2$ at $c_{32}$ gives
\[
\fc_{Q_{w'_2}}(c_{22}) = \frac{1}{\Delta_{\delta+1} \chi^Y(\fq_{22})}.
\]
On the other hand, we directly compute
\[
\fc_{Q_{w'_1}}(c_{22}) = \frac{1}{\Delta_{\delta+1} \chi^Y(\fq_{12})} = \fc_{Q_{w'_2}}(c_{22}).
\]

\underline{Case 1.2.} Suppose there is at least one $N$ between $E^{\alpha}$ and $E^{\beta}$. In this case, we have
\[
\fc_{Q_{w_2}}(c_{32}) = \Delta_\alpha \Delta_{\beta+1}.
\]
Similar to Case 1, the letter $E^\beta$ is not the final one. \underline{Case 1.2.1.} Suppose the next letter after $E^{\beta}$ is an $E$. In this case,
\[
\fc_{Q_{w_2}}(c_{22}) = \frac{1}{\Delta_\alpha \Delta_{\beta} \left( \Delta_{\delta+1} \chi^Y(\fq_{22}) \right)}.
\]
Hence,
\[
\fc_{Q_{w'_2}}(c_{22}) = \frac{\Delta_{\beta+1}}{\Delta_{\beta} \left( \Delta_{\delta+1} \chi^Y(\fq_{22}) \right)}.
\]
On the other hand, it is direct to compute
\[
\fc_{Q_{w'_1}}(c_{22}) = \frac{\Delta_{\beta+1}}{\Delta_{\beta} \Delta_{\delta+1} \chi^Y(\fq_{12})} = \fc_{Q_{w'_2}}(c_{22}).
\]
\underline{Case 1.2.2.} Suppose the next letter after $E^{\beta}$ is an $N$. In this case,
\[
\fc_{Q_{w_2}}(c_{22}) = \frac{1}{\Delta_\alpha \Delta_\beta \Delta_{\beta+1} \left( \Delta_{\delta+1} \chi^Y(\fq_{22}) \right)}.
\]
Thus,
\[
\fc_{Q_{w'_2}}(c_{22}) = \frac{1}{\Delta_{\beta} \left( \Delta_{\delta+1} \chi^Y(\fq_{22}) \right)}.
\]
Using the main decomposition theorem, we see that
\[
\fc_{Q_{w'_1}}(c_{22}) = \frac{1}{\Delta_{\beta} \Delta_{\delta+1} \chi^Y(\fq_{12})} = \fc_{Q_{w'_2}}(c_{22}).
\]

We have finished the analysis of Case 1, in which there are no cells from the DRH or the DRH transpose in the $3 \times 3$-square.  The other $16$ cases are much more straightforward. For each $1 \le r \le l(\lambda)$, we write
\[
\cE(\lambda,r) = \begin{cases}
1 & \text{if the } r^{\text{th}} \text{ letter in } \lambda \text{ is an } E, \\
0 & \text{otherwise.}
\end{cases}
\]
In Table \ref{t:fcequal}, we directly compute the two frozen coefficients in each case and observe that they are equal. Note that in each case, we know what the starting letters in $\lambda$ must be. When appropriate, we use the labels $\alpha$, $\beta$, $\gamma$, and $\delta$ as in the first case. 

We have finished the proof.
\end{proof}

\begin{center}
\setlength{\tabcolsep}{10 pt}
\def\arraystretch{1.5}
\begin{table}
\begin{tabular}{|c|c|c|}
\hline
Case & The lattice path $\lambda$ & $\fc_{Q_{w'_1}}(c_{22})$ and $\fc_{Q_{w'_2}}(c_{22})$ \\ [0.5 ex]
\hline
\hline
2 & $E\underbrace{N \cdots N}_a E \underbrace{\cdots}_b$ \, ($a, b \ge 1$) & $\Delta_{\beta+1}^{\cE(\lambda,\beta+1)} \Delta_\beta^{-1} \left( \Delta_{\delta+1} \chi^Y(\fq_{22}) \right)^{-1}$ \\
\hline
3 & $EEN \cdots$ & $\left(\Delta_{\delta+1} \chi^Y(\fq_{22})\right)^{-1}$ \\
\hline
4 & $EEE \cdots$ & $\Delta_3 \Delta_4^{-1}$ \\
\hline
5 & $\underbrace{N \cdots N}_a E \cdots$ \, ($a \ge 2$) & $\Delta_{\beta+1}^{\cE(\lambda,\beta+1)} \Delta_{\beta}^{-1} \left( \Delta_{\delta+1} \chi^Y(\fq_{22}) \right)^{-1}$ \\
\hline
6 & $NEN \cdots$ & $\Delta_2^{-1} \left( \Delta_{\delta+1} \chi^Y(\fq_{22}) \right)^{-1}$ \\
\hline
7 & $NEE \cdots$ & $\Delta_2^{-1} \Delta_3 \Delta_4^{-1}$ \\
\hline
8 & $ENN \cdots$ & $\left(\Delta_{\delta+1} \chi^Y(\fq_{22})\right)^{-1}$ \\
\hline
9 & $ENE \cdots$ & $\Delta_4^{-1}$ \\
\hline
10 & $EE \underbrace{\cdots}_{a}$ \, ($a \ge 1$) & $\Delta_2 \Delta_3^{-1}$ \\
\hline
11 & $NNN \cdots$ & $\left(\Delta_{\delta+1} \chi^Y(\fq_{22})\right)^{-1}$ \\
\hline
12 & $NNE \cdots$ & $\Delta_4^{-1}$ \\
\hline
13 & $NE\underbrace{\cdots}_{a}$ \, ($a \ge 1$) & $\Delta_2 \Delta_3^{-1}$ \\
\hline
14 & $E\underbrace{N \cdots N}_{a}E$ \, ($a \ge 1$) & $\Delta_{\delta+2} \Delta_{\delta+1}^{-1} a_{pq}^{-1}$ \\
\hline
15 & $E\underbrace{N \cdots N}_{a}$ \, ($a \ge 2$) & $a_{pq}\Delta_{\delta+2}^{-1}$ \\
\hline
16 & $\underbrace{N \cdots N}_{a} E$ \, ($a \ge 2$) & $\Delta_{\delta+2} \Delta_{\delta+1}^{-1} a_{pq}^{-1}$ \\
\hline
17 & $\underbrace{N \cdots N}_{a}$ \, ($a \ge 3$) & $a_{pq}\Delta_{\delta+2}^{-1}$ \\
\hline
\end{tabular}
\bigskip
\caption{Computations of the frozen coefficients in Cases 2 - 17 in the proof of Lemma \ref{l:startabcompat}.} \label{t:fcequal}
\end{table}
\end{center}

The following observation is going to be useful in the proof of the main theorem in the next section.

\begin{obs} \label{o:c1abcompat}
Let $[c_{ij}]_{1\le i,j \le 3}$ be as in Lemma \ref{l:startabcompat}. Let $w_1$ be a worm that starts with $c_{21} \leftarrow c_{11} \rightarrow c_{12}$. Let $w_2$ be a worm that starts with $c_{21} \rightarrow c_{22} \leftarrow c_{12}$. As before, let $Q_w$ denote the proposed quiver for the worm $w$. Let $(w',Q')$ be the resulting quiver after mutating $Q_{w_1}$ at $c_{11}$. Then,
\[
\fc_{Q'}(c_{21}) = \fc_{Q_{w_2}}(c_{21}).
\]
\end{obs}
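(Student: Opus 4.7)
The plan is to unfold both sides of the claimed equality using the definitions of Section \ref{ss:fcdesc}, apply the mutation rule to the left-hand side, and verify agreement by a case analysis parallel to that of Lemma \ref{l:startabcompat}. On the left, $c_{11}$ is an NE-bend in $w_1$ with outgoing mutable arrows to both $c_{12}$ and $c_{21}$, so by Case 1.2 of Section \ref{ss:fcdesc},
\[
\fc_{Q_{w_1}}(c_{11}) = \frac{1}{\FC(v)},
\]
where $v$ is the center of the $2 \times 2$ block $\{c_{11},c_{12},c_{21},c_{22}\}$; in particular all frozen arrows at $c_{11}$ are in-arrows. Corollary \ref{c:outmuta}, applied with $A = c_{11}$ and $B = c_{21}$, then gives
\[
\fc_{Q'}(c_{21}) = \fc_{Q_{w_1}}(c_{11}) \cdot \fc_{Q_{w_1}}(c_{21}) = \frac{\fc_{Q_{w_1}}(c_{21})}{\FC(v)}.
\]

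Next I would unpack $\fc_{Q_{w_1}}(c_{21})$ using Case 2.1.2, since the first worm step is vertical: it is the reciprocal of $\fc_{Q_{\wt{w}}}(\xi)$, where $\xi$ is the cell to the left of $c_{11}$ and $\wt{w} = \xi, c_{11}, c_{12}, \ldots$ is the worm obtained from $w_1$ by a starting-vertex worm operation. The value $\fc_{Q_{\wt{w}}}(\xi)$ is then given by Case 2.1.1 applied to $\wt{w}$ at $\xi$, driven by the degree triple for $e(\xi), e(c_{11}), e(c_{21})$ (here $c_{21}$ plays the role of the cell $\gamma$ immediately below $c_2 = c_{11}$). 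On the right, $\fc_{Q_{w_2}}(c_{21})$ is produced directly by Case 2.1.1 applied to $w_2$ at $c_{21}$ (horizontal first step), using the degree triple for $e(c_{21}), e(c_{22}), e(c_{32})$.

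The verification proceeds by enumerating configurations of the $3 \times 3$ square $[c_{ij}]$ according to how its cells distribute between the DRH, the DRH transpose, and $\cW_\lambda$ — noting that $c_{31}$ is outside the staircase and, by Corollary \ref{c:notbothD&Dt}, that DRH and transpose-DRH cells cannot coexist in the square. In each configuration the two explicit expressions can be written out using the formulas of Case 2.1.1 together with the main decomposition theorem (Theorem \ref{thm:ultiDodgson}), which factors every relevant $\FC(v)$ as an explicit (Laurent) monomial in the frozen variables $\Delta_i$. In the generic subcase where all the surrounding lattice points are non-special, the required cancellation
\[
\frac{\fc_{Q_{w_1}}(c_{21})}{\FC(v)} = \fc_{Q_{w_2}}(c_{21})
\]
follows from the Dodgson-condensation identity behind Proposition \ref{p:gendet}; the boundary and special-point subcases match the bookkeeping already carried out in the seventeen-case analysis of Lemma \ref{l:startabcompat}.

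The main obstacle, as in the two preceding lemmas, is logistical rather than conceptual: ensuring that every combination of DRH/transpose/$\cW_\lambda$ memberships for the cells near $c_{21}$, $c_{11}$, $c_{12}$, $c_{22}$ (and their staircase neighbors $\xi$, $c_{32}$) is accounted for. I would organize the casework by first handling the fully generic case (all relevant lattice points non-special) via Proposition \ref{p:gendet}, and then recycling the case tables in the proofs of Lemmas \ref{l:abcompat} and \ref{l:startabcompat} to dispatch the remaining boundary cases with minimal additional computation.
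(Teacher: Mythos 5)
Your proposal is correct and follows the same route the paper intends: the paper's proof of Observation \ref{o:c1abcompat} is a one-line appeal to the frozen-coefficient descriptions of Section \ref{ss:fcdesc} and the main decomposition theorem, and your outline simply makes that explicit (identify $c_{11}$ as an NE-bend, apply Corollary \ref{c:outmuta} to get $\fc_{Q'}(c_{21}) = \fc_{Q_{w_1}}(c_{21})/\FC(v)$, then match against the Case 2.1.1 formula for $\fc_{Q_{w_2}}(c_{21})$ by the same casework pattern as Lemmas \ref{l:abcompat} and \ref{l:startabcompat}). No substantive difference in method; you have just supplied the details the paper omits.
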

\begin{proof}
This is a direct consequence the description of frozen coefficients in Section \ref{ss:fcdesc} and the main decomposition theorem.
\end{proof}

\medskip

\subsection{Proof of the main theorem}
In this section, we prove the main theorem, which we restate here.

\main*

\begin{proof}
Following the strategy we outlined earlier, we first need to check that for the initial worm $w = \cM(\lambda)$, the described quiver $Q_w$ agrees with the initial DRH quiver $Q_{\lambda}$. Consider any cell $c$ inside the initial worm $\cM(\lambda$). If $c$ is a bend or an endpoint, the quiver mutation at $c$ brings the initial worm to another worm with $c$ replaced by another cell $c'$ close by that is still in the DRH. We proposed $\fc_{Q_w}(c)$ to be exactly the quiver which respects this worm operation. Since we showed in Lemma \ref{l:insideboa} that the initial DRH quiver $Q_{\lambda}$ also mutates this way, we see that
\[
\fc_{Q_w}(c) = \fc_{Q_{\lambda}}(c)
\]
when $c$ is either a bend or an endpoint. When $c$ is neither a bend nor an endpoint, mutating $Q_w$ at $c$ yields a quiver which does not correspond to any worm inside the DRH staircase. However, we can get around this problem, as we notice that there is a sequence of worm operations which sends $w$ to a worm $w'$ in which $c$ is a bend adjacent to another bend $\ve$ in $w'$ and in which $w'$ is still inside the DRH. We defined
\[
\fc_{Q_w}(c) = \fc_{Q_{w'}}(c) \cdot \fc_{Q_{w'}}(\ve).
\]
The right hand side of the above equation is of the form $\frac{\Delta_i}{\Delta_j}$, where $i$ and $j$ are consecutive indices. It is direct to see that this description coincides with the original description in $Q_{\lambda}$. As a result, the quivers $Q_w$ and $Q_{\lambda}$ agree.

Now, let $w$ be any worm inside the DRH staircase. We have to show that if a worm operation sends $w$ to $w'$, then the corresponding mutation sends the quiver $Q_w$ to $Q_{w'}$. Let the resulting quiver be $Q'$. We will show that $Q' = Q_{w'}$. Worm operations can only transform the worm $w$ at the bends and the endpoints. Suppose we transform at a bend first. Let $w$ contain an $EN$-bend at $c_1 \rightarrow c_2 \leftarrow c_3$. Mutating $Q_w$ at $c_2$ sends the quiver to $Q'$ with $c_1 \leftarrow c'_2 \rightarrow c_3$. Note that $\fc_{Q'}(c'_2) = \fc_{Q_w}(c_2)^{-1}$. Recall that we proposed $\fc_{Q_{w'}}(c'_2) = \FC(u)^{-1} = \fc_{Q_w}(c_2)^{-1}$, where $u$ is the shared vertex between $c_2$ and $c'_2$. Thus, $\fc_{Q'}(c'_2) = \fc_{Q_{w'}}(c'_2)$. Moreover, we have $\fc_{Q'}(c_1) = \fc_{Q_{w'}}(c_1)$ and $\fc_{Q'}(c_3) = \fc_{Q_{w'}}(c_3)$, as a result of Lemmas \ref{l:abcompat} and \ref{l:startabcompat}. Therefore, $Q' = Q_{w'}$. If we mutate $w$ at an endpoint instead, Observation \ref{o:c1abcompat} also ensures $Q' = Q_{w'}$.

Therefore, the collection of all $(w,Q_w)$ correctly describes the mutation behavior of the cluster algebra, and therefore, $c(\mu)$ ($\mu \le \lambda$) together with all $a_{ij}$ that are not previously frozen are all the mutable cluster variables of the cluster algebra $\cA_{\lambda}$.
\end{proof}

\bigskip

\section{Example: the NNEN-DRH}

In this final section, we illustrate an example when $\lambda = NNEN$. We start by drawing the initial NNEN-DRH and its quiver in Figure \ref{fig:NNEN}, and we show the NNEN-DRH staircase in Figure \ref{fig:NNENst}. We fill in the subskeleta corresponding to all the cells in $\cW_{NNEN}$ in Figure \ref{fig:W_NNEN}.

\begin{center}
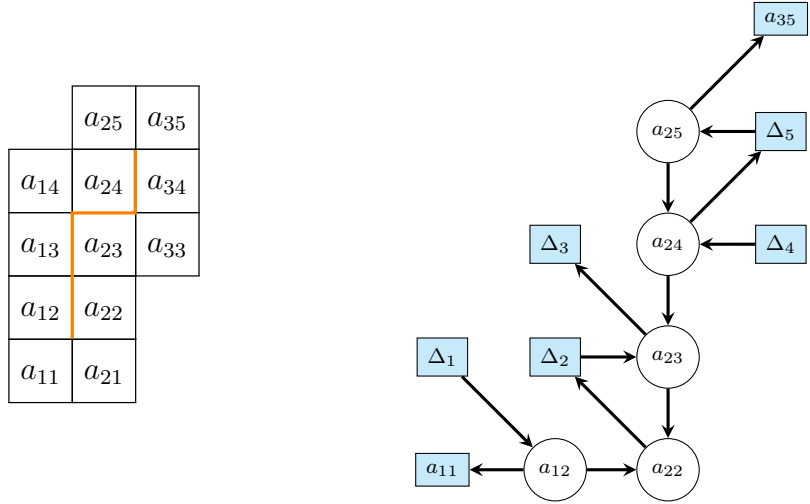
\begin{figure}
\begin{tikzpicture}
\def \a {0.837};

\ytableausetup{notabloids}
\ytableausetup{mathmode, boxsize=2.0em}
\node (n) {\begin{ytableau}
\none & a_{25} & a_{35} \\
a_{14} & a_{24} & a_{34} \\
a_{13} & a_{23} & a_{33} \\
a_{12} & a_{22} & \none \\
a_{11} & a_{21} & \none \\
\end{ytableau}};
\draw[very thick,orange] (-0.42,-0.42 - \a) -- (-0.42,-0.42 + \a) -- (-0.42 + \a,-0.42 + \a) -- (-0.42 + \a,-0.42 + 2*\a);

\begin{scope}[scale = 1.5, shift={(+2.0,-3.0)}, auto=left,every node/.style={circle, fill=cyan!20, minimum width= 1 em, draw = black}]
  \node[fill = none] (M1) at (2,1)  {\tiny $a_{12}$};
  \node[fill = none] (M2) at (3,1)  {\tiny $a_{22}$};
  \node[fill = none] (M3) at (3,2)  {\tiny $a_{23}$};
  \node[fill = none] (M4) at (3,3)  {\tiny $a_{24}$};
  \node[fill = none] (M5) at (3,4)  {\tiny $a_{25}$};
  \node[rectangle] (F1) at (1,1) {\tiny $a_{11}$};
  \node[rectangle] (F2) at (1,2) {\tiny $\Delta_1$};
  \node[rectangle] (F3) at (2,2) {\tiny $\Delta_2$};
  \node[rectangle] (F4) at (2,3) {\tiny $\Delta_3$};
  \node[rectangle] (F5) at (4,3) {\tiny $\Delta_4$};
  \node[rectangle] (F6) at (4,4) {\tiny $\Delta_5$};
  \node[rectangle] (F7) at (4,5) {\tiny $a_{35}$};
  
  \draw[line width = 0.1 em,->,>=stealth] (M5) -- (M4);
  \draw[line width = 0.1 em,->,>=stealth] (M4) -- (M3);
  \draw[line width = 0.1 em,->,>=stealth] (M3) -- (M2);
  \draw[line width = 0.1 em,->,>=stealth] (M1) -- (M2);
  \draw[line width = 0.1 em,->,>=stealth] (F2) -- (M1);
  \draw[line width = 0.1 em,->,>=stealth] (M1) -- (F1);
  \draw[line width = 0.1 em,->,>=stealth] (M2) -- (F3);
  \draw[line width = 0.1 em,->,>=stealth] (F3) -- (M3);
  \draw[line width = 0.1 em,->,>=stealth] (M3) -- (F4);
  \draw[line width = 0.1 em,->,>=stealth] (F5) -- (M4);
  \draw[line width = 0.1 em,->,>=stealth] (M4) -- (F6);
  \draw[line width = 0.1 em,->,>=stealth] (F6) -- (M5);
  \draw[line width = 0.1 em,->,>=stealth] (M5) -- (F7);
\end{scope}
\end{tikzpicture}
\caption{The initial NNEN-DRH and the initial NNEN-DRH quiver.} \label{fig:NNEN}
\end{figure}
\end{center}

\begin{center}
\begin{figure}
\begin{tikzpicture}[scale = 1.5]
\ytableausetup{notabloids}
\ytableausetup{mathmode, boxsize=3.0em}
\node (n) {\ytableausetup{nosmalltableaux}
\ytableausetup{notabloids}
\ydiagram[*(cyan)]{1+2,3,3,2,2}
*[*(green)]{0,0,0,0,0,4+4,3+5,3+3}
*[*(white)]{0,0,3+1,2+3,2+4,2+2}};

\def \a {0.837};

\def \zxs {-3.27};
\def \zys {0.42};

\begin{scope}[shift={(-0.84,+0.42)}]
\draw[red, line width = 1 pt] ({-1.56-2*\a},-0.42-\a) -- ({-1.56+8*\a},-0.42-\a);
\draw[red, line width = 1 pt] ({-1.59+2*\a},{-0.42+5*\a}) -- ({-1.59+2*\a},{-0.42-5*\a});

\draw[line width= 2.0 pt,black] (\zxs+\a, \zys) -- (\zxs+\a, \zys-\a) -- (\zxs+2*\a, \zys-\a) -- (\zxs+2*\a, \zys-2*\a) -- (\zxs+3*\a, \zys-2*\a) -- (\zxs+3*\a, \zys-3*\a) -- (\zxs+4*\a, \zys-3*\a) -- (\zxs+4*\a, \zys-4*\a) -- (\zxs+5*\a, \zys-4*\a) -- (\zxs+5*\a, \zys-5*\a) -- (\zxs+6*\a, \zys-5*\a);
\end{scope}

\begin{scope}[shift = {(-0.84+2*\a,+0.42+3*\a)}]
\draw[line width= 2.0 pt,black] (\zxs, \zys) -- (\zxs+\a, \zys) -- (\zxs+\a, \zys-\a) -- (\zxs+2*\a, \zys-\a) -- (\zxs+2*\a, \zys-2*\a) -- (\zxs+3*\a, \zys-2*\a) -- (\zxs+3*\a, \zys-3*\a) -- (\zxs+4*\a, \zys-3*\a) -- (\zxs+4*\a, \zys-4*\a) -- (\zxs+5*\a, \zys-4*\a) -- (\zxs+5*\a, \zys-5*\a) -- (\zxs+6*\a, \zys-5*\a) -- (\zxs+6*\a, \zys-6*\a) -- (\zxs+7*\a, \zys-6*\a) -- (\zxs+7*\a, \zys-7*\a);
\end{scope}

\def \xs {0.042};
\def \ys {0.15};
\def \frl {0.05};

\begin{scope}
\node[anchor = west] at (\xs-3*\a,\ys) {$\Delta_1$};
\node[anchor = west] at (\xs-3*\a,\ys+\a) {$\Delta_2$};
\node[anchor = west] at (\xs-3*\a,\ys+2*\a) {$\Delta_3$};
\node[anchor = west] at (\xs-2*\a,\ys+2*\a) {$\Delta_4$};
\node[anchor = west] at (\xs-2*\a,\ys+3*\a) {$\Delta_5$};

\node[scale = 0.4, diamond, fill=yellow,minimum width=1 em] at (-2.417+\a,+\a) {\hphantom{o}};
\node[scale = 0.4, diamond, fill=yellow,minimum width=1 em] at (-2.417+3*\a,-2*\a) {\hphantom{o}};
\node[scale = 0.4, diamond, fill=yellow,minimum width=1 em] at (-2.417,+3*\a) {\hphantom{o}};
\node[scale = 0.4, diamond, fill=yellow,minimum width=1 em] at (-2.417+5*\a,-3*\a) {\hphantom{o}};
\end{scope}

\end{tikzpicture}
\caption{The NNEN-DRH staircase.} \label{fig:NNENst}
\end{figure}
\end{center}

\begin{center}
\begin{figure}
\begin{tikzpicture}
\ytableausetup{notabloids}
\ytableausetup{mathmode, boxsize=5.0em}
\node (n) {\ytableausetup{nosmalltableaux}
\ytableausetup{notabloids}
\ydiagram[*(white)]{1+1,3,4,2}};

\begin{scope}[scale = 2.1, shift = {(-2.45,-2.5)}]
\tiny
\node at (1,1) {$E^3$};
\node at (1,2) {$N^1N^2E^3$};
\node at (1,3) {$N^2E^3$};
\node at (2,1) {$E^3N^4$};
\node at (2,2) {$N^1N^2E^3N^4$};
\node at (2,3) {$N^2E^3N^4$};
\node at (2,4) {$N^4$};
\node at (3,3) {$N^2$};
\node at (3,2) {$N^1N^2$};
\node at (4,2) {$N^1$};
\normalsize

\node at (1,0) {3};
\node at (2,0) {4};
\node at (3,0) {2};
\node at (4,0) {1};

\node at (0,1) {3};
\node at (0,2) {1};
\node at (0,3) {2};
\node at (0,4) {4};
\end{scope}

\begin{scope}[scale = 2.1, shift = {(-2.92,0.02)}]
\draw[line width= 3.0 pt,black] (1,-1) -- (1,-2) -- (2,-2);
\end{scope}

\begin{scope}[scale = 2.1, shift = {(-0.94,2.98)}]
\draw[line width= 3.0 pt,black] (0,-1) -- (1,-1) -- (1,-2) -- (2,-2) -- (2,-3) -- (3,-3) -- (3,-4);
\end{scope}

\end{tikzpicture}
\caption{$\Subsk \left(\cW_{NNEN}\right)$} \label{fig:W_NNEN}
\end{figure}
\end{center}

To obtain all the mutable cluster variables, we standardize the subskeleta as follows.

\begin{multicols}{3}
    \begin{itemize}
        \item $N^1 \mapsto \begin{vmatrix}
a_{13} & a_{23} \\
a_{11} & a_{21}
\end{vmatrix}$,
        \item $N^2 \mapsto \begin{vmatrix}
a_{14} & a_{24} \\
a_{12} & a_{22}
\end{vmatrix}$,
        \item $E^3 \mapsto \begin{vmatrix}
a_{14} & a_{34} \\
a_{13} & a_{33}
\end{vmatrix}$,
    \end{itemize}
    \end{multicols}

\begin{multicols}{2}
    \begin{itemize}
        \item $N^4 \mapsto \begin{vmatrix}
a_{25} & a_{35} \\
a_{23} & a_{33}
\end{vmatrix}$,
		\item $N^1N^2 \mapsto \begin{vmatrix}
a_{14} & a_{24} \\
a_{11} & a_{12}
\end{vmatrix}$,
    \end{itemize}
    \end{multicols}
    
\begin{multicols}{2}
    \begin{itemize}
		\item $N^2E^3 \mapsto - \begin{vmatrix}
a_{14} & a_{24} & a_{34} \\
a_{13} & a_{23} & a_{33} \\
a_{12} & a_{22} & 0
\end{vmatrix}$,
		\item $E^3N^4 \mapsto - \begin{vmatrix}
0 & a_{25} & a_{35} \\
a_{14} & a_{24} & a_{34} \\
a_{13} & a_{23} & a_{33}
\end{vmatrix}$,
    \end{itemize}
    \end{multicols}
    
\begin{itemize}
\item $N^1N^2E^3 \mapsto - \begin{vmatrix}
a_{14} & a_{24} & a_{34} \\
a_{13} & a_{23} & a_{33} \\
a_{11} & a_{21} & 0
\end{vmatrix}$,
\end{itemize}

\begin{itemize}
\item $N^2E^3N^4 \mapsto - \begin{vmatrix}
0 & 0 & a_{25} & a_{35} \\
a_{14} & a_{24} & a_{24} & a_{34} \\
a_{13} & a_{23} & a_{23} & a_{33} \\
a_{12} & a_{22} & 0 & 0
\end{vmatrix}$, and
\end{itemize}

\begin{itemize}
\item $N^1N^2E^3N^4 \mapsto - \begin{vmatrix}
0 & 0 & a_{25} & a_{35} \\
a_{14} & a_{24} & a_{24} & a_{34} \\
a_{13} & a_{23} & a_{23} & a_{33} \\
a_{11} & a_{21} & 0 & 0
\end{vmatrix}$.
\end{itemize}

Therefore, all the mutable cluster variables for the cluster algebra $\cA_{NNEN}$ are $a_{12}$, $a_{13}$, $a_{14}$, $a_{21}$, $a_{22}$, $a_{23}$, $a_{24}$, $a_{25}$, $a_{33}$, $a_{34}$, $\begin{vmatrix}
a_{13} & a_{23} \\
a_{11} & a_{21}
\end{vmatrix}$, $\begin{vmatrix}
a_{14} & a_{24} \\
a_{12} & a_{22}
\end{vmatrix}$, $\begin{vmatrix}
a_{14} & a_{34} \\
a_{13} & a_{33}
\end{vmatrix}$, $\begin{vmatrix}
a_{25} & a_{35} \\
a_{23} & a_{33}
\end{vmatrix}$, $\begin{vmatrix}
a_{14} & a_{24} \\
a_{11} & a_{12}
\end{vmatrix}$, $- \begin{vmatrix}
a_{14} & a_{24} & a_{34} \\
a_{13} & a_{23} & a_{33} \\
a_{12} & a_{22} & 0
\end{vmatrix}$, $- \begin{vmatrix}
0 & a_{25} & a_{35} \\
a_{14} & a_{24} & a_{34} \\
a_{13} & a_{23} & a_{33}
\end{vmatrix}$, $-\begin{vmatrix}
a_{14} & a_{24} & a_{34} \\
a_{13} & a_{23} & a_{33} \\
a_{11} & a_{21} & 0
\end{vmatrix}$, 
\newline $- \begin{vmatrix}
0 & 0 & a_{25} & a_{35} \\
a_{14} & a_{24} & a_{24} & a_{34} \\
a_{13} & a_{23} & a_{23} & a_{33} \\
a_{12} & a_{22} & 0 & 0
\end{vmatrix}$, and $- \begin{vmatrix}
0 & 0 & a_{25} & a_{35} \\
a_{14} & a_{24} & a_{24} & a_{34} \\
a_{13} & a_{23} & a_{23} & a_{33} \\
a_{11} & a_{21} & 0 & 0
\end{vmatrix}$.

\bigskip

\section{Acknowledgments}
This research project was conducted in the summer 2015 Combinatorics REU at the University of Minnesota, Twin Cities, and in the summer of 2016 at the same university. We would like to thank the university for hosting us both times. We are immensely grateful to Professor Pavlo Pylyavskyy and Professor Victor Reiner for constantly motivating us to finish this project. We would like to thank all the mentors and the TAs in the program for their help and motivation. In particular, we thank Professor Pavlo Pylyavskyy for introducing the Double Rim Hook algebras to us, and our project TA, Dr. Thomas McConville for many helpful ideas. PJ would also like to thank all the supports from his parents and his friends during the completion of this project.

MC was partially supported through the NSF grants DMS--1503119 and DMS--1148634.  PJ's undergraduate studies at Harvard University were supported by King's Scholarship (Thailand), while his visits to UMN were supported through the grant DMS--1351590. JS's visit, as well as the REU program in general, was supported through the grant DMS--1148634.

\bigskip

\bibliographystyle{alpha}
\bibliography{snakes_ref}

\end{document}